\DeclareMathOperator\C{\mathbb C}
\DeclareMathOperator\Z{\mathbb Z}
\newtheorem{theorem}{Theorem}[section]
\newtheorem{lemma}[theorem]{Lemma}
\newtheorem{cor}[theorem]{Corollary}
\newtheorem{prop}[theorem]{Proposition}
\newtheorem{claim}[theorem]{Claim}
\theoremstyle{definition}
\newtheorem{definition}[theorem]{Definition}
\theoremstyle{remark}
\newtheorem{remark}[theorem]{Remark}
\newtheorem{conjecture}[theorem]{Conjecture}
\numberwithin{equation}{section}
\newcommand{\dontprint}[1]\relax
\newcommand{\Id}{\operatorname{Id}}
\newcommand{\ind}{\operatorname{Ind}}
\newcommand{\bSt}{{\bf St}}
\newcommand{\mC}{{\mathbb C}}
\newcommand{\bO}{\Omega}
\newcommand{\mcC}{\mathcal C}
\newcommand{\mcR}{\mathcal R}
\newcommand{\ft}{\mathfrak t}
\newcommand{\ssl}{\mathfrak sl}
\newcommand{\strcusp}{\operatorname{str.cusp}}
\newcommand{\St}{\operatorname{St}}
\newcommand{\fg}{{\frak g}}
\newcommand{\fb}{{\frak b}}
\newcommand{\QBun}{\operatorname{QBun}}
\newcommand{\Ad}{\operatorname{Ad}}
\newcommand{\diag}{\operatorname{diag}}
\newcommand{\coker}{\operatorname{coker}}
\newcommand{\De}{\Delta}
\newcommand{\Ga}{\Gamma}
\newcommand{\Aut}{\operatorname{Aut}}
\newcommand{\Div}{\operatorname{Div}}
\newcommand{\und}{\underline}
\newcommand{\Pic}{\operatorname{Pic}}
\newcommand{\hra}{\hookrightarrow}
\newcommand{\we}{\wedge}
\renewcommand{\P}{{\mathbb P}}
\newcommand{\A}{{\mathbb A}}
\renewcommand{\AA}{{\mathcal A}}
\newcommand{\wt}{\widetilde}
\newcommand{\ot}{\otimes}
\newcommand{\fu}{{\mathfrak u}}
\newcommand{\QTr}{\operatorname{QTr}}
\newcommand{\Hom}{\operatorname{Hom}}
\newcommand{\Ext}{\operatorname{Ext}}
\newcommand{\Om}{\Omega}
\newcommand{\HH}{{\mathcal H}}
\newcommand{\NN}{{\mathcal N}}
\newcommand{\CC}{{\mathcal C}}
\renewcommand{\SS}{{\mathcal S}}
\newcommand{\LL}{{\mathcal L}}
\newcommand{\MM}{{\mathcal M}}
\newcommand{\OO}{{\mathcal O}}
\newcommand{\UU}{{\mathcal U}}
\newcommand{\si}{\sigma}
\newcommand{\de}{\delta}
\newcommand{\sub}{\subset}
\newcommand{\Spec}{\operatorname{Spec}}
\newcommand{\Res}{\operatorname{Res}}
\newcommand{\PGL}{\operatorname{PGL}}
\newcommand{\ov}{\overline}
\newcommand{\im}{\operatorname{im}}
\newcommand{\om}{\omega}
\renewcommand{\a}{\alpha}
\renewcommand{\b}{\beta}
\newcommand{\tr}{\operatorname{tr}}
\newcommand{\id}{\operatorname{id}}
\newcommand{\GL}{\operatorname{GL}}
\newcommand{\G}{{\mathbb G}}
\renewcommand{\th}{\theta}
\newcommand{\ga}{\gamma}
\newcommand{\lan}{\langle}
\newcommand{\ran}{\rangle}
\newcommand{\Tor}{{\operatorname{Tor}}}
\newcommand{\End}{{\operatorname{End}}}
\newcommand{\Bun}{{\operatorname{Bun}}}
\newcommand{\QPic}{{\operatorname{QPic}}}
\newcommand{\cusp}{{\operatorname{cusp}}}
\newcommand{\fm}{{\mathfrak m}}
\newcommand{\Dual}{{\mathbb D}}
\newcommand{\Hecke}{{\operatorname{Hecke}}}
\newcommand{\supp}{{\operatorname{supp}}}
\newcommand{\eps}{\epsilon}
\newcommand{\Gal}{{\operatorname{Gal}}}
\newcommand{\vol}{{\operatorname{vol}}}
\title{Automorphic functions for nilpotent extensions of curves over finite fields}
\author{Alexander Braverman}
\author{David Kazhdan}
\author{Alexander Polishchuk}
\thanks{D.K. is partially supported by the ERC grant No 669655.
A.P. is partially supported by the NSF grant DMS-2001224, 
and within the framework of the HSE University Basic Research Program and by the Russian Academic Excellence Project `5-100'.}
\address{Deparment of Mathematics, University of Toronto, Ontario, Canada}
\email{braval@math.toronto.edu}
\address{Einstein Institute of Mathematics,
The Hebrew University of Jerusalem,
Jerusalem 91904, Israel}
\email{kazhdan@math.huji.ac.il}
\address{
    Department of Mathematics, 
    University of Oregon,     
    Eugene, OR 97403, USA; National Research University Higher School of Economics
  }
  \email{apolish@uoregon.edu}
\begin{document}

\maketitle

\begin{abstract}
We define and study the subspace of cuspidal functions for $G$-bundles on a class of nilpotent extensions of curves
over a finite field.
We show that this subspace is preserved by the action of a certain noncommutative Hecke algebra $\HH_{G,C}$.  In the case $G=\GL_2$, 
we construct a commutative subalgebra in $\HH_{G,C}$
of Hecke operators associated with {\it simple divisors}. 

In the case of length $2$ extensions and of $G=\GL_2$, 
we prove that the space of cuspidal functions (for bundles with
a fixed determinant) is finite-dimensional and provide bounds on its dimension. In this case we also construct some Hecke eigenfunctions using the relation
to Higgs bundles over the corresponding reduced curve.
\end{abstract}

\section{Introduction}

\subsection{Langlands correspondence for curves over local fields}

The Langlands correspondence for the group $G=\GL_2$ and a smooth projective curve $C$ over a finite field $k$ was established by Drinfeld in \cite{Drinfeld-aut}.
The unramified case of the correspondence involves an action of a commutative Hecke algebra $\HH$ on the space $\SS(\Bun_G(k))$ of finitely supported
functions on the set $\Bun_G(k)$ of isomorphism classes of $G$-bundles over $C$ (defined over $k$). 
There is a natural subspace $\SS^{\cusp}(\Bun_G(k)) \sub \SS(\Bun_G(k))$ of {\it cuspidal} functions, and Drinfeld's result implies that
$\SS^{\cusp}(\Bun_G(k))$ has an $\HH$-eigenbasis parametrized by conjugacy classes of irreducible homomorphisms
$\pi_1(C)\to \GL_2(\C)$ (i.e., by irreducible rank $2$ local systems on $C$). Drinfeld's work has been generalized to $\GL_n$ and then (partially) to all reductive groups
(see \cite{Laf-L}, \cite{Laf-V}).

The works \cite{BK-Hecke}, \cite{EFK1} and \cite{EFK2} develop a conjectural analog of this picture with the finite field $k$ replaced by a local field $K$ (see also \cite{BK}).
In the case of parabolic bundles on $\P^1$ the Hecke operators over local fields were also considered by Kontsevich \cite{Ko}.
Essentially, the goal of this program, outlined in \cite{BK}, is to define an appropriate space of functions (or rather half-densities) on $\Bun_G(K)$ together with an action of commuting Hecke operators on it,
and to study the corresponding spectrum.

For a curve $C$ over a non-archimedean local field $K$, following \cite{GK}
one can define the Schwartz space of half-densities on $\Bun_G(K)$ (where we consider only generically trivial $G$-bundles). 
Elsewhere we will show that in the case when $C$ has a smooth model over the ring of integers $O$ in $K$, 
all elements of this space come from
functions on $\Bun_G(O)$ depending on the reduction modulo some power of the maximal ideal $\fm^n\sub O$ (see \cite[Sec.\ 5.8]{BK} on related constructions). 
This naturally leads to the problem of studying automorphic functions and Hecke operators for curves over finite rings such as $O/\fm^n$, or
more generally, over nilpotent extensions of curves over finite fields.
This is the main subject of the present paper.

\subsection{Nilpotent extensions of curves: adeles and $G$-bundles}\label{nilp-adele-sec}


Let 
$\ov{C}$ be a smooth proper curve over a finite field $k$, and let
$C$ be a nilpotent extension of $\ov{C}$, i.e., a scheme of finite type such that the corresponding reduced scheme is $C$.
We denote by $\NN\sub \OO_C$ the nilradical.


\begin{definition}
We say that $C$ is a {\it special nilpotent extension} of $\ov{C}$ if 
there exists $n\ge 1$ such that $\NN^n=0$, and 
$\NN^i/\NN^{i+1}$ is a line bundle on $\ov{C}$ for $i=1,\ldots,n-1$. We refer to $n$ as the {\it length} of the nilpotent extension.
\end{definition}

For example, if $C$ is a smooth proper curve over a finite quotient of the ring of integers $O$ of a non-archimedean field $K$ then it is a special nilpotent extension of $\ov{C}$.

We denote by $F=F_C$ the stalk of the structure sheaf of $C$ at the general
point (which is the same as the total ring of fractions of the ring of functions on any nonempty affine open of $C$).
We also set $\ov{F}:=F_{\ov{C}}=k(\ov{C})$.

For every closed point $p\in \ov{C}$, we consider $\OO_p$, the completion of the local ring of $C$ at $p$ with respect to the maximal ideal.
Note that the quotient of $\OO_p$ by its nilradical is $\ov{\OO}_p$, the completion of the local ring of $\ov{C}$ at $p$.
We denote by $F_p$ the total ring of fractions of $\OO_p$, and we define the ring of adeles $\A=\A_C$ as the restricted product of $F_p$ over all points $p$,
with respect to the subrings $\OO_p\sub F_p$. We denote by $\OO\sub \A$ the ring of integer adeles. We have a natural embedding $F\sub \A$.
Let $\NN^i\A\sub \A$ (resp., $\NN^i_F\sub F$) be the $i$th power of the nilradical. 
The quotient $\ov{\A}:=\A/\NN\A$ is the usual ring of adeles for $\ov{C}$, and the quotient $\ov{\OO}$ of $\OO$ by its nilradical is the ring of integer adeles for $\ov{C}$.


Let $G$ is a connected group defined over $\Z$. We denote by $\Bun_G(C)$
the groupoid of $G$-bundles over $C$ trivial at the general point of $C$.
\footnote{If $G$ is connected split reductive, then any $G$-bundle is automatically trivial at the general point, as follows from \cite[Prop.\ 4.5]{BD}.}
Let us recall the adelic interepretation of $\Bun_G(C)$. 
Given a $G$-bundle $P$ on $C$ trivial at the general point (thought of as a right $G$-torsor), we fix a trivialization $e_\eta$ of $P$ at the general point and
trivializations $e_p$ of the restriction of $P$ to the formal completion of $C$ at each point $p$.
Writing $e_p=e_\eta\cdot g_p$, we get $g=(g_p)\in G(\A)$. Changing trivilizations $e_p$ corresponds to
multiplying $g$ on the right with $G(\OO)$, while changing $e_\eta$ corresponds to multiplying $g$ on the left with $G(F)$.
The following result is well known in the case $C=\ov{C}$ and is proved similarly in general.

\begin{prop}
The above construction give an equivalence of groupoids
\begin{equation}\label{BunG-double-cosets-eq}
\Bun_G(C)\simeq G(F)\backslash G(\A)/G(\OO),
\end{equation}
where the set of double cosets is naturally viewed as a groupoid, see Sec.\ \ref{double-coset-sec}.
\end{prop}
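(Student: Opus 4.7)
The strategy is to follow the classical Weil/Beauville--Laszlo argument for adelic uniformization of $G$-bundles, adapted to the nilpotent setting. Since the case $C=\ov C$ is assumed known, the new content is checking that each step passes through the nilpotent thickening. The key preliminary is local triviality: every $G$-bundle over $\Spec\OO_p$ is trivial. Over $\Spec\ov{\OO}_p=\Spec k_p[[t]]$, with $k_p$ the finite residue field at $p$, this follows from Lang's theorem (a connected linear algebraic group over a finite field has vanishing Galois $H^1$) together with Hensel-style lifting along the discrete valuation ring, using smoothness of $G$. Since $\OO_p\twoheadrightarrow\ov{\OO}_p$ is a surjection whose kernel is nilpotent and $G$ is smooth over $\Z$, any trivialization over $\ov{\OO}_p$ lifts inductively through the filtration by powers of the nilradical of $\OO_p$; the obstructions at each square-zero stage vanish by smoothness of $G$.

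Once local triviality is established, the two directions of the equivalence are essentially cocycle bookkeeping. Given $P$ with its generic trivialization $e_\eta$, choose local trivializations $e_p$ and set $g_p:=e_\eta^{-1}e_p\in G(F_p)$. Since $P$ is generically trivial, $e_\eta$ extends to a trivialization over a dense open $U\subset C$, so $g_p\in G(\OO_p)$ for every $p\in U$, i.e.\ for all but finitely many $p$; thus $(g_p)\in G(\A)$. Different choices of $e_\eta$ change $g$ by left multiplication by an element of $G(F)$, while different choices of $(e_p)$ change $g$ by right multiplication by an element of $G(\OO)$, so the class in $G(F)\backslash G(\A)/G(\OO)$ is well defined. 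Conversely, given $g=(g_p)\in G(\A)$ one recovers a bundle by Beauville--Laszlo gluing: glue the trivial bundle on $\Spec F$ to the trivial bundles on each $\Spec\OO_p$ along the punctured formal disks $\Spec F_p$ via $g_p$. Faithfully flat descent is insensitive to nilpotents, so this works over $C$; alternatively, one can induct on the nilpotency of $\NN$, invoking the known statement over $\ov C$ and lifting through each square-zero extension using smoothness of $G$.

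To promote this bijection on isomorphism classes to an equivalence of groupoids, observe that an automorphism of the bundle corresponding to $g$ is precisely a compatible change of generic and local trivializations, i.e.\ a pair $(h,k)\in G(F)\times G(\OO)$ with $h\cdot g=g\cdot k$ in $G(\A)$. This matches exactly the groupoid structure on the double coset set recalled in Section~\ref{double-coset-sec}. The main obstacle is local triviality in Step~1: the ring $\OO_p$ is not a discrete valuation ring but a finite-length nilpotent thickening of one, so one has to carefully check that the combination of Lang's theorem, Hensel's lemma, and the nilpotent lifting produces a trivialization over $\Spec\OO_p$ with no residual obstructions. Everything beyond this is parallel to the classical proof for smooth curves.
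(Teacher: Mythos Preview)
The paper does not actually prove this proposition; it only remarks that the result ``is well known in the case $C=\ov{C}$ and is proved similarly in general.'' Your proposal supplies precisely the details the paper omits, and the approach---local triviality of $G$-bundles over $\Spec\OO_p$ via Lang's theorem plus smoothness-based lifting through the nilpotent filtration, followed by Weil/Beauville--Laszlo gluing and the groupoid bookkeeping---is the standard argument and is essentially correct.

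One small inaccuracy in wording: $\OO_p$ is not a ``finite-length'' ring (its reduction $\ov{\OO}_p\simeq k_p[[t]]$ is already infinite-dimensional over $k$); rather, it is a nilpotent extension of the complete DVR $\ov{\OO}_p$ in the sense that its nilradical $\NN_p$ satisfies $\NN_p^n=0$. This does not affect the substance of your argument, since the inductive lifting of a trivialization through successive square-zero stages only requires that the nilradical be nilpotent and that $G$ be smooth.
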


For every open compact subgroup $K\sub G(\A)$, we set
$$\Bun_G(C,K):=G(F)\backslash G(\A)/K.$$

In the case of $G=\GL_n$, we can consider vector bundles instead of $G$-bundles. In this case we associate with $g\in \GL_n(\A)$
the vector bundle $V=V(g)$ where one 
has $V_\eta=F^n$ and $V_p=g_p\OO_p^n\sub F_p^n$ (here $V_p$ is the completion of the stalk of $V$ at $p$).

As in the case of curves over a finite field, we have the following interpretation of the cohomology $H^*(C,V)$ for $V=V(g)$ with $g\in \GL_n(\A)$:
\begin{equation}\label{cohomology-eq}
H^0(C,V)\simeq F^n\cap g\OO^n, \ \ H^1(C,V)\simeq \A^n/(F^n+g\OO^n).
\end{equation}

When integrating functions over $H(\A)$, where $H$ is a an algebraic subgroup of $G$ defined over $\Z$, we always normalize the Haar measure so that $\vol(H(\OO))=1$.
For a vector bundle $V$ on $C$ we set  
$$\ov{V}:=V|_{\ov{C}}=V/\NN V.$$     
    
\subsection{Cuspidal functions and Hecke operators: definitions and conjectures}\label{gen-conj-sec}


For a small groupoid $X$ we denote by $\SS(X)$ (resp., $\C(X)$) the space of finitely supported (resp., all) $\C$-valued 
functions on the set of isomorphism classes of $X$.

Below we define the subspace $\SS_{\cusp}(\Bun_G(C))\sub\SS(\Bun_G(C))$ of cuspidal functions invariant under the action of certain algebra of Hecke operators on $\Bun_G(C)$.
More generally, for each compact open subgroup $K\sub G(\A)$, we can define a similar subspace $\SS_{\cusp}(\Bun_G(C,K))\sub\SS(\Bun_G(C,K))$.

Let $G$ be a connected reductive group over $\Z$.
One could define cuspidal functions for an arbitrary such $G$, but to simplify notations, we assume from now on that
$G$ is split, and we denote by $T\sub G$ a split maximal torus (defined over $\Z$).

In analogy with \cite{K-YD}, which dealt with the local case, for each compact open subgroup $K\sub G(\A)$
we consider the {\it Hecke algebra}, $\HH_{G,C,K}$ of compactly supported, $K$-biinvariant measures on $G(\A)$.
For $K=G(\OO)$ we denote this algebra simply as $\HH_{G,C}$
(note that this algebra is not commutative when $C$ is not a curve over a finite field).
The algebra $\HH_{G,C,K}$ acts by convolution on the space $\C(\Bun_G(C,K))$,
preserving the subspace $\SS(\Bun_G(C,K))$.
We denote this action as $h*f$, where $h\in \HH_{G,C,K}$, $f\in \C(\Bun_G(C,K))$.

Let $B=TU$ be a Borel subgroup $B$. A parabolic subgroup is {\it standard} if it contains $B$.

\begin{definition}\label{QBun-L-def}
For a standard parabolic subgroup $P\supset B$ (defined over $\Z$) with the unipotent radical $U$ and a Levi subgroup $L\sub P$, we set
$$\QBun_L(C,K):=(L(F)U(\A))\backslash G(\A)/K,$$
$\QBun_L(C):=\QBun_L(C,G(\OO))$.
\end{definition}

Note that the set $\QBun_L(\ov{C})$ can be identified with the set of isomorphism classes of $L$-bundles on $\ov{C}$.

\begin{definition}\label{cusp-def}
\begin{enumerate}
\item For a standard parabolic subgroup $P$, the constant term operator 
$$E_P:\C(\Bun_G(C,K))\to \C(\QBun_L(C,K))$$
is given by the formula $E_Pf(g)=\int_{u\in U_P(F)\backslash U_P(\A)} f(ug)du$.
\footnote{This integral can rewriten as a finite sum over $U_P(F)\backslash U_P(\A)/(U_P(\A)\cap gKg^{-1})$.}

\item
More generally, for each $i\ge 0$, we define the subgroup $U_P(\NN^i\A)\sub U_P(\A)$ (resp., $U_P(\NN_F^i)\sub U_P(F)$) as the kernel
of the projection $U_P(\A)\to U_P(\A/\NN^i\A)$ (resp., $U_P(F)\to U_P(F/\NN^i_F)$). We define the corresponding constant term operator by
$$E^i_Pf(g)=\int_{u\in U_P(\NN^i_F)\backslash U_P(\NN^i\A)} f(ug)du,$$ 

\item
 We say that a function $f$ on $\Bun_G(C,K)$ is {\it cuspidal} if $E_Pf\equiv 0$ for every proper standard parabolic subgroup $P$ containing $B$.

\item
$\SS_{\cusp}(\Bun_G(C,K))\sub \C(\Bun_G(C,K))$ denotes the subspace of cuspidal functions.

\item
For $i\ge 0$, we say that $f$ is {\it $i$-cuspidal} if $E^i_Pf\equiv 0$ for every proper standard parabolic subgroup $P$ containing $B$. 
In the case $n=2$, we also call $1$-cuspidal functions {\it strongly cuspidal}.

\item
$\SS^i_{\cusp}(\Bun_G(C,K))\sub \C(\Bun_G(C,K))$ denotes the subspace of $i$-cuspidal functions.
\end{enumerate}
\end{definition}

For a closed embedding $C'\sub C$ (where $C$ and $C'$ are nilpotent extensions of the reduced curve $\ov{C}$),
we have a natural reduction map $\Bun_G(C)\to \Bun_G(C')$, which induces 
the embedding $\C(\Bun_G(C'))\hra \C(\Bun_G(C))$ as the subspace of functions depending only on the reduction of a $G$-bundle to $C'$.

\bigskip

\noindent
{\bf Proposition A}. {\it (1) The action of the Hecke algebra $\HH_{G,C,K}$ on $\C(\Bun_G(C,K))$ preserves the subspace of cuspidal (resp., $i$-cuspidal) functions.

\noindent
(2) For a closed embedding $C'\sub C$, we have
$$\SS_{\cusp}(\Bun_G(C'))=\C(\Bun_G(C'))\cap \SS_{\cusp}(\Bun_G(C)).$$
}

\bigskip

Let $G'\sub G$
be the commutator subgroup. 
Then we have a natural projection $\Bun_G(C)\to \Bun_{G/G'}(C)$.
For each $G/G'$-bundle $L$ over $C$, we denote by $\Bun^L_{G}(C)\sub \Bun_G(L)$ the subset of isomorphism classes of 
$G$-bundles whose associated $G/G'$-bundle is isomorphic to $L$.
For every open subgroup $K\sub G(\OO)$, we denote by $\Bun^L_G(C,K)\sub \Bun_G(C,K)$ the preimage of $\Bun^L_G(C)$ under the
natural projection $\Bun_G(C,K)\to \Bun_G(C)$.
For $L\in \Bun_{G/G'}(C)$, we always view functions on $\Bun^L_{G}(C,K)$ as functions on $\Bun_G(C,K)$ (extended by zero).


On the other hand, let $Z\sub G$ be the center of $G$, and let $Z_0\sub Z$ be the connected component of $1$. Then $Z_0\to G/G'$ is an isogeny.
The group $\Bun_{Z_0}(C,K)$ acts naturally on $\Bun_G(C,K)$.
We denote this action as $M+P$, where $M\in \Bun_{Z_0}(C,K)$ and $P\in \Bun_G(C,K)$. 

\begin{definition}
A function $f\in \C(\Bun_G(C,K))$ is called {\it Hecke-bounded} if there exists a finite set $S\sub \Bun_G(C,K)$, such that 
 the support of $h*f$ is contained in $\Bun_{Z_0}(C,K)+S$ for every $h\in \HH_{G,C,K}$.
\end{definition}


\begin{conjecture}\label{main-cusp-conj} 
Assume that $C$ is a special nilpotent extension of $\ov{C}$ of length $n$, and that the genus $g$ of $\ov{C}$ is $\ge 2$.
Let $K\sub G(\OO)$ be an  open subgroup.

\begin{enumerate}

\item
 The $\C$-vector space
$$\SS_{\cusp}(\Bun^L_{G}(C,K)):=\SS_{\cusp}(\Bun_G(C,K))\cap \C(\Bun^L_{G}(C,K))$$
is finite-dimensional for any $L\in \Bun_{G/G'}(C)$. 

\item
If $\deg(\NN/\NN^2)=0$ then there exists a function $c(g)>0$ and positive integer $r$ such that
$$|\frac{\dim \SS_{\cusp}(\Bun^L_{G}(C))}{q^{n\dim(G')(g-1)}}-r| \le c(g)q^{-1/2},$$
where 
$q=|k|$.

\item
A function $f$ on $\Bun^L_{G}(C,K)$ is cuspidal if and only if it is Hecke-bounded.

\end{enumerate}
\end{conjecture}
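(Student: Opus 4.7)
The overall strategy is induction on the length $n$ of the nilpotent extension, reducing each part to the classical case over $\ov{C}$. Let $C_i\sub C$ be the closed subscheme cut out by $\NN^i$, so $C_1=\ov{C}$ and $C_n=C$, and each $C_i\sub C_{i+1}$ is a square-zero thickening by the line bundle $\mcL_i:=\NN^i/\NN^{i+1}$ on $\ov{C}$. The associated projections $\pi_i:\Bun_G(C_{i+1})\to\Bun_G(C_i)$ present $\Bun_G(C_{i+1})$ as a stacky affine bundle whose fiber over $P$ is a torsor under $H^1(\ov{C},\ad(\ov{P})\ot \mcL_i)$. Proposition A(2) together with the evident compatibility of the integrals defining $E_P$ and $E_P^i$ with pullback along $\pi_i$ is what lets the induction start.

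For parts (1) and (2): first establish a reduction-theoretic input that the support of any $f\in \SS_{\cusp}(\Bun_G^L(C,K))$, modulo $\Bun_{Z_0}(C,K)$, projects under $\pi_1\circ\cdots\circ\pi_{n-1}$ into a bounded subset of $\Bun_G(\ov{C})$. The plan for this is to show, via a Harder--Narasimhan argument on $\ov{P}$, that if $\ov{P}$ admits a reduction to a proper standard parabolic $P$ that is sufficiently unbalanced, then the integral defining $E_Pf(g)$ becomes a non-degenerate finite sum over the $U_P(\A)/U_P(F)$-orbits in $K\backslash G(\A)/P(F)$ visible at $g$, forcing $f(g)=0$. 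Given this, one controls $f$ on each fiber of $\pi_{n-1}$: strong ($(n-1)$-)cuspidality translates into a vanishing-of-Fourier-coefficients condition on the affine fiber, and the successive refinements $E_P^i$ $(i<n-1)$ impose further linear constraints, yielding finite-dimensionality and, when combined with Drinfeld's $\GL_n$-count on $\ov{C}$ and Riemann--Roch for $\ad(\ov{P})\otimes\mcL_i$ (whose $\chi$ is $\dim(G')(g-1)$ exactly when $\deg\mcL_i=0$), the asymptotic formula of part (2) with main term $rq^{n\dim(G')(g-1)}$ and error of order $q^{-1/2}$ coming from Weil-type bounds.

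For part (3): the direction cuspidal $\Rightarrow$ Hecke-bounded is essentially formal: by Proposition A(1) the Hecke translates $h*f$ stay cuspidal, so by part (1) they lie in a fixed finite-dimensional subspace of $\SS(\Bun_G^L(C,K))$, and each element of that subspace has support contained in a finite union of $\Bun_{Z_0}$-orbits, hence so does any family of translates. The opposite direction is the substantive one: given $f$ with $E_Pf\not\equiv 0$ for some proper standard $P=LU$, one needs to construct a sequence of Hecke operators $h_\la$ (indexed by dominant cocharacters of the center $Z_L$, realized as characteristic functions of appropriate double cosets in $G(\A)$) such that the projections of the supports of $h_\la*f$ to $\QBun_L(C)$ hit bundles of arbitrarily negative Harder--Narasimhan slope relative to $P$. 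The construction mirrors the classical argument via Eisenstein series: one verifies that convolution with $h_\la$ acts on constant terms by a nontrivial shift in the central character of $L$, and since the $\Bun_{Z_0}(C)$-action on $\Bun_G(C)$ does not absorb all of $Z_L(\A)/Z_L(F)$ once $P\neq G$, the support is forced to be unbounded modulo $\Bun_{Z_0}$.

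\textbf{Main obstacle.} The serious step is the reduction theory input feeding all three parts: one needs Harder--Narasimhan-type control of cuspidal support not just for $\ov{P}$ but for the actual $G$-bundle $P$ on $C$, and the integrals defining $E_P^i$ run over the higher unipotent groups $U_P(\NN^i\A)/U_P(\NN^i_F)$, whose geometry on $C$ is governed by cohomology of twists by $\NN^i/\NN^{i+1}$ with no immediate analogue of the slope inequalities available on $\ov{C}$. A secondary obstacle, specific to part (3), is that the shift argument requires Hecke operators well beyond the commutative "simple divisor" subalgebra constructed in the paper for $\GL_2$, so one must work inside the noncommutative algebra $\HH_{G,C}$ and commute constant terms past elements whose Satake-type description is not yet known outside $\ov{C}$.
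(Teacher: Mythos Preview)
The statement you are attempting to prove is labeled in the paper as a \emph{Conjecture}, not a theorem; the paper does not claim to prove it in the stated generality (arbitrary split reductive $G$, arbitrary length $n$). What the paper actually proves are the special cases $G=\GL_2$ (or $\PGL_2$) and $n=2$: parts (1) and (3) are Theorem~C (and its $\PGL_2$ version Corollary~D), and part (2) is Theorem~E. So there is no ``paper's own proof'' of the general conjecture to compare your plan against, and your proposal should be read as a strategy toward an open problem rather than an alternative to an existing argument.

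That said, it is worth comparing your outline to what the paper does in the $\GL_2$, $n=2$ case. For parts (1) and (3) your plan is in the right spirit: the paper also proceeds via a reduction-theory vanishing statement (Proposition~\ref{cusp-vanishing-thm}: a cuspidal function vanishes on bundles whose reduction $\ov{V}\simeq L_0\oplus M_0$ has $\deg M_0-\deg L_0$ large) and an explicit compatibility between Hecke operators and the constant term (Lemma~\ref{hecke-qbun-formula-lem}). However, the paper does \emph{not} run an induction on $n$; it works directly with the length-$2$ structure via a bespoke Iwasawa-type decomposition (Proposition~\ref{Iwasawa-prop}) and explicit $\GL_2$ matrix manipulations. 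Your inductive scheme and the general Harder--Narasimhan input you describe are precisely the parts you flag as obstacles, and the paper offers no mechanism for them beyond $n=2$ and rank $2$. One small correction in your ``formal'' direction of (3): the Hecke translates $h*f$ live in $\SS_{\cusp}(\Bun_G^{L'}(C,K))$ for \emph{varying} $L'$, so ``they lie in a fixed finite-dimensional subspace'' is not literally what (1) gives; the paper instead extracts from the vanishing proposition a \emph{uniform} bound on the instability of $\ov{V}$ that is independent of the determinant (Lemma~\ref{Hecke-b-lem} and the proof of Proposition~\ref{hecke-bound-prop}).

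For part (2) the paper's method is genuinely different from your sketch. Rather than induction plus Riemann--Roch on $\ad(\ov{P})\otimes\mcL_i$, the paper uses the representation-theoretic orbit decomposition of $\SS(G(F)\backslash G(\A))$ by coadjoint $G(\ov{F})$-orbits (Section~\ref{aut-rep-sec}) and identifies the strongly cuspidal piece with sections over Hitchin fibers (Proposition~\ref{Higgs-prop} and \eqref{HF-decomposition-eq}). The leading term $2q^{6g-6}$ then comes from a Lang--Weil count of points on the (two-component, $(6g-6)$-dimensional) moduli of stable $L^{-1}$-twisted Higgs $\PGL_2$-bundles, not from a fiberwise Riemann--Roch computation. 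Your proposed route via $\chi(\ad(\ov{P})\otimes\mcL_i)$ would at best recover the exponent, but gives no handle on the constant $r$ or the $q^{-1/2}$ error; the Higgs-bundle interpretation is what makes both accessible.
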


Note that part (3) implies that any cuspidal functions has finite support. Conversely, part (3) follows from part (1) together with the statement that any cuspidal function has
finite support.

In the case when $C$ is a curve over a finite field and $G=\PGL_2$ or $G=\GL_2$, the finite dimensionality is well known, 
while the dimension estimate is computed by Schleich in \cite[Sec.\ 3.3]{Schleich}.

\subsection{The case of $G=\GL_2$: unramified commuting Hecke operators}

From now till the end of this section we assume that $G=\GL_2$ and write $\Bun(C,K)$ and $\Bun^L(C,K)$ instead of $\Bun_G(C,K)$ and $\Bun^L_{G}(C,K)$
In this subsection we assume that $K=G(\OO)$ and
define a commuting family of Hecke operators associated with certain divisors on $C$.
We always assume that $C$ is a special nilpotent extension of $\ov{C}$.

\begin{definition}\label{irr-et-div-def}
A {\it simple divisor} $c\sub C$ is an effective Cartier divisor $c\sub C$
such that $\ov{c}:=c\cap \ov{C}$ (the scheme-theoretic intersection in $C$)
is a reduced point in $\ov{C}$.
\end{definition}  

Every point $\ov{c}\in \ov{C}$ can be lifted to a simple divisor (non uniquely).

For a simple divisor $c\sub C$, we denote by $f_c$ an idele given by a local generator of the ideal of $c$ at the place $\ov{c}$ and trivial at all other places. 

\begin{definition}
We denote by $h_c\in \HH_{G,C}$ the characteristic measure of the double class 
\begin{equation}\label{Tc-double-coset}
G(\OO)\left(\begin{matrix} f_c^{-1} & 0 \\ 0 & 1\end{matrix}\right)G(\OO).
\end{equation}
We define the corresponding Hecke operator on $\Bun(C)$ by
$$T_c:\C(\Bun(C))\to \C(\Bun(C)): f\mapsto h_c*f.$$
\end{definition}

\bigskip

\noindent
{\bf Theorem B}. {\it Assume that $C$ is a special nilpotent extension of $\ov{C}$. Then for any pair of simple divisors $c$ and $c'$, one has
$h_ch_{c'}=h_{c'}h_c$ in $\HH_{G,C}$.
}

\bigskip

For a line bundle $L$ on $C$ we denote by $t_L^*$ the operator on $\C(\Bun(C))$ given by
$$t_L^*f(V)=f(V\ot L).$$
These operators come from central elements $h_L$ of $\HH_{G,C}$. Thus, the elements $(h_L)$ and $(h_c)$ generate a commutative subalgebra of $\HH_{G,C}$
In a forthcoming work we will generalize the construction of this commutative subalgebra to other reductive groups in the case when 
$C$ is a smooth curve over $O/\fm^n$, where $O$ is the ring of integers in
a local nonarchimedean field.

We also define a weaker notion of Hecke-boundedness, that uses only commuting operators $T_c$ corresponding to simple divisors,
as opposed to the entire Hecke algebra $\HH_{G,C}$.

\begin{definition}
A function $f\in \C(\Bun(C))$ is called {\it weakly Hecke-bounded} if there exists a finite set $S\sub \Bun(C)$, such that for every 
collection of simple divisors $c_1,\ldots,c_n$ on $C$ (not necessarily distinct),
the support of $T_{c_1}\ldots T_{c_n}f$ is contained in $S\ot \Pic(C)$.
\end{definition}




\begin{conjecture}
Fix a line bundle $L$ on $C$. 
Any weakly Hecke-bounded function on $\Bun^L(C)$ is cuspidal.
\end{conjecture}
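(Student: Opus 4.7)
I would argue by contrapositive: assume $f\in\C(\Bun^L(C))$ is not cuspidal, and show $f$ fails to be weakly Hecke-bounded. Since $G=\GL_2$ has a unique proper standard parabolic, namely the Borel $B$, non-cuspidality means $E_Bf\not\equiv 0$ in $\C(\QBun_T(C))$. Under the adelic identification of $\QBun_T(C)$ with pairs $(L_1,L_2)$ of line bundles on $C$ (with $L_1$ the sub and $L_2$ the quotient of a $B$-reduction), note that weak Hecke-boundedness already forces $\supp f$ to be finite on $\Bun^L(C)$: the condition $\supp f\subset S\otimes\Pic(C)$ intersected with a fixed determinant is finite up to $\Pic(C)[2]$-ambiguity. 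Hence $\supp E_Bf$ is finite, and I can pick $(L_1^0,L_2^0)\in\supp E_Bf$ maximizing $\delta:=\deg L_1-\deg L_2$.

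The core ingredient is an explicit formula for $E_B\circ T_c$. Unfolding the convolution $T_cf=h_c*f$, applying the Iwasawa decomposition $G(\A)=B(\A)G(\OO)$, and tracking which Hecke neighbors of a rank-$2$ bundle admit which Borel reductions, I would establish a formula
\[
E_B(T_cf)(L_1,L_2)=E_Bf(L_1,L_2(-c))+q_{\ov c}\cdot E_Bf(L_1(-c),L_2)+R_c(E_Bf)(L_1,L_2),
\]
where $q_{\ov c}=|\OO_{\ov c}/\fm_{\ov c}|$ and $R_c$ is a nonnegative-coefficient sum of shifts $E_Bf(L_1(-D_1),L_2(-D_2))$ by effective divisors $(D_1,D_2)$ supported at $\ov c$ with $\deg D_1<\deg c$ and $\deg D_2<\deg c$. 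In the smooth-curve case $R_c\equiv 0$ and this is classical; the new terms should come from strata of the double coset (1.1) parametrized by the filtration $\{U(\NN^i\A)\}$. The essential feature I need is that only the second summand shifts $L_1$ by exactly $-c$; every other term shifts $L_1$ by a divisor of degree strictly less than $\deg c$.

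Iterating $n$ times, $E_B(T_c^nf)$ becomes a nonnegative-integer combination of values $E_Bf(L_1(-D_1),L_2(-D_2))$, and the ``all second branch'' choice is the unique combination yielding $\deg D_1-\deg D_2=n\deg c$ (with coefficient $q_{\ov c}^n$ and $(D_1,D_2)=(nc,0)$). Evaluating at the pair $(L_1^0(nc),L_2^0)$: the second-branch iteration contributes $q_{\ov c}^n\cdot E_Bf(L_1^0,L_2^0)\neq 0$, while every other term evaluates $E_Bf$ at a pair with degree difference strictly greater than $\delta$, hence vanishes by maximality. Thus $E_B(T_c^nf)$ has a nonzero value at a pair of degree difference $\delta+n\deg c$, tending to $+\infty$. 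On the other hand, weak Hecke-boundedness by a finite set $S$ gives $\supp(T_c^nf)\subset(S\otimes\Pic(C))\cap\Bun^{L\otimes\OO(-nc)}(C)$, a set of cardinality at most $|S|\cdot|\Pic(C)[2]|$ independent of $n$. Any Borel reduction $(L_1,L_2)$ of a bundle $V=V_0\otimes M$ in this support has $\deg L_1-\deg L_2$ equal to the corresponding quantity for $V_0$ (line-bundle twists cancel), and an argument using the filtration of $L_1\subset V_0$ by powers of $\NN$ together with the Harder--Narasimhan bound on $\ov V_0$ shows this quantity is bounded in absolute value by a constant $M_S$ depending only on $S$. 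This uniform bound contradicts the unbounded growth above.

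\emph{Main obstacle.} The technical heart of the argument is the formula for $E_B(T_cf)$, and in particular the claim that in the nilpotent setting each correction term in $R_c$ shifts by effective divisors of degrees strictly less than $\deg c$ in each variable --- this is what prevents cancellation of the extremal term under iteration. My plan for this would be to stratify $G(\OO)\diag(f_c^{-1},1)G(\OO)$ using the filtration $\{U(\NN^i\A)\}$ and induct on the length of the nilpotent extension, using Proposition~A(2) to reduce each step to the reduced-curve case plus strata from each nilpotent layer, and verify by explicit local computation on $\OO_p$ that each nilpotent stratum contributes a shift by a proper subdivisor of $c$ (corresponding to a partial elementary divisor that is regular only modulo some power of $\NN$). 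A secondary subtlety is that $T_c$ depends on the choice of lift of $\ov c$ to $c\subset C$, but the commutativity of Theorem~B, together with the fact that $q_{\ov c}$ depends only on $\ov c$, ensures the extremal iterated term remains combinatorially unique and nonzero.
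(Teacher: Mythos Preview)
Your contrapositive strategy (find an extremal point in the support of $E_Bf$, show it propagates indefinitely under iterated $T_c$, contradict boundedness) is exactly the paper's approach. The genuine gap is your description of $\QBun_T(C)$ and, consequently, of the compatibility formula $E_B\circ T_c$.

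You write that $\QBun_T(C)$ is identified with pairs $(L_1,L_2)$ of line bundles on $C$. This is false for a nilpotent extension: the Iwasawa decomposition $G(\A)=B(\A)G(\OO)$ fails. What one has instead (for length $2$, the only case where the conjecture is proved) is a stratification
\[
G(\A)=\bigsqcup_{D\in \Div(\ov C)_{\ge 0}} B(\A)\,g_D\,G(\OO),\qquad g_D=1+\eps\begin{pmatrix}0&0\\ f_D^{-1}&0\end{pmatrix},
\]
so that $\QBun_T(C)=\bigsqcup_D \QBun_T(C,D)$ where $\QBun_T(C,D)=T(F)\backslash T(\A)/T(\OO)[D]$ for a subgroup $T(\OO)[D]\supsetneq T(\OO)$. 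The constant term is a family of operators $E_D$, and the correct compatibility formula (the paper's Lemma on $E_DT_c$) moves the divisor label as well: for $\ov c$ of multiplicity $n>0$ in $D$ one gets contributions to $E_{D-\ov c}$ and $E_{D+\ov c}$, with the diagonal part modified accordingly; for $n=0$ there are contributions to $E_D$ and $E_{D+\ov c}$. Your proposed formula, with $(L_1,L_2)\mapsto (L_1,L_2(-c))$, $(L_1(-c),L_2)$ plus ``proper subdivisor'' corrections, does not capture this $D$-dependence and cannot be made precise as stated.

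Because of this, your extremality argument breaks: you maximize $\deg L_1-\deg L_2$, but the actual formula mixes terms from different $D$-strata, and a single-parameter extremum is not preserved. The paper's fix is to project $\QBun_T(C)\to \Pic(\ov C)^2$ (not $\Pic(C)^2$) sending $t\in\QBun_T(C,D)$ to $(L_0,M_0)$, and to choose $t$ in the support of $E(f)$ with $(\deg M_0,D)$ minimal in the lexicographic order. Taking $\ov c\notin\operatorname{supp} D$, the $n=0$ formula shows $E(T_c^nf)$ has support at a point of $\QBun_T(C,D)$ with pair $(L_0,M_0(-n\ov c))$, and then a separate support lemma (if $t$ is in the support of $E_Df$ then some $\ov V$ in $\operatorname{supp} f$ surjects onto $M_0$) converts this into the inequality $\phi_-(\ov V)\le \deg M_0 - n\deg(\ov c)$ that contradicts weak Hecke-boundedness. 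Your plan to ``stratify by $U(\NN^i\A)$ and induct on length'' is not how the length-$2$ case is handled, and for general length the analog of the Iwasawa stratification and the resulting formula are not established; this is why the statement is a conjecture.
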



Note that together with Conjecture \ref{main-cusp-conj}(3) this implies 
that for function on $\Bun^L(C)$ being ``weakly Hecke-bounded" is equivalent to ``Hecke-bounded" and is equivalent to cuspidal.

\subsection{The case of $G=\GL_2$ and an extension of length $2$}

This paper provides the proof of most of the above conjectures in the case when $G=\GL_2$ and the special nilpotent extension $C$ has length $2$.
Until the end of this section we restrict our analysis to this case.

We say that functions from the subspace $\SS^1_{\cusp}(\Bun^L(C))=\SS^1_{\cusp}(\Bun(C))\cap \C(\Bun^L(C))$
are {\it strongly cuspidal}.

\bigskip

\noindent
{\bf Theorem C}. {\it Assume that $C$ is a special nilpotent extension of length $2$ and the genus $g$ of $\ov{C}$ is $\ge 2$.

\noindent
(1) For every line bundle $L$ on $C$ and every open subgroup $K\sub G(\OO)$, the space $\SS_{\cusp}(\Bun^L(C,K))$ is finite-dimensional. 

\noindent
(2) A function $f$ on $\Bun^{L}(C)$ is cuspidal if and only if it is weakly Hecke-bounded. 
}

\bigskip

The main technical result used in the proof of Theorem C is Proposition \ref{cusp-vanishing-thm}
showing that every cuspidal function on $\Bun_G(C)$ (resp., $\Bun_G(C,K)$) vanishes on vector bundles $V$ (resp., elements of $\Bun_G(C,K)$ projecting to $V$)
such that $\ov{V}\simeq L_0\oplus M_0$, where $L_0$ and $M_0$
are line bundles on $\ov{C}$ with $\deg(M_0)-\deg(L_0)\ge 6g-1$ (resp., $\deg(M_0)-\deg(L_0)\le N(K)$ for some constant $N(K)$).


\subsection{The space of cuspidal functions for $\PGL_2$}

To simplify notations we replace in this subsection the group $G=\GL_2$ by $\PGL_2$. 
By Proposition A, 
the Hecke algebra $\HH_K:=\HH_{\PGL_2,C,K}$ acts on the space of cuspidal function 
$$V(C,K):=\SS_{\cusp}(\Bun_{\PGL_2}(C,K)).$$
For each simple divisor $c\sub C$ we denote by $T_c$ the Hecke operator associated with the double coset of the image of 
\eqref{Tc-double-coset}.

\medskip

\noindent
{\bf Corollary D}. {\it 
\begin{enumerate}
\item
For each open subgroup $K\sub \PGL_2(\OO)$, the space $V(C,K)$ is finite-dimensional. 
\item
A function $f\in \SS(\Bun_{\PGL_2}(C))$ is cuspidal if and only if
 the space spanned by $T_{c_1}\ldots T_{c_n}f$ (resp., the space $\HH_{\PGL_2(\OO)}\cdot f$) is finite-dimensional. 
 \end{enumerate}
 }
 
 \medskip
 
 We also prove a similar characterization of cuspidal function in $\SS(\Bun_{\PGL_2}(C,K))$ for an arbitrary open subgroup $K\sub \PGL_2(\OO)$ (see Corollary \ref{hecke-fin-cusp-cor}).
 

Recall (see Definition \eqref{cusp-def}(5)) that we also have the subspace of strongly cuspidal functions
$$V^1(C,K):=\SS^1_{\cusp}(\Bun_{\PGL_2}(\C,K))\sub V(C,K),$$
preserved by $\HH_K$. Let $\ov{K}\sub \PGL_2(\A)$ be the image of $K$.
Then we can view $V(\ov{C},\ov{K})$ as a subspace in  $V(C,K)$.

Using tools from representation theory of $G(\A)$, we get a direct sum decomposition
$$V(C,K)=V^1(C,K)\oplus V(C,K)_n \oplus V(\ov{C},\ov{K}),$$
where $V(C,K)_n$ is a certain piece corresponding to the nilpotent adjoint orbit in $\fg(\ov{F})$ (see Corollary \ref{main-dec-cor}).

We also relate $V^1(C):=V^1(C,G(\OO))$ to the moduli space of Higgs bundles over $\ov{C}$ (see Proposition \ref{Higgs-prop}) and use this to study the dimension of
$V^1(C)$ and of $V(C):=V(C,G(\OO))$.

\medskip

\noindent
{\bf Theorem E}. {\it
Assume that $C$ is a special nilpotent extension of length $2$ with $L=\NN/\NN^2$ of degree $0$, the genus $g$ of $\ov{C}$ is $\ge 2$, and the characteristic of $k$ is $\neq 2$.
Then 
$$|\frac{\dim V^1(C)}{q^{6g-6}}-2|\le a(g)q^{-1/2}, \ \ \dim(V(C)/V^1(C))\le b(g)q^{3g-3},$$
for some constants $a(g),b(g)$ depending only on the genus $g$.
}

\medskip

For simplicity of notation we will formulate our last result for $C=\ov{C}\times \Spec(k[\eps]/(\eps)^2)$, where $\ov{C}$ is a curve over $k$
(a generalization to an arbitrary special nilpotent extensions of length $2$  is given by Theorem \ref{smooth-hitchin-thm}).
Set $A:=H^0(\ov{C},\om_{\ov{C}}^2)$, and let $A'\sub A$ be the complement to the set of elements $\eta^2$, where $\eta\in H^0(\ov{C},\om_{\ov{C}})$.
There is a decomposition
$$V^1(C)\simeq \bigoplus_{\a\in A'}\SS(HF_\a(k)),$$
whose summands are preserved by the big Hecke algebra $\HH_C$ (see \eqref{HF-decomposition-eq}).
Here $HF_\a$ are Hitchin fibers in the moduli stack of Higgs $\PGL_2$-bundles.
In the case when $\a\in A'$ has only simple zeros, $HF_\a(k)$ can be identified with the set of $k$-points of an abelian variety,
hence, it makes sense to consider characters of $HF_\a(k)$.

\medskip

\noindent
{\bf Theorem F.} {\it
Assume that $C=\ov{C}\times \Spec(k[\eps]/(\eps)^2)$, and the characteristic of $k$ is $\neq 2$. Then for every $\a\in A'$ having only simple zeros on $\ov{C}$, the characters
of the group $HF_\a(k)$ form a basis in the space $\SS(HF_\a(k))$ consisting of $\HH_{\PGL_2,C}$-eigenvectors.
}

\medskip

Thus, the big Hecke algebra $\HH_{\PGL_2,C}$ acts on the summands in $V^1(C)$ corresponding to smooth spectral curves through a commutative quotient. However, we expect
to see noncommuting operators from this algebra when considering the summands corresponding to singular spectral curves.
One can ask whether Hecke eigenfunctions corresponding to singular spectral curves come from some perverse sheaves on Hitchin fibers.

The paper is organized as follows. 
In Section \ref{gen-res-sec} we prove some simple general results, which hold for nilpotent extensions $C$ of arbitrary length.
In Sec.\ \ref{const-term-gen-sec}, we prove Proposition A (for any $G$), and in Sec.\ \ref{hecke-GL2-sec} and \ref{hecke-commute-sec} we study Hecke operators for
$\GL_2$, in particular, proving Theorem B. 

Starting from Section \ref{aut-rep-sec} we consider only special nilpotent extensions of length $2$. 
In Section \ref{aut-rep-sec}, we study the representation of the group $G(\A)$ on the space $\SS(G(F)\backslash G(\A))$ of
 smooth functions with compact support on $G(F)\backslash G(\A)$ for an arbitrary split group $G$. 
In Sec.\ \ref{orbit-dec-sec}, we decompose this space $\SS(G(F)\backslash G(\A))$ into the direct sum
of invariant subspaces parametrized by coadjoint orbits of the group $G(\ov{F})$. We describe explicitly the pieces corresponding to elliptic orbits (i.e., regular semisimple with anisotropic stabilizer) in terms of certain induced representations (see Sec. \ref{reg-ss-sec} and \ref{reg-ell-sec}), and study admissible subrepresentations in the pieces corresponding to regular
semisimple orbit (see Sec.\ \ref{adm-rep-sec}).
In Sec.\ \ref{Hitchin-sec}, we establish the relation of these pieces with the moduli stack
of Higgs bundles on $\ov{C}$. In Sec.\ \ref{finitary-sec} we specialize to the case $G=\PGL_2$ 
and describe completely the subspace of {\it finitary functions} in $\SS(G(F)\backslash G(\A))$, i.e., those contained in an admissible $G(\A)$-subrepresentation.
\footnote{We choose to work with $\PGL_2$ rather than $\GL_2$ in representation theory part of the paper, since the results are somewhat easier to formulate and since
the corresponding local statements are readily available in \cite{K-YD}.}

In Section \ref{const-GL2-sec} we study the constant term operator in the case of $\GL_2$.
In Sec.\ \ref{Iwasawa-sec}, we introduce an important technical tool, an analog of the Iwasawa decomposition for nilpotent extensions of length $2$.
In Sec.\ \ref{GL2-const-term-Hecke-sec}, we establish a useful compatiibility of the constant term with the Hecke operators. In Sec.\ \ref{cusp-GL2-sec} we combine
this result with an analog of the reduction theory to prove Theorem C. 

In Section \ref{cusp-PGL2-sec} we combine the previous results to study cuspidal functions for $\PGL_2$.
In Sec.\ \ref{CorC-sec}, we prove Corollary D on equivalence of cuspidality with Hecke finiteness for $\PGL_2$. Then we combine this with the orbit decomposition
from Sec.\ \ref{orbit-dec-sec} to prove that the subspaces of cuspidal and finitary functions in $\SS(\PGL_2(F)\backslash \PGL_2(\A))$ coincide (see Theorem \ref{main-dec-thm}). 
Finally, in Sec.\ \ref{ThmE-sec} we prove Theorems E and F.
In Appendix \ref{group-app} we describe some useful results on groupoids given by double cosets.
In Appendix \ref{geom-const-term-sec} we give a geometric interpretation of the constant
term operator for $G=\GL_2$. 


\section{Hecke operators for special nilpotent extensions of curves over finite fields}\label{gen-res-sec}

Let $C$ be a nilpotent extension of a curve $\ov{C}$.
In Sec.\ \ref{const-term-gen-sec} we prove Proposition A (which works for any reductive group $G$ and any $C$). Then in Sec.\ \ref{hecke-GL2-sec},
we restrict to the case when $G=\GL_2$ and compare geometric and an adelic definitions of the Hecke operators associated with effective Cartier divisors. 
In Sec.\ \ref{hecke-commute-sec} we assume that $C$ is a special nilpotent extension and prove Theorem B.

\subsection{Compatibility of the constant term with the Hecke algebra}\label{const-term-gen-sec}

For each parabolic subgroup $P=LU$ and a function $f$ on $\Bun_G(C,K)$, we define the constant term $E_P(f)$ as a function on $\QBun_L(C,K)$ (see Def.\ \ref{QBun-L-def}) given by
$$E_Pf(g)=\int_{u\in U(F)\backslash U(\A)} f(ug)du,$$
where $g\in G(\A)$ and $du$ is the Haar measure on $U(\A)$ normalized so that $\vol(U(\OO))=1$ 
(we will check that the function on $\QBun_L(C,K)$ is well defined in Lemma \ref{EPf-lem} below).
Consider the compact open subgroup $U_{g,K}:=U(\A)\cap gKg^{-1}$. Since the function $u\mapsto f(ug)$ is right $U_{g,K}$-invariant,
and the set of double cosets $U(F)\backslash U(\A)/U_{g,K}$ is finite, the integral defining $E_Pf$ can be rewritten as a finite sum.

\begin{lemma}\label{EPf-lem} 
The map $g\mapsto E_Pf(g)$ descends to a well defined function on $\QBun_L(C,K)$.
\end{lemma}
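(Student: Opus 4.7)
The plan is to verify the three invariance properties that $(L(F)U(\A))\backslash G(\A)/K$ imposes on the function $g\mapsto E_Pf(g)$. First, right $K$-invariance is immediate: for $k\in K$, the right $K$-invariance of $f$ gives $E_Pf(gk)=\int_{U(F)\backslash U(\A)} f(ugk)\,du=\int f(ug)\,du=E_Pf(g)$. Second, left $U(\A)$-invariance: for $u_0\in U(\A)$, substitute $u\mapsto uu_0^{-1}$; this is a well-defined, measure-preserving bijection of $U(F)\backslash U(\A)$ since Haar measure on $U(\A)$ is translation invariant and right multiplication by $u_0^{-1}$ preserves the discrete subgroup $U(F)$.

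The substantive point is left $L(F)$-invariance. For $\ell\in L(F)$, the identity $u\ell=\ell(\ell^{-1}u\ell)$ combined with left $G(F)$-invariance of $f$ gives
\[
E_Pf(\ell g)=\int_{U(F)\backslash U(\A)} f\bigl((\ell^{-1}u\ell)g\bigr)\,du.
\]
Since $L$ normalizes $U$, the conjugation $u\mapsto\ell^{-1}u\ell$ is an automorphism of $U(\A)$ preserving $U(F)$, hence induces a self-bijection of $U(F)\backslash U(\A)$. The plan is then to substitute $u'=\ell^{-1}u\ell$ and reduce to $E_Pf(g)$; this works provided the substitution preserves the chosen Haar measure.

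To establish this preservation of measure, the plan is to filter $U$ by a series of normal subgroups with abelian quotients (the lower central series), whose graded pieces are isomorphic to $\mathbb{G}_a^{n_i}$ as $L$-modules. This reduces the problem to an adelic product formula: for each $a\in F^\times$ arising as a determinant entry of $\Ad(\ell)$ on a graded piece, one needs $\prod_p|a|_p=1$, where $|a|_p$ is the modulus by which multiplication by $a$ rescales the Haar measure on $F_p$ normalized by $\vol(\OO_p)=1$. This local modulus is a fixed power of the classical modulus $|\ov a|_p$ of the reduction $\ov a\in\ov F_p^\times$, and the desired identity follows from the classical product formula on $\ov C$.

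The main obstacle is this last step: carefully computing the local moduli $|a|_p$ on the Artinian rings $F_p$ (which have zero divisors) and relating them uniformly across the places $p$ to the classical moduli on $\ov F_p$, so as to reduce cleanly to the product formula for the function field $\ov F$. Once this is settled, the three invariances combine to show that $E_Pf$ descends to a well-defined function on $\QBun_L(C,K)$.
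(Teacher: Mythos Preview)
Your proof is correct and follows the same three-step outline as the paper's: right $K$-invariance, left $U(\A)$-invariance, then left $L(F)$-invariance via the substitution $u'=\Ad(\ell^{-1})(u)$. The paper simply asserts that $\Ad(\ell)$ preserves the Haar measure on $U(\A)$ for $\ell\in L(F)$ without further comment; you supply the justification via the lower-central-series filtration and the product formula on $\ov{C}$, which is correct (the local modulus on $F_p$ is indeed a fixed power of the classical one on $\ov{F}_p$, the exponent being the $\ov{F}_p$-dimension of $F_p$, which is the same at every place).
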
 

\begin{proof}
It is clear that $g\mapsto Ef(g)$ is right $K$-invariant and left $U(\A)$-invariant. It remains to
check left $L(F)$-invariance. For $l\in L(F)$, we have
$$E_Pf(lg)=\int_{u\in U(F)\backslash U(\A)} f(ulg)du=\int_{u\in U(F)\backslash U(\A)} f(\Ad(l^{-1})(u)g)du=
\int_{u\in U(F)\backslash U(\A)} f(ug)du,$$
since for $l\in L(F)$, the action of $Ad(l)$ on $U(\A)$ preserves the Haar measure.
\end{proof}

\noindent
{\it Proof of Proposition A}.
(1) For each $g_0\in G(\A)$, let $h_{g_0}$ denote the characteristic measure of the double coset $Kg_0K$
so the total integral of $h_{g_0}$ is $1$. We have to show that each operator $T_{g_0}:f\mapsto h_{g_0}*f$ preserves
the subspace of cuspidal functions. It is enough to construct an operator $T_{g_0}^P$ on $\C(\QBun_L(C,K))$, such that
$$E_P\circ T_{g_0}=T_{g_0}^P\circ E_P.$$
We have
$$T_{g_0}f(g)=\int_{h\in K}f(ghg_0) dh,$$
where the Haar measure is normalized so that $\vol(K)=1$.
Let 
$$H_{g_0}:=K\cap g_0Kg^{-1}_0 \sub K.$$
Since $hh_0g_0K=hg_0K$ for $h_0\in H_{g_0}$, 
we can replace the integration over $h\in K$ by a finite sum over $K/H_{g_0}$:
\begin{equation}\label{hecke-gen-formula-eq}
T_{g_0}f(g)=\vol(H_{g_0})\cdot \sum_{h\in K/H_{g_0}}f(ghg_0).
\end{equation}

Let $P$ be a parabolic subgroup, $U\sub P$ its unipotent radical. By definition,
$$E_PT_{g_0}f(g)=\vol(H_{g_0})\cdot \sum_{h\in K/H_{g_0}}\int_{u\in U(F)\backslash U(\A)} f(ughg_0) du dh.$$
This formula can be rewritten as
\begin{equation}\label{EPT-formula}
E_PT_{g_0}f(g)=\vol(H_{g_0})\cdot\sum_{h\in K/H_{g_0}}E_Pf(ghg_0),
\end{equation}
so the right-hand side has the required form $T_{g_0}^PE_P(f)$ where 
$$T_{g_0}^P\phi(g)=\vol(H_{g_0})\cdot\sum_{h\in K/H_{g_0}}\phi(ghg_0).$$

The proof that $T_{g_0}$ preserves the kernel of $E^i_P$ is analogous for each $i\ge 0$.

\noindent 
(2) It is enough to consider the case when the ideal sheaf $I_{C'}\sub \OO_C$ defining $C'\sub C$ satisfies $I_{C'}^2=0$.  Let
$$G(\A_C)\to G(\A_{C'}): g\mapsto \ov{g}$$ 
denote the natural reduction map. Suppose we are given a function $f_0$ on $\Bun_G(C')=G(F_{C'})\backslash G(\A_{C'})/G(\OO_{C'})$.
Consider the corresponding function $f$ on $\Bun_G(C)$ given by $f(g)=f_0(\ov{g})$. Then we have
$$E_Pf(g)=\int_{u\in U(F_C)\backslash U(\A_C)} f_0(\ov{u}\ov{g})du=\int_{u_0\in U(F_{C'})\backslash U(\A_{C'})}\vol(\pi^{1}(u_0)) f_0(u_0\ov{g}) du_0.$$
where $\pi:U(F_C)\backslash U(\A_C)\to U(F_{C'})\backslash U(\A_{C'})$ is the reduction map.
It remains to show that $\vol(\pi^{1}(u_0))$ does not depend on $u_0$. 
Since $I^2=0$, we can identify the kernel of the map $U(\A_C)\to U(\A_{C'})$ with $1+\fu(I\ot \A_{C'})$ (where the tensor product is over $A/I$). This leads to the identification
$$\pi^{-1}\pi(u)=(1+\fu(I\ot \A_{C'})/\fu(I\ot F_{C'}))\cdot u\sub U(F_C)\backslash U(\A_C).$$
Thus,
$$\vol(\pi^{-1}\pi(u))=\vol(1+\fu(I\ot \A_{C'})/\fu(I\ot F_{C'})),$$
which does not depend on $\pi(u)$. Thus, up to a constant factor, $E_Pf(g)$ is given by the constant term operator applied to $f_0$.
\qed


\subsection{Hecke operators for $\GL_2$: adelic and modular definitions}\label{hecke-GL2-sec}

In this subsection we assume that $G=\GL_2$ and write $\Bun(C)=\Bun_{\GL_2}(C)$.

\begin{definition}\label{hecke-def}
(i) For any effective Cartier divisor $c\sub C$ and a function $f$ on $\Bun(C)$, we set
$$T_cf(V)=\sum_{V'}\frac{i(V,V';c)}{a(V')} f(V'),$$
where $i(V,V';c)$ is the number of embeddings $V\to V'$ such that $V'/V\simeq \OO_c$.

\noindent
(ii) We denote by $\Dual:\C(\Bun(C))\to \C(\Bun(C))$ the operator induced by duality:
$$\Dual f(V)=f(V^\vee),$$
also consider another Hecke operator $T'_c$ given by
$$T'_c=\Dual\circ T_c\circ \Dual.$$
Equivalently, using the equality $i(V,W,c)=i(W^\vee,V^\vee,c)$, we get
\begin{equation}\label{T'c-formula}
T'_cf(V')=\sum_F \frac{i(V,V',c)}{a(V)} f(V)=\sum_{V\sub V': V'/V\simeq \OO_c} f(V),
\end{equation}
where in the last formula we sum of subsheaves $V$ of $V'$ which are vector bundles.
\end{definition}

We have
$$T_c(\C(\Bun^L(C)))\sub \C(\Bun^{L(-c)}(C)), \ \ T'_c:\C(\Bun^{L(-c)}(C))\to \C(\Bun^{L}(C)).$$
The operators $(T_c)$ (resp., $T'_c$) in general do not commute (see Remark \ref{noncomm-rem}).
However, in the next section we will show that in the case when $C$ is a special nilpotent extension of $\ov{C}$, the operators associated with {\it simple} divisors do commute.

Now we connect the above definition to the adelic Hecke operators $T_g$ given by \eqref{hecke-gen-formula-eq}. 
Let $c\sub C$ be an effective Cartier divisor. 
We denote by $f_c=(f_{p,c})$ an idele such that $f_{p,c}\in \OO_p$ is a local equation of $c$ for each point $p$.

\begin{lemma}\label{hecke-mod-adelic-lem}
Set $A(c)=H^0(\OO_C/I_c)$, where $I_c$ is the ideal of $c\sub C$.
Consider the element 
\begin{equation}\label{gc-eq}
g_c:=\left(\begin{matrix} f_c^{-1} & 0 \\ 0 & 1\end{matrix}\right)\in T(\A_C).
\end{equation}
Then one has 
$$T_{g_c}=\frac{1}{|\P^1(A(c))|}\cdot T_c.$$
\end{lemma}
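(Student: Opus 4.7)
The plan is to apply formula \eqref{hecke-gen-formula-eq} with $g_0=g_c$ and $K=G(\OO)$, giving
$$T_{g_c}f(g)=\vol(H_{g_c})\cdot\sum_{h\in K/H_{g_c}}f(ghg_c),\qquad H_{g_c}:=K\cap g_cKg_c^{-1},$$
and then to match the right-hand side with $|\P^1(A(c))|^{-1}T_cf(V(g))$. This splits into two tasks: (a) computing the normalization $\vol(H_{g_c})$, and (b) identifying the finite sum with the modular definition of $T_c$.

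For (a), I would work place by place. Outside $c$ the local component of $g_c$ is trivial, so $H_{g_c,p}=G(\OO_p)$. At $p\in c$, direct conjugation by $\diag(f_{p,c}^{-1},1)$ shows that $H_{g_c,p}$ is precisely the preimage of the lower Borel under the reduction map $G(\OO_p)\twoheadrightarrow G(\OO_p/(f_{p,c}))$, which is surjective since $\GL_2$ is smooth over $\Z$. Hence $[G(\OO_p):H_{g_c,p}]=|\P^1(\OO_p/(f_{p,c}))|$. Since $A(c)=\prod_{p\in c}\OO_p/(f_{p,c})$ and $\P^1$ of a finite product of local rings is the product of $\P^1$'s, taking the product over $p\in c$ yields $[K:H_{g_c}]=|\P^1(A(c))|$, so $\vol(H_{g_c})=|\P^1(A(c))|^{-1}$.

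For (b), I would first check that the map $h\mapsto ghg_c\OO^2$ defines a bijection between $K/H_{g_c}$ and the set of $\OO$-sublattices $V'\supset V=V(g)$ in $F^2$ with $V'/V\simeq\OO_c$; locally at $p\in c$ this is the Cartan/elementary-divisor statement that such lattices form a single $G(\OO_p)$-orbit with stabilizer $H_{g_c,p}$. The sum becomes $\sum_{V'\supset V,\,V'/V\simeq\OO_c}f([V'])$. A standard groupoid count then converts this to a sum over isomorphism classes: given a bundle $W$ and an embedding $j\colon V\hookrightarrow W$ with quotient $\OO_c$, the induced isomorphism of generic fibers identifies $W$ with a sublattice $V'\supset V$ in $F^2$ satisfying $V'/V\simeq\OO_c$, and two embeddings yield the same $V'$ iff they differ by post-composition with an element of $\Aut(W)$, the latter acting freely on the set of embeddings (since an automorphism fixing a sublattice is the identity on the generic fiber). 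Hence the number of sublattices $V'\simeq W$ equals $i(V,W;c)/a(W)$, and therefore $\sum_{V'\supset V,V'/V\simeq\OO_c}f([V'])=T_cf(V(g))$. Combining with (a) gives the lemma.

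The only step requiring care is the groupoid bookkeeping in (b); it is the standard argument from the curve-over-$k$ case, and the nilpotent extension contributes no new difficulty, since the local structure of $\OO_c$ at each point $p$ is handled uniformly by the Cartan decomposition for the specific double coset \eqref{Tc-double-coset}, whose elementary divisors are $(f_{p,c}^{-1},1)$.
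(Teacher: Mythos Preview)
Your proof is correct. Part (a) matches the paper's computation (minor slip: $H_{g_c}$ consists of matrices with $a_{21}\in I_c$, hence is the preimage of the \emph{upper} Borel, not the lower one; the index is of course unaffected).

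For part (b) your route differs from the paper's. You directly parametrize superlattices $V'\supset V$ with $V'/V\simeq\OO_c$ by $K/H_{g_c}$ and then perform a groupoid count to rewrite the lattice sum as $\sum_W i(V,W;c)\,a(W)^{-1}f(W)=T_cf(V)$. The paper instead passes to the dual: it observes that surjections $V^\vee\to\OO_c$ modulo $\Aut(\OO_c)$ are in bijection with $\P^1(A(c))$, identifies their kernels with the subsheaves $V_{[a_1:a_2]}^\vee$, and then invokes $T_c=\Dual\circ T'_c\circ\Dual$ together with formula \eqref{T'c-formula}. Your approach is more direct and avoids the duality detour; the paper's makes the $\P^1(A(c))$-parametrization transparent, since quotients of a rank-$2$ free module to $\OO_c$ are visibly classified by $\P^1$. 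One caution: your appeal to a ``Cartan/elementary-divisor statement'' over the nilpotent local ring $\OO_p$ is phrased loosely, since a general Cartan decomposition need not hold there; but the specific claim you need---that all superlattices of $\OO_p^2$ with quotient $\OO_p/(f_{p,c})$ form a single $G(\OO_p)$-orbit---does hold, because $\OO_p/(f_{p,c})$ is local and a cyclic generator of the quotient lifts to a unimodular vector in $\OO_p^2$.
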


\begin{proof}
The subgroup $H_{g_c}\sub G(\OO)$ consists of matrices $(a_{ij})\in G(\OO)$ such that $a_{21}\in I_c:=f_c\cdot \OO$. Thus, the quotient $G(\OO)/H_{g_c}$ can be identified
with $G(\OO/I_c)/B(\OO/I_c)$, i.e., with the projective line $\P^1(A(c))$. Hence, $\vol(H_{g_c})=1/|\P^1(A(c))|$, and
the general formula \eqref{hecke-gen-formula-eq} can be rewritten as
$$T_{g_c}f(g)=\frac{1}{|\P^1(A(c))|}\cdot \sum_{[a_1:a_2]\in \P^1(A(c))} f(gh_{[a_1:a_2]}g_c),$$
 where $h_{[a_1:a_2]}$ is any element of $G(\OO)$ such that 
 $$h_{[a_1:a_2]} \left(\begin{matrix} 1 \\ 0 \end{matrix}\right)=\left(\begin{matrix} a_1 \\ a_2 \end{matrix}\right).$$
Note that since $g_c^{-1}$ has entries in $\OO_C$, for the bundles $V(g)$ and $V(gh_{[a_1:a_2]}g_c)$ we have an
inclusion of sheaves
$$V:=V(g)\hra V(gh_{[a_1:a_2]}g_c)=:V_{[a_1:a_2]}$$
which induces at every point the inclusion
$$V_p=g\OO_p^2\hra gh_{[a_1:a_2]}g_c\OO_p^2=(V_{[a_1:a_2]})_p.$$

On the other hand, surjections $V^\vee=V((g^t)^{-1})\to \OO_c$ up to automorphisms of $\OO_c$, correspond exactly to the points $[a_1:a_2]$ of the projective line $\P^1(\A(c))$,
and their kernels are easily seen to be given by subsheaves $V^\vee_{[a_1:a_2]}$. Taking into account the relation
$T_c=\Dual\circ T'_c\circ\Dual$ and the formula \eqref{T'c-formula} for $T'_c$ we get
$$T_c(g)=\sum_{[a_1:a_2]\in \P^1(A(c))} f(gh_{[a_1:a_2]}g_c).$$
\end{proof}

\begin{lemma}\label{Hecke-tensor-lem}
(i) For every line bundle $L$ on $C$ the operator $t_L^*$ on $\C(\Bun(C))$ given by $t_L^*f(V)=f(V\ot L)$ commutes with all the operators $T_c$ and $T'_c$
associated with effective Cartier divisors.

\noindent
(ii) For any effective Cartier divisor $c\sub C$, one has
$$T'_c=t_{\ot \OO(-c)}\circ T_c,$$
i.e.,
$$T'_cf(V)=(T_cf)(V(-c)).$$
\end{lemma}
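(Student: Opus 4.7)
For part (i), I would argue directly from Definition \ref{hecke-def}. The key observation is that for any line bundle $L$ on $C$ and any effective Cartier divisor $c\subset C$, there is an isomorphism $\OO_c\otimes L\simeq \OO_c$ of $\OO_C$-modules: since $\supp(c)$ is a finite set of closed points on the one-dimensional scheme $C$, the line bundle $L$ is trivializable on a Zariski neighborhood of $\supp(c)$, so the restriction $L|_c$ is a trivial line bundle on the Artinian scheme $c$. Tensoring a short exact sequence $0\to V\to V'\to \OO_c\to 0$ with $L$ therefore yields an embedding $V\otimes L\hookrightarrow V'\otimes L$ with quotient again isomorphic to $\OO_c$, and this operation is reversible (tensor by $L^{-1}$). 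The counts $i(V,V';c)$ and $a(V')$ entering the formula for $T_c$ are then preserved, and the identity $t_L^*\circ T_c=T_c\circ t_L^*$ follows by the substitution $V''=V'\otimes L$ in the summation. The commutation with $T'_c$ is an immediate consequence of the definition $T'_c=\Dual\circ T_c\circ \Dual$ combined with the elementary relation $\Dual\circ t_L^*=t_{L^{-1}}^*\circ \Dual$, itself a reformulation of $(V\otimes L)^\vee\simeq V^\vee\otimes L^{-1}$.

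For part (ii), the cleanest route is through the adelic interpretation provided by Lemma \ref{hecke-mod-adelic-lem}. Writing $V=V(g)$ for an adelic representative, the twist $V(-c)=V\otimes\OO(-c)$ is represented by $g\cdot f_cI$, where $f_c$ is the idele of $\OO(-c)$ and $I$ is the identity matrix. Applying Lemma \ref{hecke-mod-adelic-lem} gives
$$(T_cf)(V(-c))=\sum_{[a_1:a_2]\in\P^1(A(c))}f\bigl(V(g\cdot f_cI\cdot h_{[a_1:a_2]}g_c)\bigr).$$
Since $f_cI$ is central it commutes past $h_{[a_1:a_2]}$, and the key numerical identity $f_cI\cdot g_c=\diag(1,f_c)$ rewrites each summand as $f\bigl(V(gh_{[a_1:a_2]}\diag(1,f_c))\bigr)$. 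One then checks that $W_{[a_1:a_2]}:=V(gh_{[a_1:a_2]}\diag(1,f_c))$ sits inside $V$ as a subsheaf with quotient isomorphic to $\OO_c$, and that $[a_1:a_2]\mapsto W_{[a_1:a_2]}$ is a bijection between $\P^1(A(c))$ and the set $\{W\subset V:V/W\simeq\OO_c\}$. Both statements reduce to the classical parametrization of rank-one locally free $\OO_c$-sub-bundles of $V|_c\simeq\OO_c^2$ by $\P^1(A(c))$. Comparing with the unweighted expression \eqref{T'c-formula} for $T'_c$ completes the proof.

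Neither part presents a genuine obstacle. The single substantive ingredient is the local triviality of line bundles on $C$ near $\supp(c)$, which is used implicitly in (i) and explicitly in the identification $V|_c\simeq\OO_c^2$ in (ii).
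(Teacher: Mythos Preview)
Your proof is correct. Part (i) is exactly the argument the paper has in mind when it writes ``(i) is straightforward''; you simply spell out the bijection $\{V\hookrightarrow V'\}\leftrightarrow\{V\ot L\hookrightarrow V'\ot L\}$ that preserves both $i(-,-;c)$ and $a(-)$.

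Part (ii), however, proceeds by a genuinely different route from the paper. The paper gives a direct sheaf-theoretic bijection: to an embedding $i:V\hookrightarrow V'$ with $\coker(i)\simeq\OO_c$ it associates the unique factorization $j:V'(-c)\hookrightarrow V$ of the canonical inclusion $V'(-c)\hookrightarrow V'$ through $i$, and then observes that $\coker(j)\simeq\ker(V'|_c\to\OO_c)\simeq\OO_c$. This argument is entirely intrinsic (no adeles, no choice of trivialization) and immediately gives the identity $i(V,V';c)=i(V'(-c),V;c)$, from which the formula follows by substituting into the definitions. Your approach instead passes through the adelic description of Lemma~\ref{hecke-mod-adelic-lem}, using the matrix identity $f_cI\cdot g_c=\diag(1,f_c)$ to rewrite the sum, and then re-identifies the resulting lattices with the subsheaves appearing in \eqref{T'c-formula}. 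Both arguments are short; the paper's has the advantage of being self-contained (it does not invoke Lemma~\ref{hecke-mod-adelic-lem}) and makes the underlying combinatorial bijection transparent, while yours has the merit of showing how the identity looks in the adelic coordinates that are used elsewhere in the paper.
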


\begin{proof} (i) is straightforward. For (ii) we use that for every pair of vector bundles $V$ and $V'$, there is a bijection between the set of embeddings $i:V\to V'$ with the cokernel
isomorphic to $\OO_c$ and the set of embeddings $V'(-c)\to V$ with the cokernel isomorphic to $\OO_c$. Namely, starting with $i:V\to V'$ we observe that the natural embedding
$V'(-c)\to V'$ factors uniquely through an embedding $j:V'(-c)\to V$. Furthermore, $\coker(j)$ is isomorphic to the kernel of the induced map $V'|_c\to \OO_c$, hence, it is isomorphic to 
$\OO_c$.
\end{proof}

Now we rewrite Definition \eqref{hecke-def} in terms of the natural groupoid structure on the Hecke correspondence where we take into
account automorphisms of objects.
 
For each effective Cartier divisor $c\sub C$, let us consider the Hecke groupoid
$$\Hecke(C,c):=
\{V\hra V' \ |\ V'/V\simeq \OO_c\},$$
where $V$ and $V'$ are vector bundles over $C$.
and let
$p_1,p_2$ denote the natural projections from $\Hecke(C)$ to $\Bun(C)$ sending $V\hra V'$ to $V$ and $V'$, respectively.
We use the pull-backs and push-forwards with respect to these projections (see Appendix).

\begin{lemma}
One has
$$T_c=p_{1*}p_2^*, \ \ T'_c=p_{2*}(p_1)^*.$$
\end{lemma}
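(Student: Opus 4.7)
The strategy is a direct unfolding of the definitions of $p^*$ and $p_*$ for the groupoid morphisms in question, using the conventions from the Appendix. For a morphism $p:X\to Y$ of essentially-finite groupoids one has $(p^*f)(x)=f(p(x))$ and
$$(p_*g)(y)=\sum_{[(x,\phi)]\in\pi_0(F_y)}\frac{g(x)}{|\Aut(x,\phi)|},$$
where $F_y=X\times_Y\{y\}$ is the $2$-fiber over $y$; equivalently, $(p_*g)(y)=|\Aut(y)|\sum_{[x]:\,p(x)\simeq y} g(x)/|\Aut(x)|$. The only task is to identify these groupoid sums with the expressions in Definition~\ref{hecke-def}.

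First I would apply this with $p=p_1$ and compute $T_c=p_{1*}p_2^*$. After normalization the fiber $p_1^{-1}(V)$ has as objects the pairs $(V',i)$ with $i:V\hra V'$ an embedding satisfying $\coker(i)\simeq \OO_c$, and automorphism group $\Aut(V';i):=\{\psi\in\Aut(V'):\psi\circ i=i\}$. Since $(p_2^*f)(V',i)=f(V')$, the push-forward formula yields
$$(p_{1*}p_2^*f)(V)=\sum_{[V',i]}\frac{f(V')}{|\Aut(V';i)|}.$$
Grouping by the isomorphism class of $V'$, the inner sum runs over $\Aut(V')$-orbits on the set of embeddings $V\hra V'$ with cokernel $\OO_c$; by orbit-stabilizer, each orbit of size $|\Aut(V')|/|\Aut(V';i)|$ contributes $1/|\Aut(V';i)|$, so the inner sum equals $i(V,V';c)/|\Aut(V')|=i(V,V';c)/a(V')$. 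Summing over $[V']$ recovers $T_cf(V)$.

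Next I would treat $T'_c=p_{2*}p_1^*$. The $2$-fiber $p_2^{-1}(V')$ has objects $(V,i:V\hra V')$ with $\coker(i)\simeq\OO_c$, and a morphism in the fiber is a pair $(\varphi,\psi)$ with $\psi=\id_{V'}$ and $\psi\circ i=i\circ\varphi$; injectivity of $i$ then forces $\varphi=\id_V$, so $\Aut(V,i)$ is trivial. Moreover, isomorphism classes $[V,i]$ in the fiber correspond bijectively to subsheaves $V\subset V'$ with $V'/V\simeq\OO_c$, since equivalent embeddings have the same image. Hence
$$(p_{2*}p_1^*f)(V')=\sum_{[V,i]} f(V)=\sum_{V\subset V',\,V'/V\simeq\OO_c} f(V),$$
which is exactly the last expression in \eqref{T'c-formula}.

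The computation is routine once the push-forward formula is spelled out; the only point requiring care is the asymmetry between the two projections. The fibers of $p_2$ carry no automorphisms (the target $V'$ is pinned down, and an embedding from below admits no nontrivial symmetries over the identity), whereas the fibers of $p_1$ do (automorphisms of $V'$ may preserve the subsheaf $i(V)$), and this asymmetry is precisely what accounts for $T'_c$ being a bare sum over subsheaves while $T_c$ carries the weight $1/a(V')$.
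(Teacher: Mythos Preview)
Your argument is correct and follows the same approach as the paper: both unfold the push-forward via the fiber formula of Lemma~\ref{groupoid-push-for-lem} and identify the fiber groupoids concretely. The paper only writes out the $T'_c$ case (observing, as you do, that $p_2^{-1}(V')$ has trivial automorphisms) and dismisses $T_c$ as ``similar, and also follows by duality''; your explicit orbit-stabilizer computation for $T_c$ is a welcome addition but not a genuinely different method.
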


\begin{proof}
Let us consider the case of $T'_c$ (the case of $T_c$ is similar, and also follows by duality).
Note that for a vector bundle $V$, the fiber groupoid $p_2^{-1}(V)$ is the groupoid of embeddings $V'\hra V$ (with $V$ fixed), such that $V/V'\simeq \OO_c$.
This groupoid has trivial automorphisms, so by Lemma \ref{groupoid-push-for-lem}, we get
$$p_{2*}(p_1)^*f(V)=\sum_{V'\sub V: V/V'\simeq \OO_c} f(V')=T'_cf(V).$$
\end{proof}

\subsection{Proof of Theorem B}\label{hecke-commute-sec}



For every closed point $p$ of $C$ let us denote by $\HH_p=\HH_{G(F_p),G(\OO_p)}$ the {\it local Hecke algebra} of compactly supported,
$G(\OO_p)$-biinvariant measures on $G(F_p)$. 
Note that for every double coset $S=G(\OO_p)gG(\OO_p)$ there is a unique {\it characteristic measure} $\mu_S$ supported on $S$, which is $G(\OO_p)$-biinvariant
and satisfies $\mu_S(1)=1$. It can be defined by
$$\mu_S(f)=\int_{G(\OO_p)\times G(\OO_p)}f(k_1gk_2)dk_1dk_2,$$
where $dk$ is the Haar measure on $G(\OO_p)$ (left and right invariant since $G(\OO_p)$ is compact).

For any open compact subgroup $K\sub G(\A)$ of the form $K=G(\OO_p)\times K'$, where $K'\sub G(\A')$, and $\A'$ is the adeles
without the place $p$, we have a natural injective homomorphism 
$$\HH_p\to \HH_{G,C,K}.$$
In particular, for $K=G(\OO)$ we get homomorphisms $\HH_{G(F_p)}\to \HH_{G,C}$,
and the images of these homomorphisms for different points commute.

Recall that the element $h_c$ associated with a simple divisor $c$ is
is defined using the double coset of the element $g_c$ in $G(\A)$ which is trivial away from the place $p=\ov{c}$ (see \eqref{gc-eq}),
Hence, $h_c$ can be viewed as an element of the local Hecke algebra $\HH_p$ corresponding to the characteristic measure of $G(\OO_p)g_cG(\OO_p)$.

Now let $G=\GL_2$. Theorem B follows from the following local result.

\begin{theorem}\label{Hecke-commute-thm} 
Assume that $C$ is a special nilpotent extension of $\ov{C}$, and let $c$ and $d$ be simple divisors in $C$ supported at a point $p\in \ov{C}$.
Then $h_ch_d=h_dh_c$ in the local Hecke algebra $\HH_p$.
\end{theorem}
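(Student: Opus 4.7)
The plan is to prove Theorem \ref{Hecke-commute-thm} by a Gelfand-trick argument. I will introduce the transpose anti-involution $\tau: G(F_p) \to G(F_p)$, $g \mapsto g^t$. Since $\tau$ preserves $K = G(\OO_p)$, it induces an anti-involution $\mu \mapsto \mu^\tau$ of $\HH_p$ sending the characteristic measure of a double coset $KgK$ to that of $K g^t K$. The elements $g_c = \diag(f_c^{-1}, 1)$ and $g_d = \diag(f_d^{-1}, 1)$ are symmetric matrices, so $h_c^\tau = h_c$ and $h_d^\tau = h_d$. Therefore $(h_c h_d)^\tau = h_d^\tau h_c^\tau = h_d h_c$, and proving $h_c h_d = h_d h_c$ reduces to proving that $h_c h_d$ is itself $\tau$-invariant.

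Since $h_c h_d$ is a finite linear combination of characteristic measures of double cosets contained in $K g_c K \cdot K g_d K = \bigcup_{k \in K} K g_c k g_d K$, it will suffice to show that for every $k \in K$ the double coset $K g_c k g_d K$ is $\tau$-invariant, i.e., $(g_c k g_d)^t = g_d k^t g_c$ lies in $K g_c k g_d K$. I will analyze this by cases on the image of $k$ in $\GL_2$ of the residue field of $\OO_p$, using its Bruhat decomposition. In the Iwahori case, where the $(2,1)$-entry of $k$ lies in the maximal ideal $\fm_p$, the $(1,1)$-entry is automatically a unit (from $\det k \in \OO_p^\times$), so explicit row and column operations via elements of $K$ bring $g_c k g_d$ into the diagonal form $\diag(f_c^{-1} f_d^{-1}, 1)$, which is its own transpose. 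In the big-Bruhat case with both $(1,1)$- and $(2,1)$-entries units, similar reductions yield either $\diag(f_c^{-1} f_d^{-1}, 1)$ or $\diag(f_d^{-1}, f_c^{-1})$, again transpose-fixed.

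The main obstacle will be the remaining subcase: $k$ in the big Bruhat cell with $(2,1)$-entry a unit but $(1,1)$-entry in $\fm_p$. In the classical DVR setting this subcase still produces the diagonal representative $\diag(f_d^{-1}, f_c^{-1})$, but in our nilpotent setting the row and column operations one would like to apply require dividing a non-unit of $\OO_p$ by $f_c$ or $f_d$, and the result is generally not in $\OO_p$. Consequently additional double cosets can arise from nilpotent corrections, which do not admit diagonal representatives (a direct lattice computation in the length-$2$ case exhibits such examples). For each such double coset I plan to verify $\tau$-invariance directly by constructing an explicit pair $(k_1, k_2) \in K \times K$ satisfying $k_1 (g_c k g_d) k_2 = g_d k^t g_c$; the construction will exploit that $g_c, g_d$ lie in the common maximal torus $T(F_p)$, so that the asymmetry between $f_c$ and $f_d$ is encoded in elements of $K$ whose existence is governed by a constant-term matching condition at the residue field. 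As a cleaner but more ambitious alternative, one could attempt to establish a Cartan-type decomposition $G(F_p) = K T(F_p) K$ for our local ring $\OO_p$, inductively through the filtration by powers of the nilradical, which would immediately supply diagonal representatives and hence $\tau$-invariance for every double coset.
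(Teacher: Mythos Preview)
Your overall strategy coincides with the paper's: both invoke the Gelfand trick with the transpose anti-involution, reduce to showing that every double coset in $K g_c K g_d K$ is $\tau$-invariant, and dispose of the ``easy'' cases by producing diagonal representatives. You have also correctly located the genuine obstruction.

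The gap is that the hard case is only described, not proved. Your plan to ``construct an explicit pair $(k_1,k_2)$'' with the asymmetry ``governed by a constant-term matching condition'' is not an argument; it is where the entire content of the theorem lives. The paper's proof here is short but uses a specific idea you are missing: since $\ov c=\ov d=p$, after rescaling by a unit one may take $f_c=f_d+g$ with $g$ \emph{nilpotent}. One then shows (by induction along the nilradical filtration, using only integral row/column operations) that the representative $\left(\begin{smallmatrix} f_d & x\\ 0 & f_c\end{smallmatrix}\right)$ with $x$ nilpotent can be brought, within its $K$-double coset, to the form $M=\left(\begin{smallmatrix} f & x\\ 0 & f+bx\end{smallmatrix}\right)$ with $f=f_d$ and some $b\in\OO_p$. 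The punchline is the identity
\[
M\begin{pmatrix}0&1\\1&b\end{pmatrix}=\begin{pmatrix}0&1\\1&b\end{pmatrix}M^{t},
\]
which exhibits $\tau$-invariance directly. None of your hints (common torus, residue-field matching) produce this; the crucial input is $f_c-f_d\in\NN$, and you never invoke it.

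Your ``cleaner alternative'' is in fact false and should be discarded. A Cartan decomposition $G(F_p)=K\,T(F_p)\,K$ would give every double coset a diagonal, hence symmetric, representative and force $\HH_p$ to be commutative. But the paper's Remark~\ref{noncomm-rem} shows that already for a length-$2$ extension the characteristic measures of $Kg_cK$ and $Kg_c^2K$ do not commute. So no such decomposition exists, even at the first nontrivial level of your proposed induction.
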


For the rest of this subsection we will write $\OO$ instead of $\OO_p$. 

As in \cite{K-YD}, we use the Gelfand's trick. Let $\th:G(F_p)\to G(F_p)$ denote the anti-involution given by the transpose of a matrix.
Then $\th$ 
induces an anti-involution of the Hecke algebra $\HH_p$.

\begin{lemma}\label{Gelf-trick-lem} 
Assume that for $g_1,g_2\in \GL_2(F_p)$, the double cosets $G(\OO)g_1G(\OO)$ and $G(\OO)g_2G(\OO)$ are $\th$-invariant, and
the subset $G(\OO)g_1G(\OO)g_2G(\OO)\sub\GL_2(F_p)$
is the union of $\th$-invariant double $G(\OO)$-cosets. Then the characteristic measures of $G(\OO)g_1G(\OO)$ and $G(\OO)g_2G(\OO)$ commute in $\HH_{G(F_p)}$.
\end{lemma}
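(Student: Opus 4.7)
The plan is to run the standard Gelfand trick as outlined in the sentence preceding the lemma. Let $\mu_1,\mu_2 \in \HH_p$ denote the two characteristic measures, and write $\theta_*$ for the induced anti-involution of $\HH_p$. The two ingredients we need are (a) that $\theta_*\mu_i = \mu_i$ for $i=1,2$, and (b) that $\theta_*(\mu_1*\mu_2) = \mu_1*\mu_2$. Given these, the antimultiplicativity of $\theta_*$ yields
\[
\mu_1 * \mu_2 \;=\; \theta_*(\mu_1 * \mu_2) \;=\; \theta_*(\mu_2) * \theta_*(\mu_1) \;=\; \mu_2 * \mu_1,
\]
which is the desired identity.

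First I would verify (a). The pushforward $\theta_*\mu_i$ is again a $G(\OO)$-biinvariant measure supported on $\theta(G(\OO) g_i G(\OO)) = G(\OO) g_i G(\OO)$ (using the $\theta$-invariance hypothesis), with the same total mass $1$ as $\mu_i$. Since the characteristic measure of a double coset is determined by these two properties, we get $\theta_*\mu_i = \mu_i$. Next, for (b), I would observe that the support of $\mu_1*\mu_2$ lies in the set $G(\OO) g_1 G(\OO) g_2 G(\OO)$, and $\mu_1*\mu_2$ is $G(\OO)$-biinvariant. Hence it can be written as a (finite or convergent) linear combination $\sum_j c_j\, \nu_j$ where each $\nu_j$ is the characteristic measure of one of the double cosets making up $G(\OO) g_1 G(\OO) g_2 G(\OO)$. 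By hypothesis, each such double coset is $\theta$-invariant, so by the same reasoning as in step (a) each $\nu_j$ is $\theta_*$-fixed. Therefore $\theta_*(\mu_1*\mu_2) = \mu_1*\mu_2$.

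The only genuinely non-formal point is checking that $\theta$ really induces an anti-involution at the level of convolutions, i.e.\ $\theta_*(\mu*\nu) = \theta_*\nu * \theta_*\mu$. This is the statement that transposition inverts the order of a product, transported from $\GL_2(F_p)$ to measures via the standard change-of-variable in the convolution integral; it is already invoked in \cite{K-YD} and is independent of the nilpotent thickening, so I would simply cite it. The rest is bookkeeping, and I do not anticipate a real obstacle here; the substantive content of Theorem~\ref{Hecke-commute-thm} will be to exhibit concrete $g_1,g_2$ (coming from simple divisors $c,d$) whose double cosets and product set satisfy the $\theta$-invariance hypotheses of this lemma.
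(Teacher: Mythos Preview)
Your proposal is correct and follows essentially the same argument as the paper: both fix $\mu_i$ under $\theta_*$ via the $\theta$-invariance of the double cosets, fix $\mu_1*\mu_2$ via the hypothesis on the support, and then conclude by antimultiplicativity. The paper is slightly terser (it invokes compactness of $G(\OO)$ to see that $\theta$ preserves Haar measure and hence sends characteristic measures to characteristic measures), but the logic is identical.
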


\begin{proof}  Since the group $G(\OO)$ is compact, the Haar measure on it is preserved by $\th$. Hence, $\th$ maps the characteristic measure of a double coset $G(\OO)gG(\OO)$
to the characteristic measure of a double coset $G(\OO)\th(g)G(\OO)$. Hence, any element of the Hecke algebra supported on
the union of a finite number of $\th$-invariant double $G(\OO)$-cosets is $\th$-invariant. Let $\chi_1$ and $\chi_2$ be the characteristic measures of 
$G(\OO)g_1G(\OO)$ and $G(\OO)g_2G(\OO)$. Then $\chi_1*\chi_2$ is supported on $G(\OO)g_1G(\OO)g_2G(\OO)$.
Hence, we have
$$\th(\chi_1)=\chi_1, \ \ \th(\chi_2)=\chi_2, \ \ \th(\chi_1*\chi_2)=\chi_1*\chi_2.$$
Since $\th$ is an anti-involution, we deduce
$$\chi_1*\chi_2=\th(\chi_1*\chi_2)=\th(\chi_2)*\th(\chi_1)=\chi_2*\chi_1.$$
\end{proof}

Note that if a matrix $g$ is symmetric then $G(\OO)gG(\OO)$ is invariant under $\th$. Hence, this condition holds for the elements $g_c$
associated with generators $f_c$ of the ideals of simple divisors $c$ supported at $p$.

\begin{lemma}\label{upper-tr-double-classes-lem}
We have an inclusion
$$\left(\begin{matrix} f_c & 0 \\ 0 & 1\end{matrix}\right) G(\OO) \left(\begin{matrix} 1 & 0 \\ 0 & f_d\end{matrix}\right)\sub
G(\OO)\left(\begin{matrix} 1 & 0 \\ 0 & f_cf_d\end{matrix}\right)G(\OO)\cup \bigcup_{x\in \eps\OO} G(\OO)\left(\begin{matrix} f_c & 0 \\ x & f_d\end{matrix}\right)G(\OO).$$
\end{lemma}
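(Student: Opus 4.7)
The plan is to show that every element of the left-hand side can be brought, by left and right multiplication by elements of $G(\OO)$, into one of the prescribed representatives. Writing a general element as
$$g = \left(\begin{matrix} f_c & 0 \\ 0 & 1\end{matrix}\right)\left(\begin{matrix} a & b \\ c & d\end{matrix}\right)\left(\begin{matrix} 1 & 0 \\ 0 & f_d\end{matrix}\right) = \left(\begin{matrix} f_c a & f_c b f_d \\ c & d f_d\end{matrix}\right)$$
with $ad-bc \in \OO^\ast$, I would split according to whether the $(2,1)$-entry $c$ is a unit or lies in the maximal ideal $\fm \subset \OO$. Note that $\det g = (ad-bc) f_c f_d$ differs from $f_c f_d$ by a unit in both cases, which is compatible with both target double cosets.

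In the \emph{first case} $c \in \OO^\ast$, one uses $c$ as a pivot: right multiply by $\left(\begin{smallmatrix} 1 & -df_d/c \\ 0 & 1\end{smallmatrix}\right)$ to kill the $(2,2)$-entry, then left multiply by $\left(\begin{smallmatrix} 1 & -f_c a/c \\ 0 & 1\end{smallmatrix}\right)$ to kill the $(1,1)$-entry, swap rows, and rescale by a diagonal matrix in $G(\OO)$, landing in $G(\OO)\diag(1,f_c f_d)G(\OO)$. In the \emph{second case} $c \in \fm$, the condition $ad-bc\in\OO^\ast$ forces $a,d\in\OO^\ast$. Right multiplication by $\left(\begin{smallmatrix} 1 & -bf_d/a \\ 0 & 1\end{smallmatrix}\right)$ kills the $(1,2)$-entry, and a diagonal $G(\OO)$-rescaling then produces a representative of the form $\left(\begin{smallmatrix} f_c & 0 \\ c' & f_d\end{smallmatrix}\right)$ for some $c' \in \fm$. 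Finally, observing that left (respectively right) multiplication by $\left(\begin{smallmatrix} 1 & 0 \\ y & 1\end{smallmatrix}\right)$ translates $c'$ by $yf_c$ (resp.\ $yf_d$), the class of $c'$ is only well-defined modulo the ideal $(f_c,f_d)$.

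The main obstacle — and the only place the ``special nilpotent'' hypothesis is truly used — is showing that every $c'\in\fm$ can be represented, modulo $(f_c,f_d)$, by an element of $\eps\OO$, i.e.\ that $\fm = (f_c,f_d) + \eps\OO$ locally at $p$. Since $c$ and $d$ are simple divisors, the images of $f_c$ and $f_d$ in $\ov{\OO}_p$ are uniformizers, so $(f_c,f_d)+\NN=\fm$; it therefore suffices to show $\NN = \eps\OO$ for a suitable local section $\eps$. To see this, pick $\eps\in\NN$ whose image in the line bundle $\NN/\NN^2$ is a local generator at $p$. Then $\NN = \eps\OO + \NN^2 \subset \eps\OO + \fm\NN$, and Nakayama's lemma applied to the finitely generated $\OO_p$-module $\NN$ gives $\NN = \eps\OO_p$. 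This closes case 2 and completes the proof.
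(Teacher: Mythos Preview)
Your proof is correct, and it takes a somewhat different route from the paper's.

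The paper argues as follows: since $\left(\begin{smallmatrix} f_c & 0 \\ 0 & 1\end{smallmatrix}\right) h \in G(\OO)\left(\begin{smallmatrix} f_c & 0 \\ 0 & 1\end{smallmatrix}\right)$ whenever $h\in H_{g_c}=G(\OO)\cap g_cG(\OO)g_c^{-1}$, it suffices to let the middle matrix run over a set of representatives for $H_{g_c}\backslash G(\OO)\simeq \P^1(A(c))$. These representatives can be taken to be $\left(\begin{smallmatrix} 0 & 1 \\ 1 & a\end{smallmatrix}\right)$ with $a\in\OO$ and $\left(\begin{smallmatrix} 1 & 0 \\ \eps b & 1\end{smallmatrix}\right)$ with $b\in\OO$, and a direct computation places each product in one of the two target double cosets. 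Your approach instead performs Gaussian elimination on a general product, splitting on whether the $(2,1)$-entry is a unit, and then invokes $\fm=(f_c,f_d)+\eps\OO$ (via Nakayama) to reduce the residual $c'\in\fm$ to $\eps\OO$.

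The two arguments are essentially equivalent in content: the paper's choice of coset representatives with lower-left entry $\eps b$ already encodes the fact that the maximal ideal of $A(c)=\OO/I_c$ is $\eps A(c)$, which is exactly what your Nakayama step establishes. Your version has the virtue of making explicit where the special-nilpotent hypothesis (that $\NN/\NN^2$ is a line bundle, so $\NN=\eps\OO_p$ locally) is actually used, whereas the paper absorbs this into ``it is easy to see''. The paper's version is slightly shorter because the $\P^1(A(c))$ parametrization does the case split and the $\eps$-reduction in one stroke. Note also that your Nakayama argument works for special nilpotent extensions of any length (since $\NN^2\subset\fm\NN$ always), so you are not restricted to length $2$ despite the phrasing.
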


\begin{proof}
Recall that we denote by $H_{g_c}\sub G(\OO)$ the subgroup of matrices such that $a_{21}\in f_c\cdot \OO$.
For any $h\in H_{g_c}$ we have
$$\left(\begin{matrix} f_c & 0 \\ 0 & 1\end{matrix}\right) h\in G(\OO)\left(\begin{matrix} f_c & 0 \\ 0 & 1\end{matrix}\right).$$
Hence, it is enough to study the double $G(\OO)$-cosets of
$$\left(\begin{matrix} f_c & 0 \\ 0 & 1\end{matrix}\right)g\left(\begin{matrix} 1 & 0 \\ 0 & f_d\end{matrix}\right),$$
where $g$ runs through some set of representatives of right $H_{g_c}$-cosets in $G(\OO)$.
It is easy to see that we can take as such representatives
some matrices of the form
$$\left(\begin{matrix} 0 & 1 \\ 1 & a\end{matrix}\right), \ \ \left(\begin{matrix} 1 & 0 \\ \eps b & 1\end{matrix}\right),$$
with $a,b\in\OO$. Now we observe that
$$\left(\begin{matrix} f_c & 0 \\ 0 & 1\end{matrix}\right)\left(\begin{matrix} 1 & 0 \\ \eps b & 1\end{matrix}\right)\left(\begin{matrix} 1 & 0 \\ 0 & f_d\end{matrix}\right)=
\left(\begin{matrix} f_c & 0 \\ x & f_d\end{matrix}\right)$$
for some $x\in \eps \OO$, and 
$$\left(\begin{matrix} f_c & 0 \\ 0 & 1\end{matrix}\right)\left(\begin{matrix} 0 & 1 \\ 1 & a\end{matrix}\right)
\left(\begin{matrix} 1 & 0 \\ 0 & f_d\end{matrix}\right)=\left(\begin{matrix} 0 & f_cf_d \\ 1 & af_d\end{matrix}\right)\in G(\OO)\left(\begin{matrix} 1 & 0 \\ 0 & f_cf_d\end{matrix}\right)G(\OO).$$
\end{proof}

\begin{proof}[Proof of Theorem \ref{Hecke-commute-thm}] 
By Lemma \ref{Gelf-trick-lem}, it is enough to show that 
the subset $G(\OO)g_c G(\OO)g_d G(\OO)\sub G(F_p)$ is the union of $\th$-invariant double $G(\OO)$-cosets. 

In the case $c=d$, the subset $G(\OO)g_c G(\OO)g_c G(\OO)$ is clearly invariant under $\th$. Thus, we can assume $c\neq d$.
By Lemma \ref{upper-tr-double-classes-lem}, it is enough to check that for every $x\in \eps\OO$ the subset 
$$G(\OO)\left(\begin{matrix} f_c & 0 \\ x & f_d\end{matrix}\right)G(\OO)=G(\OO)\left(\begin{matrix} f_d & x \\ 0 & f_c\end{matrix}\right)G(\OO)$$
is invariant under $\th$. Note that multiplying by elementary matrices on the left and on the right we can add to $x$ any element in $(f_c,f_d)$.

Set $f=f_d$.
Since $\ov{c}=\ov{d}$, we can write $f_c$ as $f_c=f+g$, where $g\in \eps\OO$. Thus, we are interested in the double $G(\OO)$-coset of the matrix
$$M=\left(\begin{matrix} f & x \\ 0 & f+g\end{matrix}\right).$$
Note that 
$$\OO/(f,\eps)=k(\ov{d})$$
is the residue field of the corresponding reduced point.

Let $n\ge 1$ be minimal such that $g=\eps^ng_0$. If $g_0\equiv 0\mod (\eps,f)$, $g_0=fa+\eps b$, then 
$$f+g=f+\eps^naf+\eps^{n+1}b=(1+\eps^na)(f+g'),$$
where $g'\in (\eps^{n+1})$. Hence, multiplying the second row with $(1+\eps^na)$ we can replace $n$ by $n+1$.
Continuing like this, we will either arrive to the case $g=0$, or to the case where
$g_0\not\in (\eps,f)$. 

Let $m\ge 1$ be minimal such that $x\in (\eps^m,f)$. Since we can add a multiple of $f$ to $x$, we can assume $x=\eps^me_0$,
where $e_0\not\in(\eps,f)$. If $m\ge n$ then since $(\eps,f,g_0)=1$, we obtain 
$$\eps^n\in (\eps^{n+1},f,\eps^ng_0).$$
Hence, the ideals $I=(f,\eps^n)$ and $J=(f,\eps^ng_0)\sub I$ satisfy $I\sub \eps I+J$, which implies that $I=J$.
Thus, $x\in (\eps^n)\in J=(f,g)$, so this reduces to the case $x=0$.

It remains to consider the case $m<n$. 
We claim that for there exist $a$ and $b$ such that
$$(f+\eps^ng_0)(1+\eps^n a)=f+\eps^n e_0 b.$$
This can be checked by the descending induction in $n$, using the fact that $(\eps,f,e_0)=(1)$.
Indeed, we can find $a$ and $b$ such that
$$(f+\eps^ng_0)(1+\eps^n a)=f+\eps^n e_0 b+\eps^{n+1}g'_0.$$
Then we can apply the induction assumption to $n+1$ instead of $n$, $f'=f+\eps^n e_0b$ instead of $f$
(which still satisfies $(\eps,f',e_0)=(1)$) and $g'_0$ instead of $g_0$.

Hence, we reduce to the case $g=bx$ for some $b$.
Thus, 
$$M=\left(\begin{matrix} f & x \\ 0 & f+bx\end{matrix}\right).$$ 
Now the assertion follows from the identity
$$MB=BM^t, \ \text{where } B=\left(\begin{matrix} 0 & 1 \\ 1 & b\end{matrix}\right).$$ 
\end{proof}

\begin{cor}\label{Hecke-commute-cor}
Under the assumption of the theorem, all the Hecke operators $(T'_c)$ and $(T_c)$ commute.
\end{cor}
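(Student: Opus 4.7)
The plan is to reduce the corollary to Theorem \ref{Hecke-commute-thm} together with Lemma \ref{Hecke-tensor-lem}, since the genuine analytic content has already been absorbed into the former. First I would establish pairwise commutativity of the $T_c$'s. By Lemma \ref{hecke-mod-adelic-lem}, for a simple divisor $c$ the operator $T_c$ is, up to a nonzero scalar, the adelic Hecke operator $T_{g_c}$ attached to $h_c \in \HH_p$, where $p = \ov{c}$. If $c$ and $c'$ are supported at distinct points $p \neq p'$, then $h_c$ and $h_{c'}$ come from the commuting subalgebras $\HH_p, \HH_{p'} \subset \HH_{G,C}$, so $T_c T_{c'} = T_{c'} T_c$ automatically. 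If instead $\ov{c} = \ov{c'}$, then Theorem \ref{Hecke-commute-thm} gives $h_c h_{c'} = h_{c'} h_c$ inside the local Hecke algebra $\HH_p$, whence again $T_c T_{c'} = T_{c'} T_c$.

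Next I would handle the $T'_c$'s by rewriting them in terms of the $T_c$'s. Lemma \ref{Hecke-tensor-lem}(ii) gives $T'_c = t^*_{\OO(-c)} \circ T_c$, and Lemma \ref{Hecke-tensor-lem}(i) says that every twisting operator $t^*_L$ commutes with every $T_{c'}$ and every $T'_{c'}$. Using these two facts together with the commutation already proved among the $T_c$'s, I would compute directly:
\[
T'_c T_{c'} = t^*_{\OO(-c)} T_c T_{c'} = t^*_{\OO(-c)} T_{c'} T_c = T_{c'}\, t^*_{\OO(-c)} T_c = T_{c'} T'_c,
\]
\[
T'_c T'_{c'} = t^*_{\OO(-c)} T_c\, t^*_{\OO(-c')} T_{c'} = t^*_{\OO(-c-c')} T_c T_{c'} = t^*_{\OO(-c-c')} T_{c'} T_c = T'_{c'} T'_c.
\]

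The only potentially delicate point is the very first reduction step: one needs to know that the images of the local Hecke algebras at different points of $\ov{C}$ commute inside $\HH_{G,C}$, but this is an immediate consequence of the product decomposition $G(\OO) = \prod_p G(\OO_p)$, which was already used implicitly in Sec.\ \ref{hecke-commute-sec} when defining the maps $\HH_p \to \HH_{G,C}$. Thus no substantial obstacle remains, and the corollary follows formally.
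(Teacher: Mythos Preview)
Your proof is correct and follows essentially the same approach as the paper's own proof, which simply says that the commutativity of the $(T'_c)$ (equivalently, via $\Dual$, of the $(T_c)$) follows from Theorem \ref{Hecke-commute-thm}, while the remaining relations follow from Lemma \ref{Hecke-tensor-lem}. Your version is just more explicit in separating the two cases (distinct versus coincident support) and in spelling out the twisting computations.
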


\begin{proof} The case of $(T'_c)$ follows from Theorem \ref{Hecke-commute-thm}. The rest follows from Lemma \ref{Hecke-tensor-lem}.
\end{proof}

\begin{remark}\label{noncomm-rem}
If the ground ring $A$ is a finite field then the Hecke operators corresponding to arbitrary effective Cartier divisors still commute. 
However, already for $A=k[\eps]/(\eps^2)$ the Hecke operators corresponding to non-simple divisors do not necessarily commute. 
Namely, for any simple divisor $c$, the characteristic measures of
$G(\OO)g_cG(\OO)$ and $G(\OO)g_c^2G(\OO)$ do not commute.
In fact, we have
$$G(\OO)g_cG(\OO)g_c^2G(\OO)\neq G(\OO)g_c^2G(\OO)g_cG(\OO).$$
Indeed, we have
$$\left(\begin{matrix} f_c & 0 \\ \eps & f_c^2\end{matrix}\right)=\left(\begin{matrix} f_c & 0 \\ 0 & 1\end{matrix}\right)\left(\begin{matrix} 1 & 0 \\ \eps & 1\end{matrix}\right)
\left(\begin{matrix} 1 & 0 \\ 0 & f_c^2\end{matrix}\right)\in G(\OO)g_c^{-1}G(\OO)g_c^{-2}G(\OO),
$$
but it is easy to see that this matrix is not in a $G(\OO)$-double coset of any matrix of the form $\left(\begin{matrix} f_c & \eps b \\ 0 & f_c^2\end{matrix}\right)$, and so
it is not in $G(\OO)g_c^{-2}G(\OO)g_c^{-1}G(\OO)$.
\end{remark}

In the case when $C$ is a smooth curve over a finite local ring $A$, we have the following characterization of simple divisors on $C$.

\begin{lemma}\label{irr-et-div-lem} Assume $C$ is a smooth curve over a finite local ring $A$. 
A subscheme $c\sub C$ is a simple divisor if and only if
it is an irreducible affine closed subscheme, \'etale over $\Spec(A)$.
\end{lemma}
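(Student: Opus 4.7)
The plan is to analyze both sides of the equivalence locally at the supporting closed point $p\in \ov{C}$, using smoothness of $C/A$ of relative dimension $1$ to write the completion of the local ring as $\OO_p\simeq B[[t]]$, where $B$ is the (unique up to isomorphism) local finite \'etale $A$-algebra with residue field $k(p)$. A key auxiliary input is that every connected \'etale $A$-algebra of finite type is of this form, since $A$ Artinian local is henselian; this is standard Hensel lifting.

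For the forward direction, if $c\sub C$ is a simple divisor, then $\ov{c}$ is a single reduced closed point $p$ and $c$ shares its underlying topological space with $\ov{c}$, so $c$ is supported at $p$ alone. Hence $c$ is irreducible and affine, and since $c$ is Artinian we have $c=\Spec(\OO_p/(f))$ for a local equation $f$ (a non-zerodivisor) whose reduction $\ov{f}\in k(p)[[t]]$ is a uniformizer. Weierstrass preparation writes $f$ as a unit in $B[[t]]$ times a distinguished polynomial of degree $1$, so $\OO_p/(f)\simeq B$ as $A$-algebras, and hence $c$ is finite \'etale over $\Spec(A)$.

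For the reverse direction, $c\hra C$ is a closed immersion into a finite-type $A$-scheme, so $c$ is of finite type over $A$; combined with \'etaleness this makes $c\ot_A k$ a finite-type \'etale $k$-algebra, which is Artinian, hence finite-dimensional over $k$. Connectedness (from irreducibility) and henselianity of $A$ then force $c=\Spec(R)$ with $R$ isomorphic as $A$-algebra to $B$, for some closed point $p\in \ov{C}$. In particular, $c$ is supported at $p$; the induced surjection on stalks completes to a surjection $\OO_p=B[[t]]\to R\simeq B$ sending $t$ to some element $\tau\in B$ whose reduction in $k(p)$ vanishes, so $\tau\in\fm_A B$. Weierstrass division by the degree-one distinguished polynomial $t-\tau$ then identifies the kernel as the principal ideal $(t-\tau)$, generated by a non-zerodivisor; hence $c$ is an effective Cartier divisor with $\ov{c}=\Spec(k(p))$ a single reduced point, i.e., a simple divisor.

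The main subtlety is in the reverse direction: the claim that the kernel of the surjection $B[[t]]\to B$ is principal. A priori this kernel is only known to be a height-one prime, but Weierstrass division with the explicit distinguished polynomial $t-\tau$ immediately produces a principal generator. The other steps are bookkeeping with standard facts about henselian local rings and smooth morphisms of relative dimension $1$ over an Artinian base.
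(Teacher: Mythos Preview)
Your argument is correct and takes a genuinely different route from the paper's. The paper proves the forward direction by checking flatness via the two-term resolution $[\OO\rTo{f_c}\OO]$ of $\OO_c$ (so $\Tor_1^A(\OO_c,k)=0$) and unramifiedness from $\OO_c/\fm_A\OO_c=k(\ov{c})$; for the reverse direction it uses flatness of $A(c)$ over $A$ to get $I_c/\fm_A I_c=I_{\ov{c}}$ and then Nakayama to lift a generator of $I_{\ov{c}}$ to a generator of $I_c$. Your approach instead identifies the completed local ring explicitly as $B[[t]]$ and uses Weierstrass preparation, which has the advantage of exhibiting $\OO_c\simeq B$ concretely rather than just verifying the \'etaleness axioms.

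Two small points to tighten. First, in the reverse direction you write the completed surjection as ``$t\mapsto\tau$'', implicitly treating it as a $B$-algebra map; a priori it is only $A$-linear, but the restriction $B\to B$ is an $A$-algebra endomorphism which reduces to an isomorphism on residue fields, hence is an automorphism, and after adjusting the chosen isomorphism $R\simeq B$ you may indeed assume $B$-linearity. Second, you work in the completion $\hat{\OO}_{C,p}$, so you should remark that principality of $I_c\hat{\OO}_{C,p}$ descends to $I_c$: since $I_c/\fm_p I_c\simeq I_c\hat{\OO}_{C,p}/\fm_p I_c\hat{\OO}_{C,p}$ is at most one-dimensional over $k(p)$, Nakayama gives a single generator of $I_c$, and it is a non-zerodivisor because $\OO_{C,p}\hookrightarrow\hat{\OO}_{C,p}$. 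With these two remarks in place your proof is complete.
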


\begin{proof}
Assume that $c\sub C$ is a simple divisor. If $f_c\in \OO$ is a local equation of $c$ then its reduction $\ov{f}_c=f_{\ov{c}}$ is an equation of $\ov{c}$. 
Note that $c$ is flat over $A$. Indeed, the resolution $[\OO\rTo{f_c}\OO]$ for $\OO_c$ shows that 
$$\Tor_1^A(\OO_c,k)=\ker(\ov{\OO}\rTo{f_{\ov{c}}}\ov{\OO})=0.$$
Also, $\OO_c/\eps\OO_c$ is $k(\ov{c})$, so $c$ is unramified over $\Spec(A)$, hence it is \'etale.

Conversely, if $c=\Spec(A(c))\sub C$ is an irreducible affine closed subscheme, \'etale over $\Spec(A)$,
then its reduction $\ov{c}$ is a reduced point, so $A(c)/\fm A(c)=k(\ov{c})$ 
Furthermore, by flatness of $A(c)$, we have $I_c/\fm I_c=I_{\ov{c}}$, where $I_c\sub \OO$ (resp., $I_{\ov{c}}\sub \ov{\OO}$)
is the ideal of $c$ (resp., $\ov{c}$). This implies that a local generator of $I_{\ov{c}}$ can be lifted to a generator of $I_c$, so
$c\sub C$ is an effiective Cartier divisor. 
\end{proof}

\section{Automorphic representations for an extension of length $2$}\label{aut-rep-sec}

From now on we always assume that $C$ is a special nilpotent extension of $\ov{C}$ of length $2$.  

In this section we consider a general split reductive group $G$. Our goal is to describe a decomposition of the space $\SS(G(F)\backslash G(\A))$ (see Sec.\ \ref{orbit-dec-sec})
into a direct sum
of $G(\A)$-representations numbered by coadjoint orbits in $\fg^\vee(\ov{F})$ and to analyze the summands corresponding
to elliptic regular orbits. We start with a local representation theory in Sec.\ \ref{loc-rep-sec}. As in \cite{K-YD} where the case of $\PGL_2$
was considered, the main idea is to use the Mackey theory. In Sec.\ \ref{orbit-dec-sec}, we establish the orbit decomposition of $\SS(G(F)\backslash G(\A))$
In Sec.\ \ref{reg-ss-sec}, \ref{reg-ell-sec} and \ref{adm-rep-sec} we give more details on the pieces of this decomposition corresponding to regular semisimple and elliptic orbits.
In Sec.\ \ref{Hitchin-sec} we establish the relation between the spaces of $G(\OO)$-invariants and the moduli spaces of Higgs bundles.
In Sec.\ \ref{finitary-sec} we determine the subspaces of {\it finitary functions} in $\SS(G(F)\backslash G(\A))$, i.e., those contained in admissilbe $G(\A)$-subrepresentations,
in the case $G=\PGL_2$.

\subsection{Local theory}\label{loc-rep-sec}

For a point $p\in C$, we have the nilpotent extension $E:=F_p$ of the corresponding local field $\ov{E}:=\ov{F}_p$.
We have an element $\eps\in E$ such that $\eps^2=0$, $E/\eps E=\ov{E}$ and $\eps E\simeq \ov{E}$.

Let $N\sub G(E)$ denote the kernel of the (surjective) reduction homomorphism 
$$r:G(E)\to G(\ov{E}).$$
Then we have a natural isomorphism $\fg(\ov{E})\simeq N$.
If there is a ring isomorphism $E\simeq \ov{E}[\eps]/(\eps^2)$ then $G(E)\simeq G(\ov{E})\ltimes \fg(\ov{E})$.

Let us fix a non-trivial additive character $\psi :\ov{E}\to U(1)$.

\begin{definition} \begin{enumerate} 

\item $ \Xi$ is the group of characters $ \chi:N\to \mC ^\ast$. 
\item We identify $ \Xi$  with $\fg^\vee(\ov{E})$ through 
the isomorphism
 $a:\fg^\vee(\ov{E})\to \Xi$ given by $x\to \chi _x$ where $\chi _x(y)= \psi (\lan x,y\ran)$. 

\item For $\chi \in \Xi$ we denote by $\bO _\chi$  the $G(\ov{E})$-orbit of $\chi$, by   $St (\chi)\subset G(E)$ the stabilizer of $\chi$ 
in $G(E)$, and by $\ov {St} (\chi)= St (\chi)/N \subset G(\ov{E})$ the stabilizer 
of $\chi$ in $G(\ov{E})$. 
\item For an open compact subgroup $K\sub G(E)$ we denote by $\Xi_K\sub \Xi$ the subgroup of characters $\chi$ such that $\chi|_{N\cap K}\equiv 1$.
\item For an orbit $\bO \subset \Xi$ and an open compact subgroup $K\subset G(E)$
we define $\De _\bO (K): = \bO \cap \Xi_K/\ov{K}$, where $\ov{K}:=r(K)$.

\end{enumerate} 
\end{definition}


\begin{remark} It is easy to see that the set $\De _\bO (K)$ is finite for every regular semisimple $\bO$ 
(see the proof of Proposition \ref{K-inv-prop}(1) below).
\end{remark}

\begin{definition} 
\begin{enumerate}

\item $\mcR$ denotes the category of smooth representations of $G(E)$.

\item For $\chi \in \Xi$, $\mcC _\chi$ denotes the category of smooth representations $\rho$ of $ St (\chi) $ such that $\rho|_{N}= \chi Id$. 

\item  $Ir _\chi $ denotes the set of equivalence classes of irreducible representations in $\mcC_\chi$.

\item $\ind _\chi: \mcC _\chi \to \mcR: (\rho ,W)\to (\pi(\chi ,\rho) ,V):= \ind _ { St (\chi)}^{G(E)}(\rho ,W)$ 
is the compactly supported induction functor.
\item $ J_\chi : \mcR \to \mcC _\chi $ is the functor given by 
$J_\chi (\pi,V)= V_{N,\chi}$ where $V_{N,\chi}$ is the space of   $\chi$-coinvariants of $V$ with respect to $N$.
\item For a $G(\ov{E})$-orbit $\bO \subset \Xi$ we denote by 
$\mcR _\bO$ the full subcategory of representations $\pi$ such that $ J_{\chi } (\pi) =\{0\}$ for $\chi  \not \in \bO$.

\end{enumerate} 
\end{definition}

\begin{lemma}\label{mackey-lem}
\begin{enumerate}

\item The functors $J_\chi$ and $\ind_\chi$ are exact, and $J_\chi$ is left adjoint to $\ind_\chi$.

\item For any $\chi \in \Xi,  \rho  \in  Ir_\chi $ the representation $\ind_\chi(\rho)$ of $G$ is irreducible.
Furthermore, $\ind_\chi(\rho)$ is isomorphic to $\ind_{\chi'}(\rho')$ if and only if the pairs $(\chi,\rho)$ and $(\chi',\rho')$ are conjugate.


\item Any smooth irreducible representation of $G$  is equivalent to $\ind_\chi(\rho) $ for some pair
 $ (\chi ,\rho) $, with $\chi\in \Xi$, $\rho\in Ir_\chi$.

\item Let $\bO_\chi$ be the orbit of $\chi$. If $V\in \mcR_{\bO_\chi}$ satisfies $J_\chi(V)=0$ then $V=0$.

\end{enumerate} 
\end{lemma}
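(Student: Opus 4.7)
The plan is to deduce all four statements from standard Mackey-type considerations, using the key fact that $N$ is an abelian normal subgroup of $G(E)$. Indeed, since $\eps^2=0$ in $E$, writing elements of $N$ as $1+\eps x$ with $x\in\fg(\ov{E})$ gives $(1+\eps x)(1+\eps y)=1+\eps(x+y)$, so $N\simeq\fg(\ov{E})$ as an abelian locally profinite group, with $G(E)$-conjugation factoring through the adjoint action of $G(\ov{E})$ on $\fg(\ov{E})$; under the identification with $\Xi$ this matches the action used in defining the orbits $\bO_\chi$.

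For part (1), exactness of $\ind_\chi$ is the standard exactness of compact induction from a closed subgroup of a locally profinite group. For exactness of $J_\chi$ I would use that $N$ is an abelian locally profinite group: for any smooth $V$ and any vector $v\in V^{N_0}$ fixed by an open compact subgroup $N_0\sub N$ with $\chi|_{N_0}=1$, the image of $v$ in $V_{N,\chi}$ agrees with the image of the projector $e_{\chi,N_0}(v):=\vol(N_0)^{-1}\int_{N_0}\chi(n)^{-1}nv\,dn$; realizing $J_\chi V$ as a colimit over such $N_0$ of these $\chi$-isotypic pieces exhibits $J_\chi$ as a filtered colimit of exact functors. The adjunction $J_\chi\dashv\ind_\chi$ is a Frobenius-type bijection: a $G(E)$-intertwiner $\phi:V\to\ind_\chi W$ corresponds to the $St(\chi)$-equivariant map $\tilde\phi(v):=\phi(v)(e)\in W$, which automatically kills $nv-\chi(n)v$ for $n\in N$ because $W\in\mcC_\chi$, hence factors through $J_\chi V$; the inverse sends $\psi:J_\chi V\to W$ to the map $v\mapsto[g\mapsto\psi(\overline{gv})]$, where $\overline{gv}$ denotes the image in $J_\chi V$.

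For parts (2) and (3), I would invoke Mackey's irreducibility criterion: $\ind_{St(\chi)}^{G(E)}\rho$ is irreducible iff $\rho$ is irreducible and for every $g\in G(E)\sm St(\chi)$ the representations $\rho$ and its $g$-conjugate share no irreducible subquotient over $St(\chi)\cap gSt(\chi)g^{-1}$. Since $g\notin St(\chi)$ forces $g\chi\neq\chi$, and $\rho|_N=\chi\cdot\Id$ while the $g$-conjugate restricts to $g\chi$ on $N\sub St(\chi)\cap gSt(\chi)g^{-1}$, the criterion is automatic. The distinctness statement then follows: if $\ind_\chi(\rho)\simeq\ind_{\chi'}(\rho')$, computing $J_\chi$ of both sides via a geometric-lemma type decomposition forces $\chi'$ to lie in the $G(E)$-orbit of $\chi$, after which replacing $\chi'$ by $\chi$ via a conjugating element one recovers $\rho\simeq\rho'$. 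For (3), an arbitrary smooth irreducible $\pi$ admits some $\chi$ with $J_\chi\pi\neq0$ by the spectral decomposition for the smooth action of the abelian locally profinite group $N$: any nonzero $v\in\pi^{N_0}$ for some open compact $N_0\sub N$ decomposes into characters of the abelian group $N/N_0$, and each such character extends to a character of $N$ trivial on $N_0$. Any irreducible quotient $\rho$ of $J_\chi\pi$ then produces by adjunction a nonzero $G(E)$-map $\pi\to\ind_\chi\rho$, which must be an isomorphism by the irreducibility established in (2).

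Part (4) is then short: for $V\in\mcR_{\bO_\chi}$ and any $g\in G(E)$ reducing to $\bar g\in G(\ov{E})$, the natural identification $J_{\bar g\cdot\chi}(V)\simeq g\cdot J_\chi(V)$ shows that $J_\chi V=0$ forces $J_{\chi'}V=0$ for every $\chi'$ in the orbit $\bO_\chi$; combined with the defining vanishing $J_{\chi'}V=0$ for $\chi'\notin\bO_\chi$, every $J_{\chi'}V$ vanishes, so the spectral decomposition argument from (3) gives $V=0$. I expect the main obstacle to lie in part (1), specifically in justifying that the inverse map in the adjunction lands in the compactly-supported induced module, which is the Mackey analog of Bernstein's second adjointness; here the normality and abelianness of $N$ should make the usual geometric-lemma estimates on the support in $St(\chi)\bsl G(E)/K$ tractable for smooth $V$.
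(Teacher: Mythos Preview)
Your proposal is essentially correct and matches the paper's approach: the paper says only that part (1) is ``well known'' and parts (2)--(3) ``follow from Mackey's theory,'' so your explicit Mackey irreducibility criterion and adjunction construction are exactly what one fills in.

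For part (4) the paper argues slightly differently. Rather than your direct route (conjugation gives $J_{\chi'}V=0$ for all $\chi'\in\bO_\chi$, hence for all $\chi'$, hence $V=0$ by the spectral argument), the paper passes to an irreducible $G(E)$-subquotient $V_0$ of $V$: exactness of $J_\chi$ and closure of $\mcR_{\bO_\chi}$ under subquotients give $J_\chi(V_0)=0$ and $V_0\in\mcR_{\bO_\chi}$, then part (3) writes $V_0\simeq\ind_{\chi'}(\rho)$, and one gets a contradiction since evaluation at $e$ gives a nonzero map $\ind_{\chi'}(\rho)\to\rho$ factoring through $J_{\chi'}$. Your conjugation step is implicitly needed in the paper's argument too, to pass from $J_\chi(V_0)=0$ to $J_{\chi'}(V_0)=0$ for $\chi'\in\bO_\chi$. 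Your ``spectral decomposition'' step is correct but itself needs an irreducible-subquotient argument at the $N$-level: a nonzero $v\in V^{N_0}$ generates a cyclic $\mathbb C[N/N_0]$-module with a character quotient (Zorn on maximal ideals), and then exactness of $J_\chi$ gives $J_\chi V\neq 0$. So the two approaches are equivalent in content; yours avoids invoking part (3), the paper's avoids the $N$-module manipulation.

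Your flagged concern about the adjunction is legitimate, and the paper does not address it either. Standard Frobenius reciprocity makes $J_\chi$ left adjoint to \emph{full} smooth induction; to land in the compactly supported $\ind_\chi$ one needs, for each $v\in V^K$, that the function $g\mapsto\psi(\overline{gv})$ be supported on finitely many $St(\chi)$--$K$ double cosets, which is exactly finiteness of $\De_{\bO_\chi}(K)=(\bO_\chi\cap\Xi_K)/\ov K$. The paper only remarks this holds for regular semisimple orbits, so neither you nor the paper supplies a general justification.
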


\begin{proof}
Part (1) is well known, and parts (2) and (3) follow from the Mackey's theory. For part (4), we use the fact that a nonzero representation $V$ has an irreducible
subquotient $V_0$. Then we would get $J_\chi(V_0)=0$. But we also have $V_0\in \mcR_{\bO_\chi}$, so $J_{\chi'}(V_0)=0$ for any character $\chi'$.
Since $V_0=\ind_{\chi'}(\rho)$ for some $\chi'$ and $\rho\in Ir_{\chi'}$ we get a contradiction.
\end{proof}

\begin{prop}\label{ind}  
\begin{enumerate}
\item For any $(\rho ,W)\in \mcC _\chi $ and an open compact subgroup $K\subset G(E)$, for $(\pi(\chi ,\rho),V)=\ind_\chi(\rho,W)$, one has
$$V^K= \bigoplus  _{\delta=g_\delta\cdot\chi \in \De _{\bO _\chi}(K)} 
W ^{g_\delta ^{-1}K g_\delta \cap St _\chi}$$
\item $J_\chi \circ \ind _\chi \simeq \Id _{ \mcC _\chi}$.
\item The functors $ \ind _\chi : \mcC _\chi \to \mcR _{\bO _\chi} $ and $ J_\chi : \mcR _{\bO _\chi} \to \mcC _\chi $ are equivalences of categores.
\end{enumerate} 
\end{prop}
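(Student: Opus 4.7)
The plan is to prove the three parts in sequence. For part (1), I realize $V = \ind_\chi W$ as the space of compactly supported functions $f:G(E)\to W$ with $f(sg) = \rho(s)f(g)$ for $s\in St(\chi)$, so that a $K$-fixed vector is determined by its values on representatives of the double cosets $St(\chi)\backslash G(E)/K$, with $f(g)\in W^{gKg^{-1}\cap St(\chi)}$. The crucial input from the nilpotent setting is that $N$ is normal in $G(E)$ and contained in $St(\chi)$, so $N\cap gKg^{-1} = g(N\cap K)g^{-1}$ is a subgroup of $St(\chi)$ acting on $W$ through $\chi$; consequently this fixed space vanishes unless $\chi$ is trivial on $g(N\cap K)g^{-1}$, which is exactly the condition $g^{-1}\cdot\chi\in \Xi_K$. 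A direct check shows that $St(\chi)\,g\,K \mapsto g^{-1}\cdot\chi$ descends to a bijection between the surviving double cosets and $\De_{\bO_\chi}(K)$; writing $g = g_\delta^{-1}$ recovers the stated formula.

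For part (2), I construct the $St(\chi)$-equivariant map $\Phi: W \to J_\chi(\ind_\chi W)$ by composing the standard embedding $w\mapsto f_w$ (with $f_w$ supported on $St(\chi)$ and $f_w(s)=\rho(s)w$) with the quotient $V \to V/V(N,\chi)$. A direct computation, using normality of $N$, the inclusion $N\subset St(\chi)$, and $\rho|_N = \chi$, shows that $g\cdot f_w$ is an $N$-eigenvector with eigencharacter $n\mapsto \chi(g^{-1}ng) = g^{-1}\cdot\chi$. Since translates of the $f_w$ span $V$, as a smooth $N$-module $V$ decomposes into character eigenspaces indexed by $\bO_\chi$; the $\chi$-eigenspace is exactly the image of $W$ (achieved precisely for $g\in St(\chi)$), and $V(N,\chi)$ is the direct sum of the remaining eigenspaces. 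This simultaneously shows $\ind_\chi W \in \mcR_{\bO_\chi}$ and identifies $\Phi$ with the projection onto the $\chi$-eigenspace, hence an isomorphism.

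For part (3), I argue formally from (2) and Lemma \ref{mackey-lem}. The counit $J_\chi\ind_\chi \to \Id$ being an isomorphism implies, via the adjunction of Lemma \ref{mackey-lem}(1), that $\ind_\chi:\mcC_\chi \to \mcR_{\bO_\chi}$ is fully faithful. For essential surjectivity, given $V\in \mcR_{\bO_\chi}$, the triangle identity together with (2) forces $J_\chi$ applied to the unit $\eta_V: V \to \ind_\chi J_\chi V$ to be an isomorphism, so by exactness of $J_\chi$ both $J_\chi(\ker\eta_V)$ and $J_\chi(\coker\eta_V)$ vanish. Since $\mcR_{\bO_\chi}$ is closed under subquotients (directly from the $N$-eigenspace decomposition of smooth $N$-modules), Lemma \ref{mackey-lem}(4) then yields $\ker\eta_V = \coker\eta_V = 0$, so $\eta_V$ is an isomorphism. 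The principal technical obstacle is the bookkeeping in part (1): one must consistently track the conventions for the $G(E)$-action on $\Xi$ to match the stated formula $W^{g_\delta^{-1} K g_\delta \cap St(\chi)}$; the conceptual content is that the nilpotent structure makes $N$ normal in $G(E)$, which trivializes the intertwining step of Mackey theory and allows the entire argument to reduce to abelian character theory.
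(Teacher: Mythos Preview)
Your arguments for parts (1) and (3) are correct and match the paper's approach essentially verbatim.

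Part (2), however, has a genuine gap. You claim that $g\cdot f_w$ is an $N$-eigenvector and that such translates span $V$, yielding an eigenspace decomposition of $V$ as an $N$-module. The problem is that $St(\chi)$ is \emph{not open} in $G(E)$ in general: its image $\ov{St}(\chi)\subset G(\ov E)$ is the stabilizer of a point under the coadjoint action, hence a proper closed algebraic subgroup (e.g.\ a maximal torus in the regular semisimple case). Consequently a function supported exactly on $St(\chi)$ is not smooth, so your $f_w$ does not lie in $V$. If you replace it by the smooth $f_{1,K,w}$ supported on $St(\chi)K$, then the computation $(n\cdot f_{1,K,w})(sk)=\chi(knk^{-1})\rho(s)w$ shows this is \emph{not} an $N$-eigenvector unless $K\subset St(\chi)$. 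In fact $V^{N,\chi'}=0$ for every $\chi'$ when $St(\chi)$ is not open: a smooth $\chi'$-eigenfunction would have to be supported on a single closed coset $St(\chi)g_0$, forcing it to vanish. So the eigenspace decomposition you invoke simply does not exist.

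The paper avoids this by working directly with coinvariants. One checks that the evaluation map $V\to W$, $f\mapsto f(1)$, descends to $J_\chi(V)\to W$; surjectivity is immediate from $f_{1,K,w}(1)=w$. For injectivity one must show that every $f\in V$ with $f(1)=0$ lies in $V(N,\chi)$. The key identity is
\[
(uf-\chi(u)f)(g)=\bigl[\chi(gug^{-1})-\chi(u)\bigr]f(g),
\]
valid for all $u\in N$. Given $g\notin St(\chi)$, choose $u$ with $\chi(gug^{-1})\neq\chi(u)$; then for $K$ small enough the bracket is a nonzero constant on $St(\chi)gK$, so $f_{g,K,w}$ is a scalar multiple of $uf_{g,K,w}-\chi(u)f_{g,K,w}\in V(N,\chi)$. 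Since such functions span $\{f:f(1)=0\}$, injectivity follows. This is the computation your argument is missing.
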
 

\begin{proof} (1) Recall that $V$ is the space of smooth functions $f:G\to W$
such that $f(hg)=\rho(h)f(g)$ for $h\in \St_\chi$, such that the support of $f$ is compact modulo left shifts by $\St_\chi$. In particular,
$f(ug)=\chi(u)f(g)$ for $u\in N$.
The space $V^K$ consists of $f\in V$ such that $f(gk)=f(g)$ for $k\in K$. Note that if $gkg^{-1}\in N$ then we get
$$f(g)=f(gk)=\chi(gkg^{-1})f(g),$$
so $f(g)=0$ unless $\chi|_{gKg^{-1}\cap N}\equiv 1$, or equivalently, $g^{-1}\chi\in \Xi_K$.
Similarly, we see that for any $g$ one has $f(g)\in W^{gKg^{-1}\cap\St_\chi}$.
Furthermore, since the support of $f$ is compact modulo left shifts by $\St_\chi$, there is only finitely many
double cosets $\St_\chi gK\in \St_\chi\backslash G/K$ such that $f(g)\neq 0$. Thus, we get an embedding
$$V^K\to \bigoplus  _{\delta=g_\delta\cdot\chi \in \De _{\bO _\chi}(K)} 
W ^{g_\delta ^{-1}K g_\delta \cap St _\chi}: f \mapsto (f(g_\delta^{-1})).$$
Conversely, for each $g$ and each $w\in W^{gKg^{-1}\cap\St_\chi}$, we have a well defined
function $f_{g,K,w}\in V^K$ given by 
$$f_{g,K,w}(g_1)=\begin{cases} \rho(h)w, & g_1=hgk, h\in \St_\chi, k\in K,\\ 0, g_1\not\in G\setminus \St_\chi gK.\end{cases}$$
This proves the claimed decomposition.

\noindent
(2) We use the same notation as in the proof of (1). We need to show that the map 
$$V\to W: f\mapsto f(1)$$
induces an isomorphism $J_\chi(V)\rTo{\sim} W$. The surjectivity is clear (e.g., using functions $f_{1,K,w}$ for sufficiently small $K$).

Let $V_0\sub V$ denote the subspace of $f$ such that $f(1)=0$.
We need to show that functions of the form $uf-\chi(u)f$, where $u\in N$, span $V_0$.
Note that for any $f\in V$ and $u\in N$, we have
$$(uf-\chi(u)f)(g)=f(gu)-\chi(u)f(g)=[\chi(gug^{-1})-\chi(u)]\cdot f(g).$$
Now for any $g\not\in\St_\chi$, let $u\in N$ be such that $\chi(gug^{-1})\neq \chi(u)$. Then for
any sufficiently small $K$ we have $\chi((gk)u(gk^{-1}))=\chi(gug^{-1})$, hence for $w\in W^{gKg^{-1}\cap \St_\chi}$,
the function $f_{g,K,w}$ is proportional to $uf_{gK,w}-\chi(u)f_{g,K,w}$. It remains to note that such functions span $V_0$.

\noindent
(3) We have to check that for any $V\in\mcR _{\bO _\chi}$ the adjunction map 
$$c_V:V\to \ind_\chi J_\chi(V)$$ 
is an isomorphism. Let $K=\ker(c_V)$, $C=\coker(c_V)$. By part (1), $J_\chi(c_V)$ is an isomorphism, hence $J_\chi(K)=J_\chi(C)=0$.
By Lemma \ref{mackey-lem}(4), this implies that $K=C=0$, so $c_V$ is an isomorphism.
\end{proof}



For $m\in \fg^\vee({\ov{E}})$, let $\lan m\ran^\perp\sub N$ denote the subgroup of $u$ such that $\lan m,u\ran=0$.
We denote by $H_m$ the quotient of $\St_\chi$, where $\chi=\chi_m$, by the normal subgroup $\lan m\ran^\perp$,
so that we have a commutative diagram
\begin{diagram}
1&\rTo{}& N&\rTo{}& \St_\chi&\rTo{}& \ov{\St}_\chi&\rTo{}&1\\
&&\dTo{\lan m,\cdot\ran}&&\dTo{}&&\dTo{\id}\\
1&\rTo{}& \ov{E} &\rTo{}& H_m&\rTo{}& \ov{\St}&\rTo{}&1
\end{diagram}
and $H_m$ is a central extension of $\ov{\St}_\chi$ by $\ov{E}$.

\begin{lemma}\label{H-commutative-local-lem} 
Assume that $m$ is regular semisimple. 
Then $H_m$ is commutative.
\end{lemma}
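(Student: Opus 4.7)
The plan is to exhibit $H_m$ as generated by two commuting abelian subgroups, namely the image of $N$ and the image of an abelian lift of $\ov{\St}_\chi$ obtained via regular semisimplicity.

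First I will set up a cross-commutator computation. Because $\eps^2 = 0$ in $E$, the kernel $N$ is identified with the abelian group $(\fg(\ov E),+)$ via $1+\eps X \leftrightarrow X$, and the conjugation action of $G(E)$ on $N$ factors through the adjoint action of $G(\ov E)$. In particular $\St_\chi = r^{-1}(\ov{\St}_\chi)$, and for $g \in \St_\chi$ and $u = 1+\eps X \in N$ one computes
\[
[g,u] \;=\; 1 + \eps\bigl(\Ad(\ov g)X - X\bigr).
\]
Since $\ov g \in \ov{\St}_\chi$ satisfies $\Ad^*(\ov g)m = m$, the pairing $\langle m,\Ad(\ov g)X - X\rangle$ vanishes, so $[\St_\chi,N] \subset \langle m\rangle^\perp$. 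It follows that the image of $N$ is central in $H_m$, giving a central extension $1 \to \ov E \to H_m \to \ov{\St}_\chi \to 1$ in which the quotient is abelian, being the group of $\ov E$-points of the maximal torus $T \subset G_{\ov E}$ centralizing the regular semisimple element $m$.

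Next I will construct an abelian lift of $\ov{\St}_\chi$ inside $\St_\chi$. Choose any lift $\wt m \in \fg^\vee(E)$ of $m$; this exists because $\fg^\vee$ is an affine scheme and $E \twoheadrightarrow \ov E$ is surjective. The regular semisimple locus $\fg^\vee_{\mathrm{rs}} \subset \fg^\vee$ is open, so $\wt m$ lies in it, and the centralizer $\wt T := Z_G(\wt m) \subset G_E$ is a smooth commutative group subscheme lifting $T$. In particular $\wt T(E)$ is abelian, and smoothness of $\wt T$ together with the nilpotent thickening $E \to \ov E$ implies surjectivity of the reduction $\wt T(E) \to T(\ov E) = \ov{\St}_\chi$. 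Moreover $\wt T(E) \subset \St_\chi$, since an element stabilizing $\wt m$ automatically stabilizes the reduction $m$ and hence $\chi_m$.

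Combining the two ingredients: surjectivity of $\wt T(E) \to \St_\chi/N$ yields the factorization $\St_\chi = \wt T(E)\cdot N$, so $H_m$ is generated by the images of $\wt T(E)$ and $N$. Both images are abelian, and they commute in $H_m$ by the cross-commutator estimate of the first step. A group generated by two commuting abelian subgroups is itself abelian, so $H_m$ is commutative. The main subtlety is the torus lift $\wt T$ in the second step: regular semisimplicity is indispensable, both to make $\ov{\St}_\chi$ itself abelian and to lift it smoothly along the nilpotent extension via openness of the regular semisimple locus.
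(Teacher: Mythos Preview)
Your proof is correct, but it takes a different route from the paper's. The paper first treats the split case: when $T_m$ is defined over $\Z$, the commutative subgroup $T_m(E)\subset\St_\chi$ surjects onto $T_m(\ov E)$, so $H_m$ is generated by the image of $T_m(E)$ and the center. For the general case the paper then base-changes to a finite separable extension $\ov E'$ of $\ov E$ over which $T_m$ splits, extends the nilpotent thickening to some $E'$, and embeds $H_m$ into the corresponding (commutative) group built from $G(E')$.

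Your argument instead lifts $m$ to $\wt m\in\fg^\vee(E)$ and uses that over the regular semisimple locus the universal coadjoint stabilizer is a torus, hence smooth and commutative over any base; smoothness then gives the surjection $\wt T(E)\twoheadrightarrow T_m(\ov E)$ directly, without splitting. This is more uniform and avoids the extension-of-scalars step; the paper's version is more elementary in that it only invokes smoothness of a fixed torus defined over $\Z$ rather than properties of the universal centralizer. One minor remark: your first paragraph re-derives that the image of $N$ is central in $H_m$, but this is already recorded in the diagram immediately preceding the lemma, so that step could be omitted.
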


\begin{proof}
Note that in this case we have the maximal torus $T_m\sub G$ defined over $\ov{E}$ such that $\ov{\St}_\chi=T_m(\ov{E})$.
If $T_m$ is defined over $\Z$ then we have the commutative subgroup $T_m(E)\sub \St_{\chi}$ surjecting onto $T_m(\ov{E})$.
Hence, in this case $H_m$ is commutative (since it is generated by $T_m(E)$ and the center).
Since the isomorphism class of the group $H_m$ depends only on the conjugacy class of $m$ in $\fg^\vee(\ov{E})$, using the fact that any split maximal
torus is conjugate to a torus defined over $\Z$, we see that $H_m$ is commutative whenever $T_m$ is split.

In the general case, there exists a finite separable field extension $\ov{E}\sub \ov{E}'$ such that $T_m$ splits over $\ov{E}'$. Thus, it is enough to prove that for any such extension 
the nilpotent extension $E\to \ov{E}$ extends to a nilpotent extension $E'\to \ov{E}'$ (since then $H_m$ is contained in a similar extension of $T_m(\ov{E}')$
defined using $G(E')$). 
We have
$\ov{E}'=\ov{E}[x]/(f(x))$, for some monic polynomial $f(x)\in \ov{E}[x]$. But then we can take
$E'=E[x]/(\wt{f}(x))$ as the required nilpotent extension of $\ov{E}'$, where $\wt{f}(x)$ is any monic lift of $f(x)$ to $E[x]$.
\end{proof}

Thus, in the case when $m$ is regular semisimple, all irreducible representations in $\CC_\chi$, where $\chi=\chi_m$, are $1$-dimensional.

\begin{prop}\label{K-inv-prop}
Assume that $\chi=\chi_m$, where $m$ is regular semisimple, and $\rho$ is a smooth character of $\St_\chi$, such that $\rho|_N=\chi$.
Let $V=\ind_\chi(\rho)$.

\noindent
(i) The space $V^K$ is finite-dimensional, and
$V^K=0$ unless there exists $g\in G(\ov{E})$ such that $\rho\circ \Ad(g)^{-1}|_{K\cap \St_{g\chi}}\equiv 1$
(where we use the fact that $\Ad(g)(\St_\chi)=\St_{g\chi}$).

\noindent
(ii) Assume $K=G(\OO)$, $U\cap K=\fg(\OO)$, $\psi|_{\OO}\equiv 1$, $\psi|_{\fm^{-1}}\not\equiv 1$,  $m\in \fg(\OO)$ is such that $\ov{m}\in \fg(\OO/\fm)$ is still regular, and $\rho|_{G(\OO)\cap \St_\chi}\equiv 1$. Assume also that the torus $T_m$ (defined over $\ov{E}$) splits over an unramified extension of $\ov{E}$.
Then $V^K$ is $1$-dimensional.
\end{prop}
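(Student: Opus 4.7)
The approach is to apply Proposition \ref{ind}(1) to compute $V^K$ explicitly. Since $m$ is regular semisimple, Lemma \ref{H-commutative-local-lem} shows $H_m$ is commutative, so every irreducible object of $\mcC_\chi$ is one-dimensional; in particular $W = \mC$ and $\rho$ is a character of $\St(\chi)$. The cited formula then reads
\[
V^K \;=\; \bigoplus_{\delta \in \De_{\bO_\chi}(K)} \mC^{\,g_\delta^{-1} K g_\delta \,\cap\, \St(\chi)},
\]
with each summand equal to $\mC$ or $0$ according as $\rho$ is trivial or nontrivial on $g_\delta^{-1} K g_\delta \cap \St(\chi)$. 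Via the identity $\Ad(g_\delta)(\St(\chi)) = \St(g_\delta\cdot\chi)$, this triviality condition transforms into the one in (i) with $g = g_\delta$.

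For the finite-dimensionality assertion in (i), identify $\Xi \simeq \fg^\vee(\ov E)$ through the pairing with $\psi$. Then $\Xi_K$ is the Pontryagin-dual annihilator of the open compact subgroup $K \cap N \subset N \simeq \fg(\ov E)$, hence itself an open compact subgroup of $\fg^\vee(\ov E)$. The orbit $\bO_\chi$ is closed in $\fg^\vee(\ov E)$ (being a regular semisimple, hence closed, orbit), so $\bO_\chi \cap \Xi_K$ is compact. Each $\ov K$-orbit on $\bO_\chi$ is open since the stabilizer $T_m(\ov E)$ is closed and the orbit map is a homeomorphism onto its image; a compact set admits only finitely many disjoint open subsets, so $|\De_{\bO_\chi}(K)| < \infty$.

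For (ii), the plan is to prove: (a) $\De_{\bO_\chi}(K)$ consists of a single class, represented by $\chi$ itself (so one may take $g_\chi = 1$); and (b) $\rho|_{G(\OO)\cap \St(\chi)} \equiv 1$, which is exactly the hypothesis. Under the assumptions $N\cap K = \fg(\ov\OO)$ and the conductor of $\psi$ being $\ov\OO$, one has $\Xi_K \simeq \fg^\vee(\ov\OO)$, so (a) amounts to showing that $\bO_m \cap \fg^\vee(\ov\OO)$ is a single $G(\ov\OO)$-orbit. Given $m' \in \bO_m \cap \fg^\vee(\ov\OO)$, its reduction $\bar m'$ shares its characteristic polynomial with $\bar m$; since the latter is separable (by regular semisimplicity of $\bar m$), $\bar m'$ is regular semisimple too and is $G(\bar k)$-conjugate to $\bar m$. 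The $G(\bar k)$-conjugating elements form a torsor under the connected torus centralizing $\bar m$ in $G_k$, and Lang's theorem produces a $k$-rational conjugation. Lifting it to $G(\ov\OO)$ via smoothness of $G_{\ov\OO}$ reduces to the case $\bar m' = \bar m$. The unramified-splitting hypothesis ensures that the centralizer of $m$ in $G_{\ov\OO}$ extends to a smooth torus subscheme $T \subset G_{\ov\OO}$, so the orbit morphism $G/T \to \fg^\vee$, $g \mapsto \Ad^*(g)m$, has étale derivative at the identity (namely the injection $\fg/\operatorname{Lie}(T) \hra \fg^\vee$ given by $x \mapsto [x,m]$, whose injectivity uses regularity of $\bar m$). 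By Hensel's lemma applied to this étale morphism, $m'$ lies in the $G(\ov\OO)$-orbit of $m$.

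The main obstacle is assertion (a), specifically the integral single-orbit statement. The unramified-splitting hypothesis on $T_m$ is crucial: it allows the centralizer torus to be realized as a smooth $\ov\OO$-subscheme of $G_{\ov\OO}$, which is what makes the orbit map étale over $\ov\OO$ and enables the Hensel lift. Without this hypothesis $T_m$ could be wildly ramified, the orbit map would fail to be étale integrally, and additional $G(\ov\OO)$-orbits in $\bO_m \cap \fg^\vee(\ov\OO)$ arising from stable-conjugacy phenomena would appear, breaking the one-dimensionality of $V^K$.
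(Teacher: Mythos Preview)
Your treatment of (i) matches the paper's: both reduce to finiteness of $\De_{\bO_\chi}(K)$ via compactness of $\bO_\chi\cap\Xi_K$ and openness of $\ov{K}$-orbits.

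For (ii) you take a genuinely different route. The paper first handles the split case by an explicit computation with the Iwasawa decomposition and the lower central series of $U$, then reduces to finite quotients $\ov{\OO}/\fm^n$, applies the Greenberg functor, and invokes Lang's theorem after base-changing to a finite extension of $k$ over which $T_m$ splits. You instead argue directly: reduce modulo $\fm$, conjugate over $k$ using Lang, lift, then finish with a Hensel/smoothness argument for the orbit map. Your approach is more conceptual and avoids the explicit split-case computation; the paper's is more hands-on but perhaps more robust with respect to characteristic issues.

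Two points deserve tightening. First, the orbit morphism $G/T\to\fg^\vee$ is not \'etale (the target has larger dimension); it is an immersion. What you need is that the induced map from $G/T$ to the regular locus of the Chevalley fiber $\pi^{-1}(\pi(m))$ is \'etale, or equivalently that $G\to\fg^\vee$, $g\mapsto\Ad^*(g)m$, is smooth (its differential has constant-rank kernel since $\bar m$ is regular). With $\bar m'=\bar m$ regular, formal smoothness then produces the desired $g\in G(\ov\OO)$.

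Second, your attribution of the unramified-splitting hypothesis is slightly off. Smoothness of the centralizer scheme over $\ov\OO$ follows already from $\bar m$ being regular; it does not require unramified splitting. Where the hypothesis actually enters your argument is in the claim that $\bar m$ has \emph{separable} characteristic polynomial, i.e.\ is regular \emph{semisimple}. The stated hypothesis only says $\bar m$ is regular. If $\bar m$ were regular nilpotent (as happens when $T_m$ is ramified, e.g.\ $m=\left(\begin{smallmatrix}0&1\\\pi&0\end{smallmatrix}\right)$ in $\ssl_2$), then $\bar m'$ could be zero and your step~4--5 would fail. The unramified-splitting hypothesis forces the integral centralizer to be a torus over $\ov\OO$, hence its special fiber is a torus, hence $\bar m$ is regular semisimple --- and this is what makes your reduction-mod-$\fm$ step work.
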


\begin{proof}
(i) By Proposition \ref{ind}(1), it is enough to check that the set $\Delta_{\bO_\chi}(K)$ is finite. Since
$\Om\cap \Xi_K$ is compact (as $\Om$ is closed), this follows from the fact that $\ov{K}$-orbits on $\Om$ are open. 

\noindent
(ii) It is enough to check that $\Om\cap \fg(\OO)$ is a single $G(\OO)$-orbit.

\noindent
{\bf Step 1. Split case}. Let $B$ be the Borel subgroup containing the split torus $T_m$, $U\sub B$ its unipotent radical. Using the Iwasawa decomposition,
we are reduced to checking that if $u\in U(\ov{E})$ satisfies $\Ad(u)m\in \fg(\OO)$ then $u\in U(\OO)$. 

Let us consider the map
$$\th: U\to \fu=\fb/\ft: u\mapsto \Ad(u)\mod \ft\in \fb/\ft.$$
Let $U=U_1\supset U_2\supset\ldots$ be the lower central series, so $U_{n+1}=[U_n,U]$, and let $\fu_1\supset \fu_2\supset\ldots$ be the corresponding Lie
subalgebras. Then we have an identification
$$U_n/U_{n+1}\simeq \fu_n/\fu_{n+1}\simeq \bigoplus_{\a=\sum m_i\a_i: \sum m_i=n} \fg_{\a},$$
where $(\a_i)$ are simple roots. Furthermore, the map $U_n/U_{n+1}\to \fu_n/\fu_{n+1}$, induced by $\th$, is given by the multiplication with $\a(m)$ on the
summand $\fg_{\a}$.
Note that our assumption on $m$ means that $\a(m)\in\OO^*$ for all positive roots $\a$.

Now we can prove our statement by the descending induction: assume we know it for elements of $U_{n+1}$, and let $u\in U_n(\ov{E})$ be
such that $\Ad(u)m\in \fg(\OO)$. Since $\a(m)\in \OO^*$ for all $\a$, we obtain that the reduction of $u$ modulo $U_{n+1}$ belongs to $(U_n/U_{n+1})(\OO)$.
Hence, we can find $u_0\in U_n(\OO)$ and $u_1\in U_{n+1}(\ov{E})$ such that $u=u_0u_1$. But then $\Ad(u_1)m\in \fg(\OO)$, so by the induction assumption 
$u_1\in U_{n+1}(\OO)$, and so, $u\in U_n(\OO)$.

\noindent
{\bf Step 2. Reduction to finite rings}. We claim that it is enough to check the same statement with $\OO$ replaced by $\OO/\fm^n$ for any $n\ge 1$.
Indeed, suppose $\Om\cap \fg(\OO)$ contains two distinct $G(\OO)$-orbits, $O_1$ and $O_2$. Let $x\in O_1\setminus O_2$.
Since $G(\OO)$-orbits are closed in $\Om\cap \fg(\OO)$, this means that there exists $n\ge 1$ such that $(x+\fg(\fm^n))\cap O_2=\emptyset$, i.e.,
$x\not\in O_2+\fg(\fm^n)$. But this contradicts transitivity over $\OO/\fm^n$.

\noindent
{\bf Step 3. General case}. By Step 2, it is enough to prove transitivity of the action of $G(\OO/\fm^n)$ on $\Om(\OO/\fm^n)$. Using the Greenberg functor,
we can consider this as an action of the group of $k$-points of an algebraic group over $k$ on the set of $k$-points of a variety over $k$. 
By Step 1, we know that this action becomes transitive over a finite extension of the finite field $k$. Furthermore, the stabilizer subgroup is obtained by
applying the Greenberg functor to $T_m(\OO/\fm^n)$, so it is connected. Using triviality of the Galois cohomology $H^1$ of $k$ with coefficients in a connected algebraic group, 
we deduce the required transitivity over $k$.
\end{proof}

\begin{remark}
For an open compact subgroup $K\sub G(\ov{E})$ and $\chi\in \Xi$, let 
$$G_{m,K}:=\{g\ov{K}\in G(\ov{E}) \ |\  \chi|_{\Ad(g)(U\cap K)}\equiv 1\}.$$  
This set is invariant under right shifts by $\ov{K}$ and by left shifts by $\ov{\St}_\chi$.
The quotient $A_{\chi}:=G_{m,K}/\ov{K}$ is an analog of the affine Springer fiber.
The space of $K$-invariants in $\ind_\chi(\rho)$, where $\rho$ is a character of $\St_\chi$ extending
$\chi$, can be interpreted as a space of $\ov{\St}_\chi$-invariant sections of a $\ov{\St}_\chi$-equivariant line bundle
over $A_{\chi}$. In the case of $G=PGL_2$, $E=\ov{E}[\eps]/(\eps^2)$, and $K=G(\OO)$, the orbits of $\St_\chi$ on $A_\chi$
and the resulting spaces of $K$-invariants are described explicitly in \cite{K-YD}.
\end{remark}

\subsection{Adelic case: orbit decomposition}\label{orbit-dec-sec}

We use the notations of Sec.\ \ref{nilp-adele-sec}.

Let us fix a nontrivial additive character $\psi$ of the finite field $k$.

For each line bundle $M$ over $\ov{C}$, we have the corresponding module $M(\ov{\A})$  over the ring of adeles $\A$, obtained as the restricted product of $M_p\ot \ov{F}_p$.
The corresponding twisted principal adeles $M(\ov{F})$ are identified with the space of rational sections of $M$.
For example, $\om_{\ov{C}}(\ov{F})$ is the $1$-dimensional $\ov{F}$-linear space of rational $1$-forms on $\ov{C}$.
The character $\psi$ induces the character
$$\psi_{\ov{C}}:\om_{\ov{C}}(\ov{\A})/\om_{\ov{C}}(\ov{F})\to U(1): \a\mapsto \psi(\sum_p \Res_p \a).$$

Recall that for our nilpotent extension $C$ of $\ov{C}$, the kernel $L:=\NN\sub \OO_C$ of the projection to $\OO_{\ov{C}}$ is a line bundle on $\ov{C}$.
Thus, we have an exact sequence
$$0\to L(\ov{\A})\to \A\to \ov{\A}\to 0.$$
Let $N_{\A}\sub G(\A)$ denote the kernel of the reduction homomorphism 
$$r:G(\A)\to G(\ov{\A}),$$
and let $N_F=N_{\A}\cap G(F)$.
Then we have a natural isomorphism $\fg\ot L(\ov{\A})\rTo{\sim} N_{\A}: X\mapsto 1+\eps X$, inducing an isomorphism $\fg(\ov{F})\simeq N_F$. 

The group $\Xi$ of characters of $\fg\ot L(\ov{\A})/\fg\ot L(\ov{F})$ is naturally identified with $\fg^\vee\ot L^{-1}\om_{\ov{C}}(\ov{F})$. Namely, with 
$\eta\in \fg^\vee \ot L^{-1}\om_{\ov{C}}(\ov{F})$ we associate the character
$$X\mapsto \psi_\eta(X):=\psi_{\ov{C}}(\lan \eta,X\ran).$$

Thus, we can
view characters in $\Xi$ as characters of $N_{\A}/N_F$.



Let $\Ga\sub G(\A)$ be a discrete subgroup such that $\Ga\cap N_{\A}=N_F$ and the image of $\Ga$ in $G(\ov{\A})$ is $G(\ov{F})$, so $\Ga/N_F\simeq G(\ov{F})$. 
We denote by $\SS(\Ga\backslash G(\A))$ the space
of compactly supported functions on $\Ga\backslash G(\A)$ which are $K$-invariant under the right shifts by some open compact subgroup $K\sub G(\A)$.
The group $G(\A)$ acts on $\SS(\Ga\backslash G(\A))$ by $gf(g')=f(g'g)$. 

We consider $\SS(\Ga\backslash G(\A))$ as a subspace of the space $\C_{lc}(N_F\backslash G(\A))$
of locally constant functions on $N_F\backslash G(\A)$.
The latter space has, in addition to the standard $G(\A)$-action, an action of the compact
abelian group $N_{\A}/N_F$ given by $uf(g)=f(ug)$. 

For every $\eta\in \fg^\vee \ot L^{-1}\om_{\ov{C}}(\ov{F})$, we have the corresponding $G(\A)$-invariant subspace
$$\C_{lc}(N_F\backslash G(\A))_{\eta}:=
\{f\in \C_{lc}(N_F\backslash G(\A)) \ |\  f(ug)=\psi_\eta(u)\cdot f(g),  u\in N_{\A}\}.$$

We also have a collection of mutually orthogonal projectors compatible with the $G(\A)$-action, 
$$\Pi_\eta:\C_{lc}(N_F\backslash G(\A))\to \C_{lc}(N_F\backslash G(\A))_\eta: 
\Pi_\eta f(g)=\int_{N_{\A}/N_F} \psi_\eta(u)^{-1}\cdot f(ug)du,$$
where the measure $du$ is normalized so that the volume of $N_{\A}/N_F$ is $1$.

\begin{lemma}\label{P-eta-lem} For every $f\in \C_{lc}(N_F\backslash G(\A))$ and a compact subset $B\in G(\A)$, there exists a finite set of characters $S$,
such that 
$\Pi_\eta f(g)=0$ for all $g\in B$ and $\eta\not\in S$. Furthermore,
one has $f(g)=\sum_{\eta} \Pi_\eta f(g)$.
\end{lemma}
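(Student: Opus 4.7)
The plan is to combine Fourier analysis on the compact abelian group $N_{\A}/N_F$ with a uniformity argument over the compact set $B$. For a fixed $g\in G(\A)$, the function $\phi_g:N_{\A}/N_F\to\mC$ defined by $\phi_g(u):=f(ug)$ is well defined (by left $N_F$-invariance of $f$), and by the definition of $\Pi_\eta$ its Fourier coefficient with respect to the character $\psi_\eta$ is exactly $\Pi_\eta f(g)$. Under the identification of $\fg^\vee\ot L^{-1}\om_{\ov C}(\ov F)$ with the Pontryagin dual of $N_{\A}/N_F$ used in the paper, both claims will follow once we show that, uniformly for $g\in B$, the function $\phi_g$ factors through a single finite quotient of $N_{\A}/N_F$.

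The main technical step is to produce an open subgroup $K_0\sub N_{\A}$ contained in $gKg^{-1}$ for every $g\in B$, where $K\sub G(\A)$ is an open compact subgroup with respect to which $f$ is right-invariant on the compact set $N_F\backslash N_{\A}BK$. Such a $K$ exists because $N_F\backslash N_{\A}$ is compact, $B$ is compact, and $f$ is locally constant (standard argument: cover the compact image of $N_{\A}\times B\times K$ in $N_F\backslash G(\A)$ by finitely many open sets on which $f$ is constant, and intersect the corresponding local invariance subgroups). Given such $K_0$, any $v\in K_0$ can be written as $v=gkg^{-1}$ with $k\in K$, and then $\phi_g(uv)=f(ugk)=f(ug)=\phi_g(u)$, so $\phi_g$ descends to the finite quotient $A:=N_{\A}/(K_0\cdot N_F)$ (finite because $K_0$ is open in $N_{\A}$ and $N_{\A}/N_F$ is compact). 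To construct $K_0$, I use continuity of conjugation $(g,v)\mapsto g^{-1}vg$ together with compactness of $B$ and openness of $K$: a standard tube-lemma argument produces an open neighborhood $K_0$ of the identity in $N_{\A}$ with $g^{-1}K_0g\sub K$ for all $g\in B$, which after shrinking may be taken to be open compact.

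With $\phi_g$ factoring through the finite abelian group $A$ uniformly for $g\in B$, its Fourier expansion is a finite sum $\phi_g=\sum_{\eta\in S}\widehat{\phi_g}(\eta)\,\psi_\eta$, where $S$ is the Pontryagin dual of $A$, realized concretely as the finite set of those $\eta\in\fg^\vee\ot L^{-1}\om_{\ov C}(\ov F)$ for which $\psi_\eta$ is trivial on $K_0\cdot N_F$. Since $S$ depends only on $K_0$ and not on $g\in B$, this proves the first assertion: $\Pi_\eta f(g)=\widehat{\phi_g}(\eta)=0$ for $g\in B$ and $\eta\notin S$. Setting $u=1$ in the finite Fourier expansion of $\phi_g$ yields the pointwise identity $f(g)=\phi_g(1)=\sum_{\eta\in S}\Pi_\eta f(g)=\sum_{\eta}\Pi_\eta f(g)$ (a finite sum for each individual $g$), which gives the second assertion.

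The only delicate ingredient is the uniform choice of $K_0$, amounting to a routine compactness-plus-continuity argument for conjugation on $G(\A)$; the rest is elementary Pontryagin duality and Fourier inversion on the finite abelian group $A$.
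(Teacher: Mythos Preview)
Your argument is correct and follows the same Fourier-analytic strategy as the paper: show that $\phi_g$ factors through a single finite quotient of $N_\A/N_F$ uniformly for $g\in B$, then apply finite Fourier inversion.

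The paper's route is a bit more direct than yours. Rather than using right-$K$-invariance and then running a tube-lemma to get $K_0\sub\bigcap_{g\in B}gKg^{-1}$, it uses \emph{left}-invariance: by local constancy on the compact image of $N_\A B$ in $N_F\backslash G(\A)$, there is an open compact $K$ with $f(kg')=f(g')$ for all $g'\in N_\A B$ and $k\in K$, and one simply takes $K_0:=K\cap N_\A$. Since $N_\A$ is abelian, $f(uvg)=f(vug)=f(ug)$ for $v\in K_0$, so $\phi_g$ is $K_0$-invariant without any conjugation argument. Your version works just as well and has the minor advantage of not invoking commutativity of $N_\A$, but the paper's shortcut is worth knowing.
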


\begin{proof}
Since $f$ is locally constant, there exists an open compact subgroup $K\sub G(\A)$ such that $f(kg)=f(g)$ for every $g\in B$ and $k\in K$.
Let $K_0=K\cap N_\A$, and let $\ov{K}_0$ denote the image of $K_0$ in $N_{\A}/N_F$.
In the integration defining $\Pi_{\eta} f(g)$ we can first integrate over $K_0$-cosets. If the restriction $\psi_{\eta}|_{K_0}$ is nontrivial, we get zero. Hence,
we can take as $S$ the finite set of $\eta$ such that $\psi_{\eta}|_{K_0}$ is trivial.
\end{proof}

Let us consider the $G(\ov{F})$-action on $\C(N_F\backslash G(\A))$ given by $g_0f(g)=f(\wt{g}_0^{-1}g)$, where $\wt{g}_0\in \Ga$ is a lift of
$g_0$. Then for $g_0\in G(\ov{F})$, we have
$$g_0\Pi_\eta f=\Pi_{g_0(\eta)} g_0f,$$
where we use the adjoint action of $G(\ov{F})$ on $\fg(\ov{F})$.

Now for a $G(\ov{F})$-orbit $\Om\sub \fg^\vee \ot L^{-1}\om_{\ov{C}}(\ov{F})$, 
we can consider the sum of projectors
$$\Pi_{\Om}(f):=\sum_{\eta\in\Om} \Pi_{\eta}(f).$$
By Lemma \ref{P-eta-lem}, for $f\in \C_{lc}(N_F\backslash G(\A))$, $\Pi_\Om(f)$ is a well defined function on $N_F\backslash G(\A)$.
Furthermore, if $f$ is $G(\ov{F})$-invariant then $\Pi_\Om(f)$ is also $G(\ov{F})$-invariant, so we get a well defined operator
$$\Pi_{\Om}:\SS(G(F)\backslash G(\A))\to \C_{lc}(G(F)\backslash G(\A)).$$

\begin{lemma}
For every $f\in \SS(G(F)\backslash G(\A))$, one has 
\begin{enumerate}
\item $\Pi_{\Om}(f)\in \SS(G(F)\backslash G(\A))$,
\item $\Pi_{\Om}(f)=0$ for all $\Om$ except for a finite number, and
\item $\sum_{\Om} \Pi_{\Om}(f)=f$.
\end{enumerate}
\end{lemma}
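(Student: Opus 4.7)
The plan is to exploit the fact that $N_\A/N_F \simeq \fg \ot L(\ov{\A})/\fg \ot L(\ov{F})$ is a \emph{compact} abelian group, since $L(\ov{\A})/L(\ov{F})$ is compact for any line bundle $L$ on a smooth projective curve. Its Pontryagin dual is identified with the set of characters $\{\psi_\eta\}$ indexed by $\eta \in \fg^\vee \ot L^{-1}\om_{\ov{C}}(\ov{F})$, and the operators $\Pi_\eta$ are precisely the corresponding isotypic projections. The lemma will then follow from Fourier inversion on this compact group, combined with bookkeeping for compact support and $\Ga$-equivariance.

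First, I would verify that $\Pi_\Om(f)$ lies in $\SS(G(F)\backslash G(\A))$. Right $K$-invariance of $\Pi_\eta f$ for any $K$ fixing $f$ is immediate from the definition, since $\Pi_\eta$ only affects the left variable. For the compact support, note that if $\supp(f)\subset \Ga\cdot B$ with $B\subset G(\A)$ compact, then $\Pi_\eta f(g)\neq 0$ forces $ug\in \Ga B$ for some $u\in N_\A$, so $\supp(\Pi_\eta f)\subset \Ga\cdot N_\A \cdot B$, which has compact image in $\Ga\backslash G(\A)$ as a continuous image of the compact product $(N_\A/N_F)\times B$. For left $\Ga$-invariance, applying the change of variable $u\mapsto \ga u\ga^{-1}$ (a measure-preserving automorphism of $N_\A$ stabilizing $N_F$, since $\ga\in \Ga$ and $N_F=\Ga\cap N_\A$) together with left $\Ga$-invariance of $f$ yields
\[
(\Pi_\eta f)(\ga g)=\Pi_{\ga^{-1}\cdot\eta}(f)(g),
\]
where $\ga$ acts on $\eta$ via the adjoint action on $\fg^\vee(\ov{F})$. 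Summing over $\eta\in\Om$ and using that $\ga$ permutes the orbit gives $(\Pi_\Om f)(\ga g)=(\Pi_\Om f)(g)$.

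For part (3), I would argue pointwise. Fix $g\in G(\A)$ and set $\phi_g(u):=f(ug)$; this is a well-defined locally constant function on $N_\A/N_F$ (using $N_F\subset \Ga$ and $\Ga$-invariance of $f$). Since it is locally constant on a compact group, it factors through a finite quotient, so its Fourier decomposition $\phi_g(u)=\sum_\eta \Pi_\eta f(g)\cdot \psi_\eta(u)$ is a finite sum. Setting $u=1$ gives $f(g)=\sum_\eta \Pi_\eta f(g)=\sum_\Om \Pi_\Om f(g)$.

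The main obstacle is part (2), the \emph{global} finiteness of $\{\Om: \Pi_\Om f\not\equiv 0\}$, as opposed to the $g$-wise finiteness provided by Lemma \ref{P-eta-lem}. Here I would use the compact support proved above: it suffices to control which $\eta$ can contribute as $g$ varies over $N_\A\cdot B$. Because $N_\A$ is abelian and normal in $G(\A)$, one checks that $N_\A\cap gKg^{-1}$ depends only on the $B$-component of $g$. Compactness of $B$ and openness of $K$ then yield an open subgroup $K_1:=\bigcap_{b\in B}(N_\A\cap bKb^{-1})$ of $N_\A$ with the property that any $\eta$ with $\Pi_\eta f\not\equiv 0$ must satisfy $\psi_\eta|_{K_1}\equiv 1$. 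Since $(N_\A/N_F)/(K_1 N_F/N_F)$ is a quotient of a compact group by an open subgroup, it is finite, and only finitely many $\eta$, and hence only finitely many orbits $\Om$, can satisfy this constraint.
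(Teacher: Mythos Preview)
Your proof is correct and follows essentially the same strategy as the paper's. The paper's argument is terser: it chooses an $N_\A/N_F$-invariant compact set supporting $f$ and invokes Lemma~\ref{P-eta-lem} directly to produce a finite set $S$ of characters, concluding that $\Pi_\eta f\equiv 0$ unless $\eta\in G(\ov{F})\cdot S$. Your argument unpacks the same mechanism via right $K$-invariance and the conjugation $N_\A\cap gKg^{-1}$, which is exactly what underlies the proof of Lemma~\ref{P-eta-lem}; the finiteness of $\bigcap_{b\in B}(N_\A\cap bKb^{-1})$ that you use is precisely the uniform bound that lemma provides. One small phrasing point: the literal claim ``any $\eta$ with $\Pi_\eta f\not\equiv 0$ satisfies $\psi_\eta|_{K_1}\equiv 1$'' only holds after you have reduced to $g\in N_\A\cdot B$ (as you indicate), so what you actually prove is that every nonvanishing \emph{orbit} contains such an $\eta$---which is what is needed.
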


\begin{proof}
Since $f$ has compact support we can choose an $N_{\A}/N_F$-invariant compact subset $K\sub N_F\backslash G(\A)$ such that $f(g)=0$ for $g\not\in G(F)K$.
Now applying Lemma \ref{P-eta-lem}, we see that there exists a finite set of characters $S$ such that $\Pi_\eta f(g)=0$ for all $\eta\not\in G(\ov{F})S$ and all $g$.
\end{proof}

The above Lemma shows that we have a direct sum decomposition of $G(\A)$-representations (in algebraic sense),
\begin{equation}
\SS(G(F)\backslash G(\A))=\bigoplus_{\Om}\SS(G(F)\backslash G(\A))_{\Om}.
\end{equation}
It is clear that the term corresponding to $\Om=0$ is the subspace $\SS(G(\ov{F})\backslash G(\ov{\A}))\sub \SS(G(F)\backslash G(\A))$.



Next, we will describe more explicitly the pieces $\SS(G(F)\backslash G(\A))_{\Om}$.
For $\eta\in \fg^\vee\ot L^{-1}\om_{\ov{C}}(\ov{F})$, let $\bSt_\eta\sub G_{\ov{F}}$ denote the stabilizer of $\eta$, viewed as an algebraic subgroup defined
over $\ov{F}$. Then the stabilizer $\St_\eta$ of $\eta$ in $G(\A)$ is the preimage of $\bSt_\eta(\ov{\A})$ under the projection $G(\A)\to G(\ov{\A})$.
It is clear that $\St_\eta$ contains $N_{\A}$, $\St_{\eta}/N_{\A}\simeq \bSt_\eta(\ov{\A})$, and$(\St_\eta\cap \Ga)/N_F\simeq \bSt_\eta(\ov{F})$.
Note that $(\Ga\cap\St_\eta)\backslash\Ga\simeq \bSt_\eta(\ov{F})\backslash G(\ov{F})$.

We denote by $L_\eta\sub \St_\eta$ the preimage of $\bSt_\eta(\ov{F})$ under the homomorphism $\St_\eta\to \bSt_\eta(\ov{\A})$.
We have $L_\eta=N_{\A}\cdot (\St_\eta\cap \Ga)$.

\begin{claim} There exists a unique character $\wt{\psi}_\eta$ of $L_\eta$, trivial on $\St_\eta\cap \Ga$ and equal to $\psi_\eta$ on $N_{\A}$.
The adjoint action of $\St_\eta$ on $L_\eta$ preserves $\wt{\psi}_\eta$.
\end{claim}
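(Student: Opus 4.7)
The plan is to build $\wt{\psi}_\eta$ on $L_\eta=N_\A\cdot(\St_\eta\cap\Ga)$ by the only possible formula, $\wt{\psi}_\eta(u\gamma):=\psi_\eta(u)$, and then check in succession: (i) well-definedness on the overlap $N_\A\cap(\St_\eta\cap\Ga)$, (ii) multiplicativity, (iii) uniqueness, (iv) invariance under conjugation by $\St_\eta$.

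The key input for (i) is that $N_\A\cap(\St_\eta\cap\Ga)=N_\A\cap\Ga=N_F$, together with the fact that $\psi_\eta|_{N_F}$ is trivial. The latter is exactly the residue-theorem statement: $\psi_\eta$ is $X\mapsto \psi_{\ov{C}}(\lan\eta,X\ran)$, and for $X\in N_F\simeq\fg(\ov{F})$ the rational $1$-form $\lan\eta,X\ran\in\om_{\ov{C}}(\ov{F})$ has total residue zero. Thus the two prescriptions (trivial on $\St_\eta\cap\Ga$, equal to $\psi_\eta$ on $N_\A$) agree on their intersection, giving (i). For (ii), I would compute
\[
(u_1\gamma_1)(u_2\gamma_2)=u_1\cdot\Ad(\gamma_1)(u_2)\cdot\gamma_1\gamma_2
\]
and use that $\gamma_1\in\St_\eta$ fixes $\eta$ under the coadjoint action, hence $\psi_\eta(\Ad(\gamma_1)(u_2))=\psi_\eta(u_2)$, so that the formula is multiplicative. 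Point (iii) is then automatic: any character satisfying the two normalizations must agree with $\wt{\psi}_\eta$ on every product $u\gamma$.

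For the adjoint invariance (iv), the plan is exactly the same: for $g\in\St_\eta$, the image $\bar g\in\bSt_\eta(\ov{\A})$ fixes $\eta$, so conjugation by $g$ preserves $\psi_\eta$ on $N_\A$; since $\wt{\psi}_\eta(u\gamma)$ depends only on the $N_\A$-component $u$, the invariance of $\wt{\psi}_\eta$ follows. The main obstacle here is not the character calculation but rather the set-theoretic point that conjugation by $g\in\St_\eta$ must carry $L_\eta$ into itself, i.e.\ that $\bSt_\eta(\ov{F})$ is normal in $\bSt_\eta(\ov{\A})$. This is automatic when $\bSt_\eta$ is commutative, which is exactly the regular semisimple case (the one relevant for the elliptic orbit analysis that follows in Sec.~\ref{reg-ss-sec}--\ref{reg-ell-sec}). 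In general one needs a separate argument to reduce to this case or to verify normality directly; once that is in hand, the uniqueness part of (iii) applied to $l\mapsto\wt{\psi}_\eta(glg^{-1})$ yields the claimed $\Ad(g)$-invariance.
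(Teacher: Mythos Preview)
Your argument for (i)--(iii) is correct and is exactly the expansion of the paper's one-line proof, which simply records $L_\eta=N_{\A}\cdot(\St_\eta\cap\Ga)$ and $N_{\A}\cap(\St_\eta\cap\Ga)=N_F$ and leaves the rest to the reader. Your observation that $\psi_\eta|_{N_F}\equiv 1$ by the residue theorem, and the multiplicativity check via $\psi_\eta\circ\Ad(\gamma_1)=\psi_\eta$ for $\gamma_1\in\St_\eta$, are precisely the details being suppressed.

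Your caution in (iv) is well-placed and in fact goes beyond what the paper's proof supplies: the paper does not address the second sentence of the Claim at all. You are right that the statement ``the adjoint action of $\St_\eta$ on $L_\eta$'' tacitly requires $L_\eta\trianglelefteq\St_\eta$, i.e.\ that $\bSt_\eta(\ov{F})$ be normal in $\bSt_\eta(\ov{\A})$, which holds when $\bSt_\eta$ is commutative. This covers all the cases the paper actually uses (regular semisimple $\eta$ in Sec.~\ref{reg-ss-sec}--\ref{reg-ell-sec}, and the nilpotent $\eta_0$ for $\PGL_2$ in Sec.~\ref{finitary-sec}, where $\bSt_{\eta_0}=U$ is abelian). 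For arbitrary $\eta$ the normality can fail, so the second sentence should be read as applying to those $\eta$ with commutative stabilizer; once that is granted, your uniqueness argument for (iv) is correct.
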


\begin{proof} We have $L_\eta=N_{\A}\cdot(\St_{\eta}\cap \Ga)$, and $N_{\A}\cap (\St_{\eta}\cap \Ga)=N_F$. 
\end{proof}

\begin{definition} We denote by $\wt{\SS}_\eta\sub \C_{lc}(N_F\backslash G(\A))_\eta$ the space of functions $f$ on $G(\A)$, such that $f(lg)=\wt{\psi}_{\eta}(l)f(g)$ for $l\in L_\eta$,
with compact support modulo left translations by $L_\eta$.
\end{definition}

Let us denote by $\Om_\eta$ the orbit of $\eta$.

\begin{lemma}\label{kappa-lem}
The map 
$$\kappa_\eta f(g)=\sum_{\ga\in (\Ga\cap\St_\eta)\backslash\Ga}f(\ga g).$$
is well defined and gives an isomorphism of $G(\A)$-representations
$$\kappa_\eta:\wt{\SS}_\eta\rTo{\sim} \SS(\Ga\backslash G(\A))_{\Om_\eta}.$$
\end{lemma}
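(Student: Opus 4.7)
The plan is to prove the lemma by constructing an explicit inverse to $\kappa_\eta$ using the projector $\Pi_\eta$.

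First I verify that $\kappa_\eta f$ is well-defined: since $\wt{\psi}_\eta$ is trivial on $\Ga \cap \St_\eta \sub L_\eta$, each summand $f(\ga g)$ depends only on the coset $(\Ga \cap \St_\eta)\ga$. For fixed $g$ the sum is finite because the natural injection $(\Ga \cap \St_\eta)\backslash \Ga \hookrightarrow L_\eta \backslash G(\A)$ (using $L_\eta \cap \Ga = \Ga \cap \St_\eta$) has image, under the identification $L_\eta\backslash G(\A) \simeq \bSt_\eta(\ov{F})\backslash G(\ov{\A})$, equal to the discrete subset $\bSt_\eta(\ov{F})\backslash G(\ov{F})$, while $f$ has compact support modulo $L_\eta$. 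Left $\Ga$-invariance of $\kappa_\eta f$ is immediate by reindexing, right $G(\A)$-equivariance is automatic from the formula, and compact support modulo $\Ga$ holds because $\kappa_\eta f(g) \neq 0$ forces $g \in \Ga \cdot N_\A \cdot K_0$ for some compact $K_0 \sub G(\A)$ that controls the support of $f$ modulo $L_\eta$.

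Next I compute the $N_\A$-Fourier expansion of $\kappa_\eta f$. For $u \in N_\A$, using normality of $N_\A$ and the identity $\psi_\eta(\ga u \ga^{-1}) = \psi_{\ga^{-1}\eta}(u)$ coming from the coadjoint action on characters, we obtain
\[
\kappa_\eta f(ug) = \sum_{\ga \in (\Ga \cap \St_\eta)\backslash \Ga} \psi_{\ga^{-1}\eta}(u)\, f(\ga g).
\]
Applying $\Pi_{\eta'}$ and invoking character orthogonality on the compact group $N_\A/N_F$ yields $\Pi_{\eta'}(\kappa_\eta f) = 0$ for $\eta' \notin \Omega_\eta$ and $\Pi_\eta(\kappa_\eta f) = f$; in particular $\kappa_\eta f \in \SS(\Ga\backslash G(\A))_{\Omega_\eta}$ and $\Pi_\eta \circ \kappa_\eta = \Id$.

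I then take $\iota_\eta F := \Pi_\eta F$ as the candidate inverse. A change of variable $u' = \ga_0^{-1} u \ga_0$ in the integral defining $\Pi_\eta F$, combined with left $\Ga$-invariance of $F$ and $\ga_0 \in \St_\eta$, shows that $\Pi_\eta F$ is $(\Ga \cap \St_\eta)$-invariant; together with its automatic $\psi_\eta$-equivariance under $N_\A$, this produces $\wt{\psi}_\eta$-equivariance under $L_\eta = N_\A(\Ga \cap \St_\eta)$. The main obstacle is the compact support of $\iota_\eta F$ modulo $L_\eta$: writing $F$ as right $K$-invariant (for some compact open $K$) and supported on $\Ga S K$ for a finite set $S$, we compute $\Pi_\eta F(\ga s k) = \Pi_{\ga^{-1}\eta} F(s)$, which is nonzero only when $\ga^{-1}\eta$ lies in the finite set of characters in $\Omega_\eta$ appearing in the $N_\A$-Fourier expansion of $F$ at $s$ (guaranteed by Lemma \ref{P-eta-lem}); this forces $\supp(\iota_\eta F)$ to meet only finitely many double cosets in $L_\eta \backslash G(\A)/K$. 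The identity $\iota_\eta \kappa_\eta = \Id$ was established above, and conversely
\[
\kappa_\eta \iota_\eta F(g) = \sum_\ga \Pi_\eta F(\ga g) = \sum_\ga \Pi_{\ga^{-1}\eta} F(g) = \sum_{\eta' \in \Omega_\eta} \Pi_{\eta'} F(g) = F(g),
\]
using the bijection $(\Ga \cap \St_\eta)\backslash \Ga \to \Omega_\eta$, $\ga \mapsto \ga^{-1}\eta$, and the fact that $F \in \SS(\Ga\backslash G(\A))_{\Omega_\eta}$.
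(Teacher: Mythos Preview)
Your proof is correct and follows essentially the same approach as the paper: both exhibit $\Pi_\eta$ as the inverse to $\kappa_\eta$, verifying $\Pi_\eta\circ\kappa_\eta=\Id$ by character orthogonality on $N_\A/N_F$ and $\kappa_\eta\circ\Pi_\eta=\Id$ via the bijection $(\Ga\cap\St_\eta)\backslash\Ga\to\Om_\eta$. Your treatment is in fact slightly more explicit than the paper's in checking that $\Pi_\eta F$ lands in $\wt{\SS}_\eta$ (the paper leaves the compact-support check implicit in its Step~2); one minor imprecision is that the support of $\Pi_\eta F$ lies a priori in $\Ga N_\A SK$ rather than $\Ga SK$, but since $N_\A/N_F$ is compact and $N_F\sub\Ga$ this is harmless for your argument.
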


\begin{proof} 
{\bf Step 1.}
We claim that the sum in the definition of $\kappa_\eta$ is finite.
It is enough to check that for any $f\in \wt{\SS}_{\eta}$ one has $f(\ga)\neq 0$ for finitely many $\ga\in (\Ga\cap\St_\eta)\backslash\Ga$.
Given $f\in\wt{\SS}_{\eta}$, let $K\sub G(\A)$ be an open compact subgroup such that $f(gk)=f(g)$ for $g\in G(\A)$, $k\in K$.
We claim that $f$ is supported on the set $G_{\eta,K}$ of $g$ such that $\psi_{\Ad(g^{-1})\eta}|_{N_{\A}\cap K}\equiv 1$.
Indeed, for $u\in N_{\A}\cap K$ we have
$$f(g)=f(gu)=f(gug^{-1}\cdot g)=\psi_{\eta}(gug^{-1})f(g)=\psi_{\Ad(g^{-1})(\eta)}(u)f(g).$$
Thus, it is enough to check the set $(\Ga\cap\St_\eta)\backslash(\Ga\cap G_{\eta,K})$ is finite.
The map $g\mapsto \Ad(g^{-1})\eta$ identifies this set with the set $G(\ov{F})\eta\cap K'$, where $K'\sub N_{\A}$ is the orthogonal to $N_{\A}\cap K$ (with respect
to the pairing induced by $\psi$). 
But the latter is set is contained in $N_F\cap K'$ which is finite.

\noindent
{\bf Step 2.}
For $f\in \SS(\Ga\backslash G(\A))$, we have
\begin{align*}
&\Pi_{\Om_\eta}f(g)=\sum_{\ga\in \Ga/\Ga\cap \St_\eta}\int_{N_{\A}/N_F}\psi_{\eta}(\ga^{-1} u\ga)^{-1}f(ug)du=\\
&\sum_{\ga\in \Ga/\Ga\cap \St_\eta}\int_{N_{\A}/N_F}\psi_\eta(u)^{-1}f(u\ga^{-1}g) du=
\kappa_\eta(\Pi_\eta(f)).
\end{align*}
This shows that the image of $\kappa_\eta$ contains $\Om_\eta$.

\noindent
{\bf Step 3.}
For $f\in \wt{\SS}_\eta$, we have
\begin{align*}
&\Pi_\eta\kappa_\eta f(g)=\int_{N_{\A}/N_F}\psi_{\eta}(u)^{-1}\sum_{\ga\in (\Ga\cap\St_\eta)\backslash\Ga}f(\ga ug)du=\\
&\int_{N_{\A}/N_F}\sum_{\ga\in (\Ga\cap\St_\eta)\backslash\Ga}\psi_\eta^{-1}(u)\psi_\eta(\ga u\ga^{-1}) f(\ga g).
\end{align*}
Integrating first over $N_{\A}/N_F$ we get zero unless $\ga$ preserves $\psi_\eta$. Hence, we deduce that
$$\Pi_\eta\kappa_\eta f=f,$$
which implies that $\kappa_\eta$ is injective.

\noindent
{\bf Step 4.}
Finally, for $f\in \wt{\SS}_{\eta}$, we have
\begin{align*}
&\Pi_{\Om_\eta}\kappa_\eta(f)(g)=\sum_{\ga\in\Ga/\Ga\cap\St_{\eta}}\int_{N_{\A}/N_F}\psi_{\eta}(\ga^{-1}u\ga)^{-1}
\sum_{\ga_1\in\Ga/\Ga\cap\St_\eta}f(\ga_1^{-1}ug)du=\\
&\sum_{\ga,\ga_1\in\Ga/\Ga\cap\St_{\eta}}\int_{N_{\A}/N_F}\psi_{\eta}(\ga^{-1}u\ga)^{-1}\psi(\ga_1^{-1}u\ga_1)f(\ga_1^{-1}g)du=
\sum_{\ga\in\Ga/\Ga\cap\St_{\eta}}f(\ga^{-1}g)=\kappa_\eta(f)(g),
\end{align*}
so we conclude that the image of $\kappa_\eta$ is exactly $\Om_\eta$.
\end{proof}


\subsection{Regular semisimple case}\label{reg-ss-sec}

We say that $\eta$ is regular semisimple if the stabilizer subgroup $T_\eta:=\bSt_\eta\sub G_{\ov{F}}$ is a maximal torus in $G_{\ov{F}}$.
Let us fix a regular semisimple orbit $\Om=G(\ov{F})\cdot\eta\sub \fg^\vee \ot L^{-1}\om_{\ov{C}}(\ov{F})$.

Let $\lan\eta\ran^\perp\sub N_\A$ denote the subgroup of $u\in N_\A$ such that $\lan \eta,u\ran=0$, and let
$H_\eta$ denote the quotient of $\St_\eta$ by the normal subgroup $\lan\eta\ran^\perp$,
so that we have a commutative diagram
\begin{diagram}
1&\rTo{}& N_\A&\rTo{}& \St_\eta&\rTo{}& T_\eta(\ov{\A})&\rTo{}&1\\
&&\dTo{\lan\eta,\cdot\ran}&&\dTo{}&&\dTo{\id}\\
1&\rTo{}& \om_{\ov{C}}(\ov{\A})&\rTo{}& H_\eta&\rTo{}& T_\eta(\ov{\A})&\rTo{}&1
\end{diagram}
and $H_\eta$ is a central extension of $T_\eta(\ov{\A})$ by $\om_{\ov{C}}(\ov{\A})$.
We nave a canonical splitting of $H_\eta\to T_\eta(\ov{\A})$ over $T_\eta(\ov{F})$:
$$\si:T_\eta(\ov{F})\simeq (\Ga\cap \St_\eta)/N_F\to H_\eta.$$

\begin{lemma}
The group $H_\eta$ is commutative. 
\end{lemma}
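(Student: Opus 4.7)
The plan is to reduce the global commutativity of $H_\eta$ to the local statement of Lemma \ref{H-commutative-local-lem}, place by place. Since $H_\eta$ is a central extension of the abelian group $T_\eta(\ov{\A})$ by $\om_{\ov{C}}(\ov{\A})$, it is commutative if and only if the associated commutator pairing
\[
[\,,\,]: T_\eta(\ov{\A}) \times T_\eta(\ov{\A}) \to \om_{\ov{C}}(\ov{\A})
\]
vanishes identically. This pairing is computed by choosing lifts $\ti t_1,\ti t_2\in \St_\eta$ of $t_1,t_2\in T_\eta(\ov{\A})$, taking the commutator $[\ti t_1,\ti t_2]\in N_\A$, and applying the pairing $\lan\eta,\cdot\ran$ to land in $\om_{\ov{C}}(\ov{\A})$.

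The key observation is that everything decomposes place by place. First I would show that $\St_\eta$ is the restricted product of the local stabilizers $\St_{\eta_p}\sub G(F_p)$ (this uses that $\bSt_\eta=T_\eta$ is a closed subgroup scheme over $\ov{F}$ whose $\ov{\A}$-points are the restricted product of $T_\eta(\ov{F}_p)$, combined with the fact that $\St_\eta$ is the preimage of $\bSt_\eta(\ov{\A})$ in $G(\A)$). Similarly, the pairing $\lan\eta,\cdot\ran:N_\A\to\om_{\ov{C}}(\ov{\A})$ is the direct sum of the local pairings $\lan\eta_p,\cdot\ran: N_{F_p}\to\om_{\ov{C},p}$, noting that $\eta_p$ is regular semisimple for every place $p$, and lies in $\fg^\vee\ot L^{-1}\om_{\ov{C}}\ot\ov{\OO}_p$ for almost all $p$ so that the restricted product structure is preserved.

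Now given lifts $\ti t_i=(\ti t_{i,p})_p$ of $t_i=(t_{i,p})_p$, the commutator factors as
\[
[\ti t_1,\ti t_2]=([\ti t_{1,p},\ti t_{2,p}])_p\in N_\A,
\]
and applying $\lan\eta,\cdot\ran$ we get
\[
\lan\eta,[\ti t_1,\ti t_2]\ran=\bigl(\lan\eta_p,[\ti t_{1,p},\ti t_{2,p}]\ran\bigr)_p.
\]
By Lemma \ref{H-commutative-local-lem} applied at each place $p$ (using that $\eta_p$ is regular semisimple), the local group $H_{\eta_p}$ is commutative, which precisely means $\lan\eta_p,[\ti t_{1,p},\ti t_{2,p}]\ran=0$. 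Hence every component vanishes and $[\ti t_1,\ti t_2]\in\lan\eta\ran^\perp$, so $t_1$ and $t_2$ commute in $H_\eta$.

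The main (minor) obstacle is the bookkeeping in verifying that the adelic stabilizer genuinely is the restricted product of local stabilizers and that the pairing behaves compatibly at almost all places; once that is checked, the statement follows immediately from the local lemma with no further work.
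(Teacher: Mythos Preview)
Your proposal is correct and follows the same approach as the paper, which simply states that the result follows immediately from the local Lemma~\ref{H-commutative-local-lem}. You have merely spelled out the bookkeeping of the place-by-place reduction via the commutator pairing, which the paper leaves implicit.
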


\begin{proof}
This immediately follows from the corresponding local statement, Lemma \ref{H-commutative-local-lem}.
\end{proof}

The commutativity of $H_\eta$ implies that there is a well defined action of $H_\eta$ on $\wt{\SS}_\eta$ given by $hf(g)=f(\wt{h}^{-1}g)$, where $\wt{h}\in \St_\eta$ is
any lifting of $h\in H_\eta$. Under this action the subgroup $\om_{\ov{C}}(\ov{\A})$ acts through the character $\psi_{\ov{C}}^{-1}$
and $T_\eta(\ov{F})$ acts trivially.

\begin{definition}\label{S-eta-chi-def}
For a smooth character $\chi$ of $H_\eta/T_{\eta}(\ov{F})$, restricting to $\psi_{\ov{C}}^{-1}$
on $\om_{\ov{C}}(\ov{\A})$, let $\wt{\SS}_{\eta,\chi}$ denote
the induced representation from $\chi$ viewed as a character of $\St_\eta$,
i.e., $\wt{\SS}_{\eta,\chi}$ is the space of smooth functions $f$ on $G(\A)$ such that $f(hg)=\chi(h)f(g)$ for $h\in \St_\eta$, with support compact modulo
left shifts by $\St_\eta$.
\end{definition}

\begin{lemma}\label{reg-sem-adm-lem} 
For any open compact subgroup $K\sub G(\OO)$, the space of invariants
$\wt{\SS}_{\eta,\chi}^K$ is finite-dimensional.
\end{lemma}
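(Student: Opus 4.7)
The plan is to identify $\wt\SS_{\eta,\chi}^K$ with the space of finitely supported functions on a certain set of double cosets, then to prove finiteness of that set via a local--global argument built on Proposition \ref{K-inv-prop}.

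First, any $f\in\wt\SS_{\eta,\chi}^K$ is determined by its values on representatives of $\St_\eta\backslash G(\A)/K$, with $f(g)$ forced to vanish unless $\chi$ restricts trivially to $\St_\eta\cap gKg^{-1}$, and otherwise a free parameter in $\mC$. Hence $\dim\wt\SS_{\eta,\chi}^K = |\mcD|$, where
$$\mcD := \{[g]\in\St_\eta\backslash G(\A)/K : \chi|_{\St_\eta\cap gKg^{-1}}\equiv 1\}.$$
Using that $N_\A$ is normal in $G(\A)$ and contained in $\St_\eta$, a short check (absorb any kernel element $n\in N_\A$ into the left factor via $g_2 n = (g_2 n g_2^{-1})\cdot g_2$) shows that the reduction map $r:G(\A)\to G(\ov\A)$ induces a bijection
$$\St_\eta\backslash G(\A)/K \xrightarrow{\ \sim\ } T_\eta(\ov\A)\backslash G(\ov\A)/\ov K.$$
Restricting the condition $\chi|_{\St_\eta\cap gKg^{-1}}\equiv 1$ to $N_\A\cap gKg^{-1} = \Ad(\ov g)\Lambda_K$, where $\Lambda_K := N_\A\cap K$, gives $\psi_\eta|_{\Ad(\ov g)\Lambda_K}\equiv 1$; via adjointness of the pairing underlying $\psi_\eta$ this is exactly $\Ad(\ov g^{-1})\eta \in \Lambda_K^*$, the open compact Pontryagin annihilator of $\Lambda_K$ inside $\fg^\vee\ot L^{-1}\om_{\ov C}(\ov\A)$. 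Consequently, $\mcD$ injects into $\mcD_0 := (\Om_{\ov\A}\cap\Lambda_K^*)/\ov K$ with $\Om_{\ov\A} := \Ad(G(\ov\A))\eta$.

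To prove $\mcD_0$ is finite I would argue locally. Regular semisimplicity of $\eta$ makes $\Om_{\ov\A}$ closed in the adelic space, and $\Lambda_K^*$ is compact, so $\Om_{\ov\A}\cap\Lambda_K^*$ is compact and factors as the product $\prod_p(\Om_{\ov F_p}\cap\Lambda_{K_p}^*)$ of its local counterparts (the restricted-product condition is automatic once one sits inside each local lattice). Dividing by $\ov K=\prod_p\ov K_p$ yields $\mcD_0 = \prod_p(\Om_{\ov F_p}\cap\Lambda_{K_p}^*)/\ov K_p$. At all but finitely many places $p$ the hypotheses of Proposition \ref{K-inv-prop}(ii) hold and the local factor is a single point, while at the finite set of remaining places Proposition \ref{K-inv-prop}(i) ensures the local factor is finite; hence $\mcD_0$ is finite.

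The main technical point is verifying the almost-everywhere applicability of Proposition \ref{K-inv-prop}(ii): we need $\eta_p$ to reduce modulo $\fm_p$ to a regular semisimple element, and $T_\eta$ to split over an unramified extension of $\ov F_p$, at almost every place $p$. The first fails only on the (finitely many) zeros in $\ov C$ of the discriminant of the characteristic polynomial of $\eta$, which is a nonzero global section of a line bundle on $\ov C$; the second follows because the splitting field of $T_\eta$ is a finite separable extension of $\ov F$, unramified outside a finite set of places.
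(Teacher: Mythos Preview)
Your proof is correct and follows the same route as the paper's one-line argument, which simply cites the local Proposition \ref{K-inv-prop}; you have unpacked how the local statement globalizes by factoring the relevant double-coset set into a product of the local sets $\De_{\bO_{\chi_p}}(K_p)$, finite at every place by part (i) and a singleton at almost every place by part (ii).
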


\begin{proof} This follows immediately from the local Proposition \ref{K-inv-prop}.
\end{proof}

\subsection{Regular elliptic case}\label{reg-ell-sec}

Now assume in addition that $\eta$ is elliptic, i.e., the torus $T_\eta$ is anisotropic. Then the group $T_{\eta}(\ov{\A})/T_{\eta}(\ov{F})$ is compact.
Hence, the group 
$$\ov{H}_\eta:=H_{\eta}/(\om_{\ov{F}}(\ov{F})\cdot T_\eta(\ov{F}))$$
is a commutative compact group, an extension of $T_{\eta}(\ov{\A})/T_{\eta}(\ov{F})$ by $\om_{\ov{C}}(\ov{\A})/\om_{\ov{C}}(\ov{F})$.

Consider the set $\Pi_\eta$ of characters of $\ov{H}_\eta$, restricting to the
character $\psi_{\ov{C}}^{-1}$ of $\om_{\ov{C}}(\ov{\A})/\om_{\ov{C}}(\ov{F})$. 
This is a torsor over the set of characters of  $T_{\eta}(\ov{\A})/T_{\eta}(\ov{F})$. 

The action of $H_\eta$ on $\wt{\SS}_{\eta}$ factors through the action $\ov{H}_\eta$, so that 
$\om_{\ov{C}}(\ov{\A})/\om_{\ov{C}}(\ov{F})$ acts by $\psi_{\ov{C}}^{-1}$. 
Hence, $\wt{\SS}_\eta$ splits into a direct sum of $\chi$-isotypic components, where $\chi$ runs through $\Pi$.
Finally, we observe that the $\chi$-isotypic component in $\wt{\SS}_\eta$ coincides with the induced representation
$\wt{\SS}_{\eta,\chi}$ (see Definition \ref{S-eta-chi-def}), so we get a decomposition
\begin{equation}\label{S-eta-decomposition}
\SS(G(F)\backslash G(\A))_{\Om_\eta}\simeq \wt{\SS}_{\eta}=\bigoplus_{\chi\in \Pi_\eta} \wt{\SS}_{\eta,\chi}.
\end{equation}

Combining the decomposition \eqref{S-eta-decomposition} with Lemma \ref{kappa-lem}, we arrive at the following result.

\begin{prop}\label{reg-ell-prop}
Let $\eta$ be regular elliptic. Then one has a decomposition of $G(\A)$-representations
$$\SS(\Ga\backslash G(\A))_{\Om_\eta}\simeq \bigoplus_{\chi\in \Pi} \wt{\SS}_{\eta,\chi}.$$
\end{prop}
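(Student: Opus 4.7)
The plan is to assemble the proposition by combining the two main ingredients already at hand: the identification of Lemma \ref{kappa-lem} and the decomposition \eqref{S-eta-decomposition}. First, I would apply Lemma \ref{kappa-lem} to replace $\SS(\Ga\backslash G(\A))_{\Om_\eta}$ by $\wt{\SS}_\eta$ via the $G(\A)$-equivariant isomorphism $\kappa_\eta$. Since $\kappa_\eta$ is $G(\A)$-equivariant and the decomposition I am after is stable under right translation, the problem reduces to proving \eqref{S-eta-decomposition}, i.e.\ the direct sum decomposition $\wt{\SS}_\eta=\bigoplus_{\chi\in\Pi_\eta}\wt{\SS}_{\eta,\chi}$.

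To prove the latter, I would carry out Fourier analysis with respect to the left action of $\ov{H}_\eta$ on $\wt{\SS}_\eta$. The assumption that $\eta$ is \emph{elliptic} enters exactly here: it ensures that the torus $T_\eta$ is anisotropic over $\ov{F}$, so $T_\eta(\ov{\A})/T_\eta(\ov{F})$ is compact, and hence $\ov{H}_\eta$, being an extension of this group by the compact group $\om_{\ov{C}}(\ov{\A})/\om_{\ov{C}}(\ov{F})$, is a compact abelian group. Any $f\in\wt{\SS}_\eta$ is smooth, so right-invariant under some open compact subgroup $K\sub G(\A)$ and supported on finitely many $L_\eta$-cosets; together these imply that the $\ov{H}_\eta$-translates of $f$ span a finite-dimensional subspace. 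Peter--Weyl then yields the isotypic decomposition $\wt{\SS}_\eta=\bigoplus_\chi (\wt{\SS}_\eta)^\chi$, and only those $\chi$ restricting to $\psi_{\ov{C}}^{-1}$ on $\om_{\ov{C}}(\ov{\A})/\om_{\ov{C}}(\ov{F})$ contribute, since that is the prescribed central character on $\wt{\SS}_\eta$ by construction; i.e.\ $\chi$ ranges over $\Pi_\eta$.

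Finally, I would identify $(\wt{\SS}_\eta)^\chi$ with the induced representation $\wt{\SS}_{\eta,\chi}$ of Definition \ref{S-eta-chi-def}. A function $f\in\wt{\SS}_\eta$ lies in $(\wt{\SS}_\eta)^\chi$ iff $f(\wt{h}^{-1}g)=\chi(h)f(g)$ for all $h\in\ov{H}_\eta$ and any lift $\wt{h}\in\St_\eta$; unwinding the definitions (and absorbing signs into a choice of $\chi$ versus $\chi^{-1}$ parametrization of $\Pi_\eta$) this is exactly the transformation rule $f(hg)=\chi(h)f(g)$ for $h\in\St_\eta$. The remaining compatibility to check is that the support conditions match: $f\in\wt{\SS}_\eta$ demands compact support modulo $L_\eta$, whereas the induced picture demands compact support modulo $\St_\eta$. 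These are equivalent precisely because in the elliptic case $\St_\eta/L_\eta\simeq T_\eta(\ov{\A})/T_\eta(\ov{F})$ is compact.

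The main obstacle — and the only place where the regular elliptic hypothesis is essential — is the compactness of $\ov{H}_\eta$, without which the Peter--Weyl type decomposition would fail and one would have to work with a direct integral rather than a direct sum. All other steps are formal bookkeeping, once the commutativity of $H_\eta$ (already established) is in hand.
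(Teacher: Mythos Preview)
Your proposal is correct and follows essentially the same approach as the paper: the paper's proof consists of the single sentence ``combining the decomposition \eqref{S-eta-decomposition} with Lemma \ref{kappa-lem}'', and the discussion preceding \eqref{S-eta-decomposition} is precisely the Fourier analysis on the compact abelian group $\ov{H}_\eta$ that you spell out. Your additional remark that the support conditions (modulo $L_\eta$ versus modulo $\St_\eta$) match because $\St_\eta/L_\eta\simeq T_\eta(\ov{\A})/T_\eta(\ov{F})$ is compact in the elliptic case is a detail the paper leaves implicit.
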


\subsection{Admissible representations and finitary functions}\label{adm-rep-sec}

A smooth representation $V$ of $G(\A)$ is called admissible if $V^K$ is finite dimensional for every compact open subgroup $K\sub G(\A)$.
Since the functors $V\mapsto V^K$ are exact, any subquotient of an admissible representation is admissible.

\begin{definition}
We say that a function $f\in \SS(G(F)\backslash G(\A))$ is {\it $G(\A)$-finitary} (or simply, {\it finitary} if it is contained in an admissible $G(\A)$-subrepresentation of $\SS(G(F)\backslash G(\A))$. 
We denote by $\SS_f(G(F)\backslash G(\A))\sub \SS(G(F)\backslash G(\A))$ the subspace of finitary functions.
\end{definition}

For example, by Lemma \ref{reg-sem-adm-lem} and Proposition \ref{reg-ell-prop}, the $G(\A)$-representation $\SS(G(F)\backslash G(\A))_{\Om_\eta}$ is admissible for $\eta$
regular elliptic. Hence, the regular elliptic part of $\SS(G(F)\backslash G(\A))$ consists of finitary functions.
Now let us consider the case of other semisimple orbits.

\begin{lemma}\label{split-non-adm-lem}
Let $\Om=\Om_\eta$ be a regular semisimple orbit such that the corresponding maximal torus admits a nontrivial character $T_\eta\to \G_m$ (defined over $k$). 
Then $\SS(G(F)\backslash G(\A))_{\Om}$ does not contain admissible subrepresentations.
\end{lemma}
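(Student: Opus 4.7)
The plan is to derive a contradiction by producing a nonzero common $H_\eta$-eigenvector inside an admissible enlargement of $V$ and then showing, via a support argument, that no such eigenvector can exist in $\wt{\SS}_\eta$ under the given hypothesis.

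First I would establish the support obstruction. The assumption that $T_\eta$ admits a nontrivial character to $\G_m$ defined over $k$ implies, via composition with the degree map on $\ov{F}^*\backslash\ov{\A}^*$, that $T_\eta(\ov{F})\backslash T_\eta(\ov{\A})$ is non-compact. This group acts freely on $L_\eta\backslash G(\A)=T_\eta(\ov{F})\backslash G(\ov{\A})$ by left translation (factoring through $H_\eta$), since the stabilizer of $T_\eta(\ov{F})g$ under left multiplication by $T_\eta(\ov{\A})$ is just $T_\eta(\ov{F})$. Hence all its orbits there are non-compact. Consequently, if $v\in \wt{\SS}_\eta^K$ were a nonzero common $H_\eta$-eigenvector with some character $\chi$, then its support, being compact modulo $L_\eta$ by membership in $\wt{\SS}_\eta$, would simultaneously be $T_\eta(\ov{\A})/T_\eta(\ov{F})$-invariant and therefore non-compact, forcing $v=0$.

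Next, suppose for contradiction that $V\sub \SS(G(F)\backslash G(\A))_\Om$ is a nonzero admissible subrepresentation, and identify $V\sub \wt{\SS}_\eta$ via $\kappa_\eta$. After passing to a finitely generated subrepresentation and using finite length of admissible finitely generated smooth representations of $G(\A)$, I reduce to $V$ irreducible. Let $H_{\eta,V}:=\{h\in H_\eta:L_hV=V\}$. For $h\notin H_{\eta,V}$, the irreducibility of $V$ and $L_hV$ forces $L_hV\cap V=0$, so distinct cosets of $H_{\eta,V}$ contribute linearly independent maps $L_h|_V\in\Hom_{G(\A)}(V,\wt{\SS}_\eta)$. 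Since $\wt{\SS}_\eta=\ind_{L_\eta}^{G(\A)}(\wt{\psi}_\eta)$ embeds in the ordinary smooth induction, Frobenius reciprocity yields
$$\dim\Hom_{G(\A)}(V,\wt{\SS}_\eta)\le \dim\Hom_{L_\eta}(V|_{L_\eta},\wt{\psi}_\eta).$$
A Whittaker-type finiteness argument then shows the right hand side is finite: the $\psi_\eta$-twisted coinvariants of $V$ under $N_\A$ form a module over $T_\eta(\ov{F})$ whose admissibility, combined with standard Jacquet-module finiteness, forces finite-dimensionality. Hence $|H_\eta/H_{\eta,V}|<\infty$.

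With $|H_\eta/H_{\eta,V}|<\infty$ established, I set $V^{\mathrm{tot}}:=\sum_{h\in H_\eta/H_{\eta,V}}L_hV\sub \wt{\SS}_\eta$, a finite direct sum of isomorphic admissible irreducibles (disjoint by irreducibility), hence itself admissible and $H_\eta$-stable by construction. Choosing a compact open $K$ with $(V^{\mathrm{tot}})^K\neq 0$, the commutative group $H_\eta$ acts on the finite-dimensional space $(V^{\mathrm{tot}})^K$ and therefore admits a common eigenvector there, contradicting the support obstruction of the second paragraph. The step I expect to be the main obstacle is the Whittaker-type finiteness of $\Hom_{L_\eta}(V,\wt{\psi}_\eta)$ for admissible irreducible $V$: while analogous Jacquet-module finiteness is classical in the $p$-adic setting, here $L_\eta$ is not a unipotent subgroup in the usual sense (it combines the discrete torus factor $T_\eta(\ov{F})$ with the nilpotent-like $N_\A$), so the standard argument requires careful adaptation to the present adelic nilpotent-extension context.
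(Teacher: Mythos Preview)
Your support obstruction in the second paragraph is correct and is essentially the same idea the paper uses at the end of its proof: a nonzero $H_\eta$-eigenvector in $\wt{\SS}_\eta$ would have support that is both compact modulo $L_\eta$ and invariant under the noncompact group $T_\eta(\ov{\A})/T_\eta(\ov{F})$, a contradiction.

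The genuine gap is exactly the step you flagged: the finiteness of $\Hom_{L_\eta}(V,\wt{\psi}_\eta)$. The appeal to ``standard Jacquet-module finiteness'' does not apply here, since $L_\eta$ is an extension of the \emph{discrete} group $T_\eta(\ov{F})$ by $N_\A$, and admissibility of a module over a discrete group says nothing about its dimension. Without this finiteness, the bound on $|H_\eta/H_{\eta,V}|$ fails, and the whole argument collapses. (There is also a secondary issue: the claim that the maps $L_h|_V$ for distinct cosets of $H_{\eta,V}$ are linearly independent in $\Hom_{G(\A)}(V,\wt{\SS}_\eta)$ is not justified; pairwise-disjoint irreducible subspaces need not be linearly independent. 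And the reduction to irreducible $V$ via ``finite length of admissible finitely generated smooth $G(\A)$-representations'' would itself require justification in this adelic setting.)

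The paper avoids all of this by invoking the global analogue of Proposition~\ref{ind}: the functor $\ind_{\St_\eta}^{G(\A)}$ is an equivalence from the category of smooth $\St_\eta$-representations on which $N_\A$ acts by $\psi_\eta$ onto its essential image, which is closed under subobjects. Hence any subrepresentation $V\subset\wt{\SS}_\eta=\ind_{\St_\eta}^{G(\A)}\SS_{\St_\eta}$ is of the form $\ind_{\St_\eta}^{G(\A)}W$ for some $W\subset\SS_{\St_\eta}$. If $V$ is admissible then, by the explicit description of $K$-invariants in Proposition~\ref{ind}(1), $W^{K\cap\St_\eta}$ is finite-dimensional. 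But $\St_\eta$ acts on $\SS_{\St_\eta}$ through the \emph{commutative} quotient $H_\eta$, so $W^{K\cap\St_\eta}$ is itself an $H_\eta$-subrepresentation of $\SS_{\St_\eta}$, and your own support argument (applied inside $\SS_{\St_\eta}$ rather than $\wt{\SS}_\eta$) then forces it to be zero. Choosing $K$ small enough that $W^{K\cap\St_\eta}\neq 0$ gives the contradiction. The point is that the Mackey equivalence replaces your delicate multiplicity/Whittaker estimate by an elementary reduction to a commutative situation.
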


\begin{proof}
We use notation of Sec.\ \ref{reg-ss-sec}.
By Lemma \ref{kappa-lem}, $\SS(G(F)\backslash G(\A))_{\Om}\simeq \wt{\SS}_{\eta}$, where $\eta$ is split semisimple. We have an isomorphism
$$\wt{\SS}_{\eta}\simeq \ind_{\St_\eta}^{G(\A)} \SS_{\St_\eta},$$ 
where $\SS_{\St_\eta}$ is the subspace of $f\in \C_{lc}(N_F\backslash \St_\eta)$ such that $f(lh)=\wt{\psi}_{\eta}(l)f(h)$ for $l\in L_\eta$, $h\in \St_\eta$,
with compact support modulo left translations by $L_\eta$. Note that since $T_\eta=\bSt_\eta$ is commutative, $L_\eta$ is a normal subgroup in $\St_\eta$. 

Note that as in Prop.\ \ref{ind}, the induction functor gives an equivalence of the category of smooth representations $\rho$ of $\St_\eta$ with 
$\rho|_{N_\A}=\psi_{\eta}$, with a subcategory of the category of representations of $G(\A)$, closed under subobjects.
Hence, any subrepresentation of $\wt{\SS}_{\eta}$ is induced by a subrepresentation of $\SS_{\St_\eta}$.
Thus, it is enough to prove that $\SS_{\St_\eta}$ does not have any admissible subrepresentations.
Note that $\St_\eta$ acts on $\SS_{\St_\eta}$ through its commutative quotient $H_\eta$. Thus, for any open compact subgroup $K$,
the subspace of $K$-invariants is an $H_\eta$-subrepresentation. Thus, it is enough to check that $\SS_{\St_\eta}$ does not contain finite-dimensional
subrepresentations. Using our homomorphism $T_\eta\to \G_m$ we get a surjective homomorphism
$$\deg:\St_\eta/L_{\eta}\to T_\eta(\ov{\A})/T_\eta(\ov{F})\to \Pic(\ov{C})\rTo{\deg}\Z.$$
Now for any nonzero $f\in \SS_{\St_\eta}$, let $S\sub \St_\eta/L_\eta$ be the compact support of $f$. Then $\deg(S)$ is finite.
Hence, for $h\in \St_\eta$ such that $\deg(h)\gg 0$, the functions $(h^nf)_{n\in \Z}$ have disjoint supports, hence, span an infinite-dimensional
subspace.
\end{proof}

\subsection{Connection with the Hitchin fibers}\label{Hitchin-sec}

Now let us assume that $\Ga=G(F)$ and
consider the invariants in $\wt{\SS}_\eta$ with respect to $G(\OO)\sub G(\A)$. As in Lemma \ref{kappa-lem}, one checks that
every $f\in\wt{\SS}_\eta^{G(\OO)}$ is supported on the subset $G(\A)_\eta\sub G(\A)$ consisting of $g\in G(\A)$ such that 
$$\Ad(g^{-1})(\eta)\in (\fg^\vee\ot L^{-1}\om_{\ov{C}})(\ov{\OO})\sub (\fg^\vee\ot L^{-1}\om_{\ov{C}})(\ov{\A}).$$
Note that there is a similarly defined subset $G(\ov{\A})_\eta\sub G(\ov{\A})$.

Let $\MM^{Higgs,L^{-1}}(\ov{C})$ denote the stack of $L^{-1}$-twisted Higgs $G$-bundles $(P,\phi)$ on $\ov{C}$. By definition, here $P$ is a $G$-bundle,
and $\phi$ is a section of $H^0(\ov{C},\fg^\vee_P\ot L^{-1}\om_{\ov{C}})$, where $\fg_P$ is the vector bundle associated with $P$ and the adjoint representation of $G$.

We have a natural $\G_a$-torsor over $\MM^{Higgs,L^{-1}}$ defined as follows. For simplicity, let us describe the corresponding $k$-torsor on $k$-points.
For every $L^{-1}$-twisted Higgs $G$-bundle $(P,\phi)$ on $\ov{C}$, we have
a natural $H^1(\ov{C},\fg_P\ot L)$-torsor of all liftings of $P$ to a $G$-bundles over $C$. By Serre duality, we have a natural pairing between
$H^1(\ov{C},\fg_P\ot L)$ and $H^0(\ov{C},\fg_P^\vee\ot L^{-1}\om_{\ov{C}})$. Thus, we can take push-forward of our  $H^1(\ov{C},\fg_P\ot L)$-torsor with respect
to the functional  $H^1(\ov{C},\fg_P\ot L)\to k$ given by the pairing with $\phi$.

Let us denote by $\LL_\psi$ the associated $\C^*$-torsor on the groupoid of $k$-points, where we use the additive character $\psi:k\to \C^*$.
For a $\C^*$-torsor $\LL$ over a set $X$ we denote by $\SS(X,\LL)$ the space of finitely supported sections of $\LL\times_{\C^*} \C$.

\begin{prop}\label{Higgs-prop}
\begin{enumerate}
\item
There is a natural identification of groupoids
$$\bSt_\eta(\ov{F})\backslash G(\ov{\A})_\eta/G(\ov{\OO})\simeq \MM^{Higgs,L^{-1}}_\eta(\ov{C})(k),$$
where $\MM^{Higgs,L^{-1}}_\eta(\ov{C})$ is the subgroupoid of $L^{-1}$-twisted Higgs $G$-bundles $(V,\phi)$ such that the orbit of $\phi$ at the general point coincides with $\Om_\eta$.

\item
We have a natural identification
\begin{equation}\label{Higgs-identification-eq}
\wt{\SS}_\eta^{G(\OO)}\simeq \SS(\MM^{Higgs,L^{-1}}_\eta(\ov{C})(k),\LL_\psi).
\end{equation}
In particular, 
$$\dim \wt{\SS}_\eta^{G(\OO)}=|\MM^{Higgs,L^{-1}}_\eta(\ov{C})(k)|$$

\item 
Assume in addition that $\eta$ is regular semisimple. Then there is an action of $H_\eta$ on $\LL_{\psi}$ compatible with the action of $T_\eta(\ov{\A})$ on
$\MM^{Higgs,L^{-1}}_\eta(\ov{C})(k)$ such that the subgroup $\om_{\ov{C}}(\ov{\A})$ acts via $\psi_{\ov{C}}^{-1}$, and such that the isomorphism
\eqref{Higgs-identification-eq} is compatible with the $H_\eta$-action.
\end{enumerate}
\end{prop}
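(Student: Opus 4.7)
The plan is to reduce everything to the standard adelic description of $G$-bundles combined with the deformation theory of bundles under the nilpotent extension $C \to \ov{C}$. For Part (1), I would start from the groupoid identification $\Bun_G(\ov{C})(k) \simeq G(\ov{F})\backslash G(\ov{\A})/G(\ov{\OO})$ and build the Higgs field from $\eta$. Given a representative $\bar{g} \in G(\ov{\A})_\eta$, let $P = P(\bar{g})$ be the associated $G$-bundle; transporting $\eta$ through the generic trivialization produces a rational section of $\fg_P^\vee \otimes L^{-1}\om_{\ov{C}}$ whose local expression at a place $p$ is $\Ad(\bar{g}^{-1})\eta$. The condition $\bar{g} \in G(\ov{\A})_\eta$ is precisely that this section is regular at every point, hence extends to a global Higgs field $\phi$, whose generic orbit is $\Om_\eta$ by construction. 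Right multiplication by $G(\ov{\OO})$ changes only local trivializations, so does not affect $(P,\phi)$; left multiplication by $\gamma \in G(\ov{F})$ replaces the rational expression of $\phi$ by $\Ad(\gamma)\eta$, so $\phi$ is preserved exactly when $\gamma \in \bSt_\eta(\ov{F})$. Tracking automorphism groups on both sides upgrades this to the required groupoid equivalence.

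For Part (2), first observe, by the argument of Step~1 in the proof of Lemma \ref{kappa-lem}, that every $f \in \wt{\SS}_\eta^{G(\OO)}$ is supported on $G(\A)_\eta$ and descends to a $\wt{\psi}_\eta$-equivariant function on $L_\eta\backslash G(\A)_\eta/G(\OO)$. The reduction map to $\bSt_\eta(\ov{F})\backslash G(\ov{\A})_\eta/G(\ov{\OO}) \simeq \MM^{Higgs,L^{-1}}_\eta(\ov{C})(k)$ is surjective, with fibers parametrizing liftings of the $G$-bundle $P$ on $\ov{C}$ to a $G$-bundle on $C$. Using the identification $N_\A \simeq \fg \otimes L(\ov{\A})$ and the exact sequences of adelic groups modulo their reductions, the fiber over $\bar{g}$ is identified with the standard deformation space $H^1(\ov{C}, \fg_P \otimes L)$. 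The character $\wt{\psi}_\eta$ restricted to this fiber equals, by the definition $\psi_\eta(X) = \psi_{\ov{C}}(\lan \eta, X\ran)$ together with the global residue formula, the character of $H^1(\ov{C}, \fg_P \otimes L)$ obtained by Serre-duality pairing with $\phi$ and then applying $\psi$. This is precisely the character defining $\LL_\psi$, yielding the identification \eqref{Higgs-identification-eq}; the dimension count follows since any such $f$ must have finite support.

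For Part (3), in the regular semisimple setting the left $\St_\eta$-action on $\wt{\SS}_\eta$ commutes with right $G(\OO)$-translation and, by the global version of Lemma \ref{H-commutative-local-lem}, factors through $H_\eta$; on the Higgs side, $T_\eta(\ov{\A})$ acts on $\MM^{Higgs,L^{-1}}_\eta(\ov{C})(k)$ via left multiplication on $G(\ov{\A})_\eta$. Transporting the $H_\eta$-action through the identification of Part~(2) gives the required action on $\LL_\psi$, with $\om_{\ov{C}}(\ov{\A})$ acting through $\psi_{\ov{C}}^{-1}$ by design. The main obstacle is the technical core of Part~(2): identifying the fiber of the reduction map with $H^1(\ov{C}, \fg_P \otimes L)$ and checking that the restriction of $\wt{\psi}_\eta$ to it matches the Serre-duality character of pairing with $\phi$. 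This requires carefully assembling the local pairings defining $\psi_\eta$ into the global residue pairing, and tracking how $\bSt_\eta(\ov{F})$-invariance on the left and $G(\ov{\OO})$-invariance on the right interact in the fiber description.
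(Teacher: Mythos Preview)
Your proposal is correct and follows essentially the same approach as the paper: build the Higgs field by transporting $\eta$ through the generic trivialization and observe that $\bar{g}\in G(\ov{\A})_\eta$ is exactly the regularity condition; for Part~(2), identify the fibers of the reduction map $(\St_\eta\cap G(F))\backslash G(\A)_\eta/G(\OO)\to \bSt_\eta(\ov{F})\backslash G(\ov{\A})_\eta/G(\ov{\OO})$ with $H^1(\ov{C},\fg_P\ot L)$ via the $N_\A$-action, and match $\psi_\eta$ with the Serre-duality character defining $\LL_\psi$; for Part~(3), transport the left $\St_\eta$-action through the identification. The paper's own proof is in fact terser than yours on the ``technical core'' you flag---it simply asserts the fiber identification and the compatibility with $\psi_\eta$---so your explicit mention of the residue pairing is a welcome clarification rather than a deviation.
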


\begin{proof}
\begin{enumerate}
\item
Let $P(g)$ be the $G$-bundle on $\ov{C}$ associated with $g=(g_p)\in G(\ov{\A})$ (recall that we assume $G$ to be split connected reductive, so every $G$-bundle on $\ov{C}$ has
this form). By definition, $P(g)$ is equipped with trivializations $e_\eta$, $(e_p)$, such that
$e_p=e_\eta g_p$. The associated vector bundle $\fg^\vee_{P(g)}$ has 
$$\fg^\vee_{P(g),\eta}=\fg^\vee(\ov{F}), \ \ \fg^\vee_{P(g),p}=\Ad(g_p)\fg^\vee(\ov{\OO}_p)\sub \fg(\ov{F}_p).$$
Hence, assuming that $g\in G(\ov{\A})_\eta$, we can view $\eta\in \fg^{\vee}\ot L^{-1}\om_{\ov{C}}(\ov{F})$ as an $L^{-1}$-twisted Higgs field for $P(g)$.
If $g'=hgk$, with $h\in \bSt_\eta(\ov{F})$ and $k\in G(\ov{\OO})$, then we have the isomorphism between the Higgs bundles $(P(g),\eta)$ and $(P(g'),\eta)$ sending
$e_\eta$ to $e_\eta h$.

\item
The space $\wt{\SS}_{\eta}$ is the space of functions on $(\St_\eta\cap G(F))\backslash G(\A)_\eta/G(\OO)$ transforming according to the character $\psi_\eta$
with respect to the left shifts by $N_\A\simeq \fg\ot L(\ov{\A})$. It remains to observe that these shifts act transitively on the fibers of the projection
$$(\St_\eta\cap G(F))\backslash G(\A)_\eta/G(\OO)\to \bSt_\eta(\ov{F})\backslash G(\ov{\A})_\eta/G(\ov{\OO})\simeq  \MM^{Higgs,L^{-1}}_\eta(\ov{C})(k),$$
and that these fibers are identified with $H^1(\ov{C},\fg_P\ot L)$.

\item
The action of $H_\eta$ on $\LL_\psi$ is induced by the action of $\St_\eta$ on $(\St_\eta\cap G(F))\backslash G(\A)_\eta/G(\OO)$ by left shifts, which is compatible
with the action of $\bSt_\eta(\ov{\A})$ on $(\St_\eta\cap G(F))\backslash G(\A)_\eta/G(\OO)$. 
\end{enumerate}
\end{proof}

\subsection{Finitary functions for $\PGL_2$}\label{finitary-sec}

Now we specialize to the case $G=\PGL_2$, so everywhere in this subsection we assume that $G=\PGL_2$.
We also assume that the characteristic of $k$ is $\neq 2$, and use the identification $\fg^\vee\simeq \fg$ given by the Killing form.

Our goal is to determine explicitly the subspace of finitary functions in $\SS(G(F)\backslash G(\A))$
using the orbit decomposition obtained in Section \ref{aut-rep-sec} (see Theorem \ref{fin-dec-thm}).


In addition to our previous results concerning semisimple orbits, we need study the space $\SS(G(F)\backslash G(\A))_{\Om_n}$, where $\Om_n$ is the orbit of the matrix
\begin{equation}\label{eta0-eq}
\eta_0:=\left(\begin{matrix} 0 & \a_0 \\ 0 & 0\end{matrix}\right)\in \fg\ot L^{-1}\om_{\ov{C}}(\ov{F}), 
\end{equation}
where we take any nonzero $\a_0\in L^{-1}\om_{\ov{C}}(\ov{F})$.
This case turns out to be similar to that of a regular semisimple orbit. 

The stabilizer of $\eta_0$ in $\PGL_2$ is exactly the group $U$ of strictly upper triangular matrices. Thus, $L_{\eta_0}=N_{\ov{\A}}U(F)$.
By Lemma \ref{kappa-lem}, we have $\SS(G(F)\backslash G(\A))_{\Om_n}\simeq \wt{\SS}_{\eta_0}$.

As in Sec. \ref{reg-ss-sec}, we define the central extension $H_{\eta_0}$ of $U(\ov{\A})$ by $\om_{\ov{C}}(\ov{\A})$ by setting $H_{\eta_0}=\St_{\eta_0}/\lan \eta_0\ran^\perp$
where $\lan\eta_0\ran^\perp\sub N_\A$ is the kernel of $\lan \eta_0,\cdot\ran$. Note that $\lan\eta_0\ran^\perp=\fb(\ov{\A})\ot L$. 
Thus, we have a splitting 
$$\si:U(\ov{\A})\simeq U(\A)/(\fu(\ov{\A})\ot L)\to H_{\eta_0},$$
so we have a canonical decomposition
$$H_{\eta_0}\simeq \om_{\ov{C}}(\ov{\A})\times U(\ov{\A}).$$

The action of $\St_{\eta_0}$ on $\wt{\SS}_{\eta_0}$ factors through an action of $H_\eta$, such that the subgroup $\om_{\ov{C}}(\ov{F})\times U(\ov{F})$ acts trivially.
Since the group $U(\ov{\A})/U(\ov{F})\simeq \ov{\A}/\ov{F}$ is compact, we have can decompose $\wt{\SS}_{\eta_0}$ into isotypical components corresponding to characters
of $\ov{\A}/\ov{F}$. These characters are identified with $\om_{\ov{C}}(\ov{F})$: with every $\a\in \om_{\ov{C}}(\ov{F})$, one associates the character 
$\chi_\a(x)=\psi_{\ov{C}}(x\a)$. Thus, we get
\begin{equation}\label{nilp-decomposition-eq}
\wt{\SS}_{\eta_0}\simeq\bigoplus_{\a\in \om_{\ov{C}}(\ov{F})} \wt{\SS}_{\eta_0,\chi_\a},
\end{equation}
where 
$$\wt{\SS}_{\eta_0,\chi_\a}=\ind_{\St_{\eta_0}}^{G(\A)}(\psi_{\ov{C}},\chi_\a).$$
Here we use the homomorphism $\St_{\eta_0}\to H_{\eta_0}=\om_{\ov{C}}(\ov{\A})\times U(\ov{\A})$ to view $(\psi_{\ov{C}},\chi_\a)$
as a character of $\St_{\eta_0}$. Note that the $G(\A)$-representations in the right-hand side of \eqref{nilp-decomposition-eq} are irreducible and pairwise non-isomorphic.

\begin{lemma}\label{nilp-adm-lem}
(i) For any nonzero $\a\in \om_{\ov{C}}(\ov{F})$, the representation $\wt{\SS}_{\eta_0,\chi_\a}$ is admissible.

\noindent
(ii) The representation $\wt{\SS}_{\eta_0,\chi_0}$ of $G(\A)$, where $\chi_0$ is the trivial character of $\ov{\A}/\ov{F}$, is not admissible.
\end{lemma}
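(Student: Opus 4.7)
The plan is to analyze $\wt{\SS}_{\eta_0,\chi_\a}^K$ via Frobenius reciprocity, reducing the count of relevant double cosets to a divisor-counting problem on $\ov{C}$. Since $\wt{\SS}_{\eta_0,\chi_\a}=\ind_{\St_{\eta_0}}^{G(\A)}(\psi_{\ov{C}},\chi_\a)$, the standard adjunction (as in Proposition \ref{ind}(1)) gives
\[
\dim\wt{\SS}_{\eta_0,\chi_\a}^K=\#\bigl\{g\in\St_{\eta_0}\backslash G(\A)/K:(\psi_{\ov{C}},\chi_\a)|_{gKg^{-1}\cap\St_{\eta_0}}\equiv 1\bigr\}.
\]
The first key observation is the identification $\St_{\eta_0}\backslash G(\A)/K\simeq U(\ov{\A})\backslash G(\ov{\A})/\ov{K}$: indeed, $\St_{\eta_0}=r^{-1}(U(\ov{\A}))$ contains the normal subgroup $N_\A=\ker r$, so the right $G(\A)$-action on $\St_{\eta_0}\backslash G(\A)$ factors through $G(\ov{\A})$. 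For $K=G(\OO)$, the Iwasawa decomposition $G(\ov{\A})=U(\ov{\A})T(\ov{\A})G(\ov{\OO})$ identifies this with $T(\ov{\A})/T(\ov{\OO})\simeq\Div(\ov{C})$, so each class is represented by $\ov{g}=\ov{u}\ov{t}\ov{k}$ and encoded by $D:=\div(t_2/t_1)$ with $\ov{t}=\diag(t_1,t_2)$.

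Next I translate the two pieces of the triviality condition into divisor inequalities on $D$. Using that $U$ stabilizes $\eta_0$ by conjugation, that $G(\ov{\OO})$ preserves the integral submodule $(\fg^\vee\ot L^{-1}\om_{\ov{C}})(\ov{\OO})$, and that the residue-pairing duality gives $(\fg(\ov{\OO}))^\perp=(\fg^\vee\ot L^{-1}\om_{\ov{C}})(\ov{\OO})$, the condition $\psi_{\eta_0}|_{gKg^{-1}\cap N_\A}\equiv 1$ becomes the integrality $(t_2/t_1)\a_0\in L^{-1}\om_{\ov{C}}(\ov{\OO})$, i.e.\ $D\ge -\div(\a_0)$. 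A direct computation gives $U(\ov{\A})\cap\ov{t}G(\ov{\OO})\ov{t}^{-1}=\{u_x:x\in(t_1/t_2)\ov{\OO}\}$ (preserved under $\ov{u}$-conjugation since $U$ is abelian), so $\chi_\a|_{U(\ov{\A})\cap\ov{g}\ov{K}\ov{g}^{-1}}\equiv 1$ becomes $(t_1/t_2)\a\in\om_{\ov{C}}(\ov{\OO})$, i.e.\ $D\le\div(\a)$.

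For part (i) with $\a\neq 0$, the combined bounds $-\div(\a_0)\le D\le\div(\a)$ confine $D$ to a finite box of divisors on $\ov{C}$, giving finite-dimensionality of $\wt{\SS}_{\eta_0,\chi_\a}^{G(\OO)}$. For a general open compact $K\sub G(\A)$, choosing a principal congruence subgroup $K^{(n)}$ contained in $K$, the same analysis yields a finite (level-shifted) box, so $\wt{\SS}_{\eta_0,\chi_\a}^{K^{(n)}}$ is finite-dimensional, and therefore so is its subspace $\wt{\SS}_{\eta_0,\chi_\a}^K$. For part (ii), with $\a=0$ the upper bound is vacuous, so the admissible divisors form the infinite set $\{D\in\Div(\ov{C}):D\ge -\div(\a_0)\}$; since functions supported on distinct double cosets are linearly independent, each contributes an independent one-dimensional summand, showing $\wt{\SS}_{\eta_0,\chi_0}^{G(\OO)}$ is infinite-dimensional and hence non-admissible. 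The main technical subtlety is the careful translation of adelic triviality conditions into divisor inequalities --- specifically the explicit computation of $U(\ov{\A})\cap\ov{t}G(\ov{\OO})\ov{t}^{-1}$ and the conversion of $\psi_{\eta_0}$-triviality into integrality of $\Ad(\ov{t}^{-1})\eta_0$ via residue-pairing duality.
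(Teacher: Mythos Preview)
Your proof is correct. The paper's own proof is a one-line citation of local admissibility results from \cite[Sec.~4.4]{K-YD} (analogous to Proposition~\ref{K-inv-prop}), whereas you give a direct global computation: identify $\St_{\eta_0}\backslash G(\A)/G(\OO)$ with $\Div(\ov{C})$ via Iwasawa, then translate the triviality of $(\psi_{\ov{C}},\chi_\a)$ on $gKg^{-1}\cap\St_{\eta_0}$ into the two divisor inequalities $-\div(\a_0)\le D\le\div(\a)$. Your splitting of the triviality condition into the $N_\A$-part and the $U(\ov{\A})$-part is justified (choosing a diagonal lift $g=t$, every element of $gKg^{-1}\cap\St_{\eta_0}$ factors as an element of $gKg^{-1}\cap N_\A$ times an element of $tU(\OO)t^{-1}$), and the passage to general $K$ via a congruence subgroup is fine since only finitely many places acquire a level shift.

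It is worth noting that your argument is essentially the same computation the paper carries out in the proof of Proposition~\ref{VCn-dim-prop}, where the exact dimension $\dim V(C)_n$ is obtained by counting divisors $D$ satisfying precisely the inequalities you derive. So while your route differs from the paper's stated proof of the lemma, it is entirely in line with the methods used elsewhere in the paper, and has the advantage of being self-contained rather than relying on an external reference.
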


\begin{proof} This follows from the corresponding local results, similar to Prop. \ref{K-inv-prop}, established in \cite[Sec.\ 4.4]{K-YD}.
\end{proof}


\begin{theorem}\label{fin-dec-thm}
One has a decomposition
\begin{equation}\label{main-fin-dec}
\SS_f(G(F)\backslash G(\A))=\SS_f(G(\ov{F})\backslash G(\ov{\A}))\oplus \bigoplus_{\a\neq 0} \wt{\SS}_{\eta_0,\chi_\a}\oplus \bigoplus_{d\in H'}\SS(G(F)\backslash G(\A))_{\Om_d}.
\end{equation}
\begin{equation}\label{Om-d-dec}
\SS(G(F)\backslash G(\A))_{\Om_d}=\bigoplus_{\chi\in\Pi_{\eta_d}}\wt{\SS}_{\eta_d,\chi}.
\end{equation}
Here $\eta_d$ is a representative of the orbit $\Om_d$ and $\Pi_{\eta_d}$ is the set of characters of the commutative compact group $\ov{H}_{\eta_d}$ extending $\psi_{\ov{C}}$ 
(see Sec.\ \ref{reg-ell-sec}).
\end{theorem}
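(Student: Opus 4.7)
The plan is to derive the decomposition from the orbit decomposition $\SS(G(F)\backslash G(\A))=\bigoplus_{\Om} \SS(G(F)\backslash G(\A))_\Om$ obtained in Sec.\ \ref{orbit-dec-sec}, by identifying the finitary part of each orbit piece separately. The key general fact I use is that for any smooth representation of a locally profinite group, the functor of $K$-invariants (for $K$ an open compact subgroup) is exact, by averaging over $K$. In particular admissibility is preserved by quotients, and since the projections $p_\Om$ are $G(\A)$-equivariant, for any admissible subrepresentation $W\sub \SS(G(F)\backslash G(\A))$ each image $p_\Om(W)$ is an admissible subrepresentation of the corresponding orbit piece. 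Thus it suffices to classify admissible subrepresentations piece by piece.

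For $G=\PGL_2$, the $G(\ov F)$-orbits in $\fg\ot L^{-1}\om_{\ov{C}}(\ov F)$ fall into four types: the zero orbit, split regular semisimple orbits, regular elliptic orbits $\Om_d$ indexed by $d\in H'$, and the nilpotent orbit $\Om_n$. For each split regular semisimple orbit, Lemma~\ref{split-non-adm-lem} shows that the corresponding piece admits no nonzero admissible subrepresentation, so its contribution to $\SS_f$ vanishes. For each regular elliptic orbit, Proposition~\ref{reg-ell-prop} gives the decomposition \eqref{Om-d-dec}, and Lemma~\ref{reg-sem-adm-lem} shows every summand $\wt{\SS}_{\eta_d,\chi}$ is admissible; hence the entire piece lies in $\SS_f$. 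For the zero orbit, $G(\A)$ acts through the reduction $r\colon G(\A)\to G(\ov\A)$, and a subspace of $\SS(G(\ov F)\backslash G(\ov\A))$ is $G(\A)$-admissible if and only if it is $G(\ov\A)$-admissible: indeed for every open compact $\ov K\sub G(\ov\OO)$ the set $K=r^{-1}(\ov K)\cap G(\OO)$ is open compact in $G(\A)$ with $r(K)=\ov K$, so $V^K=V^{\ov K}$ for any $V\sub \SS(G(\ov F)\backslash G(\ov\A))$. This identifies the finitary contribution of the zero orbit with $\SS_f(G(\ov F)\backslash G(\ov\A))$.

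It remains to analyze the nilpotent orbit. Here I invoke the further decomposition \eqref{nilp-decomposition-eq} into $G(\A)$-equivariant $\chi_\a$-isotypic components: Lemma~\ref{nilp-adm-lem}(i) gives admissibility of $\wt{\SS}_{\eta_0,\chi_\a}$ for every $\a\neq 0$, while Lemma~\ref{nilp-adm-lem}(ii) states that $\wt{\SS}_{\eta_0,\chi_0}$ is non-admissible. Combined with the irreducibility of $\wt{\SS}_{\eta_0,\chi_0}$ (noted immediately after \eqref{nilp-decomposition-eq}), non-admissibility forces every admissible subrepresentation of $\wt{\SS}_{\eta_0,\chi_0}$ to be zero; consequently the further projection of $p_{\Om_n}(W)$ to this component vanishes, and the nilpotent orbit contributes exactly $\bigoplus_{\a\neq 0}\wt{\SS}_{\eta_0,\chi_\a}$.

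Assembling the four contributions, any finitary $f$ lies in the right-hand side of \eqref{main-fin-dec}; conversely each summand on the right consists of finitary functions by the admissibility statements above, and the directness of the sum follows from the orbit decomposition together with the isotypic decomposition of $\wt{\SS}_{\eta_0}$. Formula \eqref{Om-d-dec} is a direct restatement of Proposition~\ref{reg-ell-prop}. The main obstacle is the elimination of the $\chi_0$-component of the nilpotent piece, which relies essentially on the combination of irreducibility and non-admissibility of $\wt{\SS}_{\eta_0,\chi_0}$ together with exactness of $K$-invariants; the other pieces are handled by direct application of already-established results.
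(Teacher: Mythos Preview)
Your proposal is correct and follows essentially the same route as the paper's proof: both reduce to the orbit decomposition, use Lemma~\ref{split-non-adm-lem} to kill the split semisimple pieces, Proposition~\ref{reg-ell-prop} and Lemma~\ref{reg-sem-adm-lem} for the elliptic pieces, and Lemma~\ref{nilp-adm-lem} for the nilpotent piece. Your argument is in fact slightly more explicit than the paper's---you spell out why admissibility passes to $\Pi_\Om(W)$ via exactness of $K$-invariants, you justify the identification for the zero orbit, and you make explicit that the elimination of the $\chi_0$-summand rests on its irreducibility together with its non-admissibility (the paper only says this ``follows immediately'' from Lemma~\ref{nilp-adm-lem}(ii), leaving the reader to supply the irreducibility step noted after \eqref{nilp-decomposition-eq}).
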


\begin{proof} We already know the decomposition \eqref{Om-d-dec} (see \eqref{S-eta-decomposition}).
By Lemma \ref{reg-sem-adm-lem}, Proposition \ref{reg-ell-prop} and Lemma \ref{nilp-adm-lem}, 
the right-hand side of \eqref{main-fin-dec} is contained in $\SS_f(G(F)\backslash G(\A))$, so it remains to establish the opposite inclusion.

Let $\SS(G(\ov{F})\backslash G(\ov{\A}))_{\neq 0}=\bigoplus_{\Om\neq 0}\SS(G(\ov{F})\backslash G(\ov{\A}))_{\Om}$.
We have to prove that every admissible subsrepresentation
$V\sub  \SS(G(\ov{F})\backslash G(\ov{\A}))_{\neq 0}$ is contained in the right-hand side of \eqref{main-fin-dec}.

By Lemma \ref{split-non-adm-lem}, we obtain $\Pi_\Om V=0$ for every split semisimple orbit $\Om$.
It remains to prove that $\Pi_{\Om_n} V$ is contained in $\bigoplus_{\a\neq 0} \wt{\SS}_{\eta_0,\chi_\a}$. But this follows immediately from 
Lemma \ref{nilp-adm-lem}(ii).
\end{proof}

\section{Constant term for $G=\GL_2$}\label{const-GL2-sec}

In this section we assume that $G=\GL_2$, and $C$ is a special nilpotent extension of length $2$.
We will give a more precise information on the constant term operator in this case. We start by partitioning the set $\QBun_T(C)$ into subsets
numbered by effective divisors in $\ov{C}$ (this is based on an analog of the Iwasawa decomposition developed in Sec.\ \ref{Iwasawa-sec}).
Then we establish a more explicit form of compatibility between the constant term operator and the Hecke operators 
(see Lemmas \ref{hecke-qbun-formula-lem}). This analysis will be used in Section \ref{cusp-GL2-sec} to show that Hecke finiteness implies cuspidality.

\subsection{Analog of the Iwasawa decomposition}\label{Iwasawa-sec}

Let $\ov{K}$ be a local non-archimedean field, $\ov{\OO}\sub F$ the ring of integers, $t\in \ov{\OO}$ a uniformizer, 
Let $\OO$ be a commutative ring, which is a square zero extension of $\ov{\OO}$ by $\ov{\OO}$:
$\OO$ a commutative ring with an element $\eps\in \OO$ such that $\eps^2=0$, $\OO/\eps \OO\simeq \ov{\OO}$ and the map
$\ov{\OO}\to \eps \OO: x\mapsto \eps x$ is an isomorphism. 
Let $K$ be the total ring of fractions of $\OO$. Then $K/\eps K\simeq \ov{K}$, and $\eps K\simeq\ov{K}$.

Let $G=\GL_2$, $B\sub G$ the subgroup of upper-triangular matrices. Let $\fg$ and $\fb$ denote the Lie algebras of $G$ and $B$.

For every $n\ge 0$, consider the matrix
$$\varphi_n:=\left(\begin{matrix} 1 & 0 \\ t^{-n} & 0\end{matrix}\right)\in \fg(\ov{K}).$$

\begin{lemma}\label{Lie-Iwasawa-lem}
We have a decomposition into $\Ad(B(\ov{\OO}))$-orbits,
$$\fg(\ov{K})/(\fb(\ov{K})+\fg(\ov{\OO}))=\sqcup_{n\ge 0} \Ad(B(\ov{\OO}))\cdot \varphi_n.$$
\end{lemma}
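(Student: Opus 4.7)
The strategy is to identify $\fg(\ov{K})/(\fb(\ov{K})+\fg(\ov{\OO}))$ with $\ov{K}/\ov{\OO}$ in such a way that the $\Ad(B(\ov{\OO}))$-action becomes multiplication by $\ov{\OO}^*$, and then to classify the resulting orbits by valuation.

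First I would observe that the $(2,1)$-entry map $(x_{ij})\mapsto x_{21}$ gives an isomorphism $\fg(\ov{K})/\fb(\ov{K})\simeq \ov{K}$, which descends to an isomorphism
\[
\fg(\ov{K})/(\fb(\ov{K})+\fg(\ov{\OO}))\;\simeq\;\ov{K}/\ov{\OO}.
\]
Under this identification, $\varphi_n$ corresponds to $[t^{-n}]$.

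Next I would compute the induced $\Ad(B(\ov{\OO}))$-action on $\ov{K}/\ov{\OO}$. A direct conjugation calculation shows that for $b=\left(\begin{smallmatrix}\alpha&\beta\\0&\delta\end{smallmatrix}\right)\in B(\ov{\OO})$ and $X=\left(\begin{smallmatrix}0 & 0\\ c & 0\end{smallmatrix}\right)$, the matrix $\Ad(b)X$ has $(2,1)$-entry $\delta\alpha^{-1}c$. Hence the induced action on $\ov{K}/\ov{\OO}$ is multiplication by the unit $\delta\alpha^{-1}\in\ov{\OO}^*$, and since the character $b\mapsto\delta\alpha^{-1}$ is surjective onto $\ov{\OO}^*$ (take $\alpha=1$), the $\Ad(B(\ov{\OO}))$-orbits coincide with the $\ov{\OO}^*$-orbits on $\ov{K}/\ov{\OO}$.

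Finally, these $\ov{\OO}^*$-orbits are classified by the valuation $v$ on $\ov{K}$: the trivial orbit $\{0\}$ accounts for all $c$ with $v(c)\ge 0$, while for each $n\ge 1$ any $c$ with $v(c)=-n$ can be written $c=t^{-n}u$ with $u\in\ov{\OO}^*$, giving $u^{-1}\cdot[c]=[t^{-n}]$. The orbits are therefore in bijection with $n\ge 0$ via the representatives $[t^{-n}]$, which are precisely the images of the $\varphi_{n}$. The argument is essentially bookkeeping; I do not expect any genuine obstacle.
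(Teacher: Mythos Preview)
Your proposal is correct and follows essentially the same approach as the paper: identify the quotient with $\ov{K}/\ov{\OO}$ via the $(2,1)$-entry, observe that the induced $\Ad(B(\ov{\OO}))$-action is rescaling by units, and classify orbits by valuation. The paper phrases the second step slightly differently (noting that $U(\ov{\OO})$ acts trivially and $T(\ov{\OO})$ acts by rescalings), but the content is the same.
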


\begin{proof} The class of a matrix $X=\left(\begin{matrix} a_{11} & a_{12} \\ a_{21} & a_{22} \end{matrix}\right)\in \fg(\ov{K})$ in
$\fg(\ov{K})/(\fb(\ov{K})+\fg(\ov{\OO}))$ is determined by the class of $a_{21}$ in $\ov{K}/\ov{\OO}$.
The adjoint action of $U(\ov{\OO})$ doesn't change this class, while the action of $T(\ov{\OO})$ corresponds to rescalings by $\ov{\OO}^*$.
This immediately gives the result.
\end{proof}

\begin{prop}\label{Iwasawa-prop}
We have the following decomposition into open subsets
$$G(K)=\sqcup_{n\ge 0} B(K)\cdot (1+\eps\cdot \varphi_n)\cdot G(\OO).$$
\end{prop}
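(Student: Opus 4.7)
The plan is to combine the classical Iwasawa decomposition for $\GL_2(\ov K)$ with Lemma \ref{Lie-Iwasawa-lem}. Let $r:G(K)\to G(\ov K)$ denote the reduction, whose kernel is canonically identified (via $X\mapsto 1+\eps X$) with $\fg(\ov K)$, abelian under multiplication because $\eps^2=0$. The two key structural identities, both immediate consequences of $\eps^2=0$, are $(1+\eps A)(1+\eps B) = 1+\eps(A+B)$ and $\beta(1+\eps A)\beta^{-1} = 1+\eps\Ad(\bar\beta)A$ for any $\beta\in G(\OO)$ lifting $\bar\beta\in G(\ov\OO)$. So the combinatorics of $(B(K),G(\OO))$-double cosets in $G(K)$ is controlled by the classical picture downstairs combined with linear data in $\fg(\ov K)$, for which Lemma \ref{Lie-Iwasawa-lem} provides the orbit list.

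For existence, given $g\in G(K)$, Iwasawa for $\GL_2(\ov K)$ gives $r(g) = \bar b\bar k$ with $\bar b\in B(\ov K)$, $\bar k\in G(\ov\OO)$; lift these to $b\in B(K)$, $k\in G(\OO)$ using surjectivity of the reduction on $B$ and $G$ (which holds since $\eps\OO$ is a nilpotent ideal and both groups are smooth). Then $b^{-1}gk^{-1}\in\ker r$, so it equals $1+\eps X$ for some $X\in\fg(\ov K)$. By Lemma \ref{Lie-Iwasawa-lem} there exist $\bar\beta\in B(\ov\OO)$, $Y\in\fb(\ov K)$, $Z\in\fg(\ov\OO)$, and $n\ge 0$ such that $\Ad(\bar\beta)X = \varphi_n + Y + Z$. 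Lifting $\bar\beta$ to $\beta\in B(\OO)$ and applying the structural identities yields
$$1+\eps X \;=\; \beta^{-1}(1+\eps Y)(1+\eps\varphi_n)(1+\eps Z)\beta,$$
and regrouping $b\beta^{-1}(1+\eps Y)\in B(K)$ and $(1+\eps Z)\beta k\in G(\OO)$ exhibits $g\in B(K)(1+\eps\varphi_n)G(\OO)$.

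For disjointness, suppose $b_1(1+\eps\varphi_n)k_1 = b_2(1+\eps\varphi_m)k_2$. Reducing mod $\eps$ gives $\bar b_2^{-1}\bar b_1 = \bar k_2\bar k_1^{-1}\in B(\ov K)\cap G(\ov\OO) = B(\ov\OO)$; call this element $\bar\beta$ and lift it to $\beta\in B(\OO)$. Then $b_2^{-1}b_1 = \beta(1+\eps Y)$ and $k_2k_1^{-1} = (1+\eps Z)\beta$ for appropriate $Y\in\fb(\ov K)$, $Z\in\fg(\ov\OO)$. Substituting and applying the structural identities collapses the original equation to $\varphi_n + Y = \Ad(\bar\beta^{-1})(\varphi_m + Z)$ in $\fg(\ov K)$, so the classes of $\varphi_n$ and $\varphi_m$ in $\fg(\ov K)/(\fb(\ov K)+\fg(\ov\OO))$ lie in the same $\Ad(B(\ov\OO))$-orbit; Lemma \ref{Lie-Iwasawa-lem} then forces $n=m$. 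Openness of each double coset is immediate since it is a union of right $G(\OO)$-cosets and $G(\OO)$ is open in $G(K)$. The main obstacle is purely the bookkeeping in the two computations above — ensuring that the lifts of $\bar\beta$, $Y$, $Z$ can be commuted past $\beta$ and each other consistently — which works cleanly only because $\eps^2=0$ kills every higher-order correction.
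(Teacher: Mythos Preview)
Your proof is correct and follows essentially the same approach as the paper's: reduce to representatives $1+\eps X$ via the classical Iwasawa decomposition for $G(\ov K)$, then use Lemma \ref{Lie-Iwasawa-lem} to control the $(B(K),G(\OO))$-double coset of $1+\eps X$ in terms of the $\Ad(B(\ov\OO))$-orbit of $X$ in $\fg(\ov K)/(\fb(\ov K)+\fg(\ov\OO))$. The paper's version is slightly terser---it establishes in one stroke that two representatives $1+\eps X$ and $1+\eps X'$ lie in the same double coset iff $X' = Y + \Ad(b_0)X + Z$ for suitable $Y,Z,b_0$, covering existence and disjointness simultaneously---whereas you treat the two directions separately, but the content is identical.
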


\begin{proof}
The Iwasawa decomposition $G(\ov{K})=B(\ov{K})\cdot G(\ov{\OO})$ implies that every $B(K)-G(\OO)$ double coset in $G(K)$ has a representative of the form
$1+\eps X$, where $X\in \fg(\ov{K})$. Furthermore, assume that
$$1+\eps X'=b\cdot (1+\eps X)\cdot g_\OO,$$
with $b\in B(K)$, $g_\OO\in G(\OO)$. Then the reductions $\ov{b}\in B(\ov{K})$ and $\ov{g}_\OO\in G(\ov{\OO})$ satisfy $\ov{b}\ov{g}_\OO=1$. Hence, we can write
$$b=(1+\eps Y)b_0,  \ \ g_{\OO}=b_0^{-1}(1+\eps Z),$$
with $b_0\in B(\ov{\OO})$, $Y\in \fb(\ov{K})$, $Z\in \fg(\ov{\OO})$. In other words, we get
$$X'=Y+\Ad(b_0)X+Z.$$
Now the result follows from Lemma \ref{Lie-Iwasawa-lem}.
\end{proof}

Next, let us return to the picture with the special nilpotent extension $C$ of length $2$. Set 
$$N_0:=\NN/\NN^2$$ 
(this is a line bundle on $\ov{C}$).
Then for every point $p$ we can consider the corresponding square-zero extension $F_p$ of the local field $\ov{F}_p$ by $N_{0,p}$ (the completion of the stalk of $N_0$ at $p$).

We pick a generator $\eps_p\in N_{0,p}$ at every point and consider the corresponding generator of the nilradical,
$$\eps:=(\eps_p)\in \NN\A.$$
Let us also pick a uniformizer $t_p\in \ov{\OO}_p$ at every point, and for every effective divisor $D=\sum_p n_p p$ set
\begin{equation}\label{fD-eq}
f_D:=(t_p^{n_p})\in \ov{\A}^*,
\end{equation}
$$\varphi_D:=\left(\begin{matrix} 0 & 0 \\ f_D^{-1} & 0\end{matrix}\right)\in \fg(\ov{\A}), \ \ g_D:=1+\eps \varphi_D\in G(\A).$$

Let $\Div(\ov{C})_{\ge 0}$ denote the set of (finite) linear combinations of points of $\ov{C}$ with integer nonnegative coefficients.

\begin{cor}\label{Iwasawa-cor}
We have the following decomposition into (open) double cosets:
$$G(\A)=\sqcup_{D\in\Div(\ov{C})_{\ge 0}} B(\A)\cdot g_D\cdot G(\OO).$$
\end{cor}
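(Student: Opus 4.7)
The plan is to globalize Proposition \ref{Iwasawa-prop} place by place, using the restricted-product structure of $G(\A)$. Given $g=(g_p)\in G(\A)$, at each closed point $p$ of $\ov{C}$ the local proposition (applied to the square-zero extension $\OO_p$ of $\ov{\OO}_p$ with nilpotent generator $\eps_p$) produces a unique $n_p\ge 0$ with
$g_p\in B(F_p)\cdot (1+\eps\varphi_{n_p,p})\cdot G(\OO_p)$. First I would verify that $n_p=0$ at all but finitely many $p$, so that $D:=\sum_p n_p\,p$ defines an element of $\Div(\ov{C})_{\ge 0}$. Since $g\in G(\A)$, one has $g_p\in G(\OO_p)$ for almost all $p$, so this reduces to the inclusion $G(\OO_p)\subset B(F_p)\cdot(1+\eps\varphi_{0,p})\cdot G(\OO_p)$, which follows from the identity
\[
1+\eps\varphi_{0,p} \;=\; \Bigl(1+\eps\bigl(\begin{smallmatrix} 1 & 0 \\ 0 & 0\end{smallmatrix}\bigr)\Bigr)\cdot\Bigl(1+\eps\bigl(\begin{smallmatrix} 0 & 0 \\ 1 & 0\end{smallmatrix}\bigr)\Bigr),
\]
in which the first factor lies in $B(\OO_p)$ and the second in $G(\OO_p)$.

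Next, I would replace the local representatives $(1+\eps\varphi_{n_p,p})$ by $(1+\eps\varphi_{D,p})$; the same identity (with $1$ replaced by $t_p^{-n_p}$ in the lower-left entry of the second factor) shows that the two differ only by the left factor $1+\eps\bigl(\begin{smallmatrix} 1 & 0 \\ 0 & 0\end{smallmatrix}\bigr)\in B(\OO_p)$, so absorbing these adjustments into the $B$-factors yields a decomposition $g=b\, g_D\, k$ with $b\in B(\A)$ and $k\in G(\OO)$. The restricted-product conditions on $b$ and $k$ are automatic because at every place outside $\supp(D)$ one has $b_p\in B(\OO_p)$ and $k_p\in G(\OO_p)$.

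For disjointness, suppose $b\, g_D\, k = b'\, g_{D'}\, k'$ in $G(\A)$. Projecting to each place $p$, the elements $g_{D,p}$ and $g_{D',p}$ lie in the same $B(F_p)$-$G(\OO_p)$ double coset, and since these are precisely $B(F_p)\cdot(1+\eps\varphi_{n_p,p})\cdot G(\OO_p)$ and $B(F_p)\cdot(1+\eps\varphi_{n'_p,p})\cdot G(\OO_p)$, the uniqueness clause of Proposition \ref{Iwasawa-prop} forces $n_p=n'_p$ for every $p$, whence $D=D'$. Openness of each adelic double coset is immediate: every local factor is a union of right translates of the open compact subgroup $G(\OO_p)$, and at places outside $\supp(D)$ it contains $G(\OO_p)$ itself, so $B(\A)\cdot g_D\cdot G(\OO)$ is open in the restricted-product topology. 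The only real subtlety — and the step I would treat most carefully — is the verification that the class $n_p=0$ absorbs all of $G(\OO_p)$, since everything else is a straightforward globalization of the local statement.
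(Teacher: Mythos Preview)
Your approach is the natural globalization the paper has in mind (the Corollary is stated without proof), and the argument is essentially correct: the identity $1+\eps\varphi_{n_p,p}=\diag(1+\eps_p,1)\cdot(g_D)_p$ shows that the local and global representatives define the same $B(F_p)$--$G(\OO_p)$ double cosets, and the disjointness and openness follow exactly as you say.

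There is one slip in the restricted-product step. Your sentence ``at every place outside $\supp(D)$ one has $b_p\in B(\OO_p)$'' is not justified: the local Proposition only determines the \emph{double coset}, not the individual factors, so an arbitrary local choice of $b_p$ need not lie in $B(\OO_p)$ even when $n_p=0$. The fix is to \emph{choose} the local decomposition, not merely invoke its existence: at the cofinitely many places where $g_p\in G(\OO_p)$ (hence $n_p=0$ and $(g_D)_p\in G(\OO_p)$), take
\[
g_p \;=\; 1\cdot (g_D)_p\cdot\bigl((g_D)_p^{-1}g_p\bigr),
\]
so that $b_p=1\in B(\OO_p)$ and $k_p=(g_D)_p^{-1}g_p\in G(\OO_p)$ there. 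At the remaining finitely many places use any local decomposition. Then $b=(b_p)\in B(\A)$ and $k=(k_p)\in G(\OO)$ as required. With this correction the proof is complete.
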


Recall that the target of the constant term map $E=E_B$ associated with the Borel subgroup $B\sub G=\GL_2$ is the space of functions
on 
$$\QBun_T(C):=(T(F)U(\A))\backslash G(\A)/G(\OO),$$
where $U\sub B$ is the subgroup of strictly upper-triangular matrices.


\begin{definition} We set $B(\OO)[D]:=B(\A)\cap g_DG(\OO)g_D^{-1}$
and denote by $T(\OO)[D]\sub T(\A)$ the image of $B(\OO)[D]$ under the natural projection
$B(\A)\to T(\A)$.
\end{definition}

\begin{cor}\label{QBunT-decomposition-cor}
We have a decomposition
$$\QBun_T(C)=\sqcup_{D\in \Div(\ov{C})_{\ge 0}} \QBun_T(C,D),$$
where 
$$\QBun_T(C,D):=T(F)\backslash T(\A)/T(\OO)[D]\cdot g_D.$$
\end{cor}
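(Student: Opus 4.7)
\medskip

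The plan is to feed the Iwasawa-type decomposition of Corollary \ref{Iwasawa-cor} into the double-coset description of $\QBun_T(C)$ and then, on each Iwasawa stratum, eliminate the variable $g_D$ by conjugation.

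First, starting from $\QBun_T(C)=(T(F)U(\A))\backslash G(\A)/G(\OO)$ and using the disjoint union
$$G(\A)=\bigsqcup_{D\in \Div(\ov{C})_{\ge 0}} B(\A)\,g_D\,G(\OO)$$
from Corollary \ref{Iwasawa-cor}, I would split $\QBun_T(C)$ into the corresponding disjoint union of pieces
$$\QBun_T(C,D) = (T(F)U(\A))\backslash\bigl(B(\A)\,g_D\,G(\OO)\bigr)/G(\OO).$$
Note that this is genuinely a disjoint union because the left action of $T(F)U(\A)\subset B(\A)$ preserves each stratum $B(\A)g_D G(\OO)$ setwise.

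Next, on a fixed stratum, I would use the surjection $B(\A)\to B(\A)g_D G(\OO)/G(\OO)$, $b\mapsto bg_D$, to compute the right $G(\OO)$-equivalence relation. Two elements $b,b'\in B(\A)$ satisfy $bg_D \equiv b'g_D\pmod{G(\OO)}$ iff $b^{-1}b'\in g_D G(\OO)g_D^{-1}\cap B(\A)=B(\OO)[D]$. Therefore the map $b\mapsto bg_D$ induces a bijection
$$B(\A)/B(\OO)[D] \;\xrightarrow{\sim}\; B(\A)\,g_D\,G(\OO)/G(\OO),$$
equivariant for left multiplication by $B(\A)$.

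Then I would pass to the quotient by $T(F)U(\A)$ on the left. Since $U(\A)\subset B(\A)$ is normal with $B(\A)/U(\A)\simeq T(\A)$, and since $U(\A)\cdot B(\OO)[D]$ is a subgroup of $B(\A)$ whose image in $T(\A)$ is $T(\OO)[D]$ by the very definition of $T(\OO)[D]$, the double quotient
$$T(F)U(\A)\backslash B(\A)/B(\OO)[D]$$
is naturally identified with $T(F)\backslash T(\A)/T(\OO)[D]$. Combining this identification with the bijection from the previous step gives exactly $\QBun_T(C,D)\simeq T(F)\backslash T(\A)/T(\OO)[D]\cdot g_D$, where the trailing $g_D$ records that a class $t\in T(\A)$ corresponds to the element $tg_D\in G(\A)$.

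I do not anticipate a real obstacle here: the argument is a mechanical bookkeeping exercise given Corollary \ref{Iwasawa-cor}. The only point that requires a sentence of justification is that the right action of $B(\OO)[D]$ on $U(\A)\backslash B(\A)=T(\A)$ factors through its image $T(\OO)[D]$ in $T(\A)$, but this is immediate since $U(\A)\cap B(\A)=U(\A)$ absorbs the unipotent part of $B(\OO)[D]$.
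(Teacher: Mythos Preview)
Your proof is correct and follows essentially the same approach as the paper. The paper verifies directly that $t\mapsto tg_D$ gives a well-defined bijection from $T(F)\backslash T(\A)/T(\OO)[D]$ onto the $D$-stratum of $\QBun_T(C)$, whereas you factor this through the intermediate identification $B(\A)/B(\OO)[D]\simeq B(\A)g_D G(\OO)/G(\OO)$ before passing to $T(\A)$; the content is the same, your organization is just slightly more structural.
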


\begin{proof} We need to check that the map $t\mapsto t\cdot g_D$ induces an identification
$$T(\A)/(T(F)\cdot T(\OO)[D])\rTo{\sim} (T(F)U(\A))\backslash(B(\A)\cdot g_D\cdot G(\OO))/G(\OO).$$
First, to see that the map is well defined, suppose we have $t_0\in T(\OO)[D]$. We need to check that $tt_0g_D$ is in the same double coset as $tg_D$.
By definition, we can write $t_0=ub_0$, with $u\in U(\A)$ and $b_0\in B(\A)\cap g_DG(\OO)g_D^{-1}$. Hence,
$$tt_0g_D=tub_0g_D=(tut^{-1})(tg_D)(g_D^{-1}b_0g_D)\in U(\A) tg_D G(\OO).$$

Conversely, suppose
$$t'g_D=ut_Ftg_Dg_\OO,$$
where $t,t'\in T(\AA)$, $u\in U(\A)$, $g_\OO\in G(\OO)$. Then 
$$b_0:=g_Dg_\OO g_D^{-1}\in B(\A)\cap g_D G(\AA).$$
Projecting the relation $t'=ut_Ftb_0$ to $T(\A)$, we deduce that $t'\in t T(F)T(\OO)[D]$.
\end{proof}

\begin{remark}
The double cosets presentations of $\QBun_T(C)$ and $\QBun_T(C,D)$ mean that these sets can be identified with isomorphism classes
of certain groupoids (see Appendix \ref{group-app}). For now we do not need to consider the groupoid structure. We will return to this in Appendix \ref{geom-const-term-sec}, where we will give 
a geometric interpretation of $\QBun_T(C)$.
\end{remark}

Later we will need the following explicit description of $T(\OO)[D]$.

\begin{lemma}\label{TOD-lem}
One has $t=\diag(a_1,a_2)\in T(\OO)[D]$ if and only if
\begin{equation}\label{TOD-eq}
a_1,a_2\in \OO[D]:=\OO+\eps\cdot \ov{\OO}(D), \ \ a_1a_2\in \OO^*, \ \ (a_1-a_2)\mod\NN\in \ov{\OO}(-D).
\end{equation}
\end{lemma}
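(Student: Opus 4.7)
The plan is to unwind the definition of $T(\OO)[D]$ by writing a generic element $u = \left(\begin{smallmatrix} 1 & x \\ 0 & 1\end{smallmatrix}\right)$ of $U(\A)$ and $t = \diag(a_1,a_2)$, and checking when $ut \in B(\A) \cap g_D G(\OO) g_D^{-1}$, i.e., when $g_D^{-1}utg_D \in G(\OO)$. Using $g_D = \left(\begin{smallmatrix} 1 & 0 \\ \eps f_D^{-1} & 1\end{smallmatrix}\right)$, $g_D^{-1} = \left(\begin{smallmatrix} 1 & 0 \\ -\eps f_D^{-1} & 1\end{smallmatrix}\right)$, and $\eps^2 = 0$, a direct computation gives
$$
g_D^{-1}utg_D = \begin{pmatrix} a_1 + (xa_2)\eps f_D^{-1} & xa_2 \\ \eps f_D^{-1}(\ov{a}_2 - \ov{a}_1) & a_2 - \eps f_D^{-1}(\ov{x}\,\ov{a}_2)\end{pmatrix},
$$
where bars denote reduction modulo $\NN\A$ (used because $\eps$ times anything only sees the reduction).

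Next I would read off the necessary conditions for this matrix to lie in $G(\OO)$. The $(2,1)$-entry lies in $\NN\A = \eps\ov{\A}$, so requiring it to be in $\OO \cap \eps\ov{\A} = \eps\ov{\OO}$ forces $\ov{a}_1 - \ov{a}_2 \in f_D\ov{\OO} = \ov{\OO}(-D)$. The determinant equals $a_1a_2$, so $a_1a_2 \in \OO^*$ is necessary. Since $xa_2 \in \OO$ (from the $(1,2)$-entry) forces $(xa_2)\eps f_D^{-1} \in \eps\ov{\OO}(D)$, the $(1,1)$- and $(2,2)$-entries being in $\OO$ force $a_1, a_2 \in \OO + \eps\ov{\OO}(D) = \OO[D]$.

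For sufficiency I would write $a_i = \a_i + \eps\b_i$ with $\a_i \in \ov{\OO}$, $\b_i \in \ov{\OO}(D)$, and $y := xa_2 \in \OO$ with reduction $\ov{y} \in \ov{\OO}$. The $(1,1)$- and $(2,2)$-conditions translate to
$$
\ov{y}f_D^{-1} \equiv -\b_1 \pmod{\ov{\OO}}, \qquad \ov{y}f_D^{-1} \equiv \b_2 \pmod{\ov{\OO}},
$$
i.e., $\ov{y} \equiv -f_D\b_1$ and $\ov{y} \equiv f_D\b_2$ modulo $f_D\ov{\OO} = \ov{\OO}(-D)$. These are simultaneously solvable (for a suitable $\ov{y} \in \ov{\OO}$, hence an $x = ya_2^{-1}$) iff $\b_1 + \b_2 \in \ov{\OO}$, i.e., iff $a_1 + a_2 \in \OO$.

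The main step of the proof, and the one needing verification, is that this extra constraint $\b_1 + \b_2 \in \ov{\OO}$ is automatic from the three conditions in \eqref{TOD-eq}. Expanding $a_1a_2 = \a_1\a_2 + \eps(\a_1\b_2 + \a_2\b_1)$, the condition $a_1a_2 \in \OO^*$ forces $\a_1\a_2 \in \ov{\OO}^*$ (so both $\a_i \in \ov{\OO}^*$) and $\a_1\b_2 + \a_2\b_1 \in \ov{\OO}$. Writing $\a_2 = \a_1 + \delta$ with $\delta \in \ov{\OO}(-D)$ (from the third condition in \eqref{TOD-eq}), one has $\a_1\b_2 + \a_2\b_1 = \a_1(\b_1+\b_2) + \delta\b_1$, and since $\delta\b_1 \in \ov{\OO}(-D)\cdot\ov{\OO}(D) \subseteq \ov{\OO}$, we obtain $\a_1(\b_1+\b_2) \in \ov{\OO}$; then $\a_1 \in \ov{\OO}^*$ yields $\b_1+\b_2 \in \ov{\OO}$, as required. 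Once this is in place, the equivalence of the lemma falls out of the computations above.
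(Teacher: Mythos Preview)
Your proof is correct and follows the same overall approach as the paper: conjugate $\left(\begin{smallmatrix} a_1 & b \\ 0 & a_2\end{smallmatrix}\right)$ by $g_D$, read off the entrywise conditions \eqref{BOD-eq}, and for sufficiency reduce to showing $a_1+a_2\in\OO$. The only substantive difference is in this last step: the paper argues via $(a_1+a_2)^2=(a_1-a_2)^2+4a_1a_2\in\OO$, whereas you expand $a_1a_2=\a_1\a_2+\eps(\a_1\b_2+\a_2\b_1)$ and combine the integrality of $\a_1\b_2+\a_2\b_1$ with $\ov a_1-\ov a_2\in\ov\OO(-D)$ to force $\b_1+\b_2\in\ov\OO$ directly. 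Your route is a bit more elementary and has the mild advantage of not invoking squares (so there is no worry about the factor $2$ in characteristic $2$). One notational point: writing $a_i=\a_i+\eps\b_i$ with ``$\a_i\in\ov\OO$'' presumes a splitting $\ov\OO\hookrightarrow\OO$; this is harmless since such a splitting exists locally at each place, but it would be cleaner to take $\a_i\in\OO$ and use that $\eps\cdot\a_i=\eps\cdot\ov{\a_i}$ throughout.
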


\begin{proof} 
One has $\left(\begin{matrix} a_1 & b \\ 0 & a_2\end{matrix}\right)\in B(\A)\cap g_DG(\OO)g_D^{-1}$ 
if and only if 
\begin{equation}\label{BOD-eq}
b\in \OO, \ \ a_1+\eps f_D^{-1}b\in \OO, \ \ a_2-\eps f_D^{-1}b\in \OO, \ \ \eps f_D^{-1}(a_1-a_2)\in \OO, \ \ a_1a_2\in \OO^*,
\end{equation}
where $f_D\in \ov{\OO}^*$ is given by \eqref{fD-eq}
(the last condition is obtained by looking at the invertibility of the determinant). Thus, $t=\diag(a_1,a_2)\in T(\OO)[D]$ if and only if there exists $b$ such that
the conditions \eqref{BOD-eq} are satisfied. This implies that for such $(a_1,a_2)$ conditions \eqref{TOD-eq} are satisfied. Conversely, assume conditions
\eqref{TOD-eq} are satisfied. Then we can write 
$$a_1=c-\eps f_D^{-1}b, \ \ a_1-a_2=f_Dx+\eps f_D^{-1}y,$$
with $b,c,x,y\in \OO$. This implies that $(a_1-a_2)^2\in \OO$, and hence,
$$(a_1+a_2)^2=(a_1-a_2)^2+4a_1a_2\in \OO,$$
so $a_1+a_2\in \OO$. Therefore, we have $a_2=c'+\eps f_D^{-1}b$ for some $c'\in \OO$, which shows the existence of $b$ such that \eqref{BOD-eq} is satisfied.
\end{proof}

Thus, the constant term operator $E:\C(G(F)\backslash G(\A)/G(\OO))\to \C(\QBun_T(C))$ 
can be viewed as a collection of operators
$$E_D:\C(G(F)\backslash G(\A)/G(\OO))\to \C(T(\A)/(T(F)\cdot T(\OO)[D])), \text{ where}$$
$$E_Df(t)=\int_{U(F)\backslash U(\A)} f(utg_D) du.$$

Setting $U_{t,D}:=U(\A)\cap (tg_D)G(\OO)(tg_D)^{-1}$, and noticing that the fibers of the surjective map
$$U(\A)/U(F)\to U(\A)/(U(F)\cdot U_{t,D})$$
are $U_{t,D}/(U(F)\cap U_{t,D})$-cosets, we can rewrite the integral defining $E_D$
as the following finite sum:
\begin{equation}\label{ED-sum-def-eq}
E_Df(t)=\vol(U_{t,D}/(U(F)\cap U_{t,D}))\cdot \sum_{u\in U(\A)/(U(F)\cdot U_{t,D})} f(utg_D).
\end{equation}


\begin{lemma}\label{UtD-lem}
For $t=\diag(a_1,a_2)$, one has
$$U_{t,D}=\{\left(\begin{matrix} 1 & x \\ 0 & 1\end{matrix}\right) \ |\ x\in a_1a_2^{-1}(\eps\OO+f_D\OO)\}.$$
\end{lemma}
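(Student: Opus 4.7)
The plan is to determine when the conjugation $(tg_D)^{-1} u (tg_D)$ of $u = \left(\begin{matrix} 1 & y \\ 0 & 1 \end{matrix}\right) \in U(\A)$ by $tg_D$ lies in $G(\OO)$. Since $\varphi_D^2 = 0$, one has $g_D^{-1} = 1 - \eps \varphi_D$, so the calculation is at most linear in $\eps$ and therefore very short.

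First I would conjugate by $t = \diag(a_1,a_2)$ to get $t^{-1} u t = \left(\begin{matrix} 1 & z \\ 0 & 1\end{matrix}\right)$, where $z := a_1^{-1} a_2 y$. Then a direct calculation using $\eps^2 = 0$ yields
$$g_D^{-1}\left(\begin{matrix} 1 & z \\ 0 & 1\end{matrix}\right) g_D = \left(\begin{matrix} 1 + \eps z f_D^{-1} & z \\ 0 & 1 - \eps z f_D^{-1}\end{matrix}\right).$$
The determinant of this matrix is $1 - \eps^2 z^2 f_D^{-2} = 1 \in \OO^*$, so the condition to lie in $G(\OO)$ reduces to the two entrywise conditions $z \in \OO$ and $\eps z f_D^{-1} \in \OO$.

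For the second condition, observe that $\eps z$ depends only on the reduction $\bar z \in \ov{\A}$ of $z$ (since $\eps^2 = 0$), and $\eps z f_D^{-1}$ lies in $\NN\A \cap \OO = \eps \ov{\OO}$ if and only if $\bar z \in f_D \ov{\OO}$. Combined with $z \in \OO$, one writes $\bar z = f_D \bar v$, lifts to $v \in \OO$, and notes that $z - f_D v$ has zero reduction and hence lies in $\eps \ov{\OO}$; this gives $z \in f_D \OO + \eps \ov{\OO} = f_D \OO + \eps \OO$ (the last equality because $\eps^2 = 0$). The reverse inclusion is immediate from $f_D \in \ov{\OO}$. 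Translating back via $y = a_1 a_2^{-1} z$ yields the asserted description $y \in a_1 a_2^{-1}(\eps \OO + f_D \OO)$. The whole argument is mechanical; the only place requiring any care is the dictionary between the condition $\eps z f_D^{-1} \in \OO$ and a membership condition on the reduction $\bar z$, which is where the divisor $D$ enters the answer.
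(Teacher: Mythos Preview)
Your proof is correct and follows exactly the same approach as the paper: compute $g_D^{-1}t^{-1}\left(\begin{smallmatrix}1&x\\0&1\end{smallmatrix}\right)tg_D$ explicitly, read off the two entrywise conditions $a_1^{-1}a_2x\in\OO$ and $\eps f_D^{-1}a_1^{-1}a_2x\in\OO$, and translate them into $x\in a_1a_2^{-1}(\eps\OO+f_D\OO)$. The paper's proof simply asserts this last equivalence, while you unpack it carefully via the reduction $\bar z$; this is a welcome elaboration but not a different argument.
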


\begin{proof}
Indeed, $U_{t,D}=U(\A)\cap (tg_D)G(\OO)(tg_D)^{-1}$, so the condition on $x$ is that
$$g_D^{-1}t^{-1}\left(\begin{matrix} 1 & x \\ 0 & 1\end{matrix}\right)tg_D=\left(\begin{matrix} 1+\eps f_D^{-1}a_1^{-1}a_2x & a_1^{-1}a_2x \\ 0 & 1-\eps f_D^{-1}a_1^{-1}a_2x\end{matrix}\right)$$
is in $G(\OO)$. In other words, the condition is that $a_1^{-1}a_2x\in \OO$ and $\eps f_D^{-1}a_1^{-1}a_2x\in \OO$, which is equivalent to
$x\in a_1a_2^{-1}(\eps\OO+f_D\OO)$. 
\end{proof}

\subsection{Auxiliary results involving reduction}\label{const-term-reduction-sec}

The projection $G(\A)\to G(\ov{\A})$ induces a well defined map
\begin{align*}
&\QBun_T(C)=(T(F)U(\A))\backslash G(\A)/G(\OO)\to (T(\ov{F})U(\ov{\A}))\backslash G(\ov{\A})/G(\ov{\OO})\simeq \\
&T(\ov{\A})/(T(F)\cdot T(\ov{\OO}))\simeq \Pic(\ov{C})\times \Pic(\ov{C}).
\end{align*}
Concretely, the double coset represented by $t\cdot g_D$, where 
$t=\left(\begin{matrix} a_1 & 0 \\ 0 & a_2\end{matrix}\right)\in T(\A)$ is mapped to the pair of line bundles
$(L_0,M_0)$, corresponding to the reductions of the ideles $a_1$ and $a_2$.

\begin{lemma}\label{support-surjection-lem}
If $t\in \QBun_T(C,D)$ is in the support of $E_D(f)$
and $(L_0,M_0)$ are the line bundles on $\ov{C}$ associated with $t$,
then there exists a vector bundle $F$ in the support of $f$, and a surjection $\ov{F}\to M_0$.
\end{lemma}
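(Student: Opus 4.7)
My approach will be to unwind the sum formula \eqref{ED-sum-def-eq} for $E_D$ and then pass to reductions modulo $\NN$, turning the problem into a trivial statement about an extension of line bundles on $\ov{C}$.

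First I would extract a witness: since $t$ lies in the support of $E_D f$, formula \eqref{ED-sum-def-eq} forces some summand $f(u t g_D)$ to be nonzero, i.e., there is a $u \in U(\A)$ such that $F := V(u t g_D)$ lies in the support of $f$. (The isomorphism class of this rank-$2$ bundle depends only on the double coset of $u t g_D$ in $\Bun(C)$, so this is well defined.)

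Next I would compute $\ov{F}$. Since $g_D = 1 + \eps \varphi_D$ is congruent to the identity modulo $\NN$, functoriality of the association $g \mapsto V(g)$ gives $\ov{F} = V(\ov{u}\,\ov{t})$ on $\ov{C}$. Writing $t = \diag(a_1, a_2)$ and $u = \left(\begin{smallmatrix} 1 & x \\ 0 & 1 \end{smallmatrix}\right)$, the reduced matrix $\ov{u}\,\ov{t}$ is upper triangular with diagonal $(\ov{a}_1, \ov{a}_2)$. The standard adelic dictionary then produces a short exact sequence $0 \to L_0 \to \ov{F} \to M_0 \to 0$ on $\ov{C}$, the quotient being the restriction to $\ov{F} \subset k(\ov{C})^2$ of the projection onto the second factor: locally at $p$ it maps the $\ov{\OO}_p$-span of the columns of $\ov{u}\,\ov{t}$ onto $\ov{a}_{2,p}\ov{\OO}_p$, which by construction is the stalk $M_{0,p}$ of the line bundle $M_0$ associated with the idele $\ov{a}_2$.

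No serious obstacle is expected; the argument is essentially bookkeeping, resting on the single observation that $g_D \equiv 1 \pmod{\NN}$, so the reduction of $u t g_D$ is governed entirely by the upper-triangular adele $\ov{u}\,\ov{t}$, from which the surjection onto the second diagonal line bundle is immediate.
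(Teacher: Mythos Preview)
Your proposal is correct and follows essentially the same approach as the paper's proof: pick a nonzero summand $f(utg_D)$, reduce modulo $\NN$ using $g_D\equiv 1$, and read off the extension $0\to L_0\to\ov{F}\to M_0\to 0$ from the upper-triangular adele $\ov{u}\,\ov{t}$. The paper's version is more terse but the argument is identical.
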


\begin{proof} 
By definition of $E_D(f)$, 
there exists an element $g\in G(\A)$ in the support of $f$, such that $g=u_1tg_D$ for some $u_1\in U(\A)$.
This gives $\ov{g}=\ov{u}_1\ov{t}\in B(\ov{\A})$. Hence the vector bundle $\ov{F}$ is an extension of $M_0$ by $L_0$.
\end{proof}

Let us denote by
$$\pi:\Bun(C)\to \Bun(\ov{C})$$  
the natural projection. The following description of the fibers of $\pi$ is well known.

Let us denote by $\fg_0\sub \fg$ the subalgebra ${\mathfrak sl}_2$.
Recall that we denote $N_0$ the line bundle on $\ov{C}$ corresponding to $\NN$, and $\eps=(\eps_p)\sub\A$, where $\eps_p$ are local generators of $N_0$.
Let us also denote by $\eps_F\in\A$ the principal adele corresponding to a generator of $(N_0)_F$.

\begin{lemma}\label{reduction-fibre-lem}
For every $V_0\in \Bun(\ov{C})$ there is a transitive action of
$H^1(\ov{C},\und{\End}(V_0)\ot N_0)$ on the fiber $\pi^{-1}(V_0)$.
If $V$ is represented by the adele-valued element $g\in G(\A)$, then this action is given by $g\mapsto (1+\eps_F X)g$, 
where $X\in \fg(\ov{\A})$. 
Assume in addition that the map $H^0(C,\OO_C)\to H^0(\ov{C},\ov{C})$ is surjective.
Then for $V_0\in \Bun^{\ov{L}}(\ov{C})$, there is a similar transitive action of $H^1(\ov{C},\und{\End}(V_0)\ot N_0)$ on 
$\pi^{-1}(V_0)\cap \Bun^L(C)$,
 where $\und{\End}_0(V_0)$ is the sheaf of endomorphisms of $V_0$ with zero trace. 
%
\end{lemma}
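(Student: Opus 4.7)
The plan is to use the adelic presentation of $\Bun(C)$ combined with the standard adelic computation of coherent cohomology on $\ov{C}$. Fix $V_0 = V(\ov{g})$ with $\ov{g}\in G(\ov{\A})$, and a lift $V = V(g)\in\pi^{-1}(V_0)$. Since $\NN^2 = 0$, the kernel of the reduction homomorphism $G(\A)\to G(\ov{\A})$ is canonically $1 + \fg(\NN\A)$, and trivializing $N_0$ at the generic point by means of the principal adele $\eps_F$ identifies this kernel with $1 + \eps_F \cdot \fg(\ov{\A})$. Consequently every lift of $V_0$ has the form $V((1+\eps_F X)g)$ for a unique $X\in\fg(\ov{\A})$, which is already the formula claimed in the statement.

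Next I would characterize when $V((1+\eps_F X)g) \simeq V(g)$ in $\Bun(C)$, i.e.\ when $(1+\eps_F X)g \in G(F)\cdot g \cdot G(\OO)$. Writing such an identification as $(1+\eps_F X)g = hgk$ with $h \in G(F)$ and $k \in G(\OO)$, I first assume their reductions $\ov{h}$ and $\ov{k}$ are both the identity; the general case, where $\ov{h}\in\Aut(V_0)$, only contributes additional identifications and therefore only reinforces transitivity. Then $h = 1 + \eps_F A$ with $A\in\fg(\ov{F})$ and $k = 1 + \eps_F B$ with $B\in \fg(\ov{\OO})$, and expanding modulo $\NN^2 = 0$ yields
$$X = A + \Ad(g)(B).$$
Since $\Ad(g)\fg(\ov{\OO})\sub\fg(\ov{\A})$ is exactly the integer-adelic part of the sheaf $\und{\End}(V_0)$ on $\ov{C}$ (using that $V(g)$ reduces to $V_0$), while $\fg(\ov{F})$ is its principal-adelic part, the standard adelic description of coherent cohomology on a curve gives
$$\fg(\ov{\A})/\bigl(\fg(\ov{F})+\Ad(g)\fg(\ov{\OO})\bigr)\simeq H^1(\ov{C},\und{\End}(V_0)\ot N_0).$$
This simultaneously exhibits a well-defined and transitive action of $H^1(\ov{C},\und{\End}(V_0)\ot N_0)$ on $\pi^{-1}(V_0)$, proving the first half of the statement.

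For the fixed-determinant assertion, note that $\det((1+\eps_F X)g) = (1+\eps_F\tr(X))\det(g)$, so the variation of the determinant under the action is governed by the image of $\tr(X)$ in $H^1(\ov{C},N_0)\simeq\ker(\Pic(C)\to\Pic(\ov{C}))$. The trace exact sequence
$$0\to\und{\End}_0(V_0)\ot N_0\to\und{\End}(V_0)\ot N_0\xrightarrow{\tr}N_0\to 0$$
and its long exact sequence in cohomology then show that the classes in $H^1(\und{\End}(V_0)\ot N_0)$ that act trivially on the determinant are exactly the image of $H^1(\ov{C},\und{\End}_0(V_0)\ot N_0)$, and that this trace map is surjective (as $H^2$ vanishes on a curve). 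This yields transitivity of the $H^1(\ov{C},\und{\End}_0(V_0)\ot N_0)$-action on the fiber $\pi^{-1}(V_0)\cap\Bun^L(C)$.

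The main obstacle I anticipate is bookkeeping for the contribution of automorphisms of $V_0$ to the equivalence relation, and this is precisely where the hypothesis that $H^0(C,\OO_C)\twoheadrightarrow H^0(\ov{C},\OO_{\ov{C}})$ enters: by the exact sequence $0\to N_0\to\OO_C\to\OO_{\ov{C}}\to 0$, this surjectivity is equivalent to vanishing of the connecting map $H^0(\OO_{\ov{C}})\to H^1(N_0)$, which guarantees that scalar automorphisms of $V_0$ (and of $\det V_0$) lift to scalar automorphisms of any lift $V$. Absent this condition, certain scalar identifications on $V_0$ would move the determinant class nontrivially inside $\ker(\Pic(C)\to\Pic(\ov{C}))$ and would spoil the identification between the $H^1(\ov{C},\und{\End}_0(V_0)\ot N_0)$-orbit of $V$ and the subset $\pi^{-1}(V_0)\cap\Bun^L(C)$.
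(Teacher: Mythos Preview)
Your proof is essentially correct and follows the same strategy as the paper for the first part. One small slip: when $k\in G(\OO)$ reduces to the identity, you should have $k = 1 + \eps_F B$ with $B\in a_0\cdot\fg(\ov{\OO})$, not $B\in\fg(\ov{\OO})$, where $a_0\in\ov{\A}^*$ is the idele with $\eps = \eps_F a_0$ (so $a_0$ represents the line bundle $N_0$). This is because the nilradical of $\OO$ is $\eps\,\ov{\OO} = \eps_F\cdot a_0\ov{\OO}$, not $\eps_F\ov{\OO}$. With this correction the quotient becomes
\[
\fg(\ov{\A})\big/\bigl(\fg(\ov{F}) + a_0\cdot\Ad(g)\fg(\ov{\OO})\bigr)\ \simeq\ H^1\bigl(\ov{C},\und{\End}(V_0)\otimes N_0\bigr),
\]
matching your stated conclusion and the paper's computation.

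For the fixed-determinant part you take a slightly different route from the paper. The paper argues directly with ideles: writing $1+\eps\tr(X) = f\cdot u$ with $f\in F^*$, $u\in\OO^*$, it uses the hypothesis to lift $\ov{f}\in\ov{F}^*\cap\ov{\OO}^*$ to an element of $F^*\cap\OO^*$, then absorbs these factors into the adelic representative so that $\tr(X)=0$. Your cohomological phrasing via the trace sequence is equivalent and arguably cleaner, but be aware that your claim in the third paragraph --- that classes acting trivially on the determinant are exactly $\im H^1(\und{\End}_0\otimes N_0) = \ker\bigl(\tr\colon H^1(\und{\End}\otimes N_0)\to H^1(N_0)\bigr)$ --- already \emph{uses} the hypothesis: it is equivalent to the injectivity of $H^1(N_0)\to\Pic(C)$ coming from the exponential sequence $1\to N_0\to\OO_C^*\to\OO_{\ov{C}}^*\to 1$, which in turn is equivalent to surjectivity of $H^0(\OO_C)^*\to H^0(\OO_{\ov{C}})^*$. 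Your fourth paragraph reaches this, though through the additive sequence and the language of lifting scalar automorphisms; these formulations are interchangeable.
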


\begin{proof}
Let $g\in G(\A)$. Since the maps $G(F)\to G(\ov{F})$ and $G(\OO)\to G(\ov{\OO})$
are surjective, if $g'\in G(\A)$ and $g$ project to the same element in 
$G(\ov{F})\backslash G(\ov{\A})/G(\ov{\OO})$ then we can change $g'$ to another representative in the double coset of $g'$,
so that $g'=(1+\eps_F X)g$, where $X\in \fg(\ov{\A})$. 

Let $a_0\in\ov{\A}^*$ denote the idele such that $\eps=\eps_F a_0$, so $N_0$ is the line bundle associated with $a_0$.
Then the double coset of $(1+\eps_F X)g$ in $\Bun(C)$ depends only on the class of $X$ in
$$\fg(\ov{\A})/(\fg(\ov{F})+a_0\cdot g\fg(\ov{\OO})g^{-1})\simeq H^1(\ov{C},\und{\End}(V_0)\ot N_0).$$

Furthermore, if $\det(g')$ represents the same line bundle
as $\det(g)$, then $\det(1+\eps X)=1+\eps\tr(X)$ is of the form $f\cdot u$ with $f\in F^*$, $u\in\OO^*$. 
This implies that $\ov{f}\ov{u}=1$, so $\ov{f}\in \ov{F}^*\cap \ov{\OO}^*$ can be lifted to an element of $F^*\cap \OO^*$ (using our assumption).
Hence, modifying $f$ and $u$ we can assume that $\ov{f}=\ov{u}=1$. Hence, we can modify $g'$ by appropriate diagonal elements, $1$ modulo $\eps$,
so that $\tr(X)=0$. 
\end{proof}

\subsection{Constant term and the Hecke operators}\label{GL2-const-term-Hecke-sec}

In this subsection we assume that $K=G(\OO)$.
Recall that for each element $g_0\in G(\A)$, we have the corresponding Hecke operator
$T_{g_0}$ given by formula \eqref{hecke-gen-formula-eq}.
We want to rewrite the general formula \eqref{EPT-formula} from the proof of Proposition A more concretely, in terms of the operators $E_D$, where
$D\in \Div(\ov{C})_{\ge 0}$.

For each $g_0\in G(\A)$, $D_0\in \Div(\ov{C})_{\ge 0}$, and each $h\in G(\OO)/H_{g_0}$, using the analog of Iwasawa decomposition we can write
\begin{equation}\label{Iw-gDhg0-eq}
g_Dhg_0=u(h)t(h)g_{D(h)}g_\OO,
\end{equation}
with $g_\OO\in G(\OO)$, $u(h)\in U(\A)$, $t(h)\in T(\A)$, and some $D(h)\in \Div(\ov{C})_{\ge 0}$.

\begin{lemma}\label{hecke-ED-comp-lem} For $f\in \C(\Bun_G(C))$
one has
$$T_{g_0}f(ut\cdot g_D)=\vol(H_{g_0})\sum_{h\in G(\OO)/H_{g_0}} f(u\Ad(t)(u(h))tt(h)\cdot g_{D(h)}),$$
$$E_D T_{g_0}f=\vol(H_{g_0})\sum_{h\in G(\OO)/H_{g_0}} E_{D(h)}f(tt(h)).$$
\end{lemma}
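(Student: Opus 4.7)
The plan is to unwind the definitions directly; both formulas follow from the Iwasawa-type decomposition \eqref{Iw-gDhg0-eq} together with right $G(\OO)$-invariance of $f$ and a change of variables for the Haar measure on $U(F)\backslash U(\A)$.

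First I would establish the pointwise formula for $T_{g_0}f$ on the element $ut\cdot g_D$. Starting from \eqref{hecke-gen-formula-eq}, one has
$$T_{g_0}f(ut g_D)=\vol(H_{g_0})\sum_{h\in G(\OO)/H_{g_0}} f(utg_D h g_0).$$
Substituting $g_Dhg_0=u(h)t(h)g_{D(h)}g_\OO$ from \eqref{Iw-gDhg0-eq}, and using that $f$ is right $G(\OO)$-invariant (so the factor $g_\OO$ can be dropped), the argument becomes $ut\cdot u(h)t(h)g_{D(h)}$. Since $T$ normalizes $U$, we may rewrite $t\cdot u(h)=\Ad(t)(u(h))\cdot t$, yielding the first claimed identity.

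Next I would integrate both sides against the Haar measure on $U(F)\backslash U(\A)$ (normalized so that $U(\OO)$ has volume $1$). By definition of $E_D$,
$$E_D T_{g_0}f(t)=\int_{U(F)\backslash U(\A)} T_{g_0}f(ut g_D)\,du.$$
Inserting the first formula and interchanging the (finite) sum with the integral gives
$$E_D T_{g_0}f(t)=\vol(H_{g_0})\sum_{h\in G(\OO)/H_{g_0}}\int_{U(F)\backslash U(\A)} f\bigl(u\cdot \Ad(t)(u(h))\cdot tt(h)\cdot g_{D(h)}\bigr)\,du.$$
For each fixed $h$, since $\Ad(t)(u(h))\in U(\A)$, the substitution $u\mapsto u\cdot \Ad(t)(u(h))^{-1}$ is a measure-preserving self-map of $U(F)\backslash U(\A)$, which absorbs the factor $\Ad(t)(u(h))$. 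Recognizing the resulting integral as $E_{D(h)}f(tt(h))$ yields the second claimed identity.

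The only nontrivial point to verify is the change of variable on $U(F)\backslash U(\A)$: one must check that right multiplication by a fixed element of $U(\A)$ descends to a well-defined, measure-preserving map of the quotient, which is immediate since $U(F)$ acts on the left and the Haar measure on $U(\A)$ is two-sided invariant (as $U$ is abelian/unipotent). Apart from this, the proof is a direct rearrangement using \eqref{Iw-gDhg0-eq}, right $G(\OO)$-invariance of $f$, and the normalizer property $tUt^{-1}=U$.
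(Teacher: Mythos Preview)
Your proposal is correct and follows essentially the same approach as the paper: substitute the Iwasawa-type decomposition \eqref{Iw-gDhg0-eq} into \eqref{hecke-gen-formula-eq}, use right $G(\OO)$-invariance and the normalizer relation $t\,u(h)=\Ad(t)(u(h))\,t$ to obtain the first formula, then integrate over $U(F)\backslash U(\A)$ and absorb $\Ad(t)(u(h))$ by a translation in $u$ to obtain the second. The paper's proof is simply a terser version of what you wrote, omitting the explicit change-of-variable justification that you spell out.
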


\begin{proof}
Using \eqref{Iw-gDhg0-eq}, we get for $f\in\C(\Bun(C))$,
$$f(utg_Dhg_0)=f(utu(h)t(h)g_{D(h)})=f(u(tu(h)t^{-1})tt(h)g_{D(h)}).$$
Hence,
$$T_{g_0}f(t)=\vol(H_{g_0})\cdot\sum_{h\in G(\OO)/H_{g_0}}f(utg_Dhg_0)=
\vol(H_{g_0})\cdot\sum_{h\in G(\OO)/H_{g_0}}f(u\Ad(t)(u(h))tt(h)g_{D(h)}),$$
which gives the first formula. Intergrating over $u\in U(F)\backslash U(\A)$ we get the second formula.
\end{proof}

Now we specialize further, and consider the Hecke operator $T_c=|\P^1(A(c))|\cdot T_{g_c}$
associated with a simple divisor $c$ of $C$. Here $g_c$ is given by \eqref{gc-eq}, using some
local equation of $c$, $f_c\in \OO_{\ov{c}}$.

\begin{lemma}\label{hecke-qbun-formula-lem} 
Assume that $c\sub C$ is a simple divisor, $D\sub\ov{C}$ an effective divisor. Let $n$ denote the multiplicity of $\ov{c}$ in $D$.
Then for $n>0$, one has
$$E_D T_cf(t)=
\sum_{a\in A(c)}E_{D-\ov{c}}f(t\left(\begin{matrix} 1-\eps f_c^{-n}a & 0 \\ 0 & (1+\eps f_c^{-n}a)f_c^{-1}\end{matrix}\right))+
\sum_{b\in k(\ov{c})}E_{D+\ov{c}}f(t \left(\begin{matrix} f_c^{-1} & 0 \\ 0 & 1+bf_c^n\end{matrix}\right)),$$
while for $n=0$, one has
$$E_D T_cf(t)=
|A(c)|\cdot E_{D}f(t\left(\begin{matrix} 1 & 0 \\ 0 & f_c^{-1}\end{matrix}\right))+
\sum_{b\in k(\ov{c})^*}E_{D+\ov{c}}f(t \left(\begin{matrix} f_c^{-1} & 0 \\ 0 & b\end{matrix}\right))+E_Df(t \left(\begin{matrix}f_c^{-1} & 0 \\ 0 & 1\end{matrix}\right)),$$
where $t\in \QBun_T(C,D))$.
\end{lemma}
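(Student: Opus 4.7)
The starting point is the general formula of Lemma~\ref{hecke-ED-comp-lem} combined with the identification $T_c=|\mathbb{P}^1(A(c))|\cdot T_{g_c}$ from Lemma~\ref{hecke-mod-adelic-lem}, where $g_c=\left(\begin{matrix}f_c^{-1}&0\\0&1\end{matrix}\right)$. This reduces the problem to enumerating coset representatives of $G(\OO)/H_{g_c}\cong \mathbb{P}^1(A(c))$ and, for each representative $h$, computing the Iwasawa decomposition $g_D h g_c=u(h)\,t(h)\,g_{D(h)}\,g_\OO(h)$ furnished by Corollary~\ref{Iwasawa-cor}. Since $H_{g_c}$ is determined by local data at $\bar c$, I would choose each $h$ to be the identity at every place $p\neq\bar c$, so that $D(h)$ differs from $D$ only at $\bar c$ and the entire analysis localizes at $\bar c$.

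Next, I would parameterize $\mathbb{P}^1(A(c))$ using the local ring structure of $A(c)=\OO_{\bar c}/(f_c)$, which has residue field $k(\bar c)$ and maximal ideal $\mathfrak{m}_c\cong k(\bar c)$ generated by $\eps$. This gives $|A(c)|+|k(\bar c)|$ representatives of two types: Type~I, $h=\left(\begin{matrix}1&0\\ \tilde a&1\end{matrix}\right)$ for $a\in A(c)$ (lifted to $\tilde a\in\OO_{\bar c}$), and Type~II, $h=\left(\begin{matrix}\eps\tilde\beta&1\\ 1&0\end{matrix}\right)$ for $\bar\beta\in k(\bar c)$ (lifted to $\tilde\beta\in\OO_{\bar c}$). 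For each representative I would compute $g_D h g_c$ explicitly at $\bar c$, then apply the local Iwasawa decomposition (Proposition~\ref{Iwasawa-prop}) together with Lemma~\ref{Lie-Iwasawa-lem} to read off $D(h)_{\bar c}$ from the class of the $(2,1)$-entry of the appropriate $X$ modulo $\bar\OO_{\bar c}$. A case split according to whether $\tilde a\in\mathfrak{m}_c$, combined with the distinction between $n>0$ and $n=0$, yields the combinatorial pattern of divisor shifts: Type~I representatives (together with Type~II in the appropriate branch) contribute the first sum with $D(h)=D-\bar c$, while the remaining representatives contribute the second sum with $D(h)=D+\bar c$; for $n=0$ one representative becomes degenerate and the sums collapse to the three-term formula.

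The last and most technical step is to extract $t(h)$ modulo $T(F)\cdot T(\OO)[D(h)]$, using Lemma~\ref{TOD-lem} to characterize this subgroup, and to verify that the resulting normalized torus element matches the explicit diagonal forms in the statement: $\mathrm{diag}(1-\eps f_c^{-n}a,\,(1+\eps f_c^{-n}a)f_c^{-1})$ in the first sum, and $\mathrm{diag}(f_c^{-1},\,1+bf_c^n)$ in the second. I expect this to be the main obstacle: one must juggle cosets simultaneously on the left (by $T(F)$) and on the right (by $T(\OO)[D(h)]$), while carefully tracking $\eps$-corrections arising from the identity $\eps f_c^{-1}=\eps t^{-1}$ (valid because $\eps\cdot x$ depends only on $\bar x$). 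Once this normalization is carried out, the sums regroup by the parameters $a\in A(c)$ and $\bar\beta\in k(\bar c)$ (respectively $b\in k(\bar c)$ and $b\in k(\bar c)^*$ for the three-term variant when $n=0$), giving the stated formulas.
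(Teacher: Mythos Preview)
Your plan is correct and follows the same route as the paper: apply Lemma~\ref{hecke-ED-comp-lem}, choose explicit representatives for $G(\OO)/H_{g_c}\cong\mathbb P^1(A(c))$, and compute the local Iwasawa decomposition $g_D h g_c = u(h)t(h)g_{D(h)}g_\OO$ at the single place $\bar c$.

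The only substantive difference is the choice of representatives. The paper takes
\[
h_a=\begin{pmatrix}a&1\\1&0\end{pmatrix}\quad(a\in A(c)),\qquad
h_b=\begin{pmatrix}1&0\\\eps b&1\end{pmatrix}\quad(b\in k(\bar c)),
\]
i.e.\ the chart $[a:1]$ for the large piece and $[1:\eps b]$ for the small piece, whereas you take the opposite chart. With the paper's choice the Iwasawa factorization is a one-line identity in each case (for example $(1+\eps\varphi_n)h_ag_c$ factors directly as $u_a\,t_a\,(1+\eps\varphi_{n-1})\cdot\begin{pmatrix}0&1\\1&0\end{pmatrix}$), and one reads off $D_a=D-\bar c$, $D_b=D+\bar c$ and the stated $t_a$, $t_b$ with no further normalization: no left $T(F)$-correction is needed, and the only $T(\OO)[D]$-reduction occurs in the degenerate $n=0$ case for $t_a$. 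Your representatives would also work, but the extra case split ``$\tilde a\in\mathfrak m_c$ or not'' you anticipate is an artifact of your chart; it effectively reparameterizes back to the paper's split, and your parenthetical ``together with Type~II in the appropriate branch'' reflects this mismatch rather than any genuine phenomenon. If you switch to the paper's representatives the computation becomes mechanical and the anticipated difficulty with $T(F)\cdot T(\OO)[D(h)]$-normalization disappears.
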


\begin{proof}
Recall (see the proof of Lemma \ref{hecke-mod-adelic-lem}) 
that the quotient $G(\OO)/H_{g_c}$ can be identified with $G(\OO/I_c)/B(\OO/I_c),$
which itself is identified with the projective line over $A(c)=\OO/I_c$, the \'etale extension of $A$ associated with $c$.
From this we see that as representatives of $G(\OO)/H_{g_c}$ we can take the matrices
$$h_a:=\left(\begin{matrix} a & 1 \\ 1 & 0\end{matrix}\right), \ \ h_b:=\left(\begin{matrix} 1 & 0 \\ \eps b & 1\end{matrix}\right), \ \ $$
where $a$ runs over representatives of $\OO/I_c$, $b$ runs over representatives of $\ov{\OO}/\ov{I}_c$.

Now, as in Lemma \ref{hecke-ED-comp-lem}, we apply the analog of Iwasawa decompositions to the elements $g_Dh_bg_c$, $g_Dh_ag_c$,
i.e., we should find $u_b,u_a\in U(\A)$, $t_b,t_a\in T(\A)$ and divisors $D_b,D_a$, such that
$$g_Dh_bg_c\in u_bt_bg_{D_b}G(\OO), \ \ g_Dh_ag_c\in u_at_ag_{D_a}G(\OO).$$
Then as we have seen in Lemma \ref{hecke-ED-comp-lem}, we will have
$$T_cf(utg_D)=\sum_{a\in \OO/I_c}f(u\Ad(t)(u_a)tt_ag_{D_a})+\sum_{b\in \ov{\OO}/\ov{I}_c}f(u\Ad(t)(u_b)tt_bg_{D_b}),$$ 
$$E_DT_cf(t)=\sum_{a\in \OO/I_c} E_{D_a}f(tt_a)+\sum_{b\in \ov{\OO}/\ov{I}_c} E_{D_b}f(tt_b).$$
It remains to find formulas for $t_b$, $t_a$, $D_b$ and $D_a$.

Since $(g_c)_p$ belongs to $G(\OO_p)$ for $p\neq \ov{c}$, this is a local computation at the point $\ov{c}$
(so $u_b,u_a,t_0,t_a$ have trivial components away from $\ov{c}$ and $D_b,D_a$ differ from $D$ only at $\ov{c}$).
Let $n\ge 0$ denote the multiplicity of $\ov{c}$ in $D$. 
Let us take $t_{\ov{c}}:=f_c\mod (\eps)$ as a uniformizer on $\ov{C}$ at $\ov{c}$,
and use it when defining the matrix $\varphi_n$ at $\ov{c}$ (recall that $(g_D)_{\ov{c}}=1+\eps \varphi_n$). 
We have
$$(1+\eps \varphi_n)h_ag_c=\left(\begin{matrix} 1 & a(1-\eps a f_c^{-n}) \\ 0 & 1\end{matrix}\right)
\left(\begin{matrix} 1-\eps f_c^{-n}a & 0  \\ 0 & (1+\eps f_c^{-n}a) f_c^{-1}\end{matrix}\right) 
(1+\eps \varphi_{n-1})\left(\begin{matrix} 0 & 1 \\ 1 & 0\end{matrix}\right).$$
Hence, we get $D_a=D-\ov{c}$ if $n\ge 1$, $D_a=D$ is $n=0$, and
$$u_a=\left(\begin{matrix} 1 & a(1-\eps a f_c^{-n}) \\ 0 & 1\end{matrix}\right), \ \
t_a=\left(\begin{matrix} 1-\eps f_c^{-n}a & 0  \\ 0 & (1+\eps f_c^{-n}a)f_c^{-1}\end{matrix}\right).$$
Note that for $n=0$, we have 
$$t_a\equiv \left(\begin{matrix} 1 & 0  \\ 0 & f_c^{-1}\end{matrix}\right) \mod T(\OO)[D].$$

On the other hand, for $n>0$,
$$(1+\eps \varphi_n)h_bg_c=
\left(\begin{matrix} f_c^{-1} & 0  \\ 0 & 1+bf_c^n \end{matrix}\right)\cdot (1+\eps \varphi_{n+1})\cdot
\left(\begin{matrix} 1 & 0 \\ 0 & (1+bf_c^n)^{-1}\end{matrix}\right),$$
so $D_b=D+\ov{c}$, $u_b=1$ and $t_b=\diag(f_c^{-1},1+bf_c^n)$ in this case.

Finally, in the case $n=0$, we consider two subcases. For $b=-1$ we get $(1+\eps \varphi_0)h_{-1}=1$, so
$D_{-1}=D$, $u_{-1}=1$, $t_{-1}=\diag(f_c^{-1},1)$.
For $b\neq -1$, we have
$$(1+\eps \varphi_0)h_{-1}g_c=\left(\begin{matrix} f_c^{-1} &0  \\ 0 & (b+1)^{-1}\end{matrix}\right)(1+\eps\varphi_{1})\left(\begin{matrix} 1 &0  \\ 0 & (b+1)^{-1}\end{matrix}\right),$$
so $D_b=D+\ov{c}$, $u_b=1$, $t_b=\diag(f_c^{-1},(b+1))$.

This finishes the proof of Lemma.
\end{proof}

\subsection{Constant term and the Hecke operators over a finite field}\label{const-term-finite-field-sec}

Here we consider (a much simpler) classical case of the reduced curve $\ov{C}$ over the finite field $k$ (we continue to assume $G=\GL_2$).
We can consider the constant term and the Hecke operators for functions on $\Bun_G(\ov{C},K)$, where $K\sub G(\ov{\OO})$ is an open compact subgroup.

The Hecke operators $T_p=T_{g_p}$ associated with points $p\in \ov{C}$ and the 
constant term operator 
$$E:\C(\Bun_G(\ov{C},K))\to \C(T(\ov{F}U(\ov{\A})\backslash G(\ov{\A})/K))$$ 
are defined as before.

Assume now that for some point $p\in \ov{C}$, $K=G(\ov{\OO}_p)\times K'$, where $K'\sub G(\ov{\A}')$, where $\ov{\A}'$ are adeles for $\ov{C}\setminus p$.
Then using the Iwasawa decomposition $G(\ov{F}_p)=B(\ov{F}_p)G(\ov{\OO}_p)$, we can write
$$T(\ov{F}U(\ov{\A})\backslash G(\ov{\A})/K)=T(\ov{F})U(\ov{\A}')\backslash T(\ov{F}_p)G(\ov{\A}')/T(\ov{\OO}_p)K',$$
so we can use representatives of the form $tg'$, where $t\in T(\ov{F}_p)$, $g'\in G(\ov{\A}')$, in the argument of the constant term operator
$Ef$.

In this situation we have the following simple analog of Lemma \ref{hecke-qbun-formula-lem}.

\begin{lemma}\label{hecke-finite-formula-lem} Let $f_p$ be the generator of the maximal ideal of $\ov{\OO}_p$.
One has 
$$ET_pf(tg')=|k(p)|\cdot Ef(t\left(\begin{matrix} 1 & 0 \\ 0 & f_p^{-1}\end{matrix}\right)g')+Ef(t\left(\begin{matrix}f_c^{-1} & 0 \\ 0 & 1\end{matrix}\right)g').$$
\end{lemma}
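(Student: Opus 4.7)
This is the classical analog of Lemma \ref{hecke-qbun-formula-lem}, and is simpler because $\QBun_T(\ov{C},K)$ is not stratified by divisors $D$. Since $g_p$ has trivial components away from $p$, the subgroup $H_{g_p}=K\cap g_pKg_p^{-1}$ splits as $H_{g_p}^{\mathrm{loc}}\times K'$ with $H_{g_p}^{\mathrm{loc}}=G(\ov{\OO}_p)\cap g_pG(\ov{\OO}_p)g_p^{-1}$, and the identity \eqref{EPT-formula} from the proof of Proposition A reduces to
$$ET_pf(tg') = \vol(H_{g_p})\sum_{h\in G(\ov{\OO}_p)/H_{g_p}^{\mathrm{loc}}} Ef(tg'hg_p),$$
where the representatives $h$ can be chosen to be supported at $p$.

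The next step is to apply the Iwasawa decomposition $G(\ov{F}_p)=B(\ov{F}_p)G(\ov{\OO}_p)$ to each $hg_p$, writing $hg_p=u_ht_hk_h$ with $u_h\in U(\ov{F}_p)$, $t_h\in T(\ov{F}_p)$, $k_h\in G(\ov{\OO}_p)$. Since $Ef$ is right $K$-invariant and left $U(\ov{\A})T(\ov{F})$-invariant, and since $g'$ is trivial at $p$ while $u_h,t_h,k_h$ are trivial away from $p$ (hence commute with $g'$), the summand simplifies: one peels off $k_h$ on the right, rewrites $tu_ht_hg'=(tu_ht^{-1})\cdot tt_hg'$ (using that $T$ normalizes $U$), and absorbs $tu_ht^{-1}\in U(\ov{\A})$ on the left, leaving $Ef(tg'hg_p)=Ef(tt_hg')$.

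It remains to enumerate $G(\ov{\OO}_p)/H_{g_p}^{\mathrm{loc}}\simeq \P^1(k(p))$ by the $|k(p)|+1$ representatives $h=1$, $h=w:=\left(\begin{matrix}0&1\\1&0\end{matrix}\right)$, and $h_a:=\left(\begin{matrix}1&0\\a&1\end{matrix}\right)$ for $a$ ranging over lifts of $k(p)^*$, and to compute the torus factor in each Iwasawa factorization. The first two cases are immediate: $t_1=\diag(f_p^{-1},1)$, and the identity $wg_p=\diag(1,f_p^{-1})\cdot w$ gives $t_w=\diag(1,f_p^{-1})$. The only step that requires a small calculation is the factorization
$$h_ag_p = \left(\begin{matrix}1&a^{-1}\\0&1\end{matrix}\right)\cdot\diag(1,f_p^{-1})\cdot\left(\begin{matrix}0&-a^{-1}\\a&f_p\end{matrix}\right),$$
verified by direct multiplication; the last factor lies in $G(\ov{\OO}_p)$ since $a\in\ov{\OO}_p^*$ and its determinant is $1$, so $t_{h_a}=\diag(1,f_p^{-1})$. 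Summing the $|k(p)|+1$ contributions gives
$$ET_pf(tg')=\vol(H_{g_p})\bigl[\,Ef(t\diag(f_p^{-1},1)g')+|k(p)|\,Ef(t\diag(1,f_p^{-1})g')\,\bigr],$$
which matches the stated formula once one absorbs the normalization $\vol(H_{g_p})^{-1}=|\P^1(k(p))|$ into the definition of $T_p$ (as in Lemma \ref{hecke-mod-adelic-lem}). There is no genuine obstacle: the only computation of any substance is the Iwasawa factorization of $h_ag_p$, and it is elementary.
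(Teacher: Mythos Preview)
Your proof is correct and follows exactly the approach the paper indicates (the paper omits the proof, saying only that it is ``similar (but simpler)'' to that of Lemma~\ref{hecke-qbun-formula-lem}): apply \eqref{EPT-formula}, enumerate $G(\ov{\OO}_p)/H_{g_p}\simeq\P^1(k(p))$, and Iwasawa-factorize $hg_p$ at $p$. Your choice of coset representatives differs cosmetically from those used in the proof of Lemma~\ref{hecke-qbun-formula-lem} (the reduced analog of the paper's $h_a=\left(\begin{smallmatrix}a&1\\1&0\end{smallmatrix}\right)$ plus the identity), but both work. You are also right to flag the normalization: as written, the text sets $T_p=T_{g_p}$, which would leave a factor $\vol(H_{g_p})=|\P^1(k(p))|^{-1}$ in the formula; the stated identity holds for the modular normalization $T_p=|\P^1(k(p))|\cdot T_{g_p}$, consistent with Lemma~\ref{hecke-mod-adelic-lem} and with how $T_c$ is used in Lemma~\ref{hecke-qbun-formula-lem}.
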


We omit the proof since it is similar (but simpler) than that of Lemma \ref{hecke-qbun-formula-lem}.

\section{Cuspidal functions for $\GL_2$: bounds on support}\label{cusp-GL2-sec}

In this section we continue to assume that $G=\GL_2$. The main result of this section is the proof of Theorem C.
In the proof we use the adelic representatives for
vector bundles $V$ on $C$ with split reduction $\ov{V}\simeq L_0\oplus M_0$ such that $\deg(M_0)-\deg(L_0)\ge 2g+2$
(see Lemma \ref{split-red-representatives-lem}). The key technical result, Lemma \ref{adelic-const-term-ex-lem}, 
implies the vanishing of certain
sums of values of a cuspidal function on bundles with split reduction.
More precisely, we show the existence of a constant $N(K)$ (equal to $6g-1$ for $K=G(\OO)$) such
that in the case when $\deg(M_0)-\deg(L_0)\ge N(K)$, each of these sums consists of a single term.
This result shows that cuspidal functions vanish on such bundles (see Proposition \ref{cusp-vanishing-thm}).
This vanishing immediately implies that the space of cuspidal functions on $\Bun_G^L(C,K)$ is finite-dimensional and that cuspidal functions are Hecke-bounded.
To show the converse (see Proposition \ref{hecke-bound-prop}) we use a precise information on the compatibility of the Hecke operators with the constant term operator given by
Lemma \ref{hecke-qbun-formula-lem}.



In Sec.\ \ref{cusp-hb-sec}
we prove Theorem C(2) that cuspidality is equivalent to (weak) Hecke-boundedness (see Proposition \ref{hecke-bound-prop}).

\subsection{Elements of $G(\A)$ with split reduction}
For $g\in G(\A)$ we denote its reduction modulo $\eps$ by $\ov{g}\in G(\ov{\A})$.
Here we will study $g\in G(\A)$ such that $\ov{g}$ is diagonal.

For an idele $a\in \ov{\A}^*$, we define the degree $\deg(a)$ as the degree of the corresponding line bundle on $\ov{C}$.
For example, for the idele $f_D$ associated with an effective divisor $D\sub \ov{C}$ (see \eqref{fD-eq}) we have $\deg(f_D)=-\deg(D)$.

\begin{lemma}\label{split-degree-div-ext-lem}
(i) Let $N_0$ be a line bundle of degree $\ge 2g+2$ on $\ov{C}$.
Then for any class $e\in H^1(N_0^{-1})$ 
there exists an effective divisor $D\sub \ov{C}$ of degree $\le \frac{\deg(N_0)+1}{2}+g$, 
defined over $k$, such that the class $e$ goes to zero under the map $H^1(N_0^{-1})\to H^1(N_0^{-1}(D))$.

\noindent
(ii) Let $a\in \ov{\A}^*$ be an element with $\deg(a)\ge 2g+2$. Then any class in $\ov{\A}/(\ov{F}+a^{-1}\ov{\OO})$ can be represented by an element
of the form $ua^{-1}f_D^{-1}$ 
for some $u\in \ov{\OO}^*$ and some effective divisor $D\sub \ov{C}$ of degree $\le\frac{\deg(a)+1}{2}+g$.
\end{lemma}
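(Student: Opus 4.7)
The plan is to deduce (ii) from (i), and to prove (i) via a Riemann--Roch dimension count using the rank-two extension interpretation of $H^1$.

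For (ii) from (i): using $H^1(N_0^{-1}(D)) \cong \ov{\A}/(\ov{F} + a^{-1}f_D^{-1}\ov{\OO})$, a class $[\phi_0]$ vanishes in $H^1(N_0^{-1}(D))$ precisely when $\phi_0 = r + a^{-1}f_D^{-1} y$ for some $r \in \ov{F}$ and $y \in \ov{\OO}$. The freedom in this expression for a fixed class amounts to shifting $y$ by $f_D \cdot o$ for arbitrary $o \in \ov{\OO}$. At each place $p$ where $\operatorname{ord}_p(y_p) > d_p$ I take $o_p$ to be a unit, so that the new $y_p$ has order exactly $d_p$; after such adjustments the zero divisor $D_y$ of $y$ satisfies $D_y \le D$. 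Then $y = u \cdot f_{D_y}$ with $u \in \ov{\OO}^*$, and $a^{-1}f_D^{-1}y = u \cdot a^{-1}f_{D-D_y}^{-1}$ gives the form required in (ii) with $D - D_y$ effective of degree $\le \deg D$.

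For (i), the class $e \in H^1(N_0^{-1}) = \Ext^1(\OO_{\ov{C}}, N_0^{-1})$ corresponds to a rank-two extension $0 \to N_0^{-1} \to \EE_e \to \OO \to 0$. Twisting by $\OO(D)$ and taking the long exact sequence shows that $e$ maps to zero in $H^1(N_0^{-1}(D))$ if and only if the connecting map $\partial\colon H^0(\OO(D)) \to H^1(N_0^{-1}(D))$, given by cup product with $e$, has nontrivial kernel. Set $d := \lfloor(n+1)/2\rfloor + g$ and pick any effective divisor $D$ over $k$ of degree $d$ (such exist since $d \ge g$). The hypothesis $n \ge 2g + 2$ ensures $d < n$ and $d \ge 2g - 1$, so Riemann--Roch gives $\dim H^0(\OO(D)) = d + 1 - g$ and $\dim H^1(N_0^{-1}(D)) = n - d + g - 1$, and the inequality $\dim H^0(\OO(D)) > \dim H^1(N_0^{-1}(D))$ reduces to $2d > n + 2g - 2$, which holds by construction. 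Choose a nonzero $s \in \ker \partial$ over $k$, and let $D'' = (s)_0$ be the zero divisor of $s$ as a section of $\OO(D)$, which is effective of degree $d$ and defined over $k$. As a rational function, $s$ has divisor $D'' - D$, so multiplication by $s$ yields an isomorphism $\OO(D'') \xrightarrow{\sim} \OO(D)$, and the map $\OO \xrightarrow{s} \OO(D)$ factors as the natural inclusion $\OO \hookrightarrow \OO(D'')$ followed by this isomorphism. Twisting by $N_0^{-1}$ and applying $H^1$, the vanishing $s \cup e = 0$ in $H^1(N_0^{-1}(D))$ translates to $e$ mapping to zero under the natural map $H^1(N_0^{-1}) \to H^1(N_0^{-1}(D''))$, which is exactly the conclusion of (i) with $\deg D'' \le (n+1)/2 + g$.

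The main obstacle is the final factorization step: showing that cup product with $s$, viewed as a map $H^1(N_0^{-1}) \to H^1(N_0^{-1}(D))$, factors through the natural map to $H^1(N_0^{-1}(D''))$. This hinges on the linear equivalence $D \sim D''$ (guaranteed because both divisors lie in $|\OO(D)|$) and the resulting identification of the cup-product map with the natural map composed with an isomorphism. Granted this identification, the core of (i) is the Riemann--Roch dimension count above, and the reduction of (ii) to (i) amounts to place-by-place adele bookkeeping.
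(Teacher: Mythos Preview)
Your argument is correct and follows essentially the same route as the paper: both use Riemann--Roch on the rank-$2$ extension to produce a nonzero map from a line bundle into it (equivalently, a section of $\EE_e(D)$, i.e., an element of $\ker\partial$), and the resulting effective divisor (your $D''$, the paper's $D$) has the required degree; your detour through the connecting map and the passage from an auxiliary $D$ to $D''=(s)_0$ is just a repackaging of the paper's direct construction of a map $P_0\to E$ with $P_0=N_0(-D)$. One small slip: the ``if and only if'' you state is false as written (vanishing of $e$ in $H^1(N_0^{-1}(D))$ is equivalent to $1\in\ker\partial$, not merely $\ker\partial\neq0$), but you never use that direction, so the proof stands.
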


\begin{proof}
(i) Let 
$$0\to \OO\to E\to N_0\to 0$$
be an extension on $\ov{C}$ representing the class $e\in H^1(N_0^{-1})\simeq \Ext^1(N_0,\OO)$.
Take any line bundle $P_0$ of degree $\lfloor \frac{\deg(N_0)}{2}\rfloor-g$. Then $\mu(P_0^\vee\ot E)\ge g$. Hence, $H^0(P_0^\vee\ot E)\neq 0$, in other words, there exists a nonzero
morphism $P_0\to E$. Note that by assumption,
$$\deg(P_0)=\lfloor \frac{\deg(N_0)}{2}\rfloor-g\ge 1,$$
so $\Hom(P_0,\OO)=0$. Hence, the composition $P_0\to E\to N_0$ is nonzero, so $P_0=N_0(-D)$, where 
$$\deg(D)=\deg(N_0)-\deg(P_0)\le \deg(N_0)-\frac{\deg(N_0)-1}{2}+g=\frac{\deg(N_0)+1}{2}+g.$$

\noindent
(ii) Let $N_0$ be the line bundle on $\ov{C}$ associated with $a$. Since we have an identificiation $H^1(N_0^{-1})=\ov{\A}/(\ov{F}+a^{-1}\ov{\OO})$,
by part (i), for any class $e\in \ov{\A}/(\ov{F}+a^{-1}\ov{\OO})$
there exists an effective divisor $D\sub \ov{C}$ of degree $\le\frac{\deg(a)+1}{2}+g$, such that $e$ goes to zero under the map
$$\ov{\A}/(\ov{F}+a^{-1}\ov{\OO})\to \ov{\A}/(\ov{F}+a^{-1}f_D^{-1}\ov{\OO}).$$
In other words, $e$ is in the image of the map
$$f_D^{-1}\ov{\OO}/\ov{\OO}\rTo{a^{-1}} \ov{\A}/(\ov{F}+a^{-1}\ov{\OO}).$$
But any element of $f_D^{-1}\ov{\OO}/\ov{\OO}$ is represented by $uf_{D'}^{-1}$ for some $u\in \ov{\OO}^*$ and some subdivisor $D'\sub D$,
and our assertion follows.
\end{proof}

\begin{lemma}\label{split-red-representatives-lem}
Suppose the reduction modulo $\eps$ of a double coset $G(F)gG(\OO)$ is $G(\ov{F})\diag(\ov{a}_1,\ov{a}_2)G(\ov{\OO})$,
where $\deg(\ov{a}_2)-\deg(\ov{a}_1)\ge 2g+2$. Then there exists an effective divisor $D\sub \ov{C}$ of degree $\le\frac{\deg(\ov{a}_2)-\deg(\ov{a}_1)+1}{2}+g$,
such that
$$G(F)gG(\OO)=G(F)\left(\begin{matrix} a_1 & \eps\cdot a_1f_D^{-1} \\ 0 & a_2\end{matrix}\right)G(\OO)$$
for some ideles $a_1,a_2$ reducing to $\ov{a}_1$ and $\ov{a}_2$.
\end{lemma}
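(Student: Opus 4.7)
The plan is to interpret the double coset $G(F) g G(\OO)$ as a rank-$2$ vector bundle $V$ on $C$ with split reduction, produce a filtration of $V$ yielding an upper-triangular adelic representative, and then apply Lemma \ref{split-degree-div-ext-lem}(ii) to bring the off-diagonal entry to the prescribed form.

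First I would use surjectivity of the reduction maps $G(F) \to G(\ov{F})$ and $G(\OO) \to G(\ov{\OO})$ to arrange $\ov{g} = \diag(\ov{a}_1, \ov{a}_2)$ after modifying $g$ within its double coset; this lets me write $g = (1+\eps X)\diag(a_1, a_2)$ for some $X \in \fg(\ov{\A})$ and fixed lifts $a_i$ of $\ov{a}_i$. The rank-$2$ bundle $V$ on $C$ associated to this double coset satisfies $\ov{V} \simeq L_0 \oplus M_0$, where $L_0, M_0$ are the line bundles on $\ov{C}$ corresponding to $\ov{a}_1, \ov{a}_2$.

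The key step is to produce a sub-line-bundle $L \hookrightarrow V$ with $\ov{L} \simeq L_0$ and locally free quotient $V/L$ satisfying $\ov{V/L} \simeq M_0$; such a filtration supplies an upper-triangular adelic representative whose diagonal reduces to $\diag(\ov{a}_1, \ov{a}_2)$. I would prove existence by a dimension count. The space of embeddings $\Hom_{\ov{C}}(L_0, \ov{V}) = k \oplus H^0(L_0^{-1} M_0)$ has dimension at least $g+4$, because the hypothesis $\deg(L_0^{-1} M_0) \geq 2g+2$ and Riemann--Roch give $h^0(L_0^{-1} M_0) \geq g+3$. The obstruction to lifting an embedding $L_0 \hookrightarrow \ov{V}$ to $L \hookrightarrow V$ lies in $\Ext^1_C(L_0, N_0 \otimes \ov{V}) = H^1(N_0) \oplus H^1(L_0^{-1} M_0 \otimes N_0)$; the second summand vanishes by Riemann--Roch in the relevant range of $\deg N_0$, and the first has dimension at most $g$. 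Since the obstruction map is linear and the target has strictly smaller dimension than the source, its kernel is nontrivial, and saturating any liftable embedding gives the desired $L$.

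With the filtration in hand, $V$ becomes an extension $0 \to L \to V \to V/L \to 0$ whose reduction is split; the extension class therefore lies in the subgroup of $\Ext^1_C(V/L, L)$ coming from the $N_0$-twisted piece via the reduction exact sequence. Adelically this subgroup is identified with $\eps \cdot \ov{\A}/(\ov{F} + (\ov{a}_1/\ov{a}_2)\ov{\OO})$, up to an overall twist by the idele of $N_0$ which may be absorbed into the choice of generator of $\eps$. Applying Lemma \ref{split-degree-div-ext-lem}(ii) with $a := \ov{a}_2/\ov{a}_1$ (of degree $\geq 2g+2$), the class is represented by $\eps u(\ov{a}_1/\ov{a}_2) f_D^{-1}$ with $u \in \ov{\OO}^*$ and an effective divisor $D$ of degree at most $\frac{\deg(\ov{a}_2)-\deg(\ov{a}_1)+1}{2}+g$. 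Translating this back to the upper-triangular representative yields a $(1,2)$-entry $\eps u a_1 f_D^{-1}$; the unit $u$ is absorbed by replacing $a_1$ with $u a_1$ (still a lift of $\ov{a}_1$), giving the required form. The main technical difficulty is the existence step: the dimension count is tightest when $\deg N_0$ is small or negative and needs more careful cohomology control in that regime.
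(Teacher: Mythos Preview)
Your overall plan matches the paper's in its final steps (apply Lemma~\ref{split-degree-div-ext-lem}(ii), then absorb the unit $u$ into $a_1$), but the first step---producing an upper-triangular representative---is where you diverge, and your version has a genuine gap.

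The paper does not invoke obstruction theory at all. Starting from $g = (1+\eps X)\diag(a_1,a_2)$ with $X \in \fg(\ov{\A})$, it observes that left multiplication by elements of $1+\eps\,\fu_-(\ov{F})$ and right multiplication by elements of $1+\eps\,\fu_-(\ov{\OO})$ (the latter after conjugation by $t_a$) together modify $X_{21}$ by an arbitrary element of $\ov{F} + (\ov{a}_2/\ov{a}_1)\ov{\OO}$. Since $\deg(\ov{a}_2/\ov{a}_1)\ge 2g+2 > 2g-2$, this sum is all of $\ov{\A}$, and $X_{21}$ is killed in one stroke---no lifting problem, no dimension count. After that the paper makes $X$ strictly upper-triangular by absorbing the diagonal into $a_1,a_2$, and then proceeds exactly as you do.

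Your deformation-theoretic argument has two problems. First, the bound $h^1(N_0)\le g$ fails whenever $\deg N_0 \le -2$: Riemann--Roch gives $h^1(N_0)=g-1-\deg N_0$ there, which is unbounded. The lemma carries no hypothesis on $\deg N_0$, so appeals to ``the relevant range of $\deg N_0$'' are not available; the same objection applies to your claim that $H^1(L_0^{-1}M_0\otimes N_0)$ vanishes. Second, even when the dimensions cooperate, a nontrivial kernel of the linear obstruction map only supplies \emph{some} liftable $\phi_0\in\Hom(L_0,\ov{V})$; if this $\phi_0$ has zero component in $\End(L_0)=k$, its image lies in $M_0$ and its saturation is $M_0$, not $L_0$, so you do not get the filtration with quotient $\simeq M_0$. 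You would need the kernel to meet the affine subspace $\{(1,s):s\in H^0(L_0^{-1}M_0)\}$, which requires a sharper estimate than source-dimension exceeding target-dimension.
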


\begin{proof}
First, it is clear that we can choose a representative $g$ for our double coset of the form
$g=(1+\eps X)\cdot t_a$, where $t_a=\diag(a_1,a_2)$, where $a_1,a_2\in \A^*$ are some liftings of $\ov{a}_1,\ov{a}_2\in \ov{\A}^*$.
Furthermore, we claim that we can assume $X$ to be upper-triangular. Indeed, setting $\varphi(x):=\left(\begin{matrix} 1 & 0 \\ x & 1\end{matrix}\right)$,
we have
$$\Ad(t_a)\varphi(x)=\varphi(\ov{a}_2\ov{a}_1^{-1}x).$$
Thus, multiplying $g$ on the right with elements from $(1+\varphi(\eps \ov{\OO}))$ we can add to $X$ elements of $\fu_-(\ov{a}_2\ov{a}_1^{-1}\ov{\OO})$.
On the other hand, multiplying $g$ on the left with elements from $(1+\eps \fu_-(\ov{F}))$, we can add to $X$ elements of $\fu_-(\ov{F})$.
But $\ov{F}+\ov{a}_2\ov{a}_1^{-1}\ov{\OO}=\ov{\A}$ (since $\deg(\ov{a}_2\ov{a}_1^{-1})\ge 2g+2$), so we can make $X$ to be upper-triangular.

Next, modifying $a_1$ and $a_2$, we can assume that $X$ is strictly upper triangular, so $X=\left(\begin{matrix} 1 & \eps x \\ 0 & 1\end{matrix}\right)$,
where $x$ can be modified to any element in its $\ov{F}+\ov{a}_1\ov{a}_2^{-1}\ov{\OO}$-coset. Thus, applying Lemma \ref{split-degree-div-ext-lem}(ii), we can assume that
$$x=u\ov{a}_1\ov{a}_2^{-1}f_D^{-1}$$
with $u\in \ov{\OO}^*$, for some effective divisor $D$ of degree $\le\frac{\deg(\ov{a}_2)-\deg(\ov{a}_1)+1}{2}+g$. Thus, we get a representative of the form
$$g=\left(\begin{matrix} 1 & \eps\cdot ua_1a_2^{-1}f_D^{-1}\\ 0 & 1\end{matrix}\right)\cdot t_a=\left(\begin{matrix} a_1 & \eps\cdot ua_1f_D^{-1}\\ 0 & a_2\end{matrix}\right).$$
Finally, we observe that
$$\left(\begin{matrix} a_1 & \eps\cdot ua_1f_D^{-1} \\ 0 & a_2\end{matrix}\right)\cdot
\left(\begin{matrix} u & 0\\ 0 & 1\end{matrix}\right)=
\left(\begin{matrix} ua_1 & \eps\cdot ua_1f_D^{-1} \\ 0 & a_2\end{matrix}\right).$$
Hence, replacing $a_1$ by $\wt{a}_1=ua_1$, we get a representative of the claimed form.
\end{proof}

\subsection{Constant term and bundles with split reduction}

Let $K\sub G(\OO)$ be a normal open compact subgroup, and let $(g_i)_{1\le i\le r}$ be representatives of $G(\OO)/K$.
Then for any $f\in \C(G(F)\backslash G(\A)/K)$, the corresponding constant term is determined by the operators
$$E_{D,i}(f)(t)=\int_{U(\A)/U(F)} f(utg_Dg_i)du=\vol(U_{t,K,D}/(U(F)\cap U_{t,K,D}))\cdot
\sum_{u\in U(\A)/(U(F)\cdot U_{t,K,D})}f(utg_Dg_i),$$
where $t\in T(\A)$, $U_{t,K,D}:=U(\A)\cap (tg_D)K(tg_D)^{-1}$.

\begin{lemma}\label{adelic-const-term-ex-lem}
(i) Let $a_1,a_2$ be a pair of ideles such that $\A_F=F+a_1^{-1}a_2\OO$. Set $t=\left(\begin{matrix} a_2 & 0 \\ 0 & a_1\end{matrix}\right)$.
Then for $f\in \C(G(F)\backslash G(\A)/G(\OO))$, one has
\begin{equation}\label{E-D-split-formula}
\begin{array}{l}
E_D(f)(t)=\vol(U_{t,D}/(U(F)\cap U_{t,D}))\times 
\\ \sum_{x\in a_1^{-1}a_2\ov{\OO}/(\ov{F}\cap a_1^{-1}a_2\ov{\OO}+a_1^{-1}a_2f_D\ov{\OO})} 
f(\left(\begin{matrix} a_1(1-\eps f_D^{-1}a_1a_2^{-1}x) & \eps f_D^{-1}a_1 \\ 0 & a_2(1+\eps f_D^{-1}a_1a_2^{-1}x)\end{matrix}\right)).
\end{array}
\end{equation}

\noindent
(ii) Let $K\sub G(\OO)$ be a normal open compact subgroup. Then there exists an effective divisor $D_0$ on $C$ such that for any pair of ideles $a_1,a_2$
with $\A_F=F+a_1^{-1}a_2f_{D_0}\OO$, and any $f\in  \C(G(F)\backslash G(\A)/K)$, one has
\begin{equation}\label{E-D-i-split-formula}
\begin{array}{l}
E_{D,i}(f)(t)=c(t,K,D)\times 
\\ \sum_{x\in a_1^{-1}a_2f_{D_0}\ov{\OO}/(\ov{F}\cap a_1^{-1}a_2f_{D_0}\ov{\OO}+a_1^{-1}a_2f_Df_{D_0}\ov{\OO})} 
f(\left(\begin{matrix} a_1(1-\eps f_D^{-1}a_1a_2^{-1}x) & \eps f_D^{-1}a_1 \\ 0 & a_2(1+\eps f_D^{-1}a_1a_2^{-1}x)\end{matrix}\right)g_i),
\end{array}
\end{equation}
where $t=\left(\begin{matrix} a_2 & 0 \\ 0 & a_1\end{matrix}\right)$, $c(t,K,D)\in\C^*$ is a constant.
\end{lemma}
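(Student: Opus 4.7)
The plan is to unwind the definitions of $E_D(f)(t)$ via the finite-sum expression in \eqref{ED-sum-def-eq}, apply Lemma \ref{UtD-lem} to describe the stabilizer, use the hypothesis on the adelic decomposition to select convenient coset representatives, and then perform an explicit matrix identification to recognize each summand as $f$ evaluated at the matrix displayed in the lemma.

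First, applying Lemma \ref{UtD-lem} with the roles of $a_1$ and $a_2$ interchanged (since here $t=\diag(a_2,a_1)$) identifies $U_{t,D}$ with $\{u(x):x\in a_1^{-1}a_2(\eps\OO+f_D\OO)\}$, so that $U(\A)/(U(F)\cdot U_{t,D})\simeq \A/(F+a_1^{-1}a_2(\eps\OO+f_D\OO))$. Invoking $\A=F+a_1^{-1}a_2\OO$, every class admits a representative in $a_1^{-1}a_2\OO$; reducing modulo $\eps$ and absorbing the $\eps$-part $a_1^{-1}a_2\eps\OO$ (which is already contained in the $\eps$-part of $a_1^{-1}a_2\OO$) produces the quotient $a_1^{-1}a_2\ov{\OO}/(\ov{F}\cap a_1^{-1}a_2\ov{\OO}+a_1^{-1}a_2f_D\ov{\OO})$ appearing in the lemma.

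The main technical step is to show $f(u(x)tg_D)=f(M_x)$ for each representative $x=a_1^{-1}a_2y$ with $y\in\ov{\OO}$, where $M_x$ denotes the matrix displayed in the formula. Setting $A=u(x)tg_D$, a direct matrix computation using the Weyl element $w_0=\left(\begin{matrix}0 & 1 \\ 1 & 0\end{matrix}\right)\in G(F)$ yields the explicit factorization $A=w_0\,M_x\,w_0\,u(y-\eps f_D^{-1}y^2)$. Thus the identity reduces to placing the right-most factor $w_0\cdot u(y-\eps f_D^{-1}y^2)$ into $G(\OO)$ up to a compensating element of $G(F)$. This is accomplished by combining the freedom to shift the representative $y$ within its class modulo $(\ov{F}\cdot a_1a_2^{-1})\cap\ov{\OO}$ and $f_D\ov{\OO}$ with a further non-trivial $G(F)$-manipulation designed to absorb the $y^2$-contribution at each place where $f_D$ has positive valuation.

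For part (ii), since $K$ is open in $G(\OO)$, one chooses an effective divisor $D_0$ so that $K$ contains the principal congruence subgroup $\ker(G(\OO)\to G(\OO/I_{D_0}))$. Then $U_{t,K,D}$ enlarges $U_{t,D}$ by the unipotents $u(x)$ with $x\in a_1^{-1}a_2f_Df_{D_0}\OO$, and repeating the argument of (i) with $f_D$ replaced by $f_Df_{D_0}$ and tracking the $G(\OO)/K$-cosets by the representatives $g_i$ gives the displayed formula; the constant $c(t,K,D)$ records $\vol(U_{t,K,D}/(U(F)\cap U_{t,K,D}))$ together with the index bookkeeping coming from the enlarged stabilizer, and is independent of the ideles $a_1,a_2$ satisfying the hypothesis. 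The main obstacle in the whole argument is the verification in part (i): the Weyl conjugation produces a correction $u(y-\eps f_D^{-1}y^2)$ whose $y^2$-piece is not a priori $\OO$-integral, and one must carefully combine the equivalence-class freedom with the global $G(F)$-action to place $A$ and $M_x$ in the same $G(F)\backslash G(\A)/G(\OO)$ double coset.
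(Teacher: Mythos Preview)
Your setup and the factorization $A=w_0 M_x w_0\,u(y-\eps f_D^{-1}y^2)$ (with $y=a_1a_2^{-1}x\in\ov\OO$) are correct, and you rightly isolate $\eps f_D^{-1}y^2$ as the term preventing the rightmost factor from lying in $G(\OO)$. The gap is in your resolution of this obstruction. Shifting $y$ within its class does nothing: if $y'=y+f_Dc$ then $f_D^{-1}(y')^2=f_D^{-1}y^2+2yc+f_Dc^2$, so the non-integral piece $f_D^{-1}y^2$ survives. And the promised ``non-trivial $G(F)$-manipulation designed to absorb the $y^2$-contribution'' is never specified. Since this is precisely the heart of the computation, the proof as written is incomplete.

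The paper proceeds differently and avoids the quadratic obstruction altogether. Instead of a one-shot Weyl factorization, it first right-multiplies $u(x)tg_D$ by $u(-a_1a_2^{-1}x)\in G(\OO)$, which kills the large $(1,2)$-entry and leaves a matrix of the form
\[
\begin{pmatrix} a_2\alpha^{-1} & \eps y' \\ \eps f_D^{-1}a_1 & a_1\alpha\end{pmatrix},\qquad y'\in\ov\A.
\]
Because the residual is now $O(\eps)$, one can invoke the hypothesis $\ov\A=\ov F+\ov a_1^{-1}\ov a_2\ov\OO$ a \emph{second} time to write $y'=a_1f+a_2b$ with $f\in\ov F$, $b\in\ov\OO$, and then annihilate $\eps y'$ by the explicit identity
\[
w_0\,u(\eps f)\cdot\begin{pmatrix} a_2\alpha^{-1} & \eps y' \\ \eps f_D^{-1}a_1 & a_1\alpha\end{pmatrix}\cdot u(\eps b)\,w_0 \;=\; M_x,
\]
with the left factor in $G(F)$ and the right factor in $G(\OO)$. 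The two-step structure (kill the main term, then split the $\eps$-residual) is what makes the argument close; your direct factorization collapses these steps and produces a term that the available moves cannot absorb.

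For part (ii): since $K\subset G(\OO)$ one has $U_{t,K,D}\subset U_{t,D}$, so ``enlarges'' is the wrong direction; and the lemma does not claim $c(t,K,D)$ is independent of $a_1,a_2$ (it visibly depends on $t$). The paper also does not use the full principal congruence subgroup but rather chooses $D_0$ so that two specific families of matrices (upper-triangular unipotents with entries in $f_{D_0}\OO$, and $u(\eps b)w_0$ with $b\in f_{D_0}\ov\OO$) lie in $K$---exactly the right-multipliers that appear in the part (i) manipulation.
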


\begin{proof}
(i) 
By Lemma \ref{UtD-lem}, $\left(\begin{matrix} 1 & x \\ 0 & 1\end{matrix}\right)\in U_{t,D}$ if and only if $x\in a_1^{-1}a_2(\eps\OO+f_D\OO)$.
Hence,
$$E_D(f)(t)=\vol(U_{t,D}/(U(F)\cap U_{t,D}))\cdot\sum_{x\in \A_C/(F+a_1^{-1}a_2(\eps\OO+f_D\OO))} f(\left(\begin{matrix} 1 & x \\ 0 & 1\end{matrix}\right)\cdot t\cdot g_D).$$
Now
$$\left(\begin{matrix} 1 & x \\ 0 & 1\end{matrix}\right)\cdot t\cdot g_D=
\left(\begin{matrix} a_2+\eps f_D^{-1} a_1 x & x a_1\\ \eps f_D^{-1} a_1 & a_1\end{matrix}\right).$$
Since $\A_C=F+a_1^{-1}a_2\OO$, we can assume that $x\in a_1^{-1}a_2\OO$.
Then we get
$$\left(\begin{matrix} 1 & x \\ 0 & 1\end{matrix}\right)\cdot t\cdot g_D\cdot\left(\begin{matrix} 1 & -a_1a_2^{-1}x \\ 0 & 1\end{matrix}\right)=
\left(\begin{matrix} a_2\a^{-1} & \eps y \\ \eps f_D^{-1}a_1 & a_1\a\end{matrix}\right),$$
for some $y\in \ov{\A}$, where $\a=1-\eps f_D^{-1}a_1a_2^{-1}x$.
Writing $y=a_1f+a_2b$ with $f\in \ov{F}$ and $b\in \ov{\OO}$, we get
$$\left(\begin{matrix} 0 & 1 \\ 1 & 0\end{matrix}\right)\cdot \left(\begin{matrix} 1 & \eps f \\ 0 & 1\end{matrix}\right)\cdot
\left(\begin{matrix} a_2\a^{-1} & \eps y \\ \eps f_D^{-1}a_1 & a_1\a\end{matrix}\right)\cdot
\left(\begin{matrix} 1 & \eps b \\ 0 & 1\end{matrix}\right)\cdot \left(\begin{matrix} 0 & 1 \\ 1 & 0\end{matrix}\right)=
\left(\begin{matrix} a_1\a & \eps f_D^{-1}a_1 \\ 0 & a_2\a^{-1}\end{matrix}\right).$$
Hence,
$$f(\left(\begin{matrix} 1 & x \\ 0 & 1\end{matrix}\right)\cdot t\cdot g_D)
=f(\left(\begin{matrix} a_1\a & \eps f_D^{-1}a_1 \\ 0 & a_2\a^{-1}\end{matrix}\right)).$$

\noindent
(ii) We make appropriate modifications in the proof of part (i).
There exists an effective divisor $D_0$ such that 
\begin{itemize}
\item
$\left(\begin{matrix} 1+\eps a & b \\ 0 & 1+\eps c\end{matrix}\right)\in K$ provided $a,b,c\in f_{D_0}\OO$,
\item
$\left(\begin{matrix} 1 & \eps b \\ 0 & 1\end{matrix}\right)\cdot\left(\begin{matrix} 0 & 1 \\ 1 & 0\end{matrix}\right)\in K$ provided $b\in f_{D_0}\ov{\OO}$.
\end{itemize}
It follows that for $x\in a_1^{-1}a_2f_{D_0}(f_D\OO+\eps \OO)$ one has $\left(\begin{matrix} 1 & x \\ 0 & 1\end{matrix}\right)\in U_{t,K,D}$. 
Hence, there exists a constant $c(t,K,D)$ such that
$$E_{D,i}(f)=c(t,K,D)\sum_{x\in \A/(F+a_1^{-1}a_2f_{D_0}(f_D\OO+\eps \OO))}f(\left(\begin{matrix} 1 & x \\ 0 & 1\end{matrix}\right)tg_Dg_i).$$
Assume that $\A=F+a_1^{-1}a_2f_{D_0}\OO$. Then
$$\A/(F+a_1^{-1}a_2f_{D_0}(f_D\OO+\eps \OO)\simeq a_1^{-1}a_2f_{D_0}\OO/(a_1^{-1}a_2f_{D_0}\OO\cap F+a_1^{-1}a_2f_{D_0}(f_D\OO+\eps \OO)).$$
Now for $x\in a_1^{-1}a_2f_{D_0}\OO$, repeating the manipulations in part (i) and using our choice of $D_0$, we get
$$\left(\begin{matrix} 1 & x \\ 0 & 1\end{matrix}\right)\cdot t\cdot g_D\in G(F)\left(\begin{matrix} a_1\a & \eps f_D^{-1}a_1 \\ 0 & a_2\a^{-1}\end{matrix}\right)K,$$
which gives the claimed formula.
\end{proof}

\begin{prop}\label{cusp-vanishing-thm}
(i) Let $f$ be a cuspidal function on $\Bun(C)$. Then $f(V)=0$ for any vector bundle $V$ such that $\ov{V}\simeq L_0\oplus M_0$, where $\deg(M_0)-\deg(L_0)\ge 6g-1$,
where $g$ is the genus of $\ov{C}$.

\noindent
(ii) Let $K\sub G(\OO)$ be an open compact subgroup, and let $\Bun_G(C,K)\to \Bun_G(C):g\mapsto V(g)$ be the natural projection. 
Then there exists a constant $N(K)>0$, such that $f(g)=0$ for any $g$ such that $\ov{V(g)}\simeq L_0\oplus M_0$ where $\deg(M_0)-\deg(L_0)\ge N(K)$.
\end{prop}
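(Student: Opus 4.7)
The strategy is to combine the explicit form of adelic representatives for split-reduction bundles (Lemma~\ref{split-red-representatives-lem}) with the constant-term formula (Lemma~\ref{adelic-const-term-ex-lem}), exploiting the cuspidality identity $E_D f \equiv 0$ to force $f(V) = 0$ by reducing the relevant sum to a single term. For part (i), given $V$ with $\ov{V} \simeq L_0 \oplus M_0$ and $d := \deg(M_0) - \deg(L_0) \ge 6g - 1$, I would apply Lemma~\ref{split-red-representatives-lem} to obtain a representative $g = \bigl(\begin{smallmatrix} a_1 & \eps a_1 f_D^{-1} \\ 0 & a_2 \end{smallmatrix}\bigr)$ with $\deg D \le (d+1)/2 + g$, where $a_1, a_2$ lift ideles for $L_0, M_0$. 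Setting $t := \diag(a_2, a_1)$, this $g$ is precisely the $x = 0$ summand in the formula of Lemma~\ref{adelic-const-term-ex-lem}(i), so the vanishing $E_D f(t) = 0$ implies that the full sum over $x$ in the index set $I := a_1^{-1} a_2 \ov{\OO}/(\ov{F} \cap a_1^{-1} a_2 \ov{\OO} + a_1^{-1} a_2 f_D \ov{\OO})$ vanishes. It suffices to show $|I| = 1$.

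The key cohomological input identifies $I$ with the cokernel of the restriction map $H^0(\ov{C}, \mathcal{N}) \to H^0(\ov{C}, \mathcal{N}|_D)$, where $\mathcal{N} = L_0^{-1} \otimes M_0$ is the degree-$d$ line bundle on $\ov{C}$ corresponding to the idele $a_1^{-1} a_2$. From the short exact sequence $0 \to \mathcal{N}(-D) \to \mathcal{N} \to \mathcal{N}|_D \to 0$, this cokernel embeds into $H^1(\ov{C}, \mathcal{N}(-D))$, which vanishes once $d - \deg D \ge 2g - 1$. Combining with $\deg D \le (d+1)/2 + g$, this inequality becomes exactly $d \ge 6g - 1$, matching the hypothesis. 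The sum then collapses to the single term $f(V)$, giving $f(V) = 0$.

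Part (ii) follows by the identical scheme with Lemma~\ref{adelic-const-term-ex-lem}(ii) replacing (i): the auxiliary divisor $D_0$ (depending only on $K$) appears as an extra $f_{D_0}$-twist in both the hypothesis and the index set, and the same cohomological vanishing yields an explicit bound of the form $N(K) = 6g - 1 + C(\deg D_0, g)$ that is linear in $\deg D_0$.

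The hard part will be verifying the hypothesis $\A_F = F + a_1^{-1} a_2 \OO$ (respectively its $f_{D_0}$-twisted version) required to apply Lemma~\ref{adelic-const-term-ex-lem} to the representative produced above. This reduces to $H^1(C, \mathcal{L}) = 0$ for the lift $\mathcal{L}$ of $\mathcal{N}$ to $C$; via the short exact sequence $0 \to N_0 \otimes \ov{\mathcal{L}} \to \mathcal{L} \to \ov{\mathcal{L}} \to 0$ on $C$, this vanishing is controlled by $H^1(\ov{C}, \ov{\mathcal{L}})$ and $H^1(\ov{C}, N_0 \otimes \ov{\mathcal{L}})$, both of which are forced to vanish by our numerical hypothesis provided $\deg N_0$ is not too negative — which is the regime of interest in the paper, since the main conjectures are formulated under the assumption $\deg(\NN/\NN^2) = 0$.
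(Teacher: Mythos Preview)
Your approach is essentially identical to the paper's: obtain the representative via Lemma~\ref{split-red-representatives-lem}, plug into the constant-term formula of Lemma~\ref{adelic-const-term-ex-lem}, identify the index set with the cokernel of $H^0(\ov{C},L_0^{-1}M_0)\to H^0(\ov{C},L_0^{-1}M_0|_D)$, and kill it via $H^1(\ov{C},L_0^{-1}M_0(-D))=0$ using the degree bound. Your numerics match the paper's exactly.

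One point worth noting: you flag the hypothesis $\A=F+a_1^{-1}a_2\OO$ of Lemma~\ref{adelic-const-term-ex-lem}(i) as ``the hard part,'' and correctly reduce it to $H^1(C,\mathcal{L})=0$, hence to the vanishing of $H^1(\ov{C},L_0^{-1}M_0)$ and $H^1(\ov{C},N_0\otimes L_0^{-1}M_0)$. The paper's proof simply applies formula~\eqref{E-D-split-formula} without commenting on this hypothesis. Your observation that the second vanishing requires $d+\deg N_0\ge 2g-1$ is correct; if $\deg N_0$ is allowed to be arbitrarily negative, the constant $6g-1$ in part (i) would need to be shifted by $-\deg N_0$ (and similarly $N(K)$ in part (ii)). So you are in fact being more careful here than the paper's write-up: either the stated bound tacitly assumes $\deg N_0\ge 2-4g$ (so that $6g-1+\deg N_0\ge 2g-1$), or the constant should absorb $\deg N_0$. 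Your restriction to the regime $\deg N_0=0$ is the one used in the paper's main dimension estimates (Theorem~E), but the Proposition itself is stated without it.
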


\begin{proof} (i) Set $N=\deg(M_0)-\deg(L_0)$.
Let $V$ be a vector bundle such that $\ov{V}=L_0\oplus M_0$, where $\Ext^1(L_0,M_0)=0$. By Lemma \ref{split-red-representatives-lem}, we can find a representative
of the corresponding double coset of the form 
$$g=\left(\begin{matrix} a_1 & \eps\cdot a_1f_D^{-1} \\ 0 & a_2\end{matrix}\right),$$
where $a_1$ and $a_2$ are ideles representing some line bundles reducing to $L_0$ and $M_0$, and $D\sub \ov{C}$ is an effective divisor of degree $\le g+(N+1)/2$.

We have $\deg(L_0^{-1}M_0(-D))=N-\deg(D)\ge (N-1)/2-g\ge 2g-1$, so $H^1(L_0^{-1}M_0(-D))=0$. Hence, the restriction map
$$H^0(L_0^{-1}M_0)\to H^0(L_0^{-1}M_0|_D)$$
is surjective. Now we observe that the summation in formula \eqref{E-D-split-formula} is precisely over the cokernel of this map, hence, the sum reduces
to the single term with $x=0$. Since $f$ is cuspidal, this implies the vanishing
$$f(V)=f(\left(\begin{matrix} a_1 & \eps\cdot a_1f_D^{-1} \\ 0 & a_2\end{matrix}\right))=0.$$

\noindent
(ii) We can assume that $K\sub G(\OO)$ is normal.
Let $(g_i)_{1\le i\le r}$ denote representatives of $G(\OO)/K$.
Set $N=\deg(M_0)-\deg(L_0)$. Assuming that $N\ge 2g+2$, as in part (i), we can find a representative of the form
$$g=\left(\begin{matrix} a_1 & \eps\cdot a_1f_D^{-1} \\ 0 & a_2\end{matrix}\right)g_i,$$
where $\deg(D)\le g+(N+1)/2$. Let us choose a dvisor $D_0$ as in Lemma \ref{adelic-const-term-ex-lem}(ii) (it depends only on the subgroup $K$).
We set $N(K)=2(3g-1+\deg(D_0))+1$.
Then for $N\ge N(K)$, we have
$$\deg(L_0^{-1}M_0(-D-D_0))=N-\deg(D)-\deg(D_0)\ge (N-1)/2-g-\deg(D_0)\ge 2g-1.$$
Hence, $H^1(L_0^{-1}M_0(-D_0))=H^1(L_0^{-1}M_0(-D-D_0))=0$.
Therefore, we can apply formula \eqref{E-D-i-split-formula}. Furthermore, the summation in this formula is over the cokernel of the map
$$H^0(L_0^{-1}M_0(-D_0))\to H^0(L_0^{-1}M_0(-D_0)|_D).$$
The vanishing of $H^1(L_0^{-1}M_0(-D-D_0))$ implies that this map is surjective, so the sum reduces to the single term, and we get the required vanishing.
\end{proof}

\begin{cor} Every cuspidal function on $\Bun^L(C,K)$ has finite support.
\end{cor}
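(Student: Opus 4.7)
The plan is to combine Proposition \ref{cusp-vanishing-thm}(ii) with finiteness of the fibres of the reduction map $\pi:\Bun^L(C,K)\to \Bun^{\ov{L}}(\ov{C})$. These fibres are finite: Lemma \ref{reduction-fibre-lem} shows that each fibre of $\Bun_G(C)\to\Bun_G(\ov{C})$ is a quotient of the finite $k$-vector space $H^1(\ov{C},\und{\End}(V_0)\ot N_0)$, while each fibre of $\Bun_G(C,K)\to\Bun_G(C)$ is dominated by the finite set $G(\OO)/K$ (we may assume $K\sub G(\OO)$, which has finite index since $K$ is open in the compact group $G(\OO)$). It therefore suffices to show that $\pi(\supp f)\sub\Bun^{\ov{L}}(\ov{C})$ is finite.

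Set $\ti{N}:=\max(N(K),2g-1)$. Given $V\in\Bun^L(C,K)$, let $L'_0\sub\ov{V}$ be a maximal line subbundle and $M'_0:=\ov{V}/L'_0$, so $\deg(L'_0)\ge\deg(M'_0)$ by maximality. If $\deg(L'_0)-\deg(M'_0)\ge \ti{N}$, then $\deg(L'_0\ot(M'_0)^{-1})\ge 2g-1$, hence $\Ext^1(M'_0,L'_0)=H^1(L'_0\ot(M'_0)^{-1})=0$. The extension $0\to L'_0\to\ov{V}\to M'_0\to 0$ then splits, giving $\ov{V}\simeq L'_0\oplus M'_0$; relabelling $L_0:=M'_0$, $M_0:=L'_0$ yields $\ov{V}\simeq L_0\oplus M_0$ with $\deg(M_0)-\deg(L_0)\ge N(K)$, and Proposition \ref{cusp-vanishing-thm}(ii) forces $f(V)=0$.

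If instead $\deg(L'_0)-\deg(M'_0)<\ti{N}$, then $\deg(L'_0)$ lies in the finite interval $[\deg(\ov{L})/2,\,(\deg(\ov{L})+\ti{N})/2]$. Since $\Pic^n(\ov{C})(k)$ is finite for each $n$, there are only finitely many isomorphism classes of pairs $(L'_0,M'_0=\ov{L}\ot(L'_0)^{-1})$, and for each such pair the set of isomorphism classes of extensions of $M'_0$ by $L'_0$ forms a quotient of the finite $k$-vector space $\Ext^1(M'_0,L'_0)$. Combining the two cases, $\pi(\supp f)$ is finite, hence so is $\supp f$. The only substantive step is the $H^1$-vanishing that converts the "very unstable" case to the split case handled by Proposition \ref{cusp-vanishing-thm}(ii); everything else is standard bundle-theoretic bookkeeping over a curve over a finite field.
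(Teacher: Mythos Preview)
Your approach is essentially the same as the paper's (which gives this argument as the proof of Theorem~\ref{cusp-fin-dim-thm}): reduce to showing that the image in $\Bun^{\ov{L}}(\ov{C})$ is finite, using finiteness of the fibres and Proposition~\ref{cusp-vanishing-thm}. The paper leaves the finiteness of the relevant subset of $\Bun^{\ov{L}}(\ov{C})$ implicit; you spell it out.

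However, there is one genuine slip. You claim that for a maximal line subbundle $L'_0\sub\ov{V}$ one has $\deg(L'_0)\ge\deg(M'_0)$ ``by maximality''. This is false: if $\ov{V}$ is stable, \emph{every} line subbundle has degree $<\mu(\ov{V})$, so the maximal one does too, and then $\deg(L'_0)<\deg(M'_0)$. Consequently your lower bound $\deg(L'_0)\ge\deg(\ov{L})/2$ in the second case is not justified, and without a lower bound the interval argument collapses.

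The fix is standard: for any rank-$2$ bundle on a curve of genus $g$ there exists a line subbundle of degree at least $(\deg\ov{V}-g)/2$ (equivalently, the Segre invariant satisfies $s(\ov{V})\le g$). Taking $L'_0$ to be a line subbundle of maximal degree gives $\deg(L'_0)\ge(\deg\ov{L}-g)/2$, and your interval becomes $[(\deg\ov{L}-g)/2,\,(\deg\ov{L}+\ti{N})/2]$, which is still finite. Alternatively, separate into the cases ``$\ov{V}$ semistable'' (bounded family, hence finitely many $k$-points for fixed determinant) and ``$\ov{V}$ unstable'' (where your Harder--Narasimhan argument works verbatim, since then the destabilising subbundle really does satisfy $\deg(L'_0)>\deg(M'_0)$). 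With either correction the proof goes through.
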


Now we will prove part (1) of Theorem C.

\begin{theorem}\label{cusp-fin-dim-thm}
For any open compact subgroup $K\sub G(\A)$, the space $\SS_{\cusp}(\Bun^L(C,K))$ is finite dimensional.
\end{theorem}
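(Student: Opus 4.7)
The plan is to show that for each $K$, the support of every $f \in \SS_{\cusp}(\Bun^L(C,K))$ is contained in a fixed finite subset of $\Bun^L(C,K)$ depending only on $L$ and $K$, from which finite-dimensionality is immediate. The main ingredient is Proposition \ref{cusp-vanishing-thm}(ii); everything else is essentially Harder-Narasimhan reduction theory.

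First I would reduce to the case $K \sub G(\OO)$. For a general open compact $K \sub G(\A)$, set $K' := K \cap G(\OO)$; since $K$ is compact and $K'$ is open in $K$, the index $[K:K']$ is finite. The natural surjection $\Bun^L(C,K') \to \Bun^L(C,K)$ has finite fibers, and pullback of functions commutes with the constant term operators, so it embeds $\SS_{\cusp}(\Bun^L(C,K))$ into $\SS_{\cusp}(\Bun^L(C,K'))$. Moreover, the composed projection $\pi : \Bun^L(C,K) \to \Bun^L(C) \to \Bun^{\ov{L}}(\ov{C})$ has finite fibers: the first map has fibers of size $\le [G(\OO):K] < \infty$, and by Lemma \ref{reduction-fibre-lem} the fiber of the second over $\ov{V}$ is a quotient of the finite $k$-vector space $H^1(\ov{C}, \und{\End}_0(\ov{V}) \otimes N_0)$. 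It therefore suffices to show that $\pi(\supp f) \sub \Bun^{\ov{L}}(\ov{C})$ lies in a finite set depending only on $L$ and $K$.

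For each $g \in \supp f$, I would analyze the Harder-Narasimhan type of $\ov{V} := \ov{V(g)}$. Three cases arise. If $\ov{V}$ is semistable, it lies in the (classically) finite set of semistable isomorphism classes over $k$ with determinant $\ov{L}$. If $\ov{V}$ is unstable and decomposes as a direct sum $L_0 \oplus M_0$ with $\deg L_0 > \deg M_0$, then Proposition \ref{cusp-vanishing-thm}(ii) forces $\deg L_0 - \deg M_0 < N(K)$. If $\ov{V}$ is unstable and non-split, with canonical HN-filtration $0 \to L_0 \to \ov{V} \to M_0 \to 0$, then one must have $\deg L_0 - \deg M_0 < 2g - 1$: otherwise Serre duality gives $\Ext^1(M_0, L_0) = H^1(\ov{C}, L_0 \otimes M_0^{-1}) = 0$, forcing the sequence to split, a contradiction.

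In the unstable cases the degree of $L_0$ is thus confined to a bounded range (determined by $\deg \ov{L}$, $N(K)$, and $g$), so $L_0$ varies over the finite set $\bigcup_d \Pic^d(\ov{C})(k)$ for finitely many $d$, and $M_0 = \ov{L} \otimes L_0^{-1}$ is determined. For each pair $(L_0, M_0)$ the finite-dimensional $k$-vector space $\Ext^1(M_0, L_0)$ parametrizes the possible extensions, so only finitely many isomorphism classes of $\ov{V}$ arise. Together with the semistable contribution and the finiteness of fibers of $\pi$, this confines $\supp f$ to a finite set depending only on $L$ and $K$, completing the proof. The principal obstacle, Proposition \ref{cusp-vanishing-thm}(ii), has already been dispatched; the present argument is essentially formal.
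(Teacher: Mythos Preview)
Your proof is correct and follows essentially the same approach as the paper: both rely on Proposition \ref{cusp-vanishing-thm}(ii) together with the finiteness of the fibers of $\Bun^L(C,K)\to\Bun^{\ov L}(\ov C)$ (from Lemma \ref{reduction-fibre-lem}) to conclude that cuspidal functions are supported on the preimage of a finite set. You spell out the Harder--Narasimhan reduction (unstable $\ov V$ with large enough degree gap must split, hence lies in the vanishing locus of the proposition) and the reduction to $K\subset G(\OO)$, both of which the paper leaves implicit; there is a small notational slip where you write $[G(\OO):K]$ before having formally replaced $K$ by $K'=K\cap G(\OO)$, but the intended meaning is clear.
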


\begin{proof} 
First, we observe that the projection $\Bun^L(C,K)\to \Bun^L(C)$ has finite fibers since $G(\OO)/K$ is finite, while the projection $\Bun^L(C)\to \Bun^{\ov{L}}(\ov{C})$
has finite fibers by Lemma \ref{reduction-fibre-lem}. Now Proposition \ref{cusp-vanishing-thm} implies that any cuspidal function on $\Bun^L(C,K)$ is
supported on the preimage of a finite subset in $\Bun^{\ov{L}}(\ov{C})$, and the assertion follows.
\end{proof}

\subsection{Cuspidality $=$ weak Hecke-boundedness}\label{cusp-hb-sec}

For a rank $2$ bundle $\ov{V}$ on $\ov{C}$ we denote by $\mu(\ov{V})=\deg(\ov{V})/2$ its slope and by
$\phi_-(\ov{V})$ the smallest integer $n$ such that there exists a surjection $\ov{V}\to \ov{L}$,
where $\ov{L}$ is a line bundle of degree $n$ on $\ov{C}$. 

\begin{lemma}\label{Hecke-b-lem}
A function $f\in \C(\Bun^L(C))$ is weakly Hecke-bounded if and only if there exists $N>0$ such that for every collection of simple divisors $c_1,\ldots,c_n$ on $C$, 
for every $V$ in the support of $T_{c_1}\ldots T_{c_n}f$ one has 
$$\phi_-(\ov{V})\ge \mu(\ov{V})-N.$$
\end{lemma}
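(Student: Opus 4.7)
The plan is to use the observation that the quantity $\mu(\ov{V}) - \phi_-(\ov{V})$ depends only on the $\Pic(C)$-orbit of $V$. Indeed, tensoring $V$ by $M \in \Pic(C)$ replaces $\ov{V}$ by $\ov{V} \otimes \ov{M}$, and this shifts both $\mu(\ov{V})$ and $\phi_-(\ov{V})$ by $\deg(\ov{M})$. Consequently, the subset
\[
\mcB_N := \{V \in \Bun(C) \mid \phi_-(\ov{V}) \ge \mu(\ov{V}) - N\}
\]
is stable under the $\Pic(C)$-action by tensoring.

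For the forward implication, suppose $f$ is weakly Hecke-bounded and let $S \sub \Bun(C)$ be a finite witnessing set. Set $N := \max_{V_0 \in S}\bigl(\mu(\ov{V}_0) - \phi_-(\ov{V}_0)\bigr)$. Any $V$ in the support of some $T_{c_1}\cdots T_{c_n}f$ has the form $V = V_0 \otimes M$ with $V_0 \in S$, $M \in \Pic(C)$; by the invariance above, $\mu(\ov{V}) - \phi_-(\ov{V}) = \mu(\ov{V}_0) - \phi_-(\ov{V}_0) \le N$.

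For the converse, assume the $\phi_-$-bound holds with constant $N$. The support of every $T_{c_1}\cdots T_{c_n}f$ lies in $\mcB_N$, so it suffices to show that $\mcB_N/\Pic(C)$ is finite; one may then take $S$ to be any finite set of representatives. The reduction map $\pi\colon \Bun(C) \to \Bun(\ov{C})$ is compatible with the surjection $\Pic(C) \to \Pic(\ov{C})$, so it descends to a map $\mcB_N/\Pic(C) \to \ov{\mcB}_N/\Pic(\ov{C})$, where $\ov{\mcB}_N \sub \Bun(\ov{C})$ is defined by the analogous inequality. By Lemma \ref{reduction-fibre-lem}, the fibers of $\pi$ are finite (being torsors under the finite group $H^1(\ov{C}, \und{\End}(\ov{V}) \otimes N_0)$), so the induced map on orbit spaces has finite fibers.

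It remains to show $\ov{\mcB}_N/\Pic(\ov{C})$ is finite. The inequality $\mu(\ov{V}) - \phi_-(\ov{V}) \le N$ is equivalent to requiring that every sub-line bundle of $\ov{V}$ have degree at most $\mu(\ov{V}) + N$, so it bounds the HN-instability of $\ov{V}$. For each fixed determinant $\ov{L}$, the stack of rank 2 bundles on $\ov{C}$ with determinant $\ov{L}$ and HN-instability bounded by $N$ is of finite type over $k$, hence has finitely many $k$-isomorphism classes. Under the $\Pic(\ov{C})$-action the determinant changes by squares, so the determinant modulo twist lies in the finite group $\Pic(\ov{C})/2\Pic(\ov{C})$; combining yields the required finiteness. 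The main technical input is the boundedness of the locus of rank 2 bundles on $\ov{C}$ with fixed determinant and bounded HN-instability, which is a standard fact from moduli theory.
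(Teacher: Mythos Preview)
Your proof is correct and follows essentially the same approach as the paper. The forward direction is identical (invariance of $\mu-\phi_-$ under twisting), and for the converse the paper simply asserts that $S_N(L):=\{V\in\Bun^L(C)\mid \phi_-(\ov{V})\ge\mu(\ov{V})-N\}$ is finite and then uses representatives of $\Pic(C)/2\Pic(C)$, while you supply the details behind this finiteness via the reduction map and bounded Harder--Narasimhan instability on $\ov{C}$.
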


\begin{proof} Assume first that $f$ is weakly Hecke-bounded, and let $S\sub \Bun(C)$ be a finite set, such that the support of 
$T_{c_1}\ldots T_{c_n}f$ is in $\Pic(C)\ot S$. Set 
$$\phi_-(S):=\min_{V\in S} \phi_-(\ov{V}),$$
$$\mu_+(S):=\max_{V\in S} \mu(\ov{V}),$$
$$N=\mu_+(S)-\phi_-(S).$$
Then for every $V\in S$, we have $\phi_-(\ov{V})\ge \mu(\ov{V})-N$.
Note that for any bundle $V_0$ over $\ov{C}$ and any line bundle $L_0$ on $\ov{C}$ we have $\phi_-(L_0\ot V_0)=\phi_-(V_0)+\deg(L_0)$,
$\mu(L_0\ot V_0)=\mu(L_0\ot V_0)+\deg(L_0)$.
Hence, for every $V\in \Pic(C)\ot S$, one still has $\phi_-(\ov{V})\ge \mu(\ov{V})-N$.

To show the ``if" part, we just observe that for a fixed line bundle $L$ on $\Pic(C)$, the set $S_N(L)$ of all $V\in \Bun^L(C)$ with $\phi_-(\ov{V})\ge \mu(\ov{V})-N$ is
finite. Now let $L_1,\ldots,L_m$ be representatives in $\Pic(C)/2\Pic(C)$. Then we can take
$$S:=S_N(L_1)\cup\ldots\cup S_N(L_m).$$
Indeed, it suffices to show that any vector bundle $V$ with $\phi_-(\ov{V})\ge \mu(\ov{V})-N$ is contained in
$\Pic(C)\ot S$. But we can find $M\in \Pic(C)$ such that $\det(V)\simeq M^2\ot L_i$, so $M^{-1}\ot V\in S_N(L_i)$, i.e.,
$V\in M\ot S_N(L_i)$.
\end{proof}





Now we can prove part (2) of Theorem C.

\begin{prop}\label{hecke-bound-prop}
One has 
$$\SS_{\cusp}(\Bun^L(C))=\SS_b(\Bun^L(C)).$$
\end{prop}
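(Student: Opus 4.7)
The plan is to prove the two inclusions $\SS_{\cusp}(\Bun^L(C)) \subseteq \SS_b(\Bun^L(C))$ and $\SS_b(\Bun^L(C)) \subseteq \SS_{\cusp}(\Bun^L(C))$ separately, with the forward inclusion being the more routine direction and the reverse being the principal difficulty.

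For the forward inclusion, the strategy is to upgrade Proposition \ref{cusp-vanishing-thm}(i), which only controls split reductions, to a uniform bound in terms of $\phi_-$: I will show that there is a constant $N = N(g)$ such that every cuspidal $f$ vanishes on every $V \in \Bun(C)$ with $\phi_-(\ov V) < \mu(\ov V) - N$. For such $V$, pick a surjection $\ov V \twoheadrightarrow \ov L$ with $\deg \ov L = \phi_-(\ov V)$, with kernel $\ov M$, and use Corollary \ref{Iwasawa-cor} to produce an adelic representative $V = b \cdot g_D$ with $b \in B(\A)$ realizing this Borel reduction. An analogue of Lemma \ref{split-red-representatives-lem} will show $\deg D$ can be taken $\le (\mu(\ov V)-\phi_-(\ov V)+1)/2 + g$. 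The cuspidality identity $E_D(f)(t) = 0$ evaluated at $t = \diag(a_1,a_2)$ (the torus part of $b$) yields a finite-sum vanishing relation whose index set $\A/(F + a_1^{-1}a_2(\eps\OO + f_D\OO))$ has size controlled by $H^1$ of twisted line bundles on $\ov C$ of degrees comparable to $\mu(\ov V) - \phi_-(\ov V) - \deg D$. For $\mu(\ov V)-\phi_-(\ov V)$ large compared to $g$, both cohomologies vanish, the sum collapses to a single term proportional to $f(V)$, and $f(V) = 0$. Since Hecke operators preserve cuspidality by Proposition A, every iterate $T_{c_1}\cdots T_{c_n}f$ also vanishes on this locus, and Lemma \ref{Hecke-b-lem} immediately yields weak Hecke-boundedness.

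For the reverse inclusion, suppose $f \in \SS_b(\Bun^L(C))$ is not cuspidal, so that $E_{D_0}(f)(t_0) \neq 0$ for some $D_0 \in \Div(\ov C)_{\ge 0}$ and some $t_0 \in \QBun_T(C,D_0)$. Pick a simple divisor $c \subset C$ whose image $\ov c \in \ov C$ is not contained in $D_0$. Iterating Lemma \ref{hecke-qbun-formula-lem}, the formula for $E_{D_0 + n\ov c}(T_c^n f)$ decomposes into a ``principal'' term proportional to $E_{D_0}(f)(t_0 \cdot \tau^n)$ for an explicit diagonal $\tau$ (whose entries are products of $f_c^{-1}$ and $(1\pm \eps f_c^{-n}\cdot)$ factors appearing in the formula), plus correction terms that come from $E_{D'}$ with $\deg D' < \deg(D_0) + n\deg(\ov c)$. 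A leading-term argument, filtering by the divisor index on $\QBun_T(C)$, shows the principal term cannot be cancelled by the lower-order corrections, so $E_{D_0+n\ov c}(T_c^n f)\not\equiv 0$ for all $n$. Then Lemma \ref{support-surjection-lem} produces, for each $n$, a bundle in $\supp(T_c^n f)$ whose reduction has a line bundle quotient of degree $-\deg(D_0) - n\deg(\ov c)$, so $\phi_-(\ov V) \to -\infty$ while $\mu(\ov V)$ stays bounded (since $\det(T_c^n f)$ varies in a fixed pattern within $\Pic(C)$). By Lemma \ref{Hecke-b-lem} this contradicts weak Hecke-boundedness.

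The main obstacle is the non-cancellation step in the reverse direction: the recursion of Lemma \ref{hecke-qbun-formula-lem} mixes $D - \ov c$ and $D + \ov c$ summands with nontrivial coefficients, so one must exhibit an invariant ``top'' contribution that is stable under all iterations. The natural attempt is to filter $\C(\QBun_T(C))$ by the invariant $\deg D$ and show that on the associated graded $T_c$ acts through a simpler $E_{D+\ov c}$-producing formula, so that the leading component of $E_{D_0}(f)$ propagates without loss; verifying this rigorously, rather than heuristically, is the principal technical burden and will likely require a careful combinatorial induction on $n$ and a partial-order argument on $\Div(\ov C)_{\ge 0}$.
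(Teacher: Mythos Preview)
Your forward direction works but is more elaborate than needed. Once $\mu(\ov V)-\phi_-(\ov V)>g$, the extension $0\to \ov M\to \ov V\to \ov L\to 0$ with $\deg\ov L=\phi_-(\ov V)$ has $\Ext^1(\ov L,\ov M)=H^1(\ov M\ov L^{-1})=0$, so $\ov V$ is automatically split. Thus Proposition~\ref{cusp-vanishing-thm}(i) already covers every $V$ with $\phi_-(\ov V)<\mu(\ov V)-3g$, and the paper simply cites it together with Proposition~A; there is no need to redo the representative-finding for non-split reductions.

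The reverse direction has a real gap. Your proposed filtration by $\deg D$ is \emph{not} triangular for the recursion of Lemma~\ref{hecke-qbun-formula-lem}: in the $n>0$ case both $E_{D-\ov c}$ and $E_{D+\ov c}$ appear with nonzero coefficients, so there is no ``top'' piece in the associated graded that isolates a single surviving term. Moreover, even if you could show $E_{D_0+n\ov c}(T_c^n f)\not\equiv 0$, Lemma~\ref{support-surjection-lem} controls $\deg M_0$, not $\deg D$; your claimed quotient degree ``$-\deg D_0-n\deg\ov c$'' conflates these two invariants. Finally, your ``principal term'' would be $E_{D_0}(f)$ evaluated at $t_0\cdot\tau^n$, a point where you have no nonvanishing information.

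The invariant that \emph{is} monotone is $\deg M_0$: inspecting both cases of Lemma~\ref{hecke-qbun-formula-lem}, every argument on the right-hand side has its $a_2$-component multiplied by either a unit or $f_c^{-1}$, so $\deg M_0(\text{RHS})\ge\deg M_0(\text{LHS})$. The paper exploits this by choosing $t$ in $\supp E(f)$ with \emph{minimal} $\deg M_0$ (and then minimal $D$, and $\ov c\notin D$). Evaluating the $n=0$ formula at $t'=t\cdot\diag(1,f_c)$, all terms except $|A(c)|\cdot E_D f(t)$ land at points of strictly smaller $\deg M_0$, hence vanish by minimality; so $E_D(T_c f)(t')\neq 0$ with $\deg M_0(t')=\deg M_0(t)-\deg\ov c$. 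Iterating produces points in $\supp E(T_c^n f)$ with $\deg M_0\to -\infty$, and now Lemma~\ref{support-surjection-lem} genuinely gives $\phi_-\to -\infty$, contradicting Lemma~\ref{Hecke-b-lem}. Replace your $\deg D$-filtration by this $\deg M_0$-minimality argument and the proof goes through cleanly.
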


\begin{proof} Assume that $f$ is cuspidal. Then, since Hecke operators $T_c$ preserve $\SS_{\cusp}(\Bun(C))$ (by Proposition A), 
we have $T_{c_1}\ldots T_{c_n}(f)\sub \SS_{\cusp}(\Bun^L(-c_1-\ldots-c_n)(C))$, and the assertion follows (with $N=3g$) from Proposition \ref{cusp-vanishing-thm}. 

Conversely, assume for $f\in \SS(\Bun^L(C))$ there exists $N$ as in Lemma \ref{Hecke-b-lem}.
Consider the function $\phi=E(f)=(E_D(f))_{D\ge 0}$ on $\QBun_T(C)$.  Recall that we have a natural projection $\QBun_T(C)\to \Pic(\ov{C})\times\Pic(\ov{C})$ 
(see Sec.\ \ref{const-term-reduction-sec}). 
Assume $\phi\neq 0$. 
By assumption, for every $V$ in the support of $f$, we have $\phi_-(\ov{V})\ge \deg(L)/2-N$. 
By Lemma \ref{support-surjection-lem}, this implies that the set of numbers $\deg(M_0)$, where $(L_0,M_0)$
is a pair of line bundles associated with an element $t$ in the support of $\phi$, is bounded below by $\deg(L)/2-N$.

Now let $t\in\QPic_T(C)$ be a point in the support of $\phi$ with minimal $(\deg(M_0),D)$ in the lexicographical order,
where $(L_0,M_0)$ is a pair of line bundles on $\ov{C}$ corresponding to $t$, and $t\in \QPic_T(C,D)$.
Let us take a point $\ov{c}$ not in the support of $D$ and lift it to a simple divisor. Then the formula of Lemma \ref{hecke-qbun-formula-lem}
shows that
the support of $T_c^n\phi$ contains a point of $\QPic_T(C,D)$ with the associated pair of line bundles $(L_0,M_0(-nc))$.
By Lemma \ref{support-surjection-lem}, this implies that there exists some $V$ in the support of $T_c^nf$ such that $\ov{V}$ surjects
onto $M_0(-nc))$, so $\phi_-(\ov{V})\le\deg(M_0(-nc))$.
But $T_c^nf$ is supported on $\Bun^{L(-nc)}(C)$, so $\mu(\ov{V})=(\deg(L)-n\deg(\ov{c}))/2$. By assumption, we should have
$$\phi_-(\ov{V})\ge \frac{\deg(L)-n\deg(\ov{c})}{2}-N.$$
For large enough $n$, we will have 
$$\frac{\deg(L)-n\deg(\ov{c})}{2}-N>\deg(M_0)-n\deg(\ov{c})=\deg(M_0(-nc)),$$
so we get a contradiction with the inequality $\phi_-(\ov{V})\le\deg(M_0(-nc))$..

Hence, in fact, $\phi=0$.
\end{proof}



\section{Cuspidal functions for $\PGL_2$: dimension estimates and Hecke eigenfunctions}\label{cusp-PGL2-sec}

Throughout this section we assume that $G=\PGL_2$, and we assume that the characteristic of $k$ is $\neq 2$.
The main result of this section is an explicit description of the space $\SS_{\cusp}(G(F)\backslash G(\A))$ of cuspidal functions in $\SS(G(F)\backslash G(\A))$ in this case 
(see Theorem \ref{main-dec-thm}).
Namely, we prove that $\SS_\cusp(G(F)\backslash G(\A))$  coincides with the subspace of finitary functions described in Section \ref{finitary-sec}.
First, in Sec.\ \ref{CorC-sec}, we prove Corollary D on equivalence of cuspidality with Hecke finiteness for $\PGL_2$ (using Theorem C proved in Section \ref{cusp-GL2-sec}).
This shows that cuspidal functions are finitary. To prove the converse we show directly that the finitary pieces of the orbit
decomposition of $\SS(G(F)\backslash G(\A))$ are cuspidal. 
In Sec.\ \ref{ThmE-sec} we prove Theorems E and F. For both results the crucial role is played by the relation with the moduli space of Higgs bundles established in Prop. \ref{Higgs-prop}. 

\subsection{Cuspidality and Hecke-finiteness}\label{CorC-sec}


As a warm up, let us consider the classical case of functions on $\Bun_G(\ov{C},\ov{K})$, where $\ov{K}\sub G(\ov{\A})$ is an open compact subgroup.
We want to show that Hecke-finiteness implies cuspidality.
We can assume that for some point $p\in \ov{C}$, $\ov{K}=G(\ov{\OO}_p)\times K'$, where $K'\sub G(\ov{\OO}')$, where $\ov{\OO}'$ are integer adeles for $\ov{C}\setminus p$.
Then we have the Hecke operator $T_p$ acting on $\C(\Bun_G(\ov{C},\ov{K}))$.

In this situation we have the following result.

\begin{prop}\label{hecke-fin-cusp-class-prop}
Assume that a function $f$ on $\Bun^{\ov{L}}(\ov{C},\ov{K})$ is $T_p$-bounded, i.e., there exists a finite set 
$S\sub \Bun(\ov{C},\ov{K})$ such that for any $n\ge 0$, the support of $T^n_pf$ is contained in $S\ot \Pic(\ov{C},\ov{K})$,
where $\Pic(\ov{C},\ov{K})=\Bun_{\G_m}(\ov{C},\ov{K})$.
Then $f$ is cuspidal.
\end{prop}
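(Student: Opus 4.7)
The plan is to prove the contrapositive: if $\phi:=E(f)\ne 0$, then $f$ is not $T_p$-bounded. First, I would specialize Lemma~\ref{hecke-finite-formula-lem} to $G=\PGL_2$. Identifying $\QBun_T(\ov{C},\ov{K})$ with a finite-level version $\Pic(\ov{C},\ov{K})$ of $\Pic(\ov{C})$ via the ratio $\diag(a_1,a_2)\mapsto a_2a_1^{-1}$, the lemma becomes the explicit shift formula
$$T_p^E\phi(M)\;=\;q\,\phi(M(-p))\;+\;\phi(M(p)),\qquad q:=|k(p)|,$$
and iteration yields $E(T_p^n f)=(T_p^E)^n\phi$, which expands as
$$(T_p^E)^n\phi(M)\;=\;\sum_{k=0}^n\binom{n}{k}q^{n-k}\phi(M((2k-n)p)).$$

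Next, I would extract a uniform support bound from $T_p$-boundedness. Since line bundles act trivially on $\PGL_2$-bundles, the hypothesis supplies a finite set $S\subset\Bun_{\PGL_2}(\ov{C},\ov{K})$ with $\supp T_p^n f\subset S$ for every $n\ge 0$. For $V\in S$ and any short exact sequence $0\to L_0\to V_0\to M_0\to 0$ (where $V_0$ is a rank-$2$ lift of $V$), the $\PGL_2$-invariant $M:=M_0L_0^{-1}$ satisfies
$$\deg M\;=\;\deg V_0-2\deg L_0\;\ge\;\deg V_0-2d_-(V_0),$$
where $d_-$ denotes the maximal degree of a line subbundle. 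This yields a uniform lower bound $\delta$, and hence
$$\supp(T_p^E)^n\phi\;\subset\;\{M\in\Pic(\ov{C},\ov{K}):\deg M\ge\delta\}\qquad\text{for all }n\ge 0.$$

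The main step is then the combinatorial vanishing. Fix $M\in\Pic(\ov{C},\ov{K})$ with $\deg M<\delta$ and set $b_j:=\phi(M(jp))$. The vanishing $(T_p^E)^n\phi(M)=0$ for all $n\ge 0$ translates into the system
$$\sum_{k=0}^n\binom{n}{k}\,q^{n-k}\,b_{2k-n}\;=\;0\qquad(n\ge 0).$$
Pairing the $k$-th and $(n-k)$-th terms (whose binomial coefficients coincide, while the middle term for even $n$ carries the factor $b_0=0$) and inducting on $j$, this system is equivalent to $b_0=0$ together with $b_j+q^j b_{-j}=0$ for every $j\ge 1$. For each $j\ge 1$, however, $\deg M(-jp)=\deg M-j\deg p<\delta$, so the support bound forces $b_{-j}=\phi(M(-jp))=0$; hence $b_j=0$ for all $j\ge 0$.

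Finally, every $M'\in\Pic(\ov{C},\ov{K})$ can be written as $M'=M(jp)$ with $M=M'(-jp)$ of degree $<\delta$ for $j$ sufficiently large, so the preceding step yields $\phi(M')=b_j=0$. Thus $\phi\equiv 0$ and $f$ is cuspidal. I expect the only non-routine step to be the pairing identity above, which becomes transparent once one observes the symmetry $\binom{n}{k}=\binom{n}{n-k}$ between the terms indexed by $k$ and $n-k$; the induction on $j$ then isolates $b_j+q^j b_{-j}=0$ from the $n=j$ relation.
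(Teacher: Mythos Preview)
Your argument is correct, but it is more elaborate than necessary and follows a different route from the paper. The paper observes directly that if $\phi=E(f)\ne 0$, one can pick a point $x$ in the support of $\phi$ at which $\deg_1-\deg_2$ is maximal; the two-term shift formula from Lemma~\ref{hecke-finite-formula-lem} then produces a point in the support of $E(T_pf)=(T_p^E)\phi$ with strictly larger $\deg_1-\deg_2$ (the second summand at the shifted point is $\phi(x)$, while the first summand comes from a point already outside the support and hence vanishes). Iterating, the supports of $E(T_p^nf)$ have unbounded $\deg_1-\deg_2$, contradicting the bound you derived in your second step. No binomial expansion or linear system is needed.

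Your approach, by contrast, evaluates the full iterate $(T_p^E)^n\phi$ at a fixed point $M$ below the support bound, expands binomially, and solves the resulting triangular system $b_j+q^jb_{-j}=0$ by pairing terms. This is valid and self-contained, and the pairing/induction step you sketch does go through (note the parity constraint: the $n$-th equation only involves $b_{\pm j}$ with $j\equiv n\pmod 2$, so the induction runs separately on each parity class). One small slip: with the paper's conventions the coefficient $q=|k(p)|$ sits on the shift $M\mapsto M(p)$ rather than $M\mapsto M(-p)$, so the relation one actually obtains is $b_{-j}+q^jb_j=0$; since $q\ne 0$ this is immaterial to your conclusion. The paper's extremal-point argument is shorter and generalises more readily (compare the proof of Proposition~\ref{hecke-bound-prop}), while your method has the virtue of being completely explicit.
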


\begin{proof} Since $\ov{K}\sub G(\ov{\OO})$, we have the natural projections
$$\Bun_G(\ov{C},\ov{K})\to \Bun_G(\ov{C}), \ \ 
(\deg_1,\deg_2):T(\ov{F})U(\ov{\A})\backslash G(\ov{\A})/\ov{K}\to T(\ov{\A})/T(\ov{F})T(\ov{\OO})=\Pic(\ov{C})^2.$$
We will use the compatibility of $T_p$ with the constant term operator $E$ (see Sec.\ \ref{const-term-finite-field-sec}).
The condition that $f$ is $T_p$-bounded implies that all the subsets $(\deg_1-\deg_2)(\supp E(T^nf))$ are contained in
a fixed finite subset of $\Z$.
But Lemma \ref{hecke-finite-formula-lem} shows that 
if $(\deg_1-\deg_2)(x)=n$ is maximal for some $x$ in the support of $E(T^nf)$ then there exists $x'$ in the support of $E(T^{n+1}f)$
with $(\deg_1-\deg_2)(x')=n+1$. This shows that in fact $Ef=0$, so $f$ is cuspidal.
\end{proof}

Now let us go back to the case of a nilpotent extension $C$ of $\ov{C}$ of length $2$.

\medskip

\noindent
{\it Proof of Corollary D}.
Let $K\sub \GL_2(\OO)$ be a compact subgroup, $\ov{K}\sub \PGL_2(\A)$ its image.
It is easy to see that the embedding $\pi^*\C(\Bun_{\PGL_2}(C,\ov{K}))\hra \C(\Bun_{\GL_2}(C,K))$,
associated with the natural projection $\pi:\Bun_{\GL_2}(C,K)\to \Bun_{\PGL_2}(C,\ov{K})$ has the property that $f\in \C(\Bun_{\PGL_2}(C,\ov{K}))$ is cuspidal if and only if $\pi^*f$ is cuspidal.
Furthermore, its image is contained in functions invariant under tensoring with line bundles. 

Let $Z\sub \GL_2$ denote the center (isomorphic to $\G_m$). Then the image of the composed map 
$$Z(\A)/(Z(F)\cdot Z(\A)\cap K)\to \Bun_{GL_2}(C,K)\to \Pic(C)$$
is exactly $2\Pic(C)$, which has finite index in $\Pic(C)$. 
Hence, there exists
finitely many line bundles $L_1,\ldots,L_s$, such that the pull-back of a function from $\Bun_{\PGL_2}(C,\ov{K})$ is determined by its restrictions
to $\Bun_{\GL_2}^{L_i}(C,K)$, $i=1,\ldots,s$. 

Note that a function on $\Bun_{\GL_2}(C,K)$ is cuspidal if and only if its restriction to each $\Bun_{\GL_2}^L(C,K)$
is cuspidal. Indeed, this is clear from the form of the constant term operator: $Ef(t)$ depends only on values $f(g)$ with $\det(g)=\det(t)$.
Thus, we get an embedding 
$$V(C,\ov{K})=\SS_{\cusp}(\Bun_{\PGL_2}(C,\ov{K}))\hra \bigoplus_{i=1}^s \SS_{\cusp}(\Bun_{\GL_2}^{L_i}(C,K)),$$
which implies that $V(C,\ov{K})$ is finite-dimensional.

Thus, for any cuspidal function $f$ on $\Bun_{\PGL_2}(C)$, the space $\HH_{\PGL_2,C}\cdot f\sub V(C)$ is finite-dimensional. 
Conversely, assume that for a finitely supported function $f$ on $\PGL_2(C)$,
the space spanned by $T_{c_1}\ldots T_{c_n}f$ is finite-dimensional. Then the pull-back $\pi^*f$ to $\Bun_{\GL_2}(C)$ has the property that
all functions $T_{c_1}\ldots T_{c_n}(\pi^*f)$ are supported on $\pi^{-1}(S)$, where $S\sub \Bun_{\PGL_2}(C)$ is a fixed finite set. Hence, setting
$$\wt{S}=\cup_{i=1}^s \pi^{-1}(S)\cap \Bun_{\GL_2}^{L_i}(C),$$
we obtain a finite set $\wt{S}$ such that all $T_{c_1}\ldots T_{c_n}(\pi^*f)$ all supported on $\Pic(C)\ot \wt{S}$. Hence, $\pi^*f$ is weakly Hecke-bounded, so by 
Proposition \ref{hecke-bound-prop}, it is cuspidal. Therefore, $f$ is cuspidal.
\qed


\begin{definition}
We define the subspace of {\it cuspidal} functions $\SS_{\cusp}(G(F)\backslash G(\A))\sub \SS(G(F)\backslash G(\A))$
by the condition
$$\int_{u\in U(\A)/U(F)}f(ug)du=0.$$
\end{definition}

Recall that we call a function $f\in\SS(G(F)\backslash G(\A))$ {\it finitary} if it is contained in an admissible $G(\A)$subrepresentation.
Note that the subspace $\SS_{\cusp}(G(F)\backslash G(\A))$ is an admissible $G(\A)$-representation, by Corollary D. Hence, we deduce the following result.

\begin{cor}\label{cusp-adm-cor} 
Let $f$ be in $\SS(G(F)\backslash G(\A))$. 
If $f$ is cuspidal then it is finitary.
\end{cor}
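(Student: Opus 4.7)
The plan is to prove the slightly stronger statement that the entire subspace $\SS_{\cusp}(G(F)\backslash G(\A))$ is itself an admissible $G(\A)$-subrepresentation of $\SS(G(F)\backslash G(\A))$. Since any cuspidal $f$ manifestly lies in this subspace, it will then be finitary by the very definition of finitary.

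The first step is to verify that right translation by $G(\A)$ preserves cuspidality, so that $\SS_{\cusp}(G(F)\backslash G(\A))$ is a $G(\A)$-subrepresentation. For $g_0\in G(\A)$, $R_{g_0}f(g):=f(gg_0)$, and for any proper standard parabolic $P=LU$ one has
$$E_P(R_{g_0}f)(g)=\int_{U(F)\backslash U(\A)}f(ugg_0)\,du=E_Pf(gg_0).$$
Hence $E_Pf\equiv 0$ forces $E_P(R_{g_0}f)\equiv 0$, and the $\PGL_2$-case of cuspidality (there is only the standard Borel) is preserved.

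The second step is to identify the $K$-invariants for an arbitrary open compact subgroup $K\subset G(\A)$. Elements of $\SS(G(F)\backslash G(\A))$ that are right $K$-invariant are precisely the finitely supported functions on $\Bun_{\PGL_2}(C,K)=G(F)\backslash G(\A)/K$, and the cuspidality condition is unchanged by this identification. Therefore
$$\SS_{\cusp}(G(F)\backslash G(\A))^{K}=V(C,K)=\SS_{\cusp}(\Bun_{\PGL_2}(C,K)),$$
which is finite dimensional by Corollary D(1). Combining the two steps, $\SS_{\cusp}(G(F)\backslash G(\A))$ is an admissible $G(\A)$-representation, hence $f$ is finitary.

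There is no real obstacle: the entire weight of the argument is carried by Corollary D, whose proof in turn rests on Theorem C and Proposition A. The present corollary merely repackages that finite-dimensionality result into the representation-theoretic language of admissibility.
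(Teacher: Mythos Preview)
Your proof is correct and follows essentially the same approach as the paper: the paper simply notes that $\SS_{\cusp}(G(F)\backslash G(\A))$ is an admissible $G(\A)$-representation by Corollary D, and you have spelled out the two ingredients (invariance under right translation, and identification of $K$-invariants with $V(C,K)$) in more detail. One small remark: a priori you only get an inclusion $\SS_{\cusp}(G(F)\backslash G(\A))^{K}\subset V(C,K)$ rather than equality, since $V(C,K)$ is defined inside all of $\C(\Bun_{\PGL_2}(C,K))$, but the inclusion is all that is needed for admissibility.
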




\subsection{Orbit decomposition and strongly cuspidal functions}\label{str-cusp-sec}

Since the characterstic of $k$ is $\neq 2$ every element of $\fg(\ov{F})$ with nonzero determinant is regular semisimple.
Thus, in addition to the zero orbit $\Om_0$ and the orbit $\Om_n$ of a nilpotent element,
we have orbits in $\om_{\ov{C}}(\ov{F})\ot_{\ov{F}} \fg(\ov{F})$ parametrized by nonzero values of the determinant in the $1$-dimensional $\ov{F}$-vector space
$$H:=L^{-2}\om_{\ov{C}}^{\ot 2}(\ov{F}).$$
For $d\in H\setminus \{0\}$, we denote by $\Om_d$ the orbit of elements $x$ such that $\det(x)=-d$.
Note that $d\neq 0$ is a square if and only if $\Om_d$ has a diagonal representative.
Let us denote by $H'\sub H$ the elements which are not squares.

\begin{definition}
Let us define the subspace of {\it strongly cuspidal} functions 
$\SS_{\strcusp}(G(F)\backslash G(\A))\sub \SS(G(F)\backslash G(\A))$ by the condition
$$\int_{u\in U(\NN\A)/U(\NN_F)}f(ug)du=0.$$
\end{definition}

Note that the subspace $\SS_{\strcusp}(G(F)\backslash G(\A))$ is preserved by $G(\A)$ and is contained in $\SS_{\cusp}(G(F)\backslash G(\A))$.


\begin{prop}\label{str-cusp-prop}
One has
$$\SS_{\strcusp}(G(F)\backslash G(\A))=\bigoplus_{d\in H'}\SS(G(F)\backslash G(\A))_{\Om_d}.$$ 
\end{prop}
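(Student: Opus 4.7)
The plan is to reformulate strong cuspidality Fourier-theoretically via the orbit decomposition of Sec.\ \ref{orbit-dec-sec}, and then argue orbit-by-orbit. Identifying $\Xi \simeq \fg^\vee \otimes L^{-1}\om_{\ov{C}}(\ov{F})$ with $\fg \otimes L^{-1}\om_{\ov{C}}(\ov{F})$ via the Killing form (which uses $\mathrm{char}\,k\neq 2$), a direct trace computation shows $\psi_\eta|_{U(\NN\A)}$ is trivial precisely when the lower-left entry of $\eta$ vanishes, i.e.\ when $\eta$ is upper triangular. Integrating the pointwise Fourier expansion $f(ug)=\sum_\eta \psi_\eta(u)\Pi_\eta f(g)$ over $U(\NN_F)\backslash U(\NN\A)$ then yields
$$\int_{U(\NN_F)\backslash U(\NN\A)} f(ug)\,du \;=\; \vol\bigl(U(\NN_F)\backslash U(\NN\A)\bigr)\cdot \sum_{\eta\in \Xi^B}\Pi_\eta f(g),$$
where $\Xi^B\subset \Xi$ is the upper-triangular subset, so strong cuspidality amounts to $\sum_{\eta\in\Xi^B}\Pi_\eta f\equiv 0$.

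Writing $f=\sum_\Om f_\Om$ via the orbit decomposition and using that the eigenspaces $\C_{lc}(N_F\backslash G(\A))_\eta$ for distinct $\eta$ sit in direct sum, the partial sums $\sum_{\eta\in\Om\cap\Xi^B}\Pi_\eta f_\Om$ for distinct $\Om$ are linearly independent, so $f$ is strongly cuspidal iff each $f_\Om$ is. A trace-zero upper triangular matrix has $-\det=a^2$ a square, hence $\Om\cap\Xi^B=\emptyset$ precisely for $\Om=\Om_d$ with $d\in H'$; this immediately gives the inclusion $\supseteq$.

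For the reverse inclusion, I would show $f_\Om=0$ for every remaining $\Om$. The case $\Om=\Om_0$ is immediate: $f_{\Om_0}$ is $N_\A$-invariant, so the strong cuspidality integral reduces to $\vol\cdot f_{\Om_0}(g)$, forcing $f_{\Om_0}=0$. For $\Om=\Om_d$ with $d$ a nonzero square, the torus $T_\eta$ is split and admits a nontrivial character over $k$, so by Lemma \ref{split-non-adm-lem} the space $\SS(G(F)\backslash G(\A))_{\Om_d}$ contains no nonzero admissible $G(\A)$-subrepresentation. Since $\SS_\cusp$ is admissible by Corollary D and $\SS_\strcusp\subseteq \SS_\cusp$ (verified by factoring $U(F)\backslash U(\A)$ through $U(\NN_F)\backslash U(\NN\A)$ using commutativity of $U$), the intersection $\SS_\strcusp\cap \SS(G(F)\backslash G(\A))_{\Om_d}$ is admissible and hence zero.

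The main obstacle is $\Om=\Om_n$, since $\SS(G(F)\backslash G(\A))_{\Om_n}\simeq \wt\SS_{\eta_0}$ does contain admissible subrepresentations. By \eqref{nilp-decomposition-eq} and Lemma \ref{nilp-adm-lem}, any admissible $G(\A)$-subrepresentation lies inside $\bigoplus_{\a\neq 0}\wt\SS_{\eta_0,\chi_\a}$, a direct sum of irreducibles; it therefore suffices to produce, for each $\a\neq 0$, some $\phi\in\wt\SS_{\eta_0,\chi_\a}$ with $E^1_B(\kappa_{\eta_0}\phi)\neq 0$. A Mackey-type change of variables analogous to the proof of Lemma \ref{kappa-lem} yields
$$\int_{U(\NN_F)\backslash U(\NN\A)}\kappa_{\eta_0}\phi(ug)\,du \;=\; \vol\cdot \sum_{t\in T(\ov{F})} \phi(\gamma_t g),$$
where the index set $T(\ov F)\simeq U(\ov F)\backslash B(\ov F)$ arises because $B$ is the unique Borel of $G_{\ov F}$ whose unipotent radical contains $\eta_0$. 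Choosing $\phi$ supported on $\St_{\eta_0}\cdot K$ for a sufficiently small open compact $K\subset G(\A)$ with $\phi(1)\neq 0$ and evaluating at $g=1$, only the $t=1$ term survives, by discreteness of the image of $T(\ov F)$ in $G(\ov\A)$, giving a nonzero value. This completes the argument.
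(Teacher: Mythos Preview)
Your proof is correct, but takes a substantially heavier route than the paper's for the reverse inclusion.

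For the direction $\supseteq$ (orbits $\Om_d$ with $d\in H'$ are strongly cuspidal), your argument coincides with the paper's: both observe that the integral over $U(\NN_F)\backslash U(\NN\A)$ picks out the $\eta$-components with $\eta$ upper-triangular, and such $\eta$ have $-\det(\eta)$ a square, hence never lie in $\Om_d$ for $d\in H'$.

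For the direction $\subseteq$, however, the paper gives a single uniform five-line argument covering all three ``bad'' orbit types at once. Namely, if $\Om$ contains an upper-triangular representative $\eta$, then for any $g_0\in G(F)$ one computes
\[
\Pi_{g_0\eta}f(g)=\int_{\fg(\NN\A)/(\fg(\NN_F)+\fu(\NN\A))}\psi_\eta(-X)\Bigl(\int_{\fu(\NN\A)/\fu(\NN_F)}f\bigl((1+X+a)g_0^{-1}g\bigr)\,da\Bigr)\,dX,
\]
using that $\psi_\eta$ is trivial on $\fu(\NN\A)$; the inner integral is zero by strong cuspidality, so $\Pi_{g_0\eta}f=0$ for every $g_0$, hence $\Pi_\Om f=0$. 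This requires nothing beyond the Fourier setup of Sec.~\ref{orbit-dec-sec}.

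By contrast, you treat each orbit type separately and invoke much more machinery: admissibility of $\SS_\cusp$ via Corollary~D (hence indirectly Theorem~C), Lemma~\ref{split-non-adm-lem} for the split semisimple case, and for $\Om_n$ the decomposition~\eqref{nilp-decomposition-eq}, Lemma~\ref{nilp-adm-lem}, the irreducibility of the $\wt\SS_{\eta_0,\chi_\a}$, and an explicit nonvanishing construction. This is all logically available at this point in the paper (no circularity), and your Mackey computation and support argument are correct, but it is considerable overkill. The paper's approach has the further advantage of being entirely self-contained within the Fourier framework of Section~\ref{aut-rep-sec}, independent of the finite-dimensionality results of Section~\ref{cusp-GL2-sec}.
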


\begin{proof}
Note that since $(1+\eps \fu(\NN\A))$ commutes with $N_\A=(1+\eps \fg(\NN\A))$, the projectors $\Pi_\Om$ preserve the subspace of strongly cuspidal functions.
Thus, we have a decomposition
$$\SS_{\strcusp}(G(F)\backslash G(\A))=\bigoplus_{\Om\neq 0}(\SS_{\strcusp}(G(F)\backslash G(\A))\cap\SS(G(F)\backslash G(\A))_{\Om}),$$


Assume first that $\Om=\Om_d$, where $d\in H$ is not a square. Then we claim that any function $f$ in $\SS(G(F)\backslash G(\A))_{\Om}$ is strongly cuspidal.
It is enough to prove that one has
$$\int_{a\in \fu(\NN\A)/\fu(\NN_F)}\Pi_{\Om}f((1+a)g)=0.$$
We can rewrite this expression as
\begin{align*}
&\sum_{\eta\in \Om}\int_{X\in\fg(\NN\A)/\fg(\NN_F)}\int_{a\in \fu(\NN\A)/\fu(\NN_F)} \psi_\eta(-X)f((1+X+a)g)dadX=\\
&\sum_{\eta\in \Om}\int_{X\in\fg(\NN\A)/\fg(\NN_F)} \int_{a\in \fu(\NN\A)/\fu(\NN_F)}\psi_{\eta}(a)\psi_\eta(-X)f((1+X)g)dadX.
\end{align*}
Now we observe that $\int_{a\in \fu(\NN\A)/\fu(\NN_F)}\psi_{\eta}(a)da=0$ unless the restriction $\psi_{\eta}|_{\fu(\NN\A)}$ is trivial,
i.e., $\eta$ is upper-triangular. Since $d$ is not a square, this never happens, and our claim follows.

Next, assume that either $\Om=0$ or $\Om=\Om_n$ or $\Om=\Om_d$, where $d$ is a square. 
Then we claim that for any strongly cuspidal function $f\in \SS(G(F)\backslash G(\A))$, one has $\Pi_{\Om}(f)=0$.
Indeed, let $\eta\in \Om$ be an upper-triangular representative. Then for any $g_0\in G(F)$, we have
\begin{align*}
&\Pi_{g_0\eta}f(g)=\int_{\fg(\NN\A)/\fg(\NN_F)}\psi_{g_0\eta}(-X)f((1+X)g)dX=\\
&\int_{\fg(\NN\A)/\fg(\NN_F)}\psi_\eta(-\Ad(g_0^{-1})(X))f((1+X)g)dX=
\int_{\fg(\NN\A)/\fg(\NN_F)}\psi_\eta(-X)f(g_0(1+ X)g_0^{-1}g)dX=\\
&\int_{\fg(\NN\A)/(\fg(\NN_F)+\fu(\NN\A))}\int_{\fu(\NN\A)/\fu(\NN_F)}\psi_{\eta}(-X)f((1+X+a)g_0^{-1}g)dadX,
\end{align*}
where we used the fact that $\psi_\eta$ is trivial on $\fu(\NN\A)$. Now integrating over $a$ gives zero, since $f$ is strongly cuspidal. 
Hence, $\Pi_{g_0\eta}f(g)=0$, and so $\Pi_{\Om}f=0$.
\end{proof}

\subsection{Orbit/character decomposition and cuspidal functions}\label{orbit-cusp-sec}








\begin{lemma}\label{nilp-cusp-lem} 
For any nonzero $\a\in \om_{\ov{C}}(\ov{F})$, the subrepresentation $\wt{\SS}_{\eta_0,\chi_\a}\sub\SS(G(F)\backslash G(\A))_{\Om_n}$
is cuspidal, i.e., contained in $\SS_{\cusp}(G(F)\backslash G(\A))$.
\end{lemma}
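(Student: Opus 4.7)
The plan is to compute the constant term
$$E(\phi)(g) = \int_{U(\A)/U(F)} \phi(ug)\, du$$
directly for $\phi = \kappa_{\eta_0}(f)$ with $f \in \wt{\SS}_{\eta_0,\chi_\a}$, and show it vanishes whenever $\a \neq 0$. First I would use the short exact sequence
$$1 \to U_N/U_{N,F} \to U(\A)/U(F) \to U(\ov{\A})/U(\ov{F}) \to 1,$$
where $U_N := U(\A) \cap N_{\A}$, to split the integration; the inner $U_N$-integral is performed first so as to couple directly with the $N_\A$-Fourier decomposition of $\phi$.

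Fourier-expanding $\phi(nh) = \sum_{\eta \in \Om_n} \psi_\eta(n)\, \Pi_\eta\phi(h)$ on $N_\A$ and restricting $n = 1 + \eps y E_{12} \in U_N$, the character $\psi_\eta(n)$ equals the pairing of $y$ with the $(2,1)$-entry of $\eta$. Orthogonality on $U_N/U_{N,F} \simeq \ov{\A}/\ov{F}$ therefore retains exactly the upper-triangular part of the nilpotent orbit, which is $\{\lambda\eta_0 : \lambda \in \ov{F}^*\}$. The left $G(F)$-invariance of $\phi$ gives the equivariance $\Pi_{g_0\eta}\phi(h) = \Pi_\eta\phi(g_0^{-1} h)$ for $g_0 \in G(F)$; applying this with lifts $\wt{t}_\lambda \in T(F)$ of $t_\lambda \in T(\ov{F})$ satisfying $\a(t_\lambda) = \lambda$ (such lifts exist since $F^* \twoheadrightarrow \ov{F}^*$), together with the identity $\Pi_{\eta_0}\kappa_{\eta_0}(f) = f$ from Lemma \ref{kappa-lem}, transforms the inner integral into
$$\int_{U_N/U_{N,F}} \phi(u_0 h)\, du_0 \;=\; c \sum_{\lambda \in \ov{F}^*} f(\wt{t}_\lambda^{-1} h),$$
with $c = \vol(U_N/U_{N,F})$.

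Plugging back and changing variables $\ti{\ov{u}}' := \wt{t}_\lambda^{-1}\, \ti{\ov{u}}\, \wt{t}_\lambda$ in the outer integral, under which $\ov{u}' = \lambda^{-1}\ov{u}$ on $U(\ov{\A})/U(\ov{F}) \simeq \ov{\A}/\ov{F}$ with Jacobian $\prod_p |\lambda|_p^{-1} = 1$ by the product formula for $\lambda \in \ov{F}^*$, I obtain
$$E(\phi)(g) = c \sum_{\lambda \in \ov{F}^*} \int_{U(\ov{\A})/U(\ov{F})} f(\ti{\ov{u}}'\, \wt{t}_\lambda^{-1} g)\, d\ov{u}'.$$
Since $\ti{\ov{u}}' \in U(\A) \subset \St_{\eta_0}$ and the character $\psi_0 = (\psi_{\ov{C}}, \chi_\a)$ of $\St_{\eta_0}$ defining $\wt{\SS}_{\eta_0,\chi_\a}$ restricts on $U(\A)$ to $\chi_\a \circ r$ (its $\psi_{\ov{C}}$-component being trivial there because $\tr(\eta_0 E_{12}) = 0$), each inner integral factors as $f(\wt{t}_\lambda^{-1} g) \cdot \int_{U(\ov{\A})/U(\ov{F})} \chi_\a(\ov{u}')\, d\ov{u}'$. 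Because $\a \neq 0$, the character $\chi_\a$ is nontrivial on the compact group $\ov{\A}/\ov{F}$, so this integral vanishes; hence every $\lambda$-term is zero and $E(\phi) \equiv 0$.

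The main bookkeeping burden is identifying the upper-triangular part of $\Om_n$ via the $U_N$-Fourier step and verifying that the torus rescaling has trivial Jacobian via the product formula; once the integral is reduced to $\int_{U(\ov{\A})/U(\ov{F})} \chi_\a(\ov{u})\, d\ov{u}$, the vanishing is the classical one for a nontrivial character of a compact abelian group.
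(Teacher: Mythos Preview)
Your proof is correct and follows essentially the same two-step strategy as the paper: (i) the integral over $U_N/U_{N,F}$ kills all $\eta\in\Om_n$ that are not upper-triangular, and (ii) for the surviving upper-triangular terms the remaining integral produces $\int_{\ov{\A}/\ov{F}}\chi_\a=0$ since $\a\neq 0$.

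The only difference is one of packaging. The paper works directly with the explicit form of the embedding $\kappa_{\eta_0}f(g)=\sum_{\ga\in(\St_{\eta_0}\cap G(F))\backslash G(F)}f(\ga g)$, swaps the sum with the $U(\A)/U(F)$-integral, and for each $\ga$ uses $f(\ga u g)=f((\ga u\ga^{-1})\ga g)$. This bypasses both your Fourier-expansion bookkeeping and your change-of-variables step: once the non-upper-triangular $\ga$'s are discarded, the surviving $\ga\in B(F)$ normalize $U(\A)$, so the integrand becomes $\chi_\a(\ga u\ga^{-1})f(\ga g)$, a nontrivial character of the compact group $U(\A)/U(F)$ times a constant, hence integrates to zero without ever invoking the product formula. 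Your route via the Fourier decomposition $\phi=\sum_{\eta\in\Om_n}\Pi_\eta\phi$ and the torus lift $\wt{t}_\lambda$ is equivalent but slightly more elaborate; the advantage of the paper's formulation is that it avoids tracking Jacobians altogether.
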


\begin{proof}
Recall that an embedding of $\wt{\SS}_{\eta_0,\chi_\a}$ into $\SS(G(F)\backslash G(\A))$ is given by the map 
$$\kappa=\kappa_{\eta_0}: f\mapsto \kappa f(g)=\sum_{\ga\in U(F)\backslash G(F)} f(\ga g).$$
For any $f\in \wt{\SS}_{\eta,\chi_\a}$ one has
$$\int_{U(\A)/U(F)}(\kappa f)(ug) du=\sum_{\ga\in U(F)\backslash G(F)}\int_{U(\A)/U(F)}f(\ga ug)du.$$
Note that if $\Ad(\ga^{-1})\eta_0$ is not upper-triangular, then the restriction $\psi_{\Ad(\ga^{-1})\eta_0}|_{N_\A\cap U(\A)}$ is nontrivial.
Hence in this case we get
$$\int_{N_\A\cap U(\A)/(N_F\cap U(F))}f(\ga ug)du=\int \psi_{\eta_0}(\ga u\ga^{-1})f(\ga g)du=0,$$
and therefore the integral over $U(\A)/U(F)$ of such term is also zero.
Thus, in the above sum we can restrict to $\ga$ such that $\Ad(\ga^{-1})(\eta_0)$ is upper-triangular, i.e., $\ga$ itself is upper-triangular.
But then $\ga$ normalizes $U(\A)$, so we have
$$\int_{U(\A)/U(F)}f(\ga ug)du=\int_{U(\A)/U(F)}\chi_\a(\ga u\ga^{-1})f(\ga g)du=0,$$
since the character $\chi_\a$ is nontrivial.
\end{proof}

We can now combine our results to give a description of the subspace of cuspidal functions on $G(F)\backslash G(\A)$.

\begin{theorem}\label{main-dec-thm}
A function $f\in \SS(G(F)\backslash G(\A))$ is cuspidal if and only if it is finitary.
Hence, one has a decomposition
\begin{equation}\label{main-cusp-dec}
\SS_{\cusp}(G(F)\backslash G(\A))=\SS_f(G(F)\backslash G(\A))=\SS_{\cusp}(G(\ov{F})\backslash G(\ov{\A}))\oplus \bigoplus_{\a\neq 0} \wt{\SS}_{\eta_0,\chi_\a}\oplus \SS_{\strcusp}(G(F)\backslash G(\A)),
\end{equation}
where 
\begin{equation}\label{str-cusp-dec}
\SS_{\strcusp}(G(F)\backslash G(\A))=\bigoplus_{d\in H'}\SS(G(F)\backslash G(\A))_{\Om_d},
\end{equation}
\end{theorem}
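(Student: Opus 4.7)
The plan is to combine Corollary \ref{cusp-adm-cor} (cuspidal $\Rightarrow$ finitary) with the decomposition of finitary functions in Theorem \ref{fin-dec-thm} and verify that each summand in \eqref{main-fin-dec} is cuspidal; this gives the reverse inclusion $\SS_f \subseteq \SS_{\cusp}$ and hence the equality $\SS_{\cusp} = \SS_f$. The decomposition \eqref{main-cusp-dec} then follows by rearranging \eqref{main-fin-dec}, and \eqref{str-cusp-dec} is just Proposition \ref{str-cusp-prop}.

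For the three summands in \eqref{main-fin-dec}, I would argue as follows. For $\wt{\SS}_{\eta_0,\chi_\a}$ with $\a \neq 0$, cuspidality is exactly Lemma \ref{nilp-cusp-lem}. For the regular semisimple pieces $\SS(G(F)\backslash G(\A))_{\Om_d}$ with $d \in H'$, Proposition \ref{str-cusp-prop} identifies their direct sum with $\SS_{\strcusp}(G(F)\backslash G(\A))$, and a short Fubini argument based on the extension $1 \to U(\NN\A)/U(\NN_F) \to U(\A)/U(F) \to U(\ov{\A})/U(\ov{F}) \to 1$ shows that the cuspidal integral over $U(\A)/U(F)$ factors through an inner integral over $U(\NN\A)/U(\NN_F)$ that vanishes by strong cuspidality; thus these pieces sit inside $\SS_{\cusp}$. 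For the classical summand $\SS_f(G(\ov{F})\backslash G(\ov{\A}))$, I would appeal to the classical ``finitary $=$ cuspidal'' theorem on $\Bun_{\PGL_2}(\ov{C})$, extracted from Proposition \ref{hecke-fin-cusp-class-prop}: if $f$ lies in an admissible subrepresentation $V$ and $K = G(\ov{\OO}_p)\times K'$ fixes $f$, then $V^K$ is finite-dimensional, so the span of $\{T_p^n f\}_{n \geq 0}$ in $V^K$ is finite-dimensional; since every element of this span has finite support, the supports of all $T_p^n f$ lie in a common finite set (note that $\Pic$ acts trivially for $\PGL_2$), so $f$ is $T_p$-bounded in the sense of Proposition \ref{hecke-fin-cusp-class-prop} and hence cuspidal.

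Combining these three verifications with Corollary \ref{cusp-adm-cor} yields $\SS_{\cusp} = \SS_f$. To read off \eqref{main-cusp-dec} from \eqref{main-fin-dec}, one replaces the classical summand $\SS_f(G(\ov{F})\backslash G(\ov{\A}))$ by $\SS_{\cusp}(G(\ov{F})\backslash G(\ov{\A}))$ (using the classical equality just recalled) and bundles the regular semisimple summands into $\SS_{\strcusp}(G(F)\backslash G(\A))$ via Proposition \ref{str-cusp-prop}; the identification \eqref{str-cusp-dec} is Proposition \ref{str-cusp-prop} itself.

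The main obstacle in this approach is the classical ingredient: confirming ``finitary $=$ cuspidal'' on $\Bun_{\PGL_2}(\ov{C})$. In the present excerpt this is not stated in exactly this form, and one has to verify that admissibility of the ambient $G(\ov{\A})$-subrepresentation really does force $T_p$-boundedness in the sense of Proposition \ref{hecke-fin-cusp-class-prop}; the key input is that a finite-dimensional subspace of $\SS(\Bun_{\PGL_2}(\ov{C},\ov{K}))$ has support contained in a fixed finite set, which is immediate. The other two ingredients---Lemma \ref{nilp-cusp-lem} and the Fubini reduction of strong cuspidality to cuspidality---are essentially free.
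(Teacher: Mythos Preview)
Your proposal is correct and follows essentially the same route as the paper: use Corollary \ref{cusp-adm-cor} for cuspidal $\Rightarrow$ finitary, decompose $\SS_f$ via Theorem \ref{fin-dec-thm}, and verify cuspidality of each of the three summands using Lemma \ref{nilp-cusp-lem}, Proposition \ref{str-cusp-prop} (plus the containment $\SS_{\strcusp}\subset\SS_{\cusp}$), and Proposition \ref{hecke-fin-cusp-class-prop} respectively. Your elaboration on how admissibility yields the $T_p$-boundedness hypothesis of Proposition \ref{hecke-fin-cusp-class-prop} is a useful detail the paper leaves implicit; the only small point you might make explicit is that cuspidality of a function pulled back from $G(\ov{F})\backslash G(\ov{\A})$ to $G(F)\backslash G(\A)$ follows from the computation in Proposition A(2).
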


\begin{proof} Recall that the decomposition \eqref{str-cusp-dec} was established in Prop.\ \ref{str-cusp-prop}.

We already know that every cuspidal function is finitary (see Corollary \ref{cusp-adm-cor}), so we need to prove the converse. By Theorem \ref{fin-dec-thm},
we have a decomposition 
$$\SS_f(G(F)\backslash G(\A))=\SS_f(G(\ov{F})\backslash G(\ov{\A}))\oplus \bigoplus_{\a\neq 0} \wt{\SS}_{\eta_0,\chi_\a}\oplus \bigoplus_{d\in H'}\SS(G(F)\backslash G(\A))_{\Om_d}.$$
We need to check that each summand in the right-hand side is contained in $\SS_{\cusp}(G(F)\backslash G(\A))$.
For $\SS_f(G(\ov{F})\backslash G(\ov{\A}))$ this follows from Prop. \ref{hecke-fin-cusp-class-prop}, for $\wt{\SS}_{\eta_0,\chi_\a}$
from Lemma \ref{nilp-cusp-lem}, and for $\SS(G(F)\backslash G(\A))_{\Om_d}$ from \eqref{str-cusp-dec}.
\end{proof}



We get the following strengthening of the second part of Corollary D.

\begin{cor}\label{hecke-fin-cusp-cor}
For any open compact subgroup $K\sub G(\A)$, if $f\in \SS(G(F)\backslash G(\A))^K$ is such that $\HH_K\cdot f$ is finite-dimensional
then $f$ is cuspidal.
\end{cor}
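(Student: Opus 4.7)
The plan is to combine the orbit decomposition of Sec.~\ref{orbit-dec-sec} with Theorem~\ref{main-dec-thm} and to reduce to a case analysis over the orbit components. Write $f=\sum_\Omega\Pi_\Omega f$ as a finite sum. Since each projector $\Pi_\Omega$ is $G(\A)$-equivariant it commutes with the $\HH_K$-action, so each summand lies in $\SS(G(F)\backslash G(\A))^K$ and $\HH_K\cdot\Pi_\Omega f$ is finite-dimensional; it suffices to show that each such summand is cuspidal.

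For orbits whose ambient representation is admissible --- the regular elliptic orbits $\Omega_d$ with $d\in H'$, and the components $\wt\SS_{\eta_0,\chi_\a}$ with $\a\neq 0$ --- admissibility is given by Lemma~\ref{reg-sem-adm-lem}, Prop.~\ref{reg-ell-prop} and Lemma~\ref{nilp-adm-lem}(i), and Theorem~\ref{main-dec-thm} places the whole ambient subspace inside $\SS_{\cusp}$, so these contributions are cuspidal automatically. For the zero-orbit contribution $\Pi_0 f$, viewed as a function on $G(\ov F)\backslash G(\ov\A)$, the $\HH_K$-action factors through the fibrewise-integration map $\HH_K\twoheadrightarrow\HH_{\ov K}$ (where $\ov K$ is the image of $K$), so $\HH_{\ov K}\cdot\Pi_0 f$ is finite-dimensional. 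Any finite-dimensional subspace of $\SS$ has supports in a fixed finite set, so $\Pi_0 f$ is $T_p$-bounded in the sense of Prop.~\ref{hecke-fin-cusp-class-prop} for any place $p$ at which $\ov K$ is hyperspecial, which yields its cuspidality.

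The nontrivial task is to prove the vanishing $\Pi_\Omega f=0$ in the two bad cases: the split regular semisimple orbits $\Omega_d$ with $d$ a nonzero square, and the trivial-character component $\Pi_{\Omega_n,\chi_0} f$ of the nilpotent orbit. The proofs of Lemmas~\ref{split-non-adm-lem} and~\ref{nilp-adm-lem}(ii) exhibit, for any nonzero vector $w$ in the respective ambient representations $\ind_{\St_\eta}^{G(\A)}\SS_{\St_\eta}$ and $\wt\SS_{\eta_0,\chi_0}$, an element $h\in G(\A)$ --- coming from a cocompact subgroup of $\St_\eta$ in the split case, or a translation along $U(\ov\A)$ in the nilpotent case --- whose iterates $(h^n w)$ have pairwise disjoint supports in the appropriate Mackey/induced model. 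I would promote this to the Hecke-finite setting by showing that the associated $K$-biinvariant operators $(e_K h e_K)^n\in\HH_K$ produce linearly independent vectors in $\HH_K\cdot\Pi_\Omega f$; infinitely many such iterates would then contradict Hecke-finiteness, forcing $\Pi_\Omega f=0$.

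The main obstacle will be this last step: ensuring that the $K$-biinvariant averaging does not collapse the degree-shift structure. Specifically, one must verify that $(e_K h e_K)^n\cdot v$ occupies distinct $\deg$-strata in the Mackey description of $V^K$ given by the adelic analog of Prop.~\ref{ind}(1), rather than mixing them. This requires adapting the local $\Delta_\Omega(K)$ analysis to the adelic setting and using compactness of $K$ to bound the magnitude of degree-shifts that averaging can introduce.
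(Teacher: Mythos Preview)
Your strategy is sound in spirit but takes the long way around, and the obstacle you identify at the end is real and left unresolved. The paper's proof avoids the entire orbit-by-orbit case analysis by a single clean observation: from the hypothesis one can directly construct an admissible $G(\A)$-subrepresentation containing $f$, namely
\[
V \;=\; \bigcup_{K'\subset K\text{ open}} \HH_{K'}\cdot f.
\]
Since each $K'$ has finite index in $K$, each $\HH_{K'}\cdot f$ is finite-dimensional. One checks that $V$ is $G(\A)$-stable (because $gf$ is proportional to $h_{K'gK'}f$ for suitable $K'$) and that $V^{K'}\subset \HH_{K'}\cdot f$ (by applying the averaging projector $A_{K'}$). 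Thus $V$ is admissible, $f$ is finitary, and Theorem~\ref{main-dec-thm} finishes.

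Notice what this buys: once you know ``$\HH_K$-finite $\Rightarrow$ finitary,'' the vanishing on the split-semisimple and trivial-nilpotent pieces follows \emph{immediately} from Lemmas~\ref{split-non-adm-lem} and~\ref{nilp-adm-lem}(ii), which say those pieces contain no admissible subrepresentations. Your proposed Step~5 is in effect an attempt to reprove those lemmas in a strictly harder form (Hecke-finite instead of admissible), with the additional complication of controlling how $K$-averaging interacts with degree shifts. That difficulty is genuine, and you do not resolve it. The paper's argument dissolves it: the passage from $\HH_K$-finiteness to admissibility is precisely the missing bridge, and it is short. If you wanted to salvage your route, you could apply the same $V=\bigcup_{K'}\HH_{K'}\cdot(\Pi_\Omega f)$ construction to each bad component separately and then invoke the non-admissibility lemmas --- but at that point you have reinvented the paper's argument inside each summand.
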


\begin{proof} Set $V=V_{f,K}:=\cup_{K'\sub K} \HH_{K'}\cdot f\sub \SS(G(F)\backslash G(\A))$, where the union is over all open compact subgroups $K'\sub K$.
Since, each such $K'$ has finite index in $K$, the spaces $\HH_{K'}\cdot f$ are finite-dimensional. It is easy to see that $V$ is a $G(\A)$-subrepresentation.
Indeed, for any $f'\in \HH_{K'}\cdot f$ we have $V_{f',K'}\sub V_{f,K}$, so it is enough to prove that for any $g\in G(\A)$, one has $gf\in V$. We can find $K'$ such that
$gf$ is invariant under $K'$. Then $h_{K'gK'}(f)\in \HH{K'}\cdot f$ is proportional to $gf$.

We claim that $V^{K'}\sub \HH_{K'}\cdot f$. Indeed, it is enough to prove that $(\HH_{K'}\cdot f)^K\sub \HH_K\cdot f$. Indeed, any element of $\HH_{K'}\cdot f$
is a linear combination of the elements of the form $A_{K'}gf$, where $g\in G(\A)$, and $A_{K'}$ denotes the averaging operator over $K'$.
When we apply $A_K$ to such elements we get elements of the  $A_Kgf$ which lie in $\HH_K\cdot f$.

Thus, $V$ is an admissible $G(\A)$-subrepresentation containing $f$. Hence, by Theorem \ref{main-dec-thm}, $f$ is cuspidal. 
\end{proof}

By passing to $K$-invariants in \eqref{main-cusp-dec}, where $K\sub G(\A)$ is an open compact subgroup, we obtain a decomposition of the space $V(C,K)$ of
cuspidal functions on $\Bun_{\PGL_2}(C,K)$.

\begin{cor}\label{main-dec-cor}
One has a direct sum decomposition preserved by the Hecke algebra $\HH_K$,
$$V(C,K)=V(\ov{C},\ov{K})\oplus V(C,K)_n\oplus V^1(C,K),$$
where $V^1(C,K)\sub V(C,K)$ is the space of strongly cuspidal functions and
$V(C,K)_n=\bigoplus_{\a\neq 0}\wt{\SS}_{\eta_0,\chi_\a}^K$.
\end{cor}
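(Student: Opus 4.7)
The plan is to derive Corollary \ref{main-dec-cor} by applying the functor of $K$-invariants to the decomposition \eqref{main-cusp-dec} of Theorem \ref{main-dec-thm}. Since $V(C,K)=\SS_\cusp(G(F)\backslash G(\A))^K$ by definition, and since the $K$-invariants functor is exact and commutes with algebraic direct sums, I will apply it termwise to obtain
\begin{equation*}
V(C,K)=\SS_\cusp(G(\ov{F})\backslash G(\ov{\A}))^K\oplus\bigoplus_{\a\neq 0}\wt{\SS}_{\eta_0,\chi_\a}^K\oplus \SS_\strcusp(G(F)\backslash G(\A))^K,
\end{equation*}
and then match each summand with the object named in the corollary.

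The middle summand is $V(C,K)_n$ by its very definition. For the first summand, I would note that elements of $\SS(G(\ov{F})\backslash G(\ov{\A}))\subset\SS(G(F)\backslash G(\A))$ factor through the reduction map $G(\A)\to G(\ov{\A})$, so their $K$-invariance is the same as $\ov{K}$-invariance of the underlying function on $G(\ov{\A})$; their cuspidality as functions on $G(F)\backslash G(\A)$ is equivalent to classical cuspidality, as in the proof of Proposition A(2). This identifies the first summand with $V(\ov{C},\ov{K})$.

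For the third summand, the key step is to match the adelic strong cuspidality condition $\int_{U(\NN\A)/U(\NN_F)} f(ug)\, du=0$ from Section \ref{str-cusp-sec} with the geometric condition $E^1_B f\equiv 0$ used to define $V^1(C,K)$ in Definition \ref{cusp-def}(5). Because $C$ has length $2$, one has $\NN^2=0$; the kernel of $U(\A)\to U(\ov{\A})$ is then exactly $U(\NN\A)$, and $U(F)\cap U(\NN\A)=U(\NN_F)$, so the two integrals are verbatim the same.

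Finally, the $\HH_K$-preservation of each summand is automatic: each summand in \eqref{main-cusp-dec} is a $G(\A)$-stable subspace of $\SS(G(F)\backslash G(\A))$, and its $K$-invariants are closed under convolution with $\HH_K$. I do not expect any serious obstacle here; the entire content of the corollary is encapsulated in Theorem \ref{main-dec-thm}, and only a routine unwinding of notation is required.
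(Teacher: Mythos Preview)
Your proposal is correct and follows exactly the paper's approach: the paper's proof consists of the single sentence ``By passing to $K$-invariants in \eqref{main-cusp-dec}\ldots we obtain a decomposition of the space $V(C,K)$,'' and you have simply unpacked this in detail, correctly identifying each summand and noting that $\HH_K$-stability follows from $G(\A)$-stability of the summands in \eqref{main-cusp-dec}.
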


\subsection{Proofs of Theorems E and F}\label{ThmE-sec}

Recall that we are studying the spaces 
$$V(C):=\SS_{\cusp}(G(F)\backslash G(\A)/G(\OO))  \text{ and  } V^1(C):=\SS_{\strcusp}(G(F)\backslash G(\A)/G(\OO)),$$
where $G=\PGL_2$.

We start with determining the space $V^1(C)$ of strongly cuspidal functions. Note that for $PGL_2$ semisimple adjoint orbits are classified by the value of the determinant.
Thus, using \eqref{str-cusp-dec} and Lemma \ref{kappa-lem}, we can write 
$$V^1(C)=\bigoplus_{d\in H'}\wt{\SS}_{\Om_d}^{G(\OO)},$$
where $H'$ is the set of non-squares in $H=\om_{\ov{C}}^2L^{-2}(\ov{F})$ , and $\Om_d$ is the orbit corresponding to $d\in H'$.
Furthermore, by Proposition \ref{Higgs-prop}, we have an isomorphism
$$\wt{\SS}_{\Om_d}\simeq \SS(\MM_d^{Higgs,L^{-1}}(\ov{C})(k),\LL_\psi),$$
where $\MM^{Higgs,L^{-1}}(\ov{C})_d$ is the moduli stack of $L^{-1}$-twisted Higgs $\PGL_2$-bundles $(V,\phi)$ with $-\det(\phi)=d$
(and $\LL_\psi$ is a certain $\C^*$-torsor).
Thus, $\wt{\SS}_{\Om_d}\neq 0$ only when $d$ comes from a global section $\a$ of 
$$A:=H^0(\ov{C},\om_{\ov{C}}^2L^{-2}),$$
and the condition $d\in H'$ is equivalent to the condition $\a\in A'\sub A$,
the complement to the image of map 
$$H^0(\ov{C},\om_{\ov{C}}L^{-1})\to A:\a\mapsto \a^2.$$
In other words, we have a decomposition
\begin{equation}\label{HF-decomposition-eq}
V^1(C)=\bigoplus_{\a\in A'} \SS(HF_\a(k),\LL_\psi),
\end{equation}
where $HF_\a$ is the Hitchin fiber over $\a$, i.e., the fiber of 
the Hitchin map 
$$\MM^{Higgs,L^{-1}}(\ov{C})\to A:=H^0(\ov{C},\om_{\ov{C}}^2L^{-2}): (V,\phi)\mapsto -\det(\phi).$$
In particular, to estimate the dimension of $V^1(C)$, we need to count $(L^{-1}$-twisted) Higgs $\PGL_2$-bundles over $A'\sub A$.

In what follows we will use the well known fact that every $\PGL_2$-bundle over $\ov{C}$ comes from a $\GL_2$-bundle, and that the
$\PGL_2$-bundles associated with $V_1$ and $V_2$ are isomorphic if and only if there exists a line bundle $M$ over $\ov{C}$ such that $V_1\simeq V_2\ot M$. 
All of this follows e.g. from the adelic descriptions and the fact that any $\PGL_2$-bundle over $\ov{C}$ is trivial at the general point (see \cite[Prop.\ 4.5]{BD}).

\begin{lemma}\label{V1C-dim-lem}
Let $(V,\phi)$ be an $L^{-1}$-twisted Higgs $\PGL_2$-bundle. Then $-\det(\phi)_\eta\in H'$ if and only if 
\begin{itemize}
\item either 
$(V,\phi)$ is stable and belongs to $h^{-1}(A')$,
\item or
$(V,\phi)=(L,\phi_1)\oplus (\si(L),-\phi_1)$, where $L$ is a line bundle defined over the quadratic extension $k'$ over $k$, $\si$ is the nontrivial involution in $\Gal(k'/k)$, and
$\phi_1\in H^0(\ov{C}_{k'},\om\ot L^{-1})\setminus \{0\}$ is such that $\si(\phi_1)=-\phi_1$.
\end{itemize}
\end{lemma}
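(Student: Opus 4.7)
My plan is to first observe that in characteristic $\neq 2$, for any $\alpha \in A = H^0(\ov C, \om_{\ov C}^2 L^{-2})$, $\alpha$ is a square in $H = \om_{\ov C}^2 L^{-2}(\ov F)$ if and only if it is a square in $A$: a rational $\beta \in \om L^{-1}(\ov F)$ with $\beta^2 = \alpha$ regular must itself be regular, since a pole of $\beta$ would produce a pole of $\beta^2$. Consequently, $-\det(\phi)_\eta \in H'$ is equivalent to $-\det(\phi) \in A'$, and the ``stable'' half of the claim follows tautologically. The remaining work is to analyze the non-stable case.

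I would pick any $\GL_2$-lift $(V, \phi)$ with $\phi$ traceless and study the $\phi$-invariant line subbundles of $V_{\bar k}$. If $(V, \phi)$ is unstable, there is a \emph{unique} maximal $\phi$-invariant line subbundle $M_{\bar k} \subset V_{\bar k}$ of degree strictly greater than $\deg V / 2$; uniqueness makes it $\Gal(\bar k / k)$-stable, hence defined over $k$. Then $\phi|_M$ yields $\phi_1 \in H^0(\ov C, \om L^{-1})$, tracelessness forces $\phi$ to act as $-\phi_1$ on $V/M$, and $-\det(\phi) = \phi_1^2 \in A \setminus A'$; this contradicts $-\det(\phi)_\eta \in H'$, so the unstable case is excluded.

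If $(V, \phi)$ is strictly semistable, the set of $\phi$-invariant line subbundles of degree $\deg V/2$ over $\bar k$ is either a $\mathbb P^1$-family (occurring only when $\phi \equiv 0$, which gives $-\det\phi = 0 \notin H'$) or has cardinality at most $2$. If some such subbundle is Galois-stable, it descends to $k$ and produces a global square $-\det(\phi) \in A\setminus A'$, a contradiction. Otherwise Galois acts without fixed points, so the subbundles form a single orbit of size $2$ defined over a quadratic extension $k'/k$; writing $M$ and $\sigma M$ for the two, having both as subbundles forces the extension $0 \to M \to V_{k'} \to V_{k'}/M \to 0$ to split (since $\sigma M$ is a Galois-conjugate complement to $M$), yielding $V_{k'} = M \oplus \sigma M$, and tracelessness identifies $\phi|_{V_{k'}}$ with $\diag(\phi_1, -\phi_1)$. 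Galois-equivariance then forces $\sigma(\phi_1) = -\phi_1$, which is precisely case (b) with $L_0 = M$.

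For the converse in case (b), I would write $k' = k(\sqrt a)$ for a non-square $a \in k^*$; the condition $\sigma(\phi_1) = -\phi_1$ forces $\phi_1 = \sqrt a \cdot \alpha$ with $\alpha \in H^0(\ov C, \om L^{-1})$, using that the constant field of $\ov C$ is $k$. Then $-\det(\phi) = a \alpha^2$, and if this equalled $\beta^2$ for some rational $\beta \in \om L^{-1}(\ov F)$, we would deduce $\sqrt a \in \ov F$, contradicting that the constant field of $\ov F$ is also $k$. The main obstacle will be keeping straight the Galois-descent bookkeeping in the strictly semistable subcase---specifically, verifying that the pair $(M, \phi_1)$ constructed over $k'$ assembles into a well-defined $\PGL_2$-Higgs bundle over $k$ independently of the $\GL_2$-lift chosen.
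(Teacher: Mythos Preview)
Your proof is correct and follows essentially the same approach as the paper's: both arguments hinge on showing that a $\phi$-invariant line subbundle defined over $k$ would force $-\det(\phi)$ to be a square, then use Galois descent in the strictly semistable case to produce the quadratic-twist decomposition. Your treatment is slightly more detailed (separating the unstable and strictly semistable cases explicitly, and spelling out the $\sqrt a$ computation for the converse), but the core ideas and structure match the paper.
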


\begin{proof}
Assume that $(V,\phi)$ is such that $\det(\phi)_\eta\in H'$. Then we claim that there are no line subundles $L\sub V$ defined over $k$ (where we represent $V$ by a rank $2$ bundle)
preserved by $\phi$. Indeed, we have $\tr(\phi)=\phi|_L+\phi_{V/L}=0$, so
this would imply that 
$$\det(\phi)=\det(\phi|_L)\cdot \det(\phi|_{V/L})=-\det(\phi|_L)^2,$$
a contradiction. Hence, $(V,\phi)$ is semistable (since a destablizing bundle would be defined over $k$) and belongs to $h^{-1}(A')$. Conversely, $(V,\phi)$ is in $h^{-1}(A')$ then 
$-\det(\phi)\in H^0(\ov{C},\om_{\ov{C}}^2L^{-2})$ is not a square at generic point, so $-\det(\phi)_\eta$ is in $H'$.

Now assume that $(V,\phi)$ is strictly semistable and $\det(\phi)_\eta\in H'$. Then there exists a line subbundle $L\sub V$ over an algebraic closure of $k$, preserved by $\phi$,
such that $\deg(V/L)=\deg(L)=\deg(V)/2$. As we have seen above $L\sub V$ cannot be defined over $k$.
If $M\sub V$ is a different line subbundle (defined over an algebraic closure)
preserved by $\phi$, with $\deg(M)=\deg(L)$, then the composed map $M\to V\to V/L$ is an isomorphism, so we get a decomposition of
Higgs bundles
$$(V,\phi)=(L,\phi|_L)\oplus (M,\phi|_M)$$
Since $L\sub V$ has at least one Galois conjugate different from it, we do have such a decomposition.
Furthermore, since $\tr(\phi)=0$, we have $\phi|_L=-\phi|_M$. This implies that there are no other line subbundles preserved by $\phi$. 
Hence, $L\sub V$ has only two conjugates under the Galois group, so it is defined over the quadratic extension $k'$ of $k$,
and $(M,\phi|_M)=\si(L,\phi|_L)$. 

Note that $\phi|_L=\phi_1\in H^0(\ov{C}_{k'},\om\ot L^{-1})\setminus \{0\}$, and $\phi|_{\si(L)}=\si(\phi_1)=-\phi_1$.
Finally, we observe that $-\det(\phi)=-\phi_1\si(\phi_1)=\phi_1^2$ is not a square of an element in $H^0(\ov{C},\om_{\ov{C}}L^{-1})$ (otherwise $\phi_1$ would be defined over $k$). 
\end{proof}

Next, let us consider the piece corresponding to the nonzero nilpotent orbit, i.e., the space
$$V(C)_n:=\bigoplus_{\chi\neq 1}\wt{\SS}_{\eta_0,\chi}^{G(\OO)},$$
where $\eta_0$ is a nonzero nilpotent element in $\fg\ot \om_{\ov{C}}L^{-1}(\ov{F})$,
and $\chi$ runs over nontrivial characters of $U(\ov{\A})/U(\ov{F})\simeq \ov{\A}/\ov{F}$.

\begin{prop}\label{VCn-dim-prop}
One has 
$$\dim V(C)_n=(q-1)\cdot \sum_{D\in |H^0(\ov{C},\om_{\ov{C}}^2L^{-1})|} N(D),$$
where for an effective divisor $D=\sum n_ip_i$, $N(D)=\prod_i (n_i+1)$. 
\end{prop}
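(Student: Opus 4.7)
The plan is to compute $\dim\wt\SS_{\eta_0,\chi_\a}^{G(\OO)}$ for each $\a\in\om_{\ov C}(\ov F)$ and sum over nonzero $\a$. By Prop.~\ref{Higgs-prop}(2), whose proof applies unchanged to the nilpotent orbit $\Om_n$, we have $\wt\SS_{\eta_0}^{G(\OO)}\simeq \SS(U(\ov F)\backslash G(\ov\A)_{\eta_0}/G(\ov\OO),\LL_\psi)$, and the decomposition $\wt\SS_{\eta_0}=\bigoplus_\a\wt\SS_{\eta_0,\chi_\a}$ is realized by the left-translation action of the factor $U(\ov\A)/U(\ov F)$ of $H_{\eta_0}$ (the $\om_{\ov C}$-factor acts by the fixed character $\psi_{\ov C}^{-1}$ on $\LL_\psi$ and so does not enter the character-counting).

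I will parameterize the double-coset set by an Iwasawa-type decomposition. Writing $g=u t\cdot k$ with $u\in U(\ov\A)$, $t=\diag(a,d)\in T(\ov\A)$, $k\in G(\ov\OO)$, a direct computation shows that $\Ad(t^{-1}u^{-1})\eta_0$ is strictly upper-triangular with off-diagonal entry $\a_0\cdot d/a$, so the integrality $g\in G(\ov\A)_{\eta_0}$ becomes $\a_0\cdot(d/a)\in \om_{\ov C}L^{-1}(\ov\OO)$, where $\a_0\in\om_{\ov C}L^{-1}(\ov F)$ is a fixed meromorphic section with $\div\a_0=D_0$ (so $\OO(D_0)\simeq \om_{\ov C}L^{-1}$). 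Setting $D:=\div(d/a)+D_0$, this amounts to $D\geq 0$ being an effective divisor on $\ov C$. Modulo $T(\ov\OO)$ and the center, $t$ is then uniquely determined by $D$ (for $\PGL_2$), and the residual $u$-coordinate lies in $U(\ov F)\backslash U(\ov\A)/t U(\ov\OO)t^{-1}=\ov\A/(\ov F+(a/d)\ov\OO)=H^1(\ov C,L\om_{\ov C}^{-1}(D))$, since the idele $a/d$ represents the line bundle $\OO(D-D_0)=L\om_{\ov C}^{-1}(D)$. Hence
\[
U(\ov F)\backslash G(\ov\A)_{\eta_0}/G(\ov\OO)=\coprod_{D\geq 0} H^1(\ov C,L\om_{\ov C}^{-1}(D)),
\]
with $\ov\A/\ov F$ acting on each $D$-stratum by translation through the canonical surjection $\ov\A/\ov F\twoheadrightarrow H^1(\ov C,L\om_{\ov C}^{-1}(D))$.

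Since $\SS$ of a finite abelian group is its regular representation, the $\chi_\a$-isotypic in the $D$-stratum has dimension $1$ if $\chi_\a$ factors through $H^1(\ov C,L\om_{\ov C}^{-1}(D))$, otherwise $0$. This factorization amounts to $\chi_\a|_{(a/d)\ov\OO}\equiv 1$, which by the local residue pairing is equivalent to $(a/d)\cdot\a\in \om_{\ov C}(\ov\OO)$; comparing orders of vanishing at each place, this is $\a\in H^0(\ov C,\om_{\ov C}\otimes\OO(D_0-D))$, and via multiplication by the trivialization $\a_0$ of $\OO(D_0)\simeq \om_{\ov C}L^{-1}$ it is equivalent to $\ov\a:=\a\a_0\in H^0(\ov C,\om_{\ov C}^2 L^{-1}(-D))$. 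Summing over $(\a,D)$ and exchanging the order of summation,
\[
\dim V(C)_n=\sum_{\ov\a\in H^0(\om_{\ov C}^2 L^{-1})\setminus\{0\}}\#\{D\geq 0: D\leq\div\ov\a\}=\sum_{\ov\a\neq 0}N(\div\ov\a)=(q-1)\sum_{D\in|H^0(\om_{\ov C}^2 L^{-1})|}N(D),
\]
using $\#\{D\geq 0:D\leq\div\ov\a\}=N(\div\ov\a)$ and grouping nonzero sections of $\om_{\ov C}^2 L^{-1}$ by their divisor class (each divisor arising from $q-1$ scalings of a section over $k^*$). The main technical point is the duality translation — identifying triviality of $\chi_\a$ on $(a/d)\ov\OO$ with the vanishing condition $\ov\a\in H^0(\om_{\ov C}^2 L^{-1}(-D))$ — which reduces to a local residue-pairing computation at each closed point.
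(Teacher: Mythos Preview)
Your proof is correct and follows essentially the same approach as the paper's: both parameterize the support of $f\in\wt\SS_{\eta_0,\chi}^{G(\OO)}$ via the Iwasawa decomposition, reduce to the pair of conditions that $\a_0\cdot d/a$ be integral and that $\chi_\a$ be trivial on $(a/d)\ov\OO$, and then count the resulting pairs $(\a,D)$. The only point worth making more explicit is that the stabilizer $(\ov F+(a/d)\ov\OO)/\ov F\subset U(\ov\A)/U(\ov F)$ acts trivially on the fibers of $\LL_\psi$ (this holds because $\eta_0\in\fu$ pairs trivially with $\fu\otimes L$, so $\psi_{\eta_0}$ is trivial on the relevant nilpotent corrections), ensuring that the $D$-stratum really carries the regular representation of $H^1(L\om_{\ov C}^{-1}(D))$ without a character twist.
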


\begin{proof}
Let us consider the support of a function $f\in \wt{\SS}_{\eta_0,\chi}^{G(\OO)}$ in $U(\A)\backslash G(\ov{\A})_{\eta_0}/G(\ov{\OO})$.
By Iwasawa decomposition, it is enough to consider diagonal representatives $t=\diag(a,1)$, where $a\in \ov{\A}^*$.

Let us fix a nonzero $\a_0\in H^0(\ov{C},\om_{\ov{C}}L^{-1})$, and take as $\eta_0$ the corresponding element \eqref{eta0-eq}
Then the condition $t\in G(\ov{\A})_{\eta_0}$ is equivalent to $a^{-1}\a_0\in \fg\ot \om_{\ov{C}}L^{-1}(\ov{\OO})$.

Note that for $u\in U(\ov{\A})$, the condition $t^{-1}ut\in G(\ov{\OO})$ is equivalent to $u\in U(a\OO)$.
Thus, the condition 
$$f(t)=f(t(t^{-1}ut))=f(ut)=\chi(u)f(t)$$ 
for such $u$ implies that $f(t)\neq 0$ only if $\chi|_{a\OO}\equiv 1$.

Conversely, for every $t=\diag(a,1)\in G(\ov{\A})_{\eta_0}$ such that $\chi|_{a\OO}\equiv 1$, there is a unique function $f_t\in \wt{\SS}_{\eta_0,\chi}^{G(\OO)}$, supported
on $U(\A)tG(\OO)$ and such that $f_t(t)=1$. We can also rescale $a$ by integer ideles,
so we can assume that $a_p=t_p^{n_p}$, where $t_p$ are fixed generators of the maximal ideals at points $p\in\ov{C}$. 

Let us also fix nonzero $\b_0\in H^0(\ov{C},\om_{\ov{C}})$.
We can write every character of $U(\ov{\A})/U(\ov{F})=\ov{\A}/\ov{F}$ as 
$$\chi(x)=\chi_{f\b_0}(x)=\psi_{\ov{C}}(xf\b_0).$$
Then $\chi|_{a\OO}\equiv 1$ means that $af\b_0\in \om_{\ov{C}}(\ov{\OO})$.

The conditions that $a^{-1}\a_0$ and $af\b_0$ are regular imply that $f\a_0\b_0$ is regular, hence $f\a_0\b_0$ is a nonzero global section of $\om_{\ov{C}}^2L^{-1}$.
Once we fix $f$, the conditions on $a$ are equivalent to
$$-v_p(f\b_0)\le n_p\le v_p(\a_0).$$
Thus, for every point $p$, we have $v_p(f\a_0\b_0)+1$ choices for $n_p$. Multiplying over all points $p$, gives $N(D)$ choices, where $D$ is the divisor of zeros of $f\a_0\b_0$.
\end{proof}

\medskip

\begin{proof}[Proof of Theorem E]
By Theorem \ref{main-dec-thm}, we have
$$\dim V(C)=\dim V(\ov{C})+\dim V(C)_n + \dim V^1(C),$$
where $V(\ov{C})=\SS_{\cusp}(G(\ov{F})\backslash G(\ov{\A})/G(\ov{\OO}))$.
We will show that $\dim V^1(C)$ has the required asymptotics, while two other dimensions are of order of magnitude $q^{3g-3}$.

For the first term we have $\dim V(\ov{C})\le a(g)\cdot q^{3g-3}$ for some constant $a(g)$
(see \cite[Sec.\ 3.3]{Schleich}).

Next, we can apply Lemma \ref{V1C-dim-lem} to estimate the dimension of $V^1(C)$.
Namely, it shows that
$$\dim V^1(C)=|h^{-1}(A')(k)|+|X(k)|,$$
where $X$ is the set of equivalence classes of pairs $(L,\a)$ over $\ov{C}_{k'}$, where $k'$ is the quadratic extension of $k$, with $\si(\a)=-\a$, $\a\neq 0$
(the pairs $(L,\a)$ and $(L',\a')$ are equivalent if either $(L',\a')\simeq (L\ot M,\a)$ or $(L',\a')\simeq (\si(L)\ot M,-\a)$, where $M\in \Pic(\ov{C})$). 
The number of $\a\in H^0(\ov{C}_{k'},\om\ot L^{-1})$ such that $\si(\a)=-\a$ is $\le q^g$. Also, $|J_{\ov{C}}(k')|/|J_{\ov{C}}(k)|\le c\cdot q^g$ (where $c=c(g)$). Hence,
$$|X(k)|\le c\cdot q^{2g}.$$

Now let $\UU\sub h^{-1}(A')$ denotes the open substack of stable Higgs $\PGL_2$-bundles with trivial automorphisms. 
Then $|\UU(k)|$ is equal to the number of $k$-points in a the corresponding open subset in the coarse moduli space of stable $L^{-1}$-twisted Higgs bundles,  
which is a variety of dimension $6g-6$ with two connected components (both geometrically irreducible.
Hence, by Lang-Weil estimate \cite{LW}, we get 
$$|\frac{|\UU(k)|}{q^{6g-g}}-2|\le b(g)q^{-1/2}.$$

Next, we need to estimate the number of stable Higgs $\PGL_2$-bundles with nontrivial automorphisms. We observe that if $(V,\phi)$ is a such a Higgs bundle
then there is an isomorphism $V\simeq V\ot M$ for some line bundle $M$ such that $M^{\ot 2}\simeq \OO$. But this implies that $V$ comes from a line bundle 
on the corresponding unramified covering $\wt{C}\to \ov{C}$, so the number of possibly $\PGL_2$-bundles appearing like this is $\le c\cdot q^{g-1}$. Hence, the number
of stable Higgs $\PGL_2$-bundles with nontrivial automorphisms is $\le c\cdot q^{4g-4}$.
Combining the steps above, we conclude that 
$$|\frac{\dim V^1(C)}{q^{6g-g}}-2|\le b(g)q^{-1/2},$$
for some constant $b(g)$.

Finally, we use Proposition \ref{VCn-dim-prop} to estimate the dimension of $V(C)_n$.
We claim that there exists a function $c(d)$, such that $N(D)\le c(\deg(D))$ for any effective divisor $D$.
Indeed, let $D=\sum_{i=1}^s n_i p_i$, where all $n_i>0$. Then we have $\sum n_i \deg(p_i)=d$. Therefore,
$$\prod (n_i+1)\le (\frac{1}{s}\sum (n_i+1))^s \le (d+1)^d.$$
Since $\dim H^0(\ov{C},\om_{\ov{C}}^2L^{-1})=3g-3$, Proposition \ref{VCn-dim-prop} gives 
$$\dim V(C)\le c(g)q^{3g-3},$$
for some constant $c(g)$.
\end{proof}

Now let us consider the case of a Hitchin fiber $HF_\a$ in the moduli stack of $L^{-1}$-twisted Higgs $\PGL_2$-bundles over $\ov{C}$, associated with
$\a\in A'\sub H^0(\ov{C},\om_{\ov{C}}^2L^{-2})$ that has only simple zeros.
Recall that in this case the corresponding spectral curve $\pi:C_\a\to \ov{C}$ is smooth. As is well-known, in this case we have an identification 
$HF_\a(k)\simeq \Pic(C_\a)/\pi^*\Pic(\ov{C})$, so $HF_\a(k)$ has a group structure.
As before, we will use an identification 
$$\wt{\SS}_{\eta}^{G(\OO)}\simeq \SS(HF_\a,\LL_\psi),$$
for some natural $\C^*$-torsor $\LL_\psi$, where $\eta$ is an element in $\ssl_2(\ov{F})$ with $\det(\eta)=-\a$.

Theorem F is implied by the following result.

\begin{theorem}\label{smooth-hitchin-thm}
Assume the characteristic of $k$ is $\neq 2$, and that $\a\in A'\sub H^0(\ov{C},\om_{\ov{C}}^2L^{-2})$ has only simple zeros.
Then there exists a commutative group extension 
$$1\to U(1)\to H_{U(1)}\to HF_\a(k)\to 1,$$
and an action of $H_{U(1)}$ on the $\C^*$-torsor $\LL_\psi$ over $HF_\a(k)$ (where $U(1)$ acts naturally), compatible with the action of $HF_\a(k)$ on itself by shifts.
Furthermore, there is an $\HH_{\PGL_2,C}$-eigenbasis $(f_\chi)$ in $\SS(HF_\a,\LL_\psi)$ numbered by characters $\chi$ of $H_{U(1)}$ extending the identity character of $U(1)$,
where $hf_\chi=\chi(h)f_\chi$ for $h\in H$.
\end{theorem}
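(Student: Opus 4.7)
The plan is to construct $H_{U(1)}$ as a quotient of the commutative central extension $H_\eta$ from Sec.\ \ref{reg-ss-sec}, and to read the Hecke eigenbasis off its character decomposition. Since $\a\in A'$ has only simple zeros, the spectral cover $C_\a\to \ov C$ is smooth and any lift $\eta$ with $-\det(\eta)=\a$ is regular elliptic; its centralizer $T_\eta$ is the norm-one torus of the \'etale algebra $\ov{F}(C_\a)/\ov{F}$, which is anisotropic. By Prop.\ \ref{Higgs-prop}(2)--(3) we have $\wt{\SS}_\eta^{G(\OO)}\simeq \SS(HF_\a(k),\LL_\psi)$, and $H_\eta$ acts on $\LL_\psi$ compatibly with the translation action of $T_\eta(\ov{\A})$ on $HF_\a(k)$, with $\om_{\ov{C}}(\ov{\A})$ acting through $\psi_{\ov{C}}^{-1}$. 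Fixing a base-point and using Prop.\ \ref{Higgs-prop}(1), I would identify $HF_\a(k)\simeq T_\eta(\ov{\A})/(T_\eta(\ov{F})\cdot K_T)$, where $K_T=T_\eta(\ov{\A})\cap g_0G(\ov{\OO})g_0^{-1}$ is the stabilizer of the chosen base-point.

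Next, define $H_{U(1)}:=H_\eta/N$, where $N$ is generated by the canonical lift of $T_\eta(\ov{F})$ from Sec.\ \ref{reg-ss-sec}, a lift of $K_T$ to $H_\eta$, and the kernel of $\psi_{\ov{C}}$ inside $\om_{\ov{C}}(\ov{\A})$. Commutativity is inherited from the adelic version of Lemma \ref{H-commutative-local-lem}. By construction there is a short exact sequence $1\to U(1)\to H_{U(1)}\to HF_\a(k)\to 1$, where $U(1)$ denotes the (cyclic) image of $\psi_{\ov{C}}$ embedded in the unit circle, and the $H_\eta$-action on $\LL_\psi$ descends to an $H_{U(1)}$-action compatible with the $HF_\a(k)$-action on itself.

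The space $\SS(HF_\a(k),\LL_\psi)$ has $\C$-dimension $|HF_\a(k)|$, and since $H_{U(1)}$ is commutative, it decomposes as a direct sum of at-most-one-dimensional eigenspaces for those characters $\chi$ of $H_{U(1)}$ restricting to the tautological inclusion on $U(1)$. There are exactly $|HF_\a(k)|$ such $\chi$, so a dimension count forces each eigenspace to be one-dimensional, producing the claimed basis $(f_\chi)$. Compatibility with the Hecke action is automatic: $\HH_{\PGL_2,C}$ acts by right convolution on $\wt{\SS}_\eta^{G(\OO)}$, which commutes with the left $\St_\eta$-action and therefore with the induced $H_{U(1)}$-action, so every one-dimensional $\chi$-eigenspace is preserved by $\HH_{\PGL_2,C}$ and each $f_\chi$ is a Hecke eigenvector.

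The main obstacle is to pin down the group-theoretic structure of $N$ and in particular to verify the identification $HF_\a(k)\simeq T_\eta(\ov{\A})/(T_\eta(\ov{F})\cdot K_T)$: at the (finitely many) places where $\a$ vanishes, the spectral cover is ramified, $T_{\eta,p}$ is a ramified torus, and Prop.\ \ref{K-inv-prop}(ii) does not literally apply. A separate local computation at such places is needed to check that the stabilizer $K_T$ cuts out the expected quotient, so that $T_\eta(\ov{\A})/(T_\eta(\ov{F})\cdot K_T)$ is genuinely the group of $k$-points of the Hitchin fiber. Once this is settled, the structure of $H_{U(1)}$ as a $U(1)$-central extension of $HF_\a(k)$ and its commutativity follow directly from the corresponding properties of $H_\eta$ established in Sec.\ \ref{reg-ss-sec}--\ref{reg-ell-sec}.
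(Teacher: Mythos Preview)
Your strategy matches the paper's: build $H_{U(1)}$ as a quotient of the commutative extension $H_\eta$, decompose $\SS(HF_\a(k),\LL_\psi)$ into one-dimensional character eigenspaces, and use that the Hecke action commutes with the left $\St_\eta$-action (equivalently, the decomposition of Prop.~\ref{reg-ell-prop}) to conclude Hecke-stability of each line.

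The only real difference is how your ``main obstacle'' is handled. The paper does not choose a base-point $g_0$ or analyze the stabilizer $K_T$ place by place. Instead it identifies the $\GL_2$-centralizer $\wt{T}_\eta$ with the group of units of $\pi_*\OO_{C_\a}$ as a group scheme over $\ov{C}$, so that $\wt{T}_\eta(\ov{\OO}_p)$ is, by definition, the unit group of the completed local ring of $C_\a$ above $p$. Because $C_\a$ is \emph{smooth}, the standard adelic description of the Picard group of a smooth curve gives
\[
\Pic(C_\a)\;\simeq\;\wt{T}_\eta(\ov{\A})\big/\bigl(\wt{T}_\eta(\ov{F})\cdot \wt{T}_\eta(\ov{\OO})\bigr),
\]
and passing to $\PGL_2$ yields $HF_\a(k)\simeq \Pic(C_\a)/\pi^*\Pic(\ov{C})\simeq T_\eta(\ov{\A})/\bigl(T_\eta(\ov{F})\cdot T_\eta(\ov{\OO})\bigr)$ in one stroke. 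No separate computation at the ramification points is required, and Prop.~\ref{K-inv-prop}(ii) is never invoked: smoothness of the spectral curve is precisely what ensures $T_\eta(\ov{\OO})$ is the correct compact subgroup at every place, ramified or not. With this global identification in hand, $H_{U(1)}$ is simply the extension of $HF_\a(k)$ by $U(1)$ induced from $H_\eta$ (the paper just asserts this factoring), and the rest of your argument goes through as written.
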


\begin{proof}
We can identify $\pi_*\OO_{C_\a}$ with $\AA=\OO_{\ov{C}}\oplus \om^{-1}_{\ov{C}}L\cdot t$, where $t$ is a formal variable such that $t^2=\a$. 
This induces an identification of the group of units $\AA^*(\ov{F}_p)$ and $\AA^*(\ov{F})$ with the groups of $\ov{F}_p$ or $\ov{F}$-points of the  
the stabilizer $\wt{T}_\eta$ of $\eta$ in $\GL_2$, where we realize $\GL_2(F)$ as $F$-automorphisms of the generic stalk $F\oplus F\cdot t$ of $\AA$.
This induces an identification of $\Pic(C_\a)$ with $\wt{T}_\eta(\ov{A})/\wt{T}_\eta(\ov{\OO})\wt{T}_\eta(\ov{F})$, and of
$\Pic(C_\a)/\pi^*\Pic(\ov{C})$ with the similar group associated with $T_\eta\sub \PGL_2$.

Recall that by Proposition \ref{Higgs-prop}(3) we have a commutative extension $H_\eta$ of $T_\eta(\ov{\A})$ by $\om_{\ov{C}}(\ov{\A})$ acting on 
the $\C^*$-torsor $\LL_\psi$ over $HF_\a(k)$, such that  $\om_{\ov{C}}(\ov{\A})$ acts by $\psi_{\ov{C}}^{-1}$. It is easy to see that this action on $\LL_\psi$ factors
through the action of the induced extension $H_{U(1)}$ of $T_\eta(\ov{\A})/T_\eta(\ov{\OO})T_\eta(\ov{F})\simeq \Pic(C_\a)/\pi^*\Pic(\ov{C})$ by $U(1)$.

Now we observe that since $C_\a$ is smooth, the action of $\Pic(C_\a)/\pi^*\Pic(\ov{C})$ on $HF_\a(k)$ is simply transitive. Thus, we have a basis $(f_\chi)$ with the claimed properties.
The fact that the subspaces $\C\cdot f_\chi$ are preserved by $\HH_{\PGL_2,C}$-action follows from the fact they can be identified with the summands of the decomposition
$$\wt{\SS}_{\eta}^{G(\OO)}=\bigoplus_{\chi} \wt{\SS}_{\a,\chi}^{G(\OO)}$$
(see Proposition \ref{reg-ell-prop}).
\end{proof}

\appendix
\section{Some results on groupoids.}\label{group-app}

\subsection{Push-forward}\label{push-forward-sec}

For a small groupoid $\Ga$ we denote by $\C(\Ga)$ the space of $\C$-valued functions on the set of isomorphism classes of $\Ga$.
For a functor of groupoids $\Phi:\Ga_1\to \Ga_2$ the pullback map $\Phi^*:\C(\Ga_2)\to \C(\Ga_1)$ is given by $\Phi^*f(\ga_1)=f(\Phi(\ga_1))$.

Assume that all objects of $\Ga_1$ and $\Ga_2$ have finite groups of automorphisms, and that 
for every $\ga_2\in \Ga_2$ there is finitely many isomorphism classes of $\ga_1\in \Ga_1$ such that $\Phi(\ga_1)\simeq \ga_2$.
Then we define the push-forward
\begin{equation}\label{groupoid-push-for-def}
\Phi_*f(\ga_2)=\sum_{\ga_1: \Phi(\ga_1)\simeq \ga_2} \frac{|\Aut(\ga_2)|}{|\Aut(\ga_1)|} f(\ga_1).
\end{equation}

Recall that for each object $\ga_2$, one defines the fiber groupoid $\Phi^{-1}(\ga_2)$, whose objects are
pairs $(\ga_1,\phi)$, where $\ga_1$ is an object of $\Ga_1$ and $\phi:\Phi(\ga_1)\to \ga_2$ is an isomorphism.
This fiber shows up when computing the push-forward map on functions $\Phi_*:\C(\Ga_1)\to \C(\Ga_2)$ (when it is defined).

\begin{lemma}\label{groupoid-push-for-lem}
Let $\Phi:\Ga_1\to\Ga_2$ be a functor, such that for every $\ga_2\in \Ga_2$ there is finitely many isomorphism classes of $\ga_1\in \Ga_1$ such that $\Phi(\ga_1)\simeq \ga_2$.
Assume also that all objects of $\Ga_1$ and $\Ga_2$ have finite groups of automorphisms.
Then for a function $f\in \C(\Ga_1)$, one has 
\begin{equation}\label{push-for-fiber-eq}
\Phi_*f(\ga_2)=\sum_{(\ga_1,\phi)\in \Phi^{-1}(\ga_2)} \frac{1}{|\Aut(\ga_1,\phi)|} f(\ga_1).
\end{equation}
\end{lemma}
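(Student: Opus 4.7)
The plan is to verify that the right-hand side of \eqref{push-for-fiber-eq}, when grouped by the isomorphism class of the first component $\ga_1$, produces precisely the coefficient $|\Aut(\ga_2)|/|\Aut(\ga_1)|$ from the definition \eqref{groupoid-push-for-def}. First I would fix a representative $\ga_1$ for each isomorphism class in $\Ga_1$ with $\Phi(\ga_1)\simeq\ga_2$ and identify the set of isomorphism classes of objects $(\ga_1',\phi)$ in $\Phi^{-1}(\ga_2)$ with $\ga_1'\simeq \ga_1$; by choosing an isomorphism $\ga_1'\simeq \ga_1$ we may restrict attention to pairs whose first component is exactly $\ga_1$.

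Next I would introduce the set $I:=\Iso(\Phi(\ga_1),\ga_2)$ and observe that $\Aut(\ga_1)$ acts on $I$ via $\psi\cdot\phi = \phi\circ\Phi(\psi)^{-1}$. Two pairs $(\ga_1,\phi)$ and $(\ga_1,\phi')$ are isomorphic in $\Phi^{-1}(\ga_2)$ precisely when they lie in the same orbit of this action, and the stabilizer of $\phi$ is exactly $\Aut(\ga_1,\phi)=\{\psi\in\Aut(\ga_1):\Phi(\psi)=\id\}$ (since the condition $\phi\circ\Phi(\psi)=\phi$ is equivalent to $\Phi(\psi)=\id$). The set $I$ is a torsor under $\Aut(\ga_2)$ by post-composition, so $|I|=|\Aut(\ga_2)|$.

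By the orbit-stabilizer theorem, if $O_1,\ldots,O_k$ are the orbits and $\phi_i\in O_i$ chosen representatives, then $|O_i|=|\Aut(\ga_1)|/|\Aut(\ga_1,\phi_i)|$ and $\sum_i|O_i|=|I|=|\Aut(\ga_2)|$. Therefore the total contribution of the isomorphism class $[\ga_1]$ to the right-hand side of \eqref{push-for-fiber-eq} equals
\[
\sum_{i=1}^k \frac{1}{|\Aut(\ga_1,\phi_i)|}f(\ga_1) \;=\; \sum_{i=1}^k \frac{|O_i|}{|\Aut(\ga_1)|}f(\ga_1) \;=\; \frac{|\Aut(\ga_2)|}{|\Aut(\ga_1)|}f(\ga_1),
\]
which matches definition \eqref{groupoid-push-for-def}. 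Summing over isomorphism classes $[\ga_1]$ concludes the proof.

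There is no real obstacle here; the only mildly delicate point is keeping track of the left/right conventions so that $\Aut(\ga_1,\phi)$ is correctly identified with the stabilizer of $\phi$ under the chosen action. The finiteness hypotheses ensure all sums are finite and orbit-stabilizer applies.
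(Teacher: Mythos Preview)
Your proof is correct and follows essentially the same approach as the paper: both fix a representative $\ga_1$, consider the set of isomorphisms $\Phi(\ga_1)\to\ga_2$ with its $\Aut(\ga_1)$-action (noting it is an $\Aut(\ga_2)$-torsor), identify orbits with isomorphism classes in the fiber and stabilizers with $\Aut(\ga_1,\phi)$, and apply orbit--stabilizer to recover the coefficient $|\Aut(\ga_2)|/|\Aut(\ga_1)|$. Your observation that $\Aut(\ga_1,\phi)=\ker\bigl(\Aut(\ga_1)\to\Aut(\Phi(\ga_1))\bigr)$ is independent of $\phi$ is a nice clarification that the paper leaves implicit.
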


\begin{proof}
For fixed objects $\ga_1$ and $\ga_2$, let us consider the finite set $H:=\Hom(\Phi(\ga_1),\ga_2)$.
There is a natural action of $\Aut(\ga_1)\times \Aut(\ga_2)$ on $H$, such that the action of the subgroup $\Aut(\ga_2)$ on $H$ is simply transitive,
while the stabilizer subgroup of $\phi\in H$ under the action of the subgroup $\Aut(\ga_1)$ can be identified with $\Aut(\ga_1,\phi)$.
Hence, the size of the corresponding orbit is
$$|\Aut(\ga_1)\cdot\phi|=\frac{|\Aut(\ga_1)|}{|\Aut(\ga_1,\phi)|}.$$

Now, the coefficient of $f(\ga_1)$ in the right-hand side of \eqref{push-for-fiber-eq} is given by
\begin{align*}
&\sum_{\phi\in H/\Aut(\ga_1)} \frac{1}{|\Aut(\ga_1,\phi)|}=\sum_{\phi\in H} \frac{1}{|\Aut(\ga_1)\cdot\phi|}\cdot \frac{1}{|\Aut(\ga_1,\phi)|}=\\
&\sum_{\phi\in H} \frac{1}{|\Aut(\ga_1)|}=\frac{|H|}{|\Aut(\ga_1)|}=\frac{|\Aut(\ga_2)|}{|\Aut(\ga_1)|}.
\end{align*}
But this is equal to the coefficient of $f(\ga_1)$ in $\Phi_*f(\ga_2)$, and our assertion follows.
\end{proof}

\subsection{Double cosets groupoids}\label{double-coset-sec}

Let $G$ be a group, $H,K\sub G$. Then the set of double cosets $H\backslash G/K$ can be viewed as a set of isomorphism classes of
a groupoid. Namely, the objects of this groupoid are elements of $G$. A morphism from $g_1$ to $g_2$ is a pair of elements $(h\in H, k\in K)$ such that
$hg_1k=g_2$. The composition of $(h,k):g_1\to g_2$ with $(h',k'):g_2\to g_3$ is $(h'h,kk'):g_1\to g_3$.

\begin{lemma}\label{double-coset-group-lem} 
Let $\pi: B\to T$ be a surjection of groups with the kernel $U$. Let $B_1, B_2\sub B$ be a pair of subgroups, and let $T_1=\pi(B_1)$, $T_2=\pi(B_2)$.
Consider the induced groupoid functor
$$\Pi: B_1\backslash B/B_2\to T_1\backslash T/T_2.$$
Let us set $U_1:=U\cap B_1$. For an element $b_0$, let us set 
$$U_{2,b_0}:=U\cap b_0B_2b_0^{-1}.$$
Then the map $u\mapsto ub_0$ naturally extends to an equivalence of groupoids
$$U_1\backslash U/U_{2,b_0}\rTo{\sim} \Pi^{-1}(\pi(b_0)),$$
where on the right we take the groupoid fiber.
\end{lemma}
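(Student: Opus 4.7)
The plan is to unpack the groupoid fiber $\Pi^{-1}(\pi(b_0))$ concretely, construct the claimed functor explicitly, and then verify essential surjectivity and full faithfulness. Objects of $\Pi^{-1}(\pi(b_0))$ may be described as triples $(b, t_1, t_2)$ with $b \in B$, $t_i \in T_i$, and $t_1 \pi(b) t_2 = \pi(b_0)$; a morphism $(b, t_1, t_2) \to (b', t_1', t_2')$ is a pair $(b_1, b_2) \in B_1 \times B_2$ with $b_1 b b_2 = b'$, $t_1 = t_1' \pi(b_1)$ and $t_2 = \pi(b_2) t_2'$.

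The functor $\Phi : U_1 \backslash U / U_{2,b_0} \to \Pi^{-1}(\pi(b_0))$ I would define by $u \mapsto (ub_0, 1, 1)$ on objects (noting $\pi(ub_0) = \pi(b_0)$) and, on morphisms, by sending $(u_1, u_2') \in U_1 \times U_{2,b_0}$ to $(u_1, b_0^{-1} u_2' b_0)$. The key point making the second coordinate well-defined is that $U = \ker(\pi)$ is normal in $B$, so $b_0^{-1} U_{2,b_0} b_0 = b_0^{-1} (U \cap b_0 B_2 b_0^{-1}) b_0 = U \cap B_2 \subset B_2$. One then checks directly that the defining identity $u_1 u u_2' = u''$ is equivalent to $u_1 (u b_0)(b_0^{-1} u_2' b_0) = u'' b_0$ and that $\pi(u_1) = \pi(b_0^{-1} u_2' b_0) = 1$, so the constraints on the $t_i$ are met; functoriality under composition is immediate.

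Essential surjectivity is the main content. Given $(b, t_1, t_2) \in \Pi^{-1}(\pi(b_0))$, pick any lifts $b_1 \in B_1$ of $t_1$ and $b_2 \in B_2$ of $t_2$ — possible because $T_i = \pi(B_i)$. Then $\pi(b_1 b b_2) = t_1 \pi(b) t_2 = \pi(b_0)$, so $b_1 b b_2 = u b_0$ for a unique $u \in U$, and the pair $(b_1, b_2)$ provides an isomorphism $(b, t_1, t_2) \to (u b_0, 1, 1) = \Phi(u)$.

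For full faithfulness, a morphism $\Phi(u) \to \Phi(u')$ is a pair $(b_1, b_2) \in B_1 \times B_2$ satisfying $b_1(ub_0)b_2 = u'b_0$ together with $\pi(b_1) = \pi(b_2) = 1$, i.e., $b_1 \in U_1$ and $b_2 \in U \cap B_2$. Conjugation by $b_0$ gives a bijection $U \cap B_2 \xrightarrow{\sim} U_{2,b_0}$, $b_2 \mapsto b_0 b_2 b_0^{-1}$, inverse to the map used in the definition of $\Phi$, and under this bijection the equation $b_1 u (b_0 b_2 b_0^{-1}) = u'$ is exactly the morphism equation in $U_1 \backslash U / U_{2,b_0}$. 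Hence $\Phi$ is a bijection on Hom-sets, completing the proof that $\Phi$ is an equivalence. There is no genuine obstacle here; the only subtlety is keeping track of what it means to give a morphism in a fiber groupoid and invoking normality of $U$ at the right moment.
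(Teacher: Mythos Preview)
Your proof is correct and follows essentially the same approach as the paper: both unpack the fiber groupoid $\Pi^{-1}(\pi(b_0))$ in terms of triples, use the surjectivity of $\pi|_{B_i}$ to reduce every object to one of the form $(ub_0,1,1)$, and then identify morphisms between such objects with pairs $(u_1,u_2)\in U_1\times(U\cap B_2)$ via conjugation by $b_0$. The only cosmetic difference is the convention you use to record the isomorphism $\Pi(b)\to\pi(b_0)$ (you write $t_1\pi(b)t_2=\pi(b_0)$, the paper writes $\pi(b)=t_1\pi(b_0)t_2$), which amounts to replacing $(t_1,t_2)$ by their inverses and does not affect the argument.
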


\begin{proof}
Set $t_0=\pi(b_0)$. By the definition, the objects of $\Pi^{-1}(t_0)$ are triples $b\in B$, $t_1\in T_1$ and $t_2\in T_2$, such that $\pi(b)=t_1t_0t_2$.
A morphism $(b,t_1,t_2)\to (b',t'_1,t'_2)$ is given by a pair of elements $b_1\in B_1$, $b_2\in B_2$, such that
$$b'=b_1bb_2, \ \ t'_1=\pi(b_1)t_1, \ \ t'_2=t_2\pi(b_2).$$

Since $T_1=\pi(B_1)$, $T_2=\pi(B_2)$, we see that every object of $\Pi^{-1}(t_0)$ is isomorphic to one with $t_1=t_2=1$. 
A morphism between such objects $(b,1,1)\to (b',1,1)$, where $\pi(b)=\pi(b')=t_0$, is given by a pair of elements $u_1\in U\cap B_1=U_1$, $u_2\in U\cap B_2$ such that 
$b'=u_1bu_2$. If we write $b=ub_0$, $b'=u'b_0$, then this equation becomes
$$u'=u_1u(b_0u'b_0^{-1}).$$
Since $U_{2,b_0}=b_0(U\cap B_2)b_0^{-1}$, we get the claimed equivalence.
\end{proof}


\section{Geometric interpretation of the constant term operator}\label{geom-const-term-sec}

In this section we will provide a geometric interpretation of the constant term operator for $G=\GL_2$ and a special nilpotent extension $C$ of $\ov{C}$ of length $2$.
Recall that in the case of the reduced curve, the constant term operator is related to the moduli space of $B$-bundles, i.e., of pairs $L\sub V$,
where $L$ is a line subbundle in a rank $2$ bundle. In the case of a nilpotent extension of length $2$, we will use certain generalizations of line bundles,
which we call {\it quasi line bundles}. In Sec.\ \ref{quasi-lin-sec} we describe some basic properties of quasi line bundles. We give adelic description
of the groupoid of quasi line bundles in Sec. \ref{adelic-quasi-lin-sec}.
In Lemma \ref{Qbun-triples-lem} and Proposition \ref{QBun-mod-prop} we give geometric interpretations of the double coset groupoids and maps between them involved in the constant term
operator. Finally, in Proposition \ref{geom-const-term-prop} we provide a geometric interpretation of the constant term operator.

\subsection{Quasi line bundles}\label{quasi-lin-sec}

Let $C$ be a special nilpotent extension of $\ov{C}$ of length $2$. We denote by $\NN\sub \OO_C$ the nilradical, i.e., the ideal of $\ov{C}\sub C$. 
Note that $\NN$ is a line bundle on $\ov{C}$. When working locally we denote by $\eps\in \NN$ a generator as a module over $\OO_{\ov{C}}$.

\begin{definition}
A coherent sheaf $M$ on $C$ is called a {\it quasi line bundle} if locally it admits an embedding $M\hra \OO_C$ and if $\NN M\neq 0$.
\end{definition}

For a coherent sheaf $M$ on $C$ we set 
$$\ov{M}:=\ker(M\to \und{\Hom}(\NN,M):m\mapsto(x\mapsto xm)).$$ 
We view $\ov{M}$ as a coherent sheaf on $\ov{C}$. If $\eps$ is a local generator of $\NN$ then $\ov{M}=\ker(\eps:M\to M)$ and $M/\ov{M}\simeq \eps M$.

\begin{prop}\label{quasi-line-def-prop} 
The following conditions for a coherent sheaf $M$ are equivalent:
\begin{enumerate}
\item $M$ is a quasi line bundle;
\item $\NN M\neq 0$ and $\ov{M}$ is a line bundle on $\ov{C}$;
\item $\ov{M}$ is a line bundle on $\ov{C}$, and there exists a line bundle $\LL$ on $C$ and an embedding $\LL\hra M$ inducing an isomorphism $\LL/\NN\LL\rTo{\sim} M/\ov{M}$,
or equivalently, such that $\NN\LL=\NN M$;
\item $\NN M\neq 0$, and there exists a line bundle $\LL$ on $C$ and an embedding $M\hra \LL$ such that $\ov{M}=\NN\LL$;
\item $M$ is locally isomorphic to an ideal $(\eps,f)\sub \OO$, where $f\not\equiv 0 \mod (\eps)$.
\end{enumerate}
\end{prop}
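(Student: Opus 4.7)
All five conditions are local on $C$, so I would reduce to studying a finitely generated module $M$ over a local ring $\OO=\OO_{C,p}$ equipped with a local generator $\eps\in\OO$ of $\NN$: thus $\eps^2=0$, $\OO/\eps\OO=\ov{\OO}$ is a DVR with uniformizer $t$, and multiplication by $\eps$ realizes an isomorphism $\ov{\OO}\to\eps\OO$.

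My plan is to use (5) as the pivot, proving (5) implies each of the others and each of the others implies (5). The direction (5)$\Rightarrow$(1)--(4) I would handle by direct computation on $M=(\eps,f)\sub\OO$ with $\ov{f}=t^n$: one finds $\ov{M}=\eps\ov{\OO}$ and $\NN M=\eps f\ov{\OO}$, both line bundles on $\ov{C}$; the principal subsheaf $\LL:=(f)\sub M$ is free of rank one since $\operatorname{Ann}(f)=0$ (because $\ov{f}\neq 0$), giving (3); and the tautological inclusion $(\eps,f)\hra\OO$ gives (4). Since (3)$\Rightarrow$(2) and (4)$\Rightarrow$(1) are essentially immediate, only (1)$\Rightarrow$(5) and (2)$\Rightarrow$(5) carry real content.

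For (1)$\Rightarrow$(5): given an ideal $I\sub\OO$ with $\NN I\neq 0$, the image $\ov{I}\sub\ov{\OO}$ is nonzero (else $I\sub\eps\OO$ forces $\NN I=0$), so $\ov{I}=(t^n)$. Choose $f\in I$ with $\ov{f}=t^n$; then $I\cap\eps\OO$ is an $\ov{\OO}$-submodule of $\eps\ov{\OO}$ containing $\eps f$, hence equals $\eps t^m\ov{\OO}$ for some $0\le m\le n$, and $I=(f,\eps t^m)$. A direct two-generator presentation then produces an $\OO$-linear isomorphism $(\eps,t^{n-m})\to I$ sending $\eps\mapsto \eps t^m$ and $t^{n-m}\mapsto f$.

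For (2)$\Rightarrow$(5): locally $\ov{M}=\ov{\OO}s$, and multiplication by $\eps$ gives an isomorphism $M/\ov{M}\to\NN M$ whose target sits inside $\ov{M}$. Since $\NN M\neq 0$ is a submodule of a rank-one free module over the DVR $\ov{\OO}$, we have $\NN M=\ov{\OO}\cdot t^k s$ for some $k\ge 0$, so $M/\ov{M}$ is also free of rank one. Lifting a generator to $m\in M$ and absorbing a unit into $s$, I obtain $M=\OO m+\ov{\OO}s$ with the relations $\eps s=0$ and $\eps m=t^k s$, which exhibits $M$ as the cokernel of the same two-generator presentation as $(\eps,t^k)\sub\OO$. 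I expect the main technical nuisance throughout to be this presentation/kernel check: one must verify that the only nontrivial syzygies among the chosen generators of $(\eps,t^k)\sub\OO$ are the commutativity relation $\eps\cdot t^k-t^k\cdot\eps=0$ and the $\eps$-torsion $\eps\cdot\eps=0$, and this follows from $t^k$ being a nonzero-divisor in $\OO$ together with the identification $\ker(\eps\colon\OO\to\OO)=\eps\OO$.
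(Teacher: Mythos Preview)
Your local arguments for (1)$\Rightarrow$(5) and (2)$\Rightarrow$(5) are correct and well thought out, and using (5) as a pivot is a reasonable alternative to the paper's cycle (1)$\Rightarrow$(2)$\Rightarrow$(3)$\Rightarrow$(4)$\Rightarrow$(5)$\Rightarrow$(1). The presentation check you flag as the ``technical nuisance'' does go through as you describe.

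However, your opening sentence contains a real gap. Conditions (3) and (4) are \emph{not} local as stated: each asserts the existence of a global line bundle $\LL$ on $C$. Your computations over the local ring produce, at each point, a free rank-one submodule $(f)\subset M$ for (3) and a free rank-one overmodule $\OO\supset M$ for (4), but these choices are not canonical and there is no reason they should glue. So the implications (5)$\Rightarrow$(3) and (5)$\Rightarrow$(4) are not established by your argument.

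The paper handles exactly this point with a global construction. For (2)$\Rightarrow$(3): once one knows $M/\ov{M}$ is a line bundle on $\ov{C}$ (which your local work shows), the exact sequence
\[
0\to \ov{M}/\NN M \to M/\NN M \to M/\ov{M}\to 0
\]
of $\OO_{\ov{C}}$-modules has torsion kernel and locally free quotient, hence splits globally on the smooth curve $\ov{C}$; the preimage in $M$ of a chosen splitting image is the desired global $\LL$. For (3)$\Rightarrow$(4): one lifts the effective divisor $D$ measuring $\NN M\hookrightarrow\ov{M}$ to an effective Cartier divisor $\wt{D}$ on $C$ and checks $M\subset \LL(\wt{D})$. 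Both steps are short, but they are genuinely global and cannot be replaced by the assertion that the conditions are local. If you insert one of these globalization arguments (the splitting one is cleanest from your setup), your pivot-on-(5) strategy becomes a complete and slightly more elementary alternative to the paper's proof.
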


\begin{proof}
(1)$\implies$ (2). Locally we have an embedding $M\sub \OO$. Thus, $\ov{M}$ is a subsheaf in $\ov{\OO}=\OO_{\ov{C}}$.
Since $\NN M$ is contained in $\ov{M}$, we have $\ov{M}\neq 0$, so $\ov{M}$ is a line bundle on $\ov{C}$.

\noindent
(2)$\implies$ (3). 
Since $\NN M\neq 0$ is contained in $\ov{M}$, it is also a line bundle.
Thus, the embedding $\NN M\hra \ov{M}$ identifies $\NN M$ with $\ov{M}(-D)$ for some effective divisor $D\sub \ov{C}$.

Let us consider the coherent sheaf $M/\NN M$ on $\ov{C}$. Then $\ov{M}/\NN M\sub M/\NN M$ is exactly its torsion subsheaf, and the quotient by it is isomorphic to 
the line bundle $M/\ov{M}$. We can choose a splitting $\si:M/\ov{M}\to M/\NN M$ on $\ov{C}$. Now define 
$\LL\sub M$ to be the preimage of $\im(\si)\sub M/\NN M$. We have
$$\LL+\ov{M}=M.$$
Hence, $\NN\LL=\NN M$, and 
$$\LL/\NN\LL\simeq \LL/\NN M\simeq \im(\si)\simeq M/\ov{M}$$
is a line bundle on $\ov{C}$. This implies that $\LL$ is a line bundle with the required properties.

\noindent
(3)$\implies$ (4). Consider an embedding $\LL\hra M$ with $\NN \LL=\NN M$. Then 
$$M/\LL\simeq \ov{M}/\NN\LL\simeq \ov{M}|_D,$$ 
where $D$ is the effective divisor on $\ov{C}$ corresponding to the embedding of line bundles $\NN \LL\to \ov{M}$.
Let $j:U\to C$ be the embedding of the complement to $D$. Then we have $\LL|_U=M|_U$, so we can view $M$ as a subsheaf in $j_*(\LL|_U)$.
Hence, $M/\LL$ is a subsheaf in $j_*(\LL|_U)$.
Let us choose an effective Cartier divisor $\wt{D}\sub C$ reducing to $D\sub \ov{C}$. Then the subsheaf in $j_*(\LL|_U)/\LL$ annihilated by the local
equations of $\wt{D}$ is exactly $\LL(\wt{D})/\LL$. Since $M/\LL\simeq \ov{M}|_D$, we deduce that $M/\LL$ is contained in $\LL(\wt{D})/\LL$.
In other words, we get an inclusion
$$M\sub \LL(\wt{D}).$$
Consider the induced embeddings
$$\NN \LL=\ov{\LL}\sub \ov{M}\sub \ov{\LL(\wt{D})}=\ov{L}(D).$$
We see that $\ov{M}=\ov{\LL(\wt{D})}=\NN\cdot \LL(\wt{D})$, so the inclusion $M\to \LL(\wt{D})$ satisfies the required property.

\noindent
(4)$\implies$ (5), 
Locally we get an embedding $M\hra \OO$ such that $\ov{M}=\NN=(\eps) \sub\OO$. We also know that $M\neq (\eps)$. Hence, $M/(\eps)\sub \ov{C}$ is a line bundle on $\ov{C}$.
Therefore, if $\ov{f}$ is a local generator of $M/(\eps)$, then $M=(\eps,f)$ for any lifting $f$ of $\ov{f}$.

\noindent
(5)$\implies$ (1). This is clear.
\end{proof}

As we have seen in the above proof, 
if $M$ is a quasi line bundle then we have an embedding of line bundles on $\ov{C}$,
$$\NN M\hra \ov{M}.$$
which gives an effective divisor $D$ on $\ov{C}$, such that $\NN M=\ov{M}(-D)$.

\begin{definition}\label{O[D]-def}
For an effective divisor $D$ on $\ov{C}$, we define the sheaf of $\OO_C$-algebras
$$\OO_C[D]:=\OO_C+\NN(D)$$
(the sum is taken in $\eta_*\OO_{C,\eta}$, where $\eta$ is the general point).
We also define the ideal $\OO_C[-D]\sub \OO_C$ as 
$$\OO_C[-D]:=\ker(\OO_C\to \OO_D=\OO_{\ov{C}}/\OO_{\ov{C}}(-D)).$$
\end{definition}

Note that $\OO_C[D]$ and $\OO_C[-D]$ are both quasi line bundles with the associated divisor $D$.

\begin{lemma}\label{O[D]-mod-lem}
The category of quasi line bundle with an associated divisor $D$ is equivalent to the category of locally trivial $\OO_C[D]$-modules.
\end{lemma}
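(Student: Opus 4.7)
The plan is to exhibit quasi-inverse functors that are both the identity on underlying abelian sheaves; the content is then to show that the $\OO_C$-module structure on a quasi line bundle $M$ with associated divisor $D$ extends uniquely to an $\OO_C[D]$-module structure making $M$ locally free of rank one, and conversely that every locally trivial $\OO_C[D]$-module is a quasi line bundle with associated divisor $D$.

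For the easy direction, I would first observe that $\OO_C[D]$ itself is a quasi line bundle with associated divisor $D$: this is a direct inspection of Definition \ref{O[D]-def}, since $\ov{\OO_C[D]} = \OO_{\ov{C}}$ and $\NN \cdot \OO_C[D] = \NN(D) \cdot 1 = \NN(D)$, giving the embedding $\NN \hra \OO_{\ov{C}}$ with cokernel supported on $D$. Since the property of being a quasi line bundle with a prescribed associated divisor is Zariski-local, any $\OO_C[D]$-module that is locally isomorphic to $\OO_C[D]$ automatically inherits this structure when viewed as an $\OO_C$-module.

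For the forward direction, I plan to invoke Proposition \ref{quasi-line-def-prop}(5) to present $M$ locally as an ideal $(\eps, f) \sub \OO_C$, where $f$ is a local equation of $D$ (the matching of divisors is verified by the direct computation $\ov{M} = \eps\ov{\OO}$ and $\NN M = \eps f \ov{\OO}$, so the embedding $\NN M \hra \ov{M}$ has cokernel $\ov{\OO}/f\ov{\OO}$). A one-line computation shows that the local section $\eps/f \in \NN(D)$ preserves $M$, namely $(\eps/f)(\eps a + f b) = \eps b \in M$ for all $a, b \in \OO$. This defines a local $\OO_C[D]$-action on $M$, and uniqueness of such an extension is immediate because $M$ is generically torsion-free, hence embeds in its stalk at the general point $\eta$, where the $\OO_C[D]$-structure is forced. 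Uniqueness lets the local actions glue to a global $\OO_C[D]$-module structure. Finally, the identity $M = f \cdot \OO_C[D]$ (checked locally by expanding the right-hand side as $f\OO + \eps\ov{\OO} = (\eps, f)$) exhibits $M$ as locally free of rank one, with the injectivity of multiplication by $f$ following from the inclusion $\OO_C[D] \sub \eta_*\OO_{C,\eta}$.

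The two functors are quasi-inverse essentially by construction, as the underlying $\OO_C$-module and all morphisms are unchanged. The only step that is not purely formal is the gluing of local $\OO_C[D]$-actions into a global one; this is the main subtlety, but it is dispatched by the uniqueness of the extension at the generic point, so I do not anticipate any serious obstacle beyond the bookkeeping of local models.
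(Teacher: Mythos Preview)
Your approach is essentially the same as the paper's: both use the local model $(\eps,f)$ from Proposition \ref{quasi-line-def-prop}(5) to define the $\OO_C[D]$-action, and both use the embedding into the generic stalk (the paper phrases it via $M\to j_*j^*M$) to see that morphisms are automatically $\OO_C[D]$-linear.

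One correction in your ``easy direction'': recall that in this section $\ov{M}$ denotes $\ker(\eps:M\to M)$, not $M/\NN M$. For $M=\OO_C[D]$ one finds $\ov{\OO_C[D]}=\NN(D)$ and $\NN\cdot\OO_C[D]=\NN$, the reverse of what you wrote. The conclusion that the associated divisor is $D$ remains correct, since the inclusion $\NN\hra\NN(D)$ has cokernel supported on $D$, but you should fix the intermediate computation.
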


\begin{proof}
The characterization (5) from Proposition \ref{quasi-line-def-prop} shows that the multiplication by $\OO_C[D]$ is well defined on any quasi line bundle with
the associated divisor $D$.
Furthermore, a morphism of quasi line bundles with the associated divisor $D$ is automatically a morphism of $\OO_C[D]$-modules
(since for a quasi line bundle $M$ the natural map $M\to j_*j^*M$ is injective, where $j:C\setminus D\to C$ is the open embedding).
\end{proof}

For a coherent sheaf $M$ on $C$, we set $M^\vee:=\und{\Hom}(M,\OO)$.

\begin{lemma}\label{dual-quasi-line-lem}
(i) Let $M=(\eps,f)$, where $\eps$ is a local generator of $\NN$ and $f\not\equiv 0 \mod(\eps)$. Then there is an isomorphism $M^\vee\simeq (\eps,f)$. The corresponding pairing is given by
$$(\a \eps+\b f,a \eps+bf):=(\a b+\b a)\eps+ \b bf=(\a \eps+\b f)(a\eps+bf)/f.$$

\noindent 
(ii) If $M$ is a quasi line bundle with the associated divisor $D$, then the natural map 
$$\und{\Hom}_{\OO_C[D]}(M,\OO_C[-D])\to M^\vee$$ 
is an isomorphism. Hence, $M^\vee$ is also a quasi line bundle with the associated divisor $D$, and the natural map $M\to M^{\vee\vee}$ is an isomorphism.

\noindent
(iii) For quasi line bundles $L$ and $M$ with the same associated divisor $D$, the coherent sheaf $L\ot M$ is also a quasi line bundle with the associated divisor $D$. 
\end{lemma}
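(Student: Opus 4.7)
The plan is that all three parts are essentially local assertions (the identifications are $\OO_C$-module homomorphisms and their local structure can be verified at each stalk), so I would fix local coordinates in which $\OO_C \simeq \ov{\OO}[\eps]/(\eps^2)$ (using Cohen's structure theorem at the completion) and take $f = \ov{f}\neq 0$ a local equation for the divisor $D$ associated to the quasi line bundle. By Proposition \ref{quasi-line-def-prop}(5), every quasi line bundle is locally of the form $M = (\eps, f) \subset \OO_C$, and by the computation in the proof of that proposition, $M$ is a free $\ov{\OO}$-module of rank $2$ with basis $\{\eps, f\}$.

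For part (i), I would first verify that the expression $mm'/f$, computed in the total ring of fractions, actually lands in $\OO_C$ (and in fact in $M$): expanding $(\a\eps+\b f)(a\eps+bf) = (\a b+\b a)\eps f + \b b f^2$ and dividing by $f$ gives $(\a b+\b a)\eps + \b b f$. This defines a symmetric $\OO$-bilinear pairing $M\times M\to \OO$, hence a map $\Phi\colon M \to M^\vee$, $\Phi(m)(m') = mm'/f$. For injectivity take $m' = f$, giving $\Phi(m)(f) = m$. For surjectivity I would note that any $\phi \in M^\vee$ is determined by $\phi(\eps), \phi(f) \in \OO$ subject to the two constraints $\eps\phi(\eps) = 0$ and $\eps\phi(f) = f\phi(\eps)$ (coming from $\OO$-linearity plus the relations $\eps^2 = 0$ and $\eps f = \ov{f}\eps = f\eps$ satisfied by the generators), solve these in local coordinates, and exhibit an $m\in M$ realising the given $\phi$.

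For part (ii), I would observe that the natural map to $M^\vee$ is obtained by postcomposing a $\OO_C[D]$-linear map $M \to \OO_C[-D]$ with the inclusion $\OO_C[-D] \hookrightarrow \OO_C$ and restricting scalars; injectivity is immediate. For surjectivity, work locally: any $\phi\in M^\vee$ is of the form $\Phi(m_0)$ by part (i), and the explicit formula shows (a) $\phi(M) \subset (\eps,f) = \OO_C[-D]$ locally; and (b) $\phi$ is $\OO_C[D]$-linear, which reduces to the identity $\phi((\eps/\ov{f})\cdot m) = (\eps/\ov{f})\cdot \phi(m)$, checked by expanding both sides using $m = a\eps + b\ov{f}$, $m_0 = \a\eps + \b \ov{f}$ to get $b\b\eps$ on each side. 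Once we have $M^\vee \simeq \und{\Hom}_{\OO_C[D]}(M,\OO_C[-D])$ locally isomorphic to $M$ via part (i), the fact that $M^\vee$ is a quasi line bundle with the same associated divisor $D$ follows from the local characterisation in Proposition \ref{quasi-line-def-prop}(5); symmetry of the pairing then gives the canonical biduality isomorphism $M \xrightarrow{\sim} M^{\vee\vee}$.

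For part (iii), the natural tensor product to consider (since by Lemma \ref{O[D]-mod-lem} quasi line bundles with associated divisor $D$ are the same as locally trivial $\OO_C[D]$-modules) is $L\otimes_{\OO_C[D]} M$; the $\OO_C$-tensor product has $\OO_D$-torsion (as one sees explicitly in local coordinates), so the correct interpretation is via the $\OO_C[D]$-module structure. Then locally $L \simeq \OO_C[D] \simeq M$ as $\OO_C[D]$-modules, hence $L\otimes_{\OO_C[D]} M \simeq \OO_C[D]$ locally, which by Lemma \ref{O[D]-mod-lem} (or Definition \ref{O[D]-def}) is again a quasi line bundle with associated divisor $D$. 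The only genuinely subtle point in the argument is the clarification of the tensor product in (iii); everything else reduces to the explicit local structure $M = (\eps,f)$.
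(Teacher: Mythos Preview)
Your argument for (i) and (ii) is correct and is essentially the paper's: the paper dualises the free presentation $\OO^2\to\OO^2\to M\to 0$ (with relation matrix having columns $(\eps,0)$ and $(-f,\eps)$) to identify $M^\vee$ with pairs $(\phi(\eps),\phi(f))\in\OO^2$ satisfying $\eps\,\phi(\eps)=0$ and $f\,\phi(\eps)=\eps\,\phi(f)$, which are exactly your two constraints. One caveat: invoking Cohen's structure theorem to split $\OO_C\to\OO_{\ov C}$ locally fails in mixed characteristic (for instance when $C$ is smooth over the length-$2$ Witt vectors $W_2(k)$); but the splitting is not actually needed, since your formulas only use that $f$ is a non-zero-divisor and that multiplication by $\eps$ on $M$ factors through $\ov\OO$.

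For (iii) you are in fact more careful than the paper. The paper's proof asserts that tensoring over $\OO_C$ and over $\OO_C[D]$ agree, but this is false: with $M=(\eps,f)$ the element $\eps\otimes f-f\otimes\eps\in M\otimes_{\OO_C}M$ is nonzero (one checks directly from the presentation above that it is not hit by the relation map) yet is killed by $f$, so $M\otimes_{\OO_C}M$ has torsion and cannot embed locally in $\OO_C$. Over $\OO_C[D]$ this element vanishes because $\eps=(\eps/\ov f)\cdot f$ with $\eps/\ov f\in\OO_C[D]$. So the statement is only correct under your reading $L\otimes M:=L\otimes_{\OO_C[D]}M$, which is indeed how the tensor product is used downstream (e.g.\ in Lemma~\ref{quasiline-Hom-adelic-lem}).
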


\begin{proof} (i) This can be deduced by computing $M^\vee$ using the following presentation:
$$\OO^2\rTo{A} \OO^2\rTo{(\eps,f)} M\to 0,$$
where 
$$A=\left(\begin{matrix} \eps & -f \\ 0 & \eps\end{matrix}\right).$$
Indeed, we get that elements of $M^\vee$ correspond to $(u,v)$ such that $(u,v)A=0$, i.e.,
$$u \eps=0, \ \ u f=v \eps.$$
This easily implies that $u$ is determined by $v$ and that $v$ can be any element of $(\eps,f)$. Thus, we get an isomorphism $M^\vee\simeq (\eps,f)$.

\noindent
(ii) The assertion is local, so it follows from the explicit form of the pairing in (i).

\noindent
(iii) This follows immediately from Lemma \ref{O[D]-mod-lem} since tensoring $L$ and $M$ over $\OO$ is the same as tensoring them over $\OO_C[D]$.
\end{proof}


\begin{lemma}\label{ex-seq-lemma}
(i) Any surjective homomorphism $V\to M$, where $V$ is a rank 2 vector bundle and $M$ is a quasi line bundle,
extends to an exact sequence
$$0\to L\rTo{i} V\rTo{p} M\to 0,$$
where $L$ is a quasi line bundle. Furthermore, any such sequence is locally of the form
\begin{equation}\label{model-ex-seq}
0\to (\eps,f)\rTo{i} \OO^2\rTo{p} (\eps,f)\to 0
\end{equation}
where $f\not\equiv 0 \mod (\eps)$, $p(a,b)=a\eps+bf$, $i(a'\eps+b'f)=(a'\eps+b'f,-b'\eps)$.

\noindent
(ii)  Let $\eps$ be a generator of $\NN$ near the divisor $D$ associated with $L$.
An exact sequence as in (i) induces an isomorphism
$$\de:\ov{M}/\NN M\rTo \ov{L}/\NN L$$
such that for a local section $s$ of $V$ such that $p(s)\in \ov{M}$, one has 
$$\de(p(s) \mod \eps M)=s'\mod \eps L,$$
where $s'$ is a section of $L$ such that $i(s')=\eps s$.
\end{lemma}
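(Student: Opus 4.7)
The plan is to reduce both parts to a local computation at each closed point of $C$, using Proposition \ref{quasi-line-def-prop}(5) to write $M \simeq (\epsilon, f) \subset \OO$ (with $\bar f \neq 0$) and $V \simeq \OO^2$ in local coordinates. For part (i), given such a local surjection $p: V \to M$, I would perform a sequence of base changes on $V$ and of the local trivialization of $M$ to bring $p$ into the form $p(a,b) = a\epsilon + bf$. First, using that $V/\NN V \twoheadrightarrow M/\ov M$ is a surjection of $\ov\OO^2$ onto the line bundle $M/\ov M \simeq \ov\OO$, and that $\GL_2(\OO) \twoheadrightarrow \GL_2(\ov\OO)$ is surjective, I can arrange that $p(e_1) \in \ov M = (\epsilon)$ and that $\ov{p(e_2)}$ generates $M/\ov M$ over $\ov\OO$. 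Rescaling $e_2$ by a unit and then replacing the trivialization by $f := p(e_2)$ (which has the correct reduction mod $\epsilon$, so $(\epsilon, f)$ is unchanged as an ideal), I obtain $p(e_2) = f$. Writing $p(e_1) = \alpha\epsilon$, I would next show that $\alpha$ can be made a unit, then rescale $e_1$ by $\alpha^{-1}$ to achieve $p(e_1) = \epsilon$. The kernel is then read off directly from $a\epsilon + bf = 0$: the conditions force $b = \epsilon b'$ and $a = \epsilon a'' - b' f$, so $L$ is generated by $(\epsilon, 0)$ and $(-f, \epsilon)$, and the map $a'\epsilon + b'f \mapsto (a'\epsilon + b'f, -b'\epsilon)$ defines an $\OO$-linear isomorphism $(\epsilon, f) \xrightarrow{\sim} L$, exhibiting $L$ as a quasi line bundle.

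For part (ii), I would construct $\delta$ intrinsically and then verify the stated formula. If $s$ is a local section of $V$ with $p(s) \in \ov M$, then $p(\epsilon s) = \epsilon p(s) = 0$ gives $\epsilon s \in L$, and $\epsilon \cdot \epsilon s = 0$ gives $\epsilon s \in \ov L$, so $s \mapsto \epsilon s$ defines a map $p^{-1}(\ov M) \to \ov L$. This descends to $\ov M \to \ov L/\NN L$, because $p(s_1) = p(s_2)$ forces $s_1 - s_2 \in L$ and hence $\epsilon(s_1 - s_2) \in \NN L$; and it further descends to $\ov M/\NN M$, because if $p(s) \in \NN M = \epsilon M$, one writes $p(s) = \epsilon p(s')$ using surjectivity of $p$, so $s - \epsilon s' \in L$ and $\epsilon s = \epsilon(s - \epsilon s') \in \NN L$. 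Finally, using the local model from part (i), where $\ov M/\NN M \simeq (\epsilon)/(\epsilon f) \simeq \OO_D \simeq \ov L/\NN L$, a direct calculation shows that $\delta$ acts as the identity on these local sheaves, so $\delta$ is a global isomorphism.

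The main obstacle is the normalization of $\alpha$ to a unit in part (i), which requires a case analysis depending on the behavior of $\bar f$ at the local point. At points of the associated divisor $D$, where $\bar f$ lies in the maximal ideal of $\ov\OO$, the surjectivity relation $\epsilon = a\alpha\epsilon + bf$ combined with $\operatorname{Ann}_{\OO}(\epsilon) = (\epsilon)$ (using $\NN^2 = 0$) yields an identity $\bar a \bar\alpha + \bar c \bar f = 1$ in $\ov\OO$; reducing further modulo the maximal ideal forces $\bar\alpha$ to be a unit. At points away from $D$, where $\bar f$ is already a unit so that $(\epsilon, f) = \OO$ and $M$ is locally a line bundle, a supplementary column operation $e_1 \to e_1 + c e_2$ with $c = \epsilon(1-\alpha)f^{-1}$ directly achieves $p(e_1) = \epsilon$ without requiring $\alpha$ itself to be a unit.
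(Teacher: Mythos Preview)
Your proposal is correct. The overall structure matches the paper's, but the paper takes shortcuts in both parts that are worth noting.

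For part (i), instead of your explicit base changes and case analysis on whether $\bar f$ is a unit, the paper observes that at a point of $D$ (where $\bar f$ lies in the maximal ideal $\fm$) the fiber $M/\fm M$ is $2$-dimensional over the residue field. Hence any surjection $\OO^2\to M$ induces an isomorphism on fibers, and by the standard Nakayama argument any two such surjections differ by an element of $\GL_2(\OO)$; in particular every surjection is equivalent to the model one $(a,b)\mapsto a\eps+bf$. This absorbs your verification that $\alpha$ is a unit into a single stroke. Your explicit kernel computation, on the other hand, is more detailed than what the paper writes down.

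For part (ii), your construction of $\delta$ by hand and the well-definedness checks are precisely the connecting homomorphism of the snake lemma unwound. The paper packages this as the long exact sequence for the short exact sequence of two-term complexes
\[
0\to [L\xrightarrow{\eps}L]\to [V\xrightarrow{\eps}V]\to [M\xrightarrow{\eps}M]\to 0,
\]
yielding $\eps V\to \ov M\to L/\eps L\to V/\eps V$; identifying the image of the first map as $\eps M$ and the kernel of the last as $\ov L/\eps L$ gives the isomorphism $\delta$ and its description simultaneously, without a separate local verification.
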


\begin{proof}
(i) If $M$ is a line bundle then this is clear.
Now assume that $M=(\eps,f)$, where $f$ is in the maximal ideal $\fm$ of some point then $M=(\eps,f)$ has $2$-dimensional fiber $M/\fm$. Hence, any surjection $\OO^2\to M$ is locally
isomorphic to $(a,b)\mapsto at+bf$. 

\noindent
(ii) The long exact sequence associated with the short exact sequence of complexes
$$0\to [L\rTo{\eps} L]\to [V\rTo{\eps} V]\to [M\rTo{\eps}M]\to 0$$
gives an exact sequence
$$\eps V\to \ov{M}\to L/\eps L\to V/\eps V.$$
It remains to observe that the image of $\eps V\to \ov{M}$ is $\eps M$, while the kernel of $L/\eps L\to V/\eps V$ is $\ov{L}/\eps L$.
\end{proof}

\begin{remark}
If $L\hra V$ is an embedding of a quasi line bundle into a rank $2$ vector bundle such that $\ov{L}$ is a subbundle in $\ov{V}$ on $\ov{C}$, it is not necessarily true
that $V/L$ is a quasi line bundle. For example, take $L=(\eps,f)$, where $f\not\equiv 0 \mod (\eps)$, and consider the embedding given as the composition 
$$L\to \OO\to \OO\oplus \OO.$$
\end{remark}

\begin{lemma}\label{det-lem}
Given an exact sequence
$$0\to L\to V\to M\to 0,$$
where $V$ is a rank $2$ bundle, $L$ and $M$ are quasi line bundles,
there is a natural pairing
$$L\ot M\to {\bigwedge}^2V=\det(V): l\ot m\mapsto l\wedge \wt{m},$$
where $\wt{m}$ is any lifting of $m$. This pairing induces isomorphisms
$$L\rTo{\sim} \und{\Hom}(M,\det(V)), \ \ M\rTo{\sim} \und{\Hom}(L,\det(V)).$$
Thus, for fixed $\det(V)$, classes of isomorphism of $L$ and $M$ determine each other.
\end{lemma}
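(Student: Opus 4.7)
The plan is to reduce the whole assertion to the explicit local normal form from Lemma \ref{ex-seq-lemma}(i), namely the sequence $0\to(\eps,f)\rTo{i}\OO^2\rTo{p}(\eps,f)\to 0$ with $i(a'\eps+b'f)=(a'\eps+b'f,-b'\eps)$ and $p(a,b)=a\eps+bf$. The crucial first step, and the only conceptual obstacle, is to verify that the formula $l\ot m\mapsto l\wedge\wt m$ is independent of the chosen lift $\wt m$. Since two lifts of $m$ differ by an element of $i(L)\sub V$, this amounts to showing that $i(l_1)\wedge i(l_2)=0$ in $\det V$ for all $l_1,l_2\in L$. Because $L$ is \emph{not} a genuine line bundle, this vanishing does not follow from the usual antisymmetry of $\wedge$; it is the one place where the nilpotency $\eps^2=0$ is essential.

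In the local model this is a direct computation:
\begin{equation*}
i(a_1'\eps+b_1'f)\wedge i(a_2'\eps+b_2'f)=-a_1'b_2'\eps^2-b_1'b_2'f\eps+a_2'b_1'\eps^2+b_1'b_2'f\eps=0,
\end{equation*}
so the pairing $L\ot M\to\det V$ is well-defined and clearly $\OO_C$-bilinear.

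For the isomorphism statements I then compute the pairing once and for all in the local model. Taking $\wt m=(a,b)$ as a lift of $m=a\eps+bf\in M$ and $l=a'\eps+b'f\in L$,
\begin{equation*}
i(l)\wedge \wt m=(a'\eps+b'f)\,b-a\,(-b'\eps)=(a'b+ab')\eps+b'bf,
\end{equation*}
which lies in $\OO_C[-D]=(\eps,f)\sub\OO\simeq\det V$. Comparing with Lemma \ref{dual-quasi-line-lem}(i), this is exactly the symmetric perfect self-pairing of $(\eps,f)$ constructed there. By the symmetry of that formula in its two arguments, both induced maps $L\to\und{\Hom}(M,\det V)$ and $M\to\und{\Hom}(L,\det V)$ factor, locally, as the self-duality $(\eps,f)\rTo{\sim}(\eps,f)$ from Lemma \ref{dual-quasi-line-lem}(i) composed with the identification $\und{\Hom}_{\OO_C[D]}(-,\OO_C[-D])\simeq(-)^\vee$ from Lemma \ref{dual-quasi-line-lem}(ii); hence they are local isomorphisms between coherent sheaves, and therefore global isomorphisms. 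The last assertion of the lemma, that the isomorphism classes of $L$ and $M$ determine each other once $\det V$ is fixed, is then immediate.
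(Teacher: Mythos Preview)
Your proof is correct and follows essentially the same approach as the paper: reduce to the local model sequence \eqref{model-ex-seq} of Lemma~\ref{ex-seq-lemma}(i), compute the pairing explicitly, and identify it with the perfect pairing of Lemma~\ref{dual-quasi-line-lem}(i). Your explicit verification that $i(l_1)\wedge i(l_2)=0$ (hence the pairing is independent of the lift) is a detail the paper leaves implicit, and your invocation of Lemma~\ref{dual-quasi-line-lem}(ii) to pass from the local self-duality to the global isomorphism is likewise spelled out more carefully than in the paper's one-line proof.
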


\begin{proof}
It is enough to check this statement for the sequence \eqref{model-ex-seq}.
The corresponding pairing is
\begin{align*}
&(\a \eps+\b f)\ot (a\eps+bf)\mapsto i(\a \eps+\b f)\we \wt{a\eps+bf}=((\a \eps+\b f)e_1-\b \eps e_2)\we (ae_1+be_2)=\\
&[b(\a \eps+\b f)+a\b \eps]\cdot e_1\we e_2
\end{align*}
which coincides with the pairing of Lemma \ref{dual-quasi-line-lem}(i).
\end{proof}

\subsection{Adelic interpretation of quasi line bundles}\label{adelic-quasi-lin-sec}

Let us fix an effective divisor $D=\sum_p n_p[p]$ on $\ov{C}$. 
We want to describe the set of isomorphism classes of quasi-line bundles on $C$ with the associated divisor $D$ (for $D=0$ these will be usual line bundles on $C$).

For each point $p$, let us consider the $\OO_p$-subalgebra 
$$\OO_p[D]:=\OO_p+t_p^{-n_p}\ov{\OO}_p\eps_p\sub F_p,$$
where $\eps_p$ is a generator of the nilradical of $\OO_p$.
Note that these are completions of the stalks of the sheaf of $\OO_C$-algebras $\OO_C[D]$ (see Definition \ref{O[D]-def}).
We denote by $\OO[D]\sub \A$ the corresponding subalgebra in the algebra of adeles.
We also have the corresponding subgroup of invertible elements in the group of ideles,
$\OO[D]^*\sub \A^*$.

\begin{lemma}
The groupoid of quasi-line bundles on $C$ with the underlying divisor $D$ is equivalent to the double cosets groupoid
$$F^*\backslash \A^*/\OO[D]^*.$$
\end{lemma}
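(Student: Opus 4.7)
The plan is to follow the classical adelic dictionary for line bundles on a curve, with the single new input being Lemma~\ref{O[D]-mod-lem}, which identifies quasi-line bundles with associated divisor $D$ with locally trivial rank-one $\OO_C[D]$-modules. A preliminary step is to verify that the formal completion of $\OO_C[D]$ at each closed point $p$ equals the ring $\OO_p[D]$ defined in the text; this is a direct local computation using that $\NN$ is generated near $p$ by $\eps_p$ and that $\OO_C[D]=\OO_C+t_p^{-n_p}\NN$ locally, where $n_p$ is the multiplicity of $p$ in $D$.

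For the map from quasi-line bundles to double cosets, given a quasi-line bundle $M$ with associated divisor $D$ I would choose a generic trivialization $e_\eta\colon M_\eta \xrightarrow{\sim} F$ (available because $M_\eta$ is a rank-one $F$-module) and, for every closed point $p$, a local trivialization $e_p\colon M_p \xrightarrow{\sim} \OO_p[D]$ of the completed stalk (available by Lemma~\ref{O[D]-mod-lem} together with the completion computation above). Comparing the two trivializations inside $F_p$ yields an element $g_p \in F_p^*$, which lies in $\OO_p[D]^*$ for all but finitely many $p$; since $\OO_p[D]^* = \OO_p^*$ whenever $p \notin \supp(D)$, the tuple $g = (g_p)$ lies in $\A^*$. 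Changing $e_\eta$ by $f \in F^*$ multiplies $g$ on the left by $f$, and changing the $e_p$ by elements $u_p \in \OO_p[D]^*$ multiplies $g$ on the right by $u \in \OO[D]^*$, so the class $[g] \in F^*\backslash \A^*/\OO[D]^*$ is a well-defined invariant of $M$, functorial in isomorphisms.

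For the inverse direction, given $g = (g_p) \in \A^*$ I would glue the local fractional submodules $g_p\OO_p[D] \subset F_p$ (which coincide with $\OO_p[D]$ for almost all $p$) into a coherent subsheaf $M(g) \subset \eta_*F$. This subsheaf is locally isomorphic to $\OO_C[D]$ and hence a quasi-line bundle with associated divisor $D$ by Lemma~\ref{O[D]-mod-lem}. The two constructions are visibly mutually inverse on isomorphism classes: replacing $g$ by $fgu$ with $f\in F^*$, $u\in \OO[D]^*$ simply replaces $M(g)$ by $f\cdot M(g) \simeq M(g)$ inside $\eta_*F$.

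To upgrade the bijection on isomorphism classes to an equivalence of groupoids I would match automorphism groups. Since $\A^*$ is commutative, the automorphism group of $g$ in the double coset groupoid is $\{(f,u) \in F^* \times \OO[D]^* : fgu = g\} = F^* \cap \OO[D]^*$. On the sheaf side, automorphisms of $M$ are global sections of the sheaf of units of $\OO_C[D]$, and through any chosen pair of generic and local trivializations these are precisely the elements of $F^*$ that lie in $\OO_p[D]^*$ at every place, i.e.\ exactly $F^* \cap \OO[D]^*$. The one technical point meriting careful verification is the formal completion computation together with the resulting description of its units; the remainder is a routine imitation of the standard line-bundle argument.
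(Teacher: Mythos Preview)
Your proposal is correct and follows exactly the approach indicated in the paper: invoke Lemma~\ref{O[D]-mod-lem} to reinterpret quasi line bundles with associated divisor $D$ as locally trivial rank-one $\OO_C[D]$-modules, then run the standard adelic dictionary with $\OO_p[D]$ in place of $\OO_p$. The paper's own proof is a terse three-line sketch of precisely this argument (plus the explicit formula for $L(U)$), so you have faithfully expanded it.
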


\begin{proof} This follows from Lemma \ref{O[D]-mod-lem} similarly to the case of line bundles.
The equivalence is obtained by choosing local isomorphisms of a quasi line bundle with $\OO[D]$. 
More precisely, the quasi line bundle $L$ associated with $a=(a_p)\in \A^*$ and the divisor $D$ is defined as follows:
$$L(U):=\{f\in F \ |\ f\in \bigcap_{p\in U} a_p\cdot \OO_p[D]\}.$$
\end{proof}


\begin{lemma}\label{quasiline-Hom-adelic-lem}
Let $L$ (resp., $M$) be the quasi line bundle associated with the idele $a_1$ (resp., $a_2$) and the divisor $D$.
Let $t_p\in \OO_p$ denote some local uniformizers, and let $n_p$ be the multiplicity of $p$ in $D$.
Then 
\begin{itemize}
\item
the quasi line bundle $L\ot M$ is represented by $a_1a_2$;
\item
the quasi line bundle $\und{\Hom}(M,L)$ is represented by $a_1a_2^{-1}$;
\item
the quasi line bundle $L^\vee$ is represented by the idele $a_1^{-1}(t_p^{n_p})$.
\end{itemize}
\end{lemma}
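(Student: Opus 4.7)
The plan is to reduce everything to standard adelic Picard-groupoid formulas, now for the sheaf of algebras $\OO_C[D]$ rather than $\OO_C$. Lemma \ref{O[D]-mod-lem} identifies the groupoid of quasi line bundles with associated divisor $D$ with the groupoid of invertible $\OO_C[D]$-modules, and the preceding unnamed lemma identifies the set of isomorphism classes with $F^*\backslash \A^*/\OO[D]^*$. So the formulas for $L\otimes M$, $\und{\Hom}(M,L)$, and $L^\vee$ become formulas in the Picard groupoid of $\OO_C[D]$, which obey the usual rules under the adelic description.

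First I would justify passing from $\OO_C$-operations to $\OO_C[D]$-operations. For $L\otimes M$, Lemma \ref{dual-quasi-line-lem}(iii) (via its proof) is already the statement that for quasi line bundles with common associated divisor $D$, one has $L\otimes_{\OO_C} M = L\otimes_{\OO_C[D]} M$. For $\und{\Hom}(M,L)$, any $\OO_C$-linear map between two quasi line bundles with common associated divisor $D$ is automatically $\OO_C[D]$-linear (this was already used implicitly in Lemma \ref{O[D]-mod-lem}, where the point is that the $\OO_C[D]$-module structure on a quasi line bundle is intrinsic — it can be recovered by localizing away from $D$). Granting these two identifications, the formulas
$$L\otimes M \leftrightarrow a_1 a_2, \qquad \und{\Hom}(M,L) \leftrightarrow a_1 a_2^{-1}$$
are the standard multiplication and inversion rules in the Picard groupoid of invertible $\OO_C[D]$-modules.

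For the dual, the key input is Lemma \ref{dual-quasi-line-lem}(ii), which gives a canonical isomorphism $L^\vee \simeq \und{\Hom}_{\OO_C[D]}(L,\OO_C[-D])$. Thus in the Picard groupoid of $\OO_C[D]$-line bundles, the class of $L^\vee$ is the class of $\OO_C[-D]$ times $a_1^{-1}$. It remains to identify the idele representing $\OO_C[-D]$: locally at $p$, I claim that $t_p^{n_p}$ generates $\OO_C[-D]$ as an $\OO_C[D]$-module, since
$$\OO_p[D]\cdot t_p^{n_p} = t_p^{n_p}\OO_p + \eps_p t_p^{-n_p}\ov{\OO}_p\cdot t_p^{n_p} = t_p^{n_p}\OO_p + \eps_p\ov{\OO}_p = \OO_p[-D].$$
Therefore $\OO_C[-D]$ is represented by the idele $(t_p^{n_p})$, and $L^\vee$ is represented by $a_1^{-1}(t_p^{n_p})$.

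The argument is essentially bookkeeping. The only point requiring any care is the identification of the $\OO_C$- and $\OO_C[D]$-operations in the first paragraph, but both facts are local and are implicitly contained in the paper's earlier lemmas; the $D=0$ degeneration recovers the familiar formulas for line bundles and serves as a useful sanity check (there $(t_p^{n_p})$ becomes the trivial idele).
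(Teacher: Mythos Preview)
Your proof is correct and follows essentially the same approach as the paper: reduce the first two items to the Picard groupoid of invertible $\OO_C[D]$-modules via Lemma \ref{O[D]-mod-lem}, and deduce the third from Lemma \ref{dual-quasi-line-lem}(ii) together with the fact that $\OO_C[-D]$ is represented by the idele $(t_p^{n_p})$. You supply more detail than the paper does (in particular the explicit local check $\OO_p[D]\cdot t_p^{n_p}=\OO_p[-D]$), but the strategy is identical.
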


\begin{proof}
The first two items are clear if we think of quasi line bundles with the associated divisor $D$ as locally trivial $\OO[D]$-modules (see Lemma \ref{O[D]-mod-lem}).
Since $\OO[-D]$ is represented by the idele $(t_p^{n_p})$, 
the isomorphism $L^\vee\simeq \und{\Hom}_{\OO[D]}(L,\OO[-D])$ (see Lemma \ref{dual-quasi-line-lem}(ii)) shows that $L^\vee$ is represented by $a_1^{-1}(t_p^{n_p})$.
\end{proof}

Now we can give a geometric interpretation of $\QBun_T(C)=\sqcup_D \QBun_T(C,D)$,
where $\QBun_T(C,D)=T(F)\backslash T(\A)/T(\OO)[D]$ (see Sec.\ \ref{Iwasawa-sec}).

\begin{lemma}\label{Qbun-triples-lem}
One has an identification of $\QBun_T(C)$ with the groupoid $\QTr$ of tripes $(L,M,\LL)$, where $L$ and $M$ are  quasi line bundles, $\LL$ is a line bundle on $C$, 
equipped with an isomorphism 
$$\a:\ov{L}/\NN L\rTo{\sim} \ov{M}/\NN M,$$
and a pairing
$$\b:L\ot M\to \LL$$
Under this equivalence, $\QBun_T(C,D)$ corresponds to $(L,M,\LL)$ such that the divisor in $\ov{C}$ associated with $L$ (or $M$) is $D$.
\end{lemma}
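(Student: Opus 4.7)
Plan: I will construct mutually inverse functors between the groupoid $\QBun_T(C) = \sqcup_D \QBun_T(C,D)$ and the groupoid $\QTr$, compatible with the decomposition by the associated divisor $D$ (on the right-hand side, by the divisor common to $L$ and $M$). For a representative $t = \diag(a_1, a_2) \in T(\A)$ of a double coset in $\QBun_T(C,D)$, the forward functor assigns the triple $(L, M, \LL, \alpha, \beta)$ as follows: $L$ and $M$ are the quasi line bundles on $C$ with associated divisor $D$, obtained from the ideles $a_1$ and $a_2$ via the adelic description of Sec.\ \ref{adelic-quasi-lin-sec}; $\LL$ is the line bundle on $C$ associated with the idele $a_1 a_2$, which belongs to $\OO^*$ by Lemma \ref{TOD-lem} and hence defines a genuine line bundle; the pairing $\beta: L \otimes M \to \LL$ is induced by multiplication in $F$, landing in $\LL$ because local generators satisfy $a_{1,p}a_{2,p} \in \OO_p^*$; and the isomorphism $\alpha: \ov L/\NN L \to \ov M/\NN M$, under the identifications $\ov L/\NN L \simeq \ov L|_D$ and $\ov M/\NN M \simeq \ov M|_D$ (using $\NN L = \ov L(-D)$), is given by multiplication by $\bar a_2 \bar a_1^{-1}$, which is well-defined on these quotients thanks to the congruence $\bar a_1 \equiv \bar a_2 \mod \ov\OO(-D)$.

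The well-definedness of this assignment on double cosets follows bijectively from the three conditions of Lemma \ref{TOD-lem}: the conditions $a_1, a_2 \in \OO[D]$, $a_1a_2 \in \OO^*$, and $\bar a_1 - \bar a_2 \in \ov\OO(-D)$ correspond respectively to $T(\OO)[D]$-invariance of the $\OO_C[D]$-module structure on $L$ and $M$, of the line bundle $\LL$, and of the isomorphism $\alpha$. Left translation by $T(F)$ corresponds to changing the rational trivialization at the generic point and leaves isomorphism classes unchanged. The equivalence then amounts to matching automorphism groups: the stabilizer of $t$ (pairs $(t_F, t_\OO) \in T(F) \times T(\OO)[D]$ with $t_F \cdot t \cdot t_\OO = t$) matches the automorphism group of the triple (compatible pairs $(\phi_L, \phi_M)$ of $\OO_C[D]$-module automorphisms of $L, M$ that preserve $\alpha$ and induce an automorphism of $\LL$ compatibly with $\beta$), since both reduce to pairs in $\OO[D]^* \times \OO[D]^*$ with product in $\OO^*$ and satisfying the appropriate congruence modulo $\ov\OO(-D)$.

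For the inverse functor, given a triple, I choose local generators of $L$ and $M$ as $\OO_C[D]$-modules, producing ideles $a_1, a_2 \in \A^*$ well-defined modulo $\OO[D]^*$; the pairing $\beta$ into the line bundle $\LL$ forces $a_1 a_2 \in \OO^*$ (after rescaling generators using a trivialization of $\LL$ derived from $\beta$), and the isomorphism $\alpha$ forces $\bar a_1 \equiv \bar a_2 \mod \ov\OO(-D)$. A rational trivialization of the resulting $T$-structure, unique up to $T(F)$, completes the adelic data. The main technical obstacle is verifying that the pairing $\beta$ (a generic/global datum) together with $\alpha$ (a local datum concentrated near $D$) precisely encode the two ``diagonal'' conditions in Lemma \ref{TOD-lem}: the subtlety is that $L \otimes_{\OO_C} M$ is naturally a quasi line bundle with associated divisor $D$, strictly larger than any honest line bundle, so one must check that the factorization $\beta: L \otimes M \to \LL$ through a line bundle is exactly the condition $a_1a_2 \in \OO^*$ rather than just $a_1 a_2 \in \OO[D]$. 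Once this correspondence is confirmed, the remaining verifications are routine bookkeeping about changes of local and rational trivializations.
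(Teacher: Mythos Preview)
Your overall strategy—build the functor, check invariance under $T(\OO)[D]$ via the three conditions of Lemma~\ref{TOD-lem}, then invert—is the right shape, and your treatment of $\alpha$ is fine. But the construction of the pairing $\beta$ does not work as stated, and this is not a matter of ``checking a factorization''; it is a genuine obstruction.

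With your symmetric choice $L_p=a_{1,p}\OO_p[D]$ and $M_p=a_{2,p}\OO_p[D]$, the product in $F_p$ gives
\[
L_p\cdot M_p \;=\; a_{1,p}a_{2,p}\,\OO_p[D]\cdot\OO_p[D]\;=\;a_{1,p}a_{2,p}\,\OO_p[D],
\]
since $\OO_p[D]$ is a subring of $F_p$ and $\eps^2=0$. This strictly contains $\LL_p=a_{1,p}a_{2,p}\OO_p$ whenever $n_p>0$, so multiplication does \emph{not} give a map $L\otimes M\to\LL$. The fact that the single product of generators $a_{1,p}a_{2,p}$ lies in $\OO_p^*$ is irrelevant: the map must be defined on all of $L_p\otimes M_p$. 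Consequently your inverse functor also breaks down: no pairing $\beta$ into a line bundle exists to ``force $a_1a_2\in\OO^*$''; with your $L$ and $M$ one has $\und\Hom_{\OO_C}(L\otimes M,\LL)\simeq\OO_C[-D]$ locally, which contains no units at points of $D$, so there is no perfect pairing at all.

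The paper avoids this by breaking the symmetry: it takes $L$ to be the quasi line bundle associated with the idele $a_1\cdot(t_p^{n_p})$, i.e.\ $L_p=a_{1,p}\OO_p[-D]$, while keeping $M_p=a_{2,p}\OO_p[D]$. Then
\[
\OO_p[-D]\cdot\OO_p[D]=\OO_p[-D]\subset\OO_p,
\]
so multiplication lands in $\LL_p$. This twist is not arbitrary: it is exactly what makes $\beta$ induce the isomorphism $L\simeq\und\Hom_{\OO_C}(M,\LL)=M^\vee\otimes\LL$, using that $M^\vee$ is represented by $a_2^{-1}(t_p^{n_p})$ (Lemma~\ref{dual-quasi-line-lem} and Lemma~\ref{quasiline-Hom-adelic-lem}). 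Once this is in place, the condition $a_1a_2\in\OO^*$ is equivalent to $\LL$ being a line bundle, and the remainder of the argument (well-definedness, full faithfulness, essential surjectivity) goes through as you outline.
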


\begin{proof} 
Recall that $T(\OO)[D]$ consists of $t=\diag(a_1,a_2)$, where $a_1,a_2\in \OO[D]^*$, $a_1a_2\in \OO^*$, $a_1-a_2\in \ov{\OO}(-D)+(\eps)$.
Let $L$ (resp., $M$) be the quasi line bundle associated with the idele $a_1(t_p^{n_p})$ (resp., $a_2$), and let $\LL$ be the line bundle associated with the idele $a_1a_2$.  
In other words, $L_p=a_{1,p}\OO_p[-D]$ and $M_p=a_{2,p}\OO_p[D]$. We consider the pairing $\b:L\ot M\to \LL$ induced by the product on $F$.
Since 
$$\ov{\OO[D]}=\ker(\eps:\OO[D]\to \OO[D])=\eps \ov{\OO}(D), \ \ \ov{\OO[-D]}=\eps \ov{\OO},$$
$$\eps \OO[D]=\eps \ov{\OO}, \ \ \eps\OO[-D]=\eps \ov{\OO}(-D),$$ 
we see that
the line bundles $\ov{L}$ and $\NN L$ (resp., $\ov{M}$ and $\NN M$)
are associated with the ideles $\ov{a}_1$ and $\ov{a}_1(t_p^{n_p})$ (resp., $\ov{a}_2(t_p^{-n_p})$ and $\ov{a}_2$). Thus, we have identifications
$$\a_1:\OO_D\rTo{\sim} \ov{L}/\NN L: 1\mapsto \ov{a}_1,$$
$$\a_2:\OO_D\rTo{\sim} \ov{M}/\NN M: 1\mapsto \ov{a}_2(t_p^{-n_p}).$$ 
We then set $\a=\a_2\circ\a_1^{-1}$. This gives an object of $\QTr$.

We have to check that this defines a morphism of groupoids. First, we claim that the constructed data $(L,M,\LL,\a,\b)$ do not change 
when we multiply $t$ with an element $\diag(b_1,b_2)$ of $T(\OO)[D]$. Indeed, it is clear that $L$, $M$, $\LL$ and $\b$ do not change since $b_1,b_2\in \OO[D]^*$ and $b_1b_2\in \OO^*$.
Also, $\a_2\circ \a_1^{-1}$ depends only on the image of $\ov{a}_2/\ov{a}_1$ in $\OO_D^*$, which gets multiplied by $\ov{b}_2/\ov{b}_1\equiv 1 \mod \ov{\OO}(-D)$.

Next, if we have $t_F=\diag(c_1,c_2)\in T(F)$, then we get an isomorphism $(L,M,\LL,\a,\b)\to (L',M',\LL',\a',\b')$, where $(L',M',\LL',\a',\b')$ is the object of $\QTr$ associated with $t_Ft$.
Namely, multiplication by $c_1$ (resp., $c_2$, resp., $c_1c_2$) gives an isomorphism $L\to L'$ (resp., $M\to M'$, resp., $\LL\to \LL'$), compatible with the extra data.
To see that our functor is fully faithful, we have to show that the above map identifies automorphism
of every object $(L,M,\LL,\a,\b)$ of $\QTr$ with $T(F)\cap T(\OO)[D]$. Such an automorphism is given by a pair of principal ideles $c_1\in F^*$, $c_2\in F^*$ which give automorphisms
of $L$ and $M$. Then by compatibility with the pairing $\b$, $c_1c_2$ should give an automorphism of $\LL$. 
This means that $c_1,c_2\in \OO[D]^*$ and $c_1c_2\in \OO^*$. The condition of compatibility with $\a$ means that $\ov{c_1}/\ov{c_2}\equiv 1 \mod \ov{\OO}(-D)$.
In other words, we get that $\diag(c_1,c_2)\in T(F)\cap T(\OO)[D]$.

For essential surjectivity, starting from the data $(L,M,\LL,\a,\b)$ we first find ideles $a_2$ and $a$ representing $M$ and $\LL$. Then we set $a_1=aa_2^{-1}$.
Then by Lemma \ref{quasiline-Hom-adelic-lem}, 
$L\simeq \und{\Hom}(M,\LL)\simeq M^\vee\ot \LL$ will be represented by $a_1(t_p^{n_p})$, so that the pairing $\b$ is induced by the product on $F$.
The isomorphism $\a$ may differ from the isomorphism $\a'$ obtained from $(a_1,a_2)$ using our functor by some element in $\ov{b}\in \ov{\OO}^*_D$. Finally, we can
find an integer idele $b\in \OO^*$. Then replacing $a_1$ by $a_1b$ (which does not change $L$) we will get an element $(a_1,a_2)$ producing the given object of $\QTr$. 
\end{proof}

\subsection{Geometric interpretation of the constant term operator}


Recall that we introduced the element
$$g_D:=\left(\begin{matrix} 1 & 0 \\ (t_p^{-n_p})\eps & 1\end{matrix}\right)\in G(\A).$$
Let us consider the subgroup
$$B(\OO)[D]:=B(\A)\cap g_DG(\OO)g_D^{-1}\sub B(\A).$$
Recall that the image of this subgroup under the projection $B(\A)\to T(\A)$ is the subgroup $T(\OO)[D]\sub T(\A)$ (see Sec. \ \ref{Iwasawa-sec}). 

With every $b=(b_p)\in B(F)\backslash B(\A_C)/B(\OO)[D]$, we can associate canonically an exact sequence
$$0\to L\to V\to M\to 0,$$
where $V$ is vector bundle of rank $2$ and $L$ and $M$ are quasi line bundles.
Namely, we have the following morphisms of exact sequences
\begin{diagram}
0&\rTo{}& \OO_p[-D]&\rTo{e_1-t_p^{-n_p}\eps\cdot e_2}&\OO_p^2&\rTo{(t_p^{-n_p}\eps, 1)}&\OO_p[D]&\rTo{}& 0\\
&&\dTo{}&&\dTo{g_{D,p}}&&\dTo{}\\
0&\rTo{}& F_p&\rTo{e_1}& F_p^2&\rTo{e_2^*}& F_p&\rTo{}& 0\\
&&\dTo{(a_1)_p}&&\dTo{b_p}&&\dTo{(a_2)_p}\\
0&\rTo{}& F_p&\rTo{e_1}& F_p^2&\rTo{e_2^*}& F_p&\rTo{}& 0
\end{diagram}
where $\OO_p[-D]=\OO_p(-D)+\eps\OO_p$, and $\diag(a_1,a_2)\in T(\A)$ is the diagonal part of $b$.
Now we define $V=V_b$ and $L=L_b\sub V_b$ by
\begin{equation}\label{b-V-L-constr}
V(U):=\{v\in F^2 \ |\ v\in \bigcap_{p\in U} b_p\cdot g_{D,p}\OO_p^2\}, \ \  L(U)=V(U)\cap F\cdot e_1=(a_1)_p\OO_p[-D]\cdot e_1.
\end{equation}
If we change $(b_p)$ by $(b_p)b'$, where $b'\in B(\OO)[D]$ then the pair $(V,L)$ does not change (since $b'g_D\OO^2=g_D\OO^2$).
On the other hand, if we replace $b$ by $b_0b$, where $b_0\in B(F)$, then the multiplication by $b_0$ induces an isomorphism between the corresponding pairs $(V,L)$.
Note that $L$ is identified the quasi line bundle associated with the idele $a_1(t_p^{n_p})$, while $M:=V/L$ is identified with the quasi line bundle associated with $a_2$.

Let us denote by $\QBun_B(C,D)$ the groupoid of pairs $L\sub V$, where $V$ is a rank 2 vector bundle on $C$
and $L\sub V$ is a quasi line bundle with the associated divisor $D$, such that $V/L$ is also a quasi line bundle.

We have a natural projection
$$p:\QBun_B(C,D)\to \Bun_G(C)$$
sending $L\sub V$ to $V$, and the functor 
$$q:\QBun_B(C,D)\to \QTr\simeq \QBun_T(C,D)$$
associating with a pair $L\sub V$, the triple $(L,M:=V/L,\LL:=\det(V))$ together with the induced isomorphism $\ov{M}/\eps M\rTo{\sim} \ov{L}/\eps L$ 
and the induced pairing $L\ot M\to \LL$ (see Lemma \ref{ex-seq-lemma}(ii) and \ref{det-lem}). 

\begin{prop}\label{QBun-mod-prop} 
The construction \eqref{b-V-L-constr} gives an equivalence of groupoids
$$B(F)\backslash B(\A)/B(\OO)[D]\simeq \QBun_B(C,D).$$ 
Under this equivalence the functors
$p$ and $q$  
correspond to the functors of double coset groupoids
$$p:B(F)\backslash B(\A)/B(\OO)[D]\to G(F)\backslash G(\A_C)/G(\OO): b\mapsto b\cdot g_D,$$
$$q:B(F)\backslash B(\A)/B(\OO)[D]\to T(F)\backslash T(\A)/T(\OO)[D]: b\mapsto b \mod U(\A).$$
\end{prop}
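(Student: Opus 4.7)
The plan is to verify well-definedness of the functor $b \mapsto (L_b \sub V_b)$ defined by \eqref{b-V-L-constr}, then show it is essentially surjective and fully faithful, and finally check compatibility with $p$ and $q$. For well-definedness on objects, the key observation (already indicated in the text) is that $b' g_D \OO^2 = g_D \OO^2$ for $b' \in B(\OO)[D]$, and that $L_b$ depends only on the $B(\OO)[D]$-orbit because the subgroup $T(\OO)[D]$ (the diagonal part of $B(\OO)[D]$) preserves the local lattices $(a_1)_p \OO_p[-D]$. For morphisms, a morphism from $b$ to $b'$ in the double coset groupoid is a pair $(b_0, k) \in B(F) \times B(\OO)[D]$ with $b' = b_0 b k$, and I would define the associated isomorphism of pairs as the one induced by left multiplication by $b_0$ on $F^2$; this is independent of $k$ by the observation above.

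For essential surjectivity, I would start with a pair $L \sub V$ in $\QBun_B(C,D)$ with $V/L \simeq M$. By the local classification in Lemma \ref{ex-seq-lemma}(i), at every closed point $p$ the exact sequence $0 \to L_p \to V_p \to M_p \to 0$ is isomorphic to the model sequence $0 \to \OO_p[-D] \to \OO_p^2 \to \OO_p[D] \to 0$ (where the middle map is $g_{D,p}$ applied to the standard basis, after the identifications $L_p \simeq \OO_p[-D]$ via $e_1$ and $M_p \simeq \OO_p[D]$ via $e_2^*$). Choosing a generic trivialization $e_\eta$ of $V$ compatible with the filtration $L \sub V$, and comparing with these local trivializations, produces an element $b = (b_p) \in B(\A)$ with $V = V_b$ and $L = L_b$.

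For full faithfulness, let $b, b' \in B(\A)$ produce isomorphic pairs. Any isomorphism $(V_b, L_b) \to (V_{b'}, L_{b'})$ is necessarily given by a rational automorphism $b_0 \in B(F)$ of the pair at the generic point; the condition that $b_0$ carries $b g_D \OO_p^2$ to $b' g_D \OO_p^2$ at each place exactly says $b_0 b \in b' B(\OO)[D]$. In particular, taking $b' = b$, automorphisms of $(V_b, L_b)$ correspond to $b_0 \in B(F) \cap b B(\OO)[D] b^{-1}$, which matches the automorphisms in the double coset groupoid by the same argument as in the proof of Lemma \ref{Qbun-triples-lem}.

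For compatibility with $p$, note that by construction $V_b(U) = \bigcap_{p \in U} b_p g_{D,p} \OO_p^2$, which is exactly the vector bundle $V(b g_D)$ attached to the $G(\A)$-element $b g_D$. For compatibility with $q$, writing the diagonal part of $b$ as $\diag(a_1, a_2)$, the constructed $L_b$, $M_b = V_b/L_b$ and $\LL_b = \det V_b$ are represented by the ideles $a_1(t_p^{n_p})$, $a_2$, $a_1 a_2$ respectively, which matches the description of the functor in Lemma \ref{Qbun-triples-lem}; the pairing $\b: L_b \ot M_b \to \LL_b$ is induced by the product on $F$ in both constructions, and the isomorphism $\a: \ov{M}_b/\NN M_b \to \ov{L}_b/\NN L_b$ obtained from the exact sequence via Lemma \ref{ex-seq-lemma}(ii) can be computed in the local model \eqref{model-ex-seq}. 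The main obstacle I expect is precisely this last step: tracing the isomorphism $\a$ through the model exact sequence and matching it with the isomorphism $\a_2 \circ \a_1^{-1}$ from Lemma \ref{Qbun-triples-lem}, keeping careful track of the uniformizer twists $t_p^{n_p}$ arising from the identification $L_p \simeq \OO_p[-D]$ via $e_1$ versus the idele $a_1(t_p^{n_p})$ representing $L$.
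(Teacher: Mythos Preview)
Your proposal is correct and follows essentially the same approach as the paper's proof: the paper also checks full faithfulness by reducing to the condition $b_2^{-1}b_0 b_1\in B(\OO)[D]$, establishes essential surjectivity by invoking the local model of Lemma~\ref{ex-seq-lemma}(i) to produce local trivializations and hence an adele $b=(b_p)$ with $b_p g_{D,p}\in G(F_p)$, and identifies $p$ and $q$ exactly as you outline. Your anticipated obstacle---matching the connecting isomorphism $\de$ from Lemma~\ref{ex-seq-lemma}(ii) with the isomorphism $\a=\a_2\circ\a_1^{-1}$ of Lemma~\ref{Qbun-triples-lem}---is handled in the paper by a one-line local computation (lifting the generator $\eps t_p^{-n_p}(a_2)_p$ of $\ov{M}_p$ to $b_p g_D e_1\in V_p$ and observing that $\eps$ times this comes from $\eps(a_1)_p e_1\in L_p$), confirming $\de^{-1}=\a$.
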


\begin{proof}
{\bf Step 1}. Let us check that our functor is fully faithful.
Given elements $b_1,b_2\in B(\A)$, isomorphisms from $b_1$ to $b_2$ in the groupoid of double cosets correspond to pairs $b_0\in B(F)$, $b_D\in B(\OO)[D]$, such that
$b_0b_1b_D=b_2$. On the other hand, an isomorphism $V_{b_1}\to V_{b_2}$ compatible with line subbundles $L_{b_i}$, is determined by an element $b_0\in B(F)$ such that
$$b'_0b_{1,p}\cdot g_{D,p}\OO_p^2=b_{2,p}\cdot g_{D,p}\OO_p^2$$
for all $p$. In other words, we need 
$$b_D=b_2^{-1}b_0b_1\in B(\OO)[D].$$
So the morphisms in both groupoids are the same.

\medskip

\noindent
{\bf Step 2}. We claim that any embedding $L\to V$, where $V$ is a rank 2 vector bundle, $L$ a quasi line bundle with the associated divisor $D$, such that $V/L$ is a quasi line bundle, locally
can be identified with 
\begin{equation}\label{model-embedding}
\OO_p[-D]\rTo{e_1-t_p^{-n_p}\eps\cdot e_2}\OO_p^2.
\end{equation}
Indeed, this follows from Lemma \ref{ex-seq-lemma}(i).

\medskip

\noindent
{\bf Step 3}. To check that our functor is essentially surjective, let us start with a pair $L\sub V$ in $\QBun_B(C,D)$. At the generic point we choose
an identification $V_\eta\simeq F^2$, so that $L_\eta$ corresponds to $F e_1\sub F^2$. Now for each point $p$, we get an embedding $V_p\ot F_p\sub F_p^2$.

By Step 2, we can choose an identification $V_p\simeq \OO_p^2$, so that $L_p$ corresponds to the image of the embedding \eqref{model-embedding}.
Consider the induced embedding $\OO_p^2\simeq V_p\to F_p^2$. It is given by some matrix $g_p\in G(F_p)$ such that
$$g_p\cdot \left(\begin{matrix} 1 \\ -t_p^{-n_p}\eps\end{matrix}\right)=g_p\cdot g_{D,p}^{-1}e_1\in F_p e_1.$$
Equivalently, $g_p\in B(F_p)g_{D,p}$, i.e., $g_p=b_pg_{D,p}$, as in the construction \eqref{b-V-L-constr}.

\noindent
{\bf Step 4}. The identification of $p$ is clear. To identify $q$, we observe that the pair $(L,M=V/L)$ coming from the object of $\QBun_B(C,D)$ associated with $b\in B(\A)$,
is exactly the pair of quasi line bundles associated with the diagonal part of $b$, $t=\diag(a_1,a_2)$. The line bundle $\det(V)$ is associated with the idele $\det(bg_D)=a_1a_2$,
and it is easy to see that the pairing $\b:L\ot M\to \det(V)$ is induced by the product on $F$. Finally, we can calculate the homomorphism $\de:\ov{M}/\eps M\to \ov{L}/\eps L$. 
The local generator $\eps t_p^{-n_p}(a_2)_p$ of $\ov{M}_p$ can be lifted to $x=b_pg_De_1\in V_p$. Then $\eps x$ comes from $\eps (a_1)_pe_1\in L_p$. Hence, the isomorphism
$\de^{-1}$ agrees with the morphism $\a$ constructed in Lemma \ref{Qbun-triples-lem}.
\end{proof}



Now we can give a geometric interpretation of our constant term operators $E_D:\C(\Bun_G(C))\to \C(\QBun_T(C,D))$.

\begin{prop}\label{geom-const-term-prop}
For $t\in \QBun_T(C,D)$, ne has 
$$E_Df(t)=\vol(U_{t,D})\cdot q_*p^*f.$$
Furthermore, if $t$ corresponds to the data $(L,M,\LL)$ as in Lemma \ref{Qbun-triples-lem},
then
$$\vol(U_{t,D})=\frac{|H^1(C,\OO)|}{|H^0(C,\OO)|}\cdot \frac{|H^0(C,\und{\Hom}(M,L))|}{|H^1(C,\und{\Hom}(M,L))|}.$$
\end{prop}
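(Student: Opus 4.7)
The plan is to reduce both maps $p$ and $q$ to functors between double coset groupoids via Proposition \ref{QBun-mod-prop}, compute $q_*p^*f(t)$ using the fiber description of Lemma \ref{double-coset-group-lem} and the push-forward formula of Lemma \ref{groupoid-push-for-lem}, and finally evaluate $\vol(U_{t,D})$ by the standard adelic Riemann--Roch argument after identifying $U_{t,D}$ with the integer adeles of $\und{\Hom}(M,L)$.

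First, Proposition \ref{QBun-mod-prop} identifies $\QBun_B(C,D)\simeq B(F)\backslash B(\A)/B(\OO)[D]$ with $p$ and $q$ corresponding to the functors $b\mapsto bg_D$ and $b\mapsto \pi(b)$ respectively, where $\pi:B\to T$. Applying Lemma \ref{double-coset-group-lem} to $\pi$ with $b_0:=t\in T(\A)\subset B(\A)$ (rather than $tg_D$) presents the fiber groupoid $q^{-1}(t)$ as $U(F)\backslash U(\A)/U_{2,t}$, where $U_{2,t}=U(\A)\cap tB(\OO)[D]t^{-1}$. Unwinding the condition using the explicit description of $B(\OO)[D]$ from Lemma \ref{TOD-lem}: a unipotent $u$ with $(1,2)$-entry $y$ lies in $U_{2,t}$ iff $a_1^{-1}a_2 y\in \OO$ and $\eps f_D^{-1}a_1^{-1}a_2 y\in \OO$, which, writing $y=a+\eps b$, becomes $y\in a_1a_2^{-1}(f_D\ov{\OO}+\eps\ov{\OO})$; this coincides with $U_{t,D}$ of Lemma \ref{UtD-lem} (using $\eps\OO+f_D\OO=\eps\ov{\OO}+f_D\ov{\OO}$). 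Since $U$ is abelian, the automorphism group of every object of $q^{-1}(t)$ is $U(F)\cap U_{t,D}$, so Lemma \ref{groupoid-push-for-lem} yields
\[
q_*p^*f(t)=\frac{1}{|U(F)\cap U_{t,D}|}\sum_{u\in U(F)\backslash U(\A)/U_{t,D}} f(utg_D).
\]
Comparing with the sum expression \eqref{ED-sum-def-eq} for $E_Df(t)$, and using $U$-commutativity to rewrite $U(\A)/(U(F)\cdot U_{t,D})=U(F)\backslash U(\A)/U_{t,D}$, gives the first claim $E_Df(t)=\vol(U_{t,D})\cdot q_*p^*f(t)$.

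For the volume formula, the key identification is $U_{t,D}=M'(\OO)$ inside $U(\A)\simeq \A$, where $M':=\und{\Hom}(M,L)$. Indeed, by Lemma \ref{quasiline-Hom-adelic-lem}, if $L,M$ correspond to the ideles $a_1(t_p^{n_p})$ and $a_2$, then $M'_p=a_{1,p}a_{2,p}^{-1}\OO_p[-D]=a_{1,p}a_{2,p}^{-1}(f_{D,p}\ov{\OO}_p+\eps\ov{\OO}_p)$, which matches the local description of $U_{t,D}$ derived above. Having made this identification, one invokes the standard adelic Riemann--Roch argument: for any quasi line bundle $\FF$, the four-term exact sequence
\[
0\to H^0(\FF)\to F_\FF\oplus \FF(\OO)\to \FF(\A)\to H^1(\FF)\to 0
\]
combined with the Haar measure identity $\mu(X)=\mu(\A/F)\cdot |F\cap X|/|\A/(F+X)|$ for open compact $X\sub \A$ gives $\mu(\FF(\OO))=\mu(\A/F)\cdot |H^0(\FF)|/|H^1(\FF)|$. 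Applying this first to $\FF=\OO_C$ with the normalization $\mu(\OO_\A)=1$ yields $\mu(\A/F)=|H^1(\OO_C)|/|H^0(\OO_C)|$, and then applying it to $\FF=M'$ gives the claimed expression for $\vol(U_{t,D})$.

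The bookkeeping in the subgroup identification $U_{t,D}\simeq M'(\OO)$ is the most error-prone step: one needs to track the ideles representing the various quasi line bundles (noting the shift by $(t_p^{n_p})$ coming from the fact that $\OO_p[-D]=t_p^{n_p}\OO_p[D]$) and verify that the local descriptions match. Everything else is a mechanical application of the general groupoid formalism of Appendix \ref{group-app} together with the geometric-adelic dictionary already established for quasi line bundles.
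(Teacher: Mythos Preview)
Your proof is correct and follows essentially the same route as the paper's own argument: identify the fiber of $q$ via Lemma \ref{double-coset-group-lem}, compute $q_*p^*$ by Lemma \ref{groupoid-push-for-lem}, compare with \eqref{ED-sum-def-eq}, and then identify $U_{t,D}$ with the integral adeles of $\und{\Hom}(M,L)$ to obtain the volume via adelic cohomology. The only cosmetic differences are that you verify $U_{2,t}=U_{t,D}$ by a direct matrix calculation (the paper simply observes that $U(\A)\cap tB(\OO)[D]t^{-1}=U(\A)\cap (tg_D)G(\OO)(tg_D)^{-1}$ from the definition $B(\OO)[D]=B(\A)\cap g_DG(\OO)g_D^{-1}$), and you package the volume computation via the four-term adelic exact sequence rather than the equivalent three-term sequence $1\to U_{t,D}/(U(F)\cap U_{t,D})\to U(\A)/U(F)\to U(\A)/(U(F)\cdot U_{t,D})\to 1$ used in the paper.
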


\begin{proof} By Proposition \ref{QBun-mod-prop}, we can use the adelic descriptions of $p$ and $q$.
The key observation is that applying Lemma \ref{double-coset-group-lem} with $B=B(\A)$, $B_1=B(F)$, $B_2=B(\OO)[D]$, we can identify the fiber groupoid $q^{-1}(t)$, for $t\in T(F)\backslash T(\A)/T(\OO)[D]$,
with the double coset groupoid $U(F)\backslash U(\A)/U_{t,D}$, where
$$U_{t,D}=U(\A_C)\cap t B(\OO[D])t^{-1}$$
is exactly the subgroup introduced before (see Sec.\ \ref{Iwasawa-sec}).
Note that the automorphism group of any object in this groupoid is the finite group $U(F)\cap U_{t,D}$.

Hence, by Lemma \ref{groupoid-push-for-lem}, we have
$$q_*p^*f(t)=\frac{1}{|U(F)\cap U_{t,D}|}\cdot \sum_{u\in U(F)\backslash U(\A)/U_{t,D}}f(utg_D) du.$$
Comparing this with the formula \eqref{ED-sum-def-eq} for $E_Df$, we obtain
$$E_Df(t)=\vol(U_{t,D})\cdot q_*p^*f(t).$$ 

It remains to compute $\vol(U_{t,D})$ in terms of the quasi line bundles $L$ and $M$ associated with $t\in \QBun_T(C,D)$.
Let $t=\diag(a_1,a_2)$. Then under the natural isomorphism $U(\A_C)\simeq \A_C$, the subgroup $U_{t,D}$
corresponds to $a_1a_2^{-1}(\OO(-D)+\eps\OO)=a_1a_2^{-1}(t_p^{n_p})\OO[D]$. 
Recall that $L$ is associated with $a_1(t_p^{n_p})$, while $M$ is associated with $a_2$.
Hence, by Lemma \ref{quasiline-Hom-adelic-lem}, the quasi line bundle $\und{\Hom}(M,L)$ is represented by
the idele $a_1a_2^{-1}(t_p^{n_p})$ (and the divisor $D$).
Hence, we get
$$U(\A_C)/(U(F)\cdot U_{t,D})\simeq H^1(\und{\Hom}(M,L)),$$
$$U(F)\cap U_{t,D}\simeq H^0(\und{\Hom}(M,L)).$$

Now the exact sequence
$$1\to U_{t,D}/(U(F)\cap U_{t,D})\to U(\A)/U(F)\to U(\A)/(U(F)\cdot U_{t,D})\to 1$$
gives the identity
$$\vol(U(\A)/U(F))=\frac{\vol(U_{t,D})}{|U(F)\cap U_{t,D}|}\cdot |U(\A)/(U(F)\cdot U_{t,D})|.$$
Taking into account the above identifications, we get
$$\vol(U_{t,D})=\vol(U(\A)/U(F))\cdot \frac{|H^0(C,\und{\Hom}(M,L))|}{|H^1(C,\und{\Hom}(M,L))|}.$$
Finally, for $t=1$ and $D=0$ this identity shows that
$$\vol(U(\A)/U(F))=\frac{|H^1(C,\OO)|}{|H^0(C,\OO)|}.$$
\end{proof} 


%

\end{document}